\newtheorem{theorem}{Theorem}[section]
\newtheorem{lemma}{Lemma}[section]
\newtheorem{proposition}{Proposition}[section]
\newtheorem{corollary}{Corollary}[section]
\theoremstyle{remark}
\newtheorem{remark}{Remark}[section]
\begin{document}

\title[centroaffine hypersurfaces with parallel cubic form]
{Classification of the locally strongly convex \\centroaffine
hypersurfaces with \\parallel cubic form}

\author[X. X. Cheng, Z. J. Hu and M. Moruz]{Xiuxiu Cheng, Zejun Hu and Marilena Moruz}

\thanks{2010 {\it
Mathematics Subject Classification.} \ Primary 53A15; Secondary
53C24, 53C42.}

\thanks{This project was supported by NSFC (Grant No. 11371330)}

\date{}

\keywords{Centroaffine hypersurface, locally strongly convex,
difference tensor, Levi-Civita connection, parallel cubic form.}

\begin{abstract}
In this paper, we study locally strongly convex centroaffine
hypersurfaces with parallel cubic form with respect to the
Levi-Civita connection of the centroaffine metric. As the main
result, we obtain a complete classification of such centroaffine
hypersurfaces. The result of this paper is a centroaffine version of
the complete classification of locally strongly convex
equiaffine hypersurfaces with parallel cubic form due to Hu, Li and
Vrancken \cite{HLV2}.
\end{abstract}
\maketitle

%===================================================================================================
\numberwithin{equation}{section}

\section{Introduction}\label{sect:1}

In centroaffine differential geometry, we study properties of
hypersurfaces in the $(n+1)$-dimensional affine space
$\mathbb{R}^{n+1}$ equipped with its standard affine flat connection
$D$, that are invariant under the centroaffine transformation group
$G$ in $\mathbb{R}^{n+1}$. Here, by definition, $G$ is the subgroup
of affine transformation group in $\mathbb{R}^{n+1}$ which keeps the
origin $O\in \mathbb{R}^{n+1}$ invariant. Let $M^n$ be an
$n$-dimensional smooth manifold. An immersion $x: M^n\rightarrow
\mathbb{R}^{n+1}$ is said to be centroaffine hypersurface if the position
vector $x$ (from $O$) for each point $x\in M^n$ is transversal
to the tangent plane of $M$ at $x$. In this case, the position vector
$x$ defines the so-called {\it centroaffine normalization} modulo
orientation. For any vector fields $X$ and $Y$ tangent to $M$, we
have the centroaffine formula of Gauss:
\begin{equation}\label{eqn:1.1}
D_{X}x_{*}(Y)=x_{*}(\nabla_{X}Y) + h(X,Y)(-\varepsilon x) ,
\end{equation}
where $\varepsilon=1$ or $-1$. In this paper, we always assume that
$x: M^n\rightarrow \mathbb{R}^{n+1}$ is a non-degenerate
centroaffine hypersurface, i.e., the bilinear $2$-form $h$, defined
by \eqref{eqn:1.1}, remains non-degenerate. Moreover, associated
with \eqref{eqn:1.1} we call $-\varepsilon x$, $\nabla$ and $h$ the
centroaffine normal, the induced connection and the {\it
centroaffine metric} induced by $-\varepsilon x$, respectively.

Let $N(h)$ denote the dimension of the maximal negative definite
subspaces of the bilinear form $h$ with respect to $\varepsilon=-1$.
For a locally strongly convex centroaffine hypersurface, i.e.,
$N(h)=0$ or $N(h)=n$, we can choose $\varepsilon$ such that the
centroaffine metric $h$ is positive definite. In that situation, if
$\varepsilon=1$ we say that the hypersurface is elliptic and, if
$\varepsilon=-1$ we call the hypersurface hyperbolic (cf. section 2
of \cite{LLS}). We refer to \cite{DVV,LW1,W} for some interesting
studies on centroaffine hypersurfaces.

Given a non-degenerate centroaffine hypersurface $x: M^n\rightarrow
\mathbb{R}^{n+1}$, we denote by $\hat{\nabla}$ the Levi-Civita
connection of $h$. Then the difference tensor $K$, defined by
$K(X,Y):=K_XY:=\nabla_XY-\hat{\nabla}_XY$, and the cubic form
$C:=\nabla h$ are related by the equation
\begin{equation}\label{eqn:1.2}
C(X,Y,Z)=-2h(K_XY,Z)=-2h(K_XZ,Y).
\end{equation}

It is well-known (cf. \cite{LLS,LSZH,SSV}) that a centroaffine
hypersurface immersion is uniquely determined, up to a centroaffine
transformation, by its centroaffine metric and its cubic form (this
means that the cubic form plays the role of the affine second
fundamental form). Hence, in centroaffine differential geometry the
problem of classifying affine hypersurfaces with parallel cubic form
(i.e., $\hat\nabla C=0$) is quite natural and important. In
\cite{LW}, Li and Wang considered this problem the first time by
studying the so-called {\it canonical} centroaffine hypersurface.
Here, a centroaffine hypersurface is called {\it canonical} if its
centroaffine metric $h$ is flat and its cubic form $C$ satisfies
$\hat{\nabla}C=0$.

We should recall that in equiaffine differential geometry, the
problem of classifying locally strongly convex affine hypersurfaces
with parallel cubic form has been studied intensively, from the
earlier beginning paper by Bokan, Nomizu and Simon \cite{BNS}, and
then \cite{DV,DVY,HLLV,HLSV} by some others, to the very recent
complete classification of Hu, Li and Vrancken \cite{HLV2}. We also
refer to the latest development due to Hildebrand \cite{Hi},
however, from the geometric viewpoint the arguments in \cite{Hi} is
difficult to be followed for us.

In centroaffine differential geometry, compared with its counterpart
in equiaffine differential geometry, the important {\it apolarity
condition} does not exist. The lack of the apolarity condition
brings serious difficulties to the solution of the problem of
classifying centroaffine hypersurfaces with parallel cubic form. To
our knowledge, besides Li and Wang \cite{LW}, the only known results
concentrating on this problem is by Liu and Wang \cite{LW2}, where
$2$-dimensional centroaffine surfaces were classified under the
condition that the traceless cubic form $\tilde{C}$ is parallel,
i.e. $\hat{\nabla}\tilde{C}=0$. As $\hat{\nabla}C=0$ implies that
$\hat{\nabla}\tilde{C}=0$, Liu and Wang's classification list should
include all immersions satisfying $\hat{\nabla}C=0$.

In this paper, restricting our attention to locally strongly convex
centroaffine hypersurfaces in $\mathbb{R}^{n+1}$, we will solve the
above problem by establishing a complete classification of all
centroaffine hypersurfaces with parallel cubic form. Similar to the
one in \cite{HLSV,HLV1,HLV2}, our classification depends heavily on
the characterization of the so-called (generalized) Calabi product
construction of centroaffine hypersurfaces (cf. \cite{LLS,LW}).
Indeed, such characterization tells how to decompose a complicated
centroaffine hypersurface into lower dimensional ones that have been
well known.

To state the main result of this paper, we first recall that if
$\psi_i:M_i\rightarrow \mathbb{R}^{n_i+1}$, where $i=1,2,$ are
non-degenerate centroaffine hypersurfaces, then, following
\cite{LLS,LW}, for a constant $\lambda\not=0,-1$, we can define the
(generalized) Calabi product of $M_{1}$ and $M_{2}$ by
\begin{equation}\label{eqn:1.3}
\psi(u,p,q)=(e^{u}\psi_{1}(p)
,e^{-\lambda u}\psi_{2}(q)),\ p\in M_{1},\
q\in M_{2},\ u \in \mathbb{R}.
\end{equation}

Similarly, the (generalized) Calabi product of $M_1$ and a point is
defined by
\begin{equation}\label{eqn:1.4}
\tilde\psi(u,p)=(e^{u}\psi_{1}(p),e^{-\lambda u}),\ p\in M_{1},\ u
\in \mathbb{R}.
\end{equation}

Note that a straightforward calculation shows that the Calabi
product of two centroaffine hypersurfaces with parallel cubic form
(resp. the Calabi product of a centroaffine hypersurface with
parallel cubic form and a point) again has parallel cubic form. The
decomposition theorems, which can be seen as the converse of the
previous Calabi product constructions, were first obtained in terms
of $h$ and $K$ in \cite{LW} (Theorem 4.5 therein) and will be
modified  more quantitatively in the present paper (cf. Theorems
\ref{th:3.2} and \ref{th:3.4} below) for maintaining consistency
with Theorems 3 and 4 of \cite{HLV1}. In this paper, we further
develop the techniques, started in \cite{HLSV} and \cite{HLV2} when
dealing with equiaffine hypersurfaces, in order to obtain the
following complete classification.

\begin{theorem}\label{th:1.1}

Let $M^n$ be an $n$-dimensional locally strongly convex centroaffine
hypersurface in $\mathbb{R}^{n+1}$ with $\hat{\nabla}C=0$. Then, we
have either
\begin{enumerate}
\item[(i)] $M^n$ is an open part of a locally strongly
convex hyperquadric ($C=0$); or
\vskip 1mm
\item[(ii)] $M^n$ is obtained as the Calabi product of a
lower dimensional locally strongly convex
centroaffine hypersurface with parallel cubic form and a point; or
\vskip 1mm
\item[(iii)] $M^n$ is obtained as the Calabi product of two
lower dimensional locally strongly convex
centroaffine hypersurfaces with parallel cubic form; or
\vskip 1mm
\item[(iv)] $n=\frac{1}{2}m(m+1)-1,\ m\ge3$, $M^n$ is centroaffinely equivalent to the standard
embedding of
$\mathrm{SL}(m,\mathbb{R})/\mathrm{SO}(m)\hookrightarrow
\mathbb{R}^{n+1}$; or
\vskip 1mm
\item[(v)] $n=\tfrac{1}{4}(m+1)^2-1,\ m\ge5$,
$M^n$ is centroaffinely equivalent to the standard embedding
$\mathrm{SL}(\tfrac{m+1}{2},\mathbb{C})/\mathrm{SU}(\tfrac{m+1}{2})
\hookrightarrow\mathbb{R}^{n+1}$;
or
\vskip 1mm
\item[(vi)] $n=\tfrac{1}{8}(m+1)(m+3)-1,\ m\ge9$,
$M^n$ is centroaffinely equivalent to the standard embedding
$\mathrm{SU}^*(\tfrac{m+3}{2})/\mathrm{Sp}(\tfrac{m+3}{4})\hookrightarrow
\mathbb{R}^{n+1}$;
or
\vskip 1mm
\item[(vii)] $n=26$,
$M^n$ is centroaffinely equivalent to the standard embedding\\
$\mathrm{E}_{6(-26)}/\mathrm{F}_4\hookrightarrow\mathbb{R}^{27}$; or
\vskip 1mm
\item[(viii)] $M^n$ is locally centroaffinely equivalent to the
canonical centroaffine hypersurface
$x_{n+1}=\tfrac{1}{2x_{1}}\sum_{k=2}^{n}x_{k}^{2}
+ x_{1}\ln x_{1}$.
\vskip 1mm
\end{enumerate}
\end{theorem}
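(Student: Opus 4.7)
The plan is to follow the general strategy used by Hu--Li--Vrancken in the equiaffine setting \cite{HLV2}, adapted to the centroaffine context where the apolarity condition is unavailable. The starting point is the observation that, by \eqref{eqn:1.2}, the parallelism $\hat{\nabla}C=0$ is equivalent to $\hat{\nabla}K=0$. This immediately implies that the eigenvalues of each operator $K_X$ (which is self-adjoint with respect to $h$) are constant on $M^n$, and that the associated eigenspace distributions are $\hat{\nabla}$-parallel. Combining this with the Gauss, Codazzi, and Ricci equations for a centroaffine hypersurface gives a rigid algebraic system on the tangent space at any fixed point.

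Next, I would perform a pointwise spectral analysis. Choose at a point $p$ a unit vector $e_1$ maximizing the cubic function $v\mapsto h(K_vv,v)$ on the unit sphere of $T_pM^n$; this yields $K_{e_1}e_1=\lambda_1 e_1$ for some eigenvalue $\lambda_1$, and one obtains orthogonal decompositions of $\{e_1\}^{\perp}$ into eigenspaces $V_{\mu}$ of $K_{e_1}$. Applying the polarization identity to the parallel-cubic-form condition and invoking the Codazzi equation, I would derive restrictions both on the admissible eigenvalues $\mu$ relative to $\lambda_1$ and on how $K_X$ maps one eigenspace into another, closely paralleling the formulas in \cite{HLV2} but with the apolarity relation replaced by its centroaffine substitute (which keeps traces of $K_X$ as additional unknowns). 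The case division then proceeds according to the distinct multiplicities that arise: $K\equiv 0$ produces (i); an eigenvalue configuration of the form $(\lambda_1,\mu_1,\dots)$ with one block of small multiplicity or two maximal blocks triggers the decomposition results (Theorems~\ref{th:3.2} and \ref{th:3.4}), yielding (ii) and (iii); the remaining "irreducible" eigenvalue patterns must correspond to the symmetric-space embeddings or to the canonical hypersurface of (viii).

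For the irreducible classes (iv)--(vii), I would show that the pointwise algebraic data $(h,K)$ at $p$ coincide with the centroaffine data of one of the four standard embeddings of $\mathrm{SL}(m,\mathbb{R})/\mathrm{SO}(m)$, $\mathrm{SL}(\tfrac{m+1}{2},\mathbb{C})/\mathrm{SU}(\tfrac{m+1}{2})$, $\mathrm{SU}^*(\tfrac{m+3}{2})/\mathrm{Sp}(\tfrac{m+3}{4})$, $\mathrm{E}_{6(-26)}/\mathrm{F}_4$, which are precisely the rank-one symmetric spaces attached to the four simple Euclidean Jordan algebras of rank $\geq 3$. Since the hypersurface is determined up to a centroaffine transformation by $(h,C)$ and since $\hat{\nabla}C=0$ propagates the pointwise model to the whole of $M^n$, the rigidity of these embeddings (already known in the equiaffine literature) transfers to the centroaffine classification. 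Case (viii) is identified by showing that the remaining possibility forces $h$ to be flat and $K$ to satisfy precisely the algebraic identities defining the canonical hypersurface of Li--Wang \cite{LW}.

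The main obstacle, as the authors emphasize in the introduction, is the absence of the apolarity condition $\mathrm{tr}\,K_X=0$: in the equiaffine setting it instantly reduces the number of free parameters, whereas here the traces $\mathrm{tr}\,K_X$ appear as additional unknowns coupled to the Ricci tensor. Consequently the quantitative Calabi-product decomposition theorems must be sharpened in a form compatible with Theorems~3 and 4 of \cite{HLV1}, and the eigenvalue bookkeeping in the case division is appreciably more intricate than in \cite{HLV2}. Once this quantitative framework is in place, the induction on $n$ via the Calabi product essentially reduces the problem to the primitive cases, and the theorem follows.
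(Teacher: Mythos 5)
Your proposal follows essentially the same route as the paper: the pointwise maximization of $h(K_vv,v)$ to produce a typical basis, the eigenvalue case division arising from $\hat\nabla K=0$ and the integrability conditions, the quantitative Calabi-product characterization theorems to dispose of the reducible cases (ii)--(iii), and the identification of the irreducible configurations with the four symmetric-space embeddings (iv)--(vii) plus the exceptional flat hypersurface (viii). The one caveat is that the technical core --- proving that the ``irreducible'' eigenvalue patterns are exactly those of (iv)--(vii) --- is asserted rather than argued; in the paper this occupies Sections 7--11 and rests on the isotropic map $L$, the operators $P_v$, and a parallelizability-of-spheres argument forcing the block dimension $\mathfrak{p}$ to be $0,1,3$ or $7$.
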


\begin{remark}\label{rm:1.1}
Compared to its counterpart of the Classification Theorem in
equiaffine situation \cite{HLV2}, the case (viii) in Theorem
\ref{th:1.1} is exceptional and it is completely newly appeared.
\end{remark}

\vskip 1mm

\begin{remark}\label{rm:1.2}
Theorem \ref{th:1.1} implies that all canonical centroaffine
hypersurfaces but that in (viii) can be decomposed as the Calabi
product.
\end{remark}

\vskip 1mm
\begin{remark}\label{rm:1.3}
Related to Theorem \ref{th:1.1} we have established in \cite{CH} the
classification of locally strongly convex {\it isotropic}
centroaffine hypersurfaces. From the comparison of the main results
in \cite{BD} and \cite{CH} one sees that the {\it isotropic}
condition again have different implications in both equiaffine
theory of hypersurfaces and centroaffine theory of hypersurfaces,
just like Theorem \ref{th:1.1} here and the Classification Theorem
in \cite{HLV2}.
\end{remark}

\vskip 1mm

As direct consequence of Theorem \ref{th:1.1}, and without paying
attention to the Calabi product constructions, the classification of
locally strongly convex canonical centroaffine hypersurfaces can be
formulated as follows:

\begin{corollary}[cf. \cite{LW}]\label{cr:1.1}
Let $x:M^n\to\mathbb{R}^{n+1}$ be a locally strongly convex
canonical centroaffine hypersurface. Then it is locally
centroaffinely equivalent to one of the following hypersurfaces:
\begin{enumerate}
\item[(i)] $x_{1}^{\alpha_{1}}x_{2}^{\alpha_{2}}
\cdots x_{n+1}^{\alpha_{n+1}}=1$, where either $\alpha_{i}>0\ (1\le
i\le n+1)$, or
$$
\alpha_{1}<0\ \ and\ \ \alpha_{i}>0\ (2\le i\le n+1) \ such\ that\
\sum_{i=1}^{n+1}\alpha_i<0.
$$

%\vskip 2mm

\item[(ii)] $x_{1}^{\alpha_{1}}x_{2}^{\alpha_{2}}
\cdots x_{n-1}^{\alpha_{n-1}}(x_{n}^2 +x_{n+1}^2)^{\alpha_{n}}\exp
(\alpha _{n+1} \arctan\tfrac{x_n}{x_{n+1}})=1$, where
$$
\alpha_i<0\ (1\le i\le n-1)\ \ such\ that\ \
2\alpha_{n}+\sum_{i=1}^{n-1}\alpha_i>0,
$$

%\vskip 2mm

\item[(iii)] $x_{n+1}=\tfrac{1}{2x_{1}}
(x_{2}^{2}+\cdots+x_{v-1}^{2}) -x_{1}(-\ln x_{1}+\alpha_{v}\ln
x_{v}+\cdots+\alpha_{n}\ln x_{n})$, where $2\le v\le n+1$,
$\alpha_i\ (v\le i\le n)$ are real numbers satisfying
$$
\alpha_i>0\ (v\le i\le n)\ \ and\ \ \sum_{i=v}^{n}\alpha_i<1.
$$
\end{enumerate}
\end{corollary}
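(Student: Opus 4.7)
The idea is to invoke Theorem \ref{th:1.1} and extract precisely those cases whose centroaffine metric $h$ is flat, translating each into the explicit equations of Corollary \ref{cr:1.1}. Applying Theorem \ref{th:1.1} to $x:M^n\to\mathbb{R}^{n+1}$, I would first rule out cases (i) and (iv)--(vii). In case (i), $C=0$ together with \eqref{eqn:1.2} gives $K=0$, so the centroaffine Gauss equation reduces to $R(X,Y)Z=\varepsilon[h(Y,Z)X-h(X,Z)Y]$, forcing $h$ to have constant sectional curvature $\varepsilon=\pm1$ and contradicting flatness. In cases (iv)--(vii), $h$ is (up to a positive constant) the standard symmetric-space metric of the listed irreducible Riemannian symmetric space of noncompact type, which is not flat; these cases are therefore excluded. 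Case (viii) directly matches part (iii) of the corollary with $v=n+1$ and empty logarithmic tail, and so can be absorbed immediately.

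For the remaining Calabi product cases (ii) and (iii) of Theorem \ref{th:1.1}, I would proceed by induction on $n$. The key descent is that the centroaffine metric of a Calabi product carries an explicit block decomposition of the form $h=du^{2}+c_{1}h_{1}+c_{2}h_{2}$ (respectively $h=du^{2}+c_{1}h_{1}$), with positive constants $c_{i}$ depending only on $\lambda$; this is encoded in the formulas underlying the decomposition theorems \ref{th:3.2} and \ref{th:3.4}. Hence flatness of $h$ descends to flatness of $h_{1}$ and, where present, $h_{2}$, so each constituent is a canonical centroaffine hypersurface of strictly lower dimension. By the induction hypothesis each constituent has one of the three forms (i)--(iii) of the corollary. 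The defining equation of the Calabi product is then read off by eliminating $u$ via $e^{-\lambda u}=w$ and using the weighted homogeneity of the monomial, spiral or $x_{1}\ln x_{1}$-type equations of the constituents; this produces another instance of (i), (ii) or (iii).

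The base case $n=1$ must be handled directly, since Theorem \ref{th:1.1} offers no genuine Calabi product decomposition in that dimension. In an $h$-arclength parameter, $\hat{\nabla}C=0$ reduces via \eqref{eqn:1.2} and the Gauss formula \eqref{eqn:1.1} to a constant-coefficient linear second-order ODE $x''=kx'-\varepsilon x$ for the position vector. Its three integration regimes (two distinct real roots, complex conjugate roots, repeated root) produce respectively the monomial curves in (i) with $n=1$, the logarithmic spiral family in (ii) with $n=1$, and the exceptional curve $x_{2}=x_{1}\ln x_{1}$ in (iii). These three base models then propagate to arbitrary $n$ through the iterated Calabi products of the previous paragraph; in particular the spiral family in (ii) arises precisely when a spiral base curve enters the iteration. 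The main obstacle I foresee is the bookkeeping of the sign and sum conditions $\alpha_{i}>0$, $\sum_{i=1}^{n+1}\alpha_{i}<0$, and $\sum_{i=v}^{n}\alpha_{i}<1$ stated in the corollary: these must correspond exactly to the admissible parameter range $\lambda\in\mathbb{R}\setminus\{0,-1\}$ of the iterated Calabi products combined with local strong convexity of $h$, and verifying this match through the induction is the most delicate technical step.
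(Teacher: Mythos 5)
Your proposal follows essentially the same route as the paper, which presents the corollary as a direct consequence of Theorem \ref{th:1.1}: flatness excludes cases (i) and (iv)--(vii), case (viii) is the instance $v=n+1$ of (iii), and the Calabi-product cases descend by induction using the fact (Propositions \ref{pr:3.1} and \ref{pr:3.2}) that a Calabi product is flat if and only if its factors are. The parameter bookkeeping you flag as the delicate step is likewise not carried out in the paper, which defers to the explicit computations of Li and Wang \cite{LW}.
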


\begin{remark}\label{rm:1.4}
More general canonical centroaffine non-degenerate hypersurfaces
have been discussed by Li and Wang \cite{LW}, where the
classification of canonical centroaffine hypersurfaces in
$\mathbb{R}^{n+1}$ with $N(h)\leq 1$ was established. According to
\cite{LW}, it is easily seen that if $N(h)=0$ then such
hypersurfaces are centroaffinely equivalent to the following
hypersurfaces
$$
x_{1}^{\alpha_{1}}x_{2}^{\alpha_{2}} \cdots
x_{n+1}^{\alpha_{n+1}}=1,
$$
where $\alpha_i\ (1\le i\le n+1)$ are positive real numbers.
\end{remark}

\vskip 2mm

This paper is organized in twelve sections. In Section \ref{sect:2},
we fix notations and recall relevant material for centroaffine
hypersurfaces in affine differential geometry. In Section
\ref{sect:3}, we study both the Calabi product of centroaffine
hypersurfaces and their characterizations. In Section \ref{sect:4},
properties of centroaffine hypersurfaces with parallel cubic form in
terms of a typical basis are presented, so that the classification
problem of such hypersurfaces is divided into $(n+1)$ cases, namely:
$\{\mathfrak{C}_m\}_{1\le m\le n}$ and an exceptional case
$\mathfrak{B}$, depending on the decomposition of the tangent space
into three orthogonal distributions, i.e., $\mathcal{D}_1$ (of
dimension one), $\mathcal{D}_2$ and $\mathcal{D}_3$. The two cases
$\mathfrak{C}_1$ and $\mathfrak{C}_n$ will be settled in this
section. In Section \ref{sect:5}, we settle the exceptional Case
$\mathfrak{B}$.
In Section \ref{sect:6}, we classify locally strongly convex
centroaffine surfaces in $\mathbb{R}^3$ with parallel cubic form.
The result of Section \ref{sect:6} is necessary not only because it
is indispensable to the induction procedure of Theorem \ref{th:1.1},
but also because it fills in a gap in the result of Liu and Wang
\cite{LW2}.

To consider the cases $\{\mathfrak{C}_m\}_{2\le m\le n-1}$, we
follow closely the same procedure as in \cite{HLV2}: we introduce
two extremely important operators, i.e., an isotropic bilinear map
$L:\mathcal{D}_2\times \mathcal{D}_2\rightarrow \mathcal{D}_3$ in
subsection \ref{sect:4.3}, and, for any unit vector $v\in
\mathcal{D}_2$, the symmetric linear map $P_v: \mathcal{D}_2
\rightarrow \mathcal{D}_2$ in subsection \ref{sect:4.4}. With the
help of $L$ and $P_v$, we can give a remarkable decomposition of
$\mathcal{D}_2$ in subsection \ref{sect:4.5}. Then in Sections
\ref{sect:7}\,--\,\ref{sect:11}, according to the decomposition of
$\mathcal{D}_2$ we analyze these cases in much detail in order to
achieve the corresponding conclusion, respectively. Finally in
Section \ref{sect:12} we complete the proof Theorem \ref{th:1.1}.

\vskip2mm

%\noindent{\bf Acknowledgements}. The authors would like to thank the
%referee for their helpful suggestions and comments.

%%%%%%%%%%%%%%%%%%%%%%%%%%%%%%%%%%%%%%%%%%%%%%%%%%%%%%%%%%%%%%%%%%%%%%%%

\numberwithin{equation}{section}
\section{Preliminaries}\label{sect:2}

In this section, we recall basic facts about centroaffine
hypersurfaces. For more details see also \cite{NS} and \cite{SSV}.
Given a centroaffine hypersurface, let $\nabla$, $\hat{\nabla}$, $K$
and $C$ denote the induced connection, the Levi-Civita connection
for the centroaffine metric $h$, the difference tensor and the cubic
form, respectively, and let $X,Y,Z$ denote the tangent vector fields. We
define the Tchebychev form $\hat{T}$ and the Tchebychev vector field
$T$, respectively, by
\begin{equation}\label{eqn:2.1}
n\hat{T}(X)={\rm Tr}\,(K_{X}),\ \ h(T, X)=\hat{T}(X).
\end{equation}
If $T=0$, or equivalently, ${\rm Tr}\,K_X=0$ for any tangent
vector $X$, then $M^n$ is reduced to be the so-called {\it proper
{\rm (equi-)}affine hypersphere} centered at the origin $O$
(cf. also \cite{LSZH}, p.279, and for more details, in Section
1.15.2-3 therein). Using the cubic form $C$ and the Tchebychev form
$\hat{T}$ one can define a traceless symmetric cubic form
$\widetilde{C}$ by
\begin{equation}\label{eqn:2.2}
\begin{aligned}
\widetilde{C}(X,Y,Z):=&-\tfrac{1}{2}C(X,Y,Z)\\
&-\tfrac{n}{n+2}\big[\hat{T}(X)h(Y,Z)+\hat{T}(Y)h(X,Z)
+\hat{T}(Z)h(X,Y)\big].
\end{aligned}
\end{equation}

It is well-known that $\widetilde{C}$ vanishes if and only if $f:
M^n \rightarrow \mathbb{R}^{n+1}$ is a hyperquadric (cf.
Section 7.1 in \cite{SSV}; Lemma 2.1 and Remark 2.2 in \cite{LLSSW}).

Let $\hat{R}$ denote the curvature tensor of $\hat{\nabla}$. Then,
according to the integrability conditions, we have
\begin{equation}\label{eqn:2.3}
\hat{R}(X,Y)Z=\varepsilon(h(Y,Z)X-h(X,Z)Y)-[K_{X},K_{Y}]Z,
\end{equation}
\begin{equation}\label{eqn:2.4}
\hat{\nabla}C(X,Y,Z,W)=\hat{\nabla}C(Y,X,Z,W),
\end{equation}
where $\hat{\nabla}C(X,Y,Z,W):=(\hat{\nabla}_XC)(Y,Z,W)$.

We define the curvature tensor acting as derivation by
\begin{align*}
(\hat{R}(X,Y)K)(Z,U)=\hat{R}(X,Y)K(Z,U)-K(\hat{R}(X,Y)Z,U)-K(Z,\hat{R}(X,Y)U).
\end{align*}
Notice that $\hat{\nabla}C=0$ if and only if $\hat{\nabla}K=0$. Thus
if $\hat{\nabla}C=0$, we have
\begin{align}\label{eqn:2.5}
\hat{R}(X,Y)K(Z,U)=K(\hat{R}(X,Y)Z,U)+K(Z,\hat{R}(X,Y)U).
\end{align}
%

%%%%%%%%%%%%%%%%%%%%%%%%%%%%%%%%%%%%%%%%%%%%%%%%%%%%%%%%%%%%%%%%%%%%%%%%
\numberwithin{equation}{section}

\section{Characterizations of the generalized Calabi product}\label{sect:3}

To prove Theorem \ref{th:1.1}, we should study the (generalized)
Calabi products of centroaffine hypersurfaces as defined in
\eqref{eqn:1.3} and \eqref{eqn:1.4}. In this section, we first
state some elementary calculations on Calabi product, formulated as
Propositions \ref{pr:3.1} and \ref{pr:3.2}. Then, considering the
converse of these propositions, we will prove Theorems \ref{th:3.1},
\ref{th:3.2}, \ref{th:3.3} and \ref{th:3.4}, which demonstrate the
characterizations of the Calabi product in terms of their
centroaffine invariants.

Let $\psi_i:M_i\rightarrow \mathbb{R}^{n_i+1}$ be a locally strongly
convex centroaffine hypersurface of dimension $n_i\ (i=1,2)$. Denote
by $h^i$ the centroaffine metric of $\psi_i$ ($i=1,2$),
respectively. Given the Calabi product $\psi$ and $\tilde\psi$
defined as in \eqref{eqn:1.3} and \eqref{eqn:1.4}, i.e., for
constant $\lambda\not=0,-1$, we have
\begin{equation}\label{eqn:3.1}
\psi(u,p,q)=(e^{u}\psi_{1}(p),\,e^{-\lambda u}\psi_{2}(q)),\ \ p\in
M_{1},\ q\in M_{2},\ u \in \mathbb{R},
\end{equation}
\begin{equation}\label{eqn:3.2}
\tilde\psi(u,p)=(e^{u}\psi_{1}(p),\,e^{-\lambda u}),\ \ p\in M_{1},\
u \in \mathbb{R}.
\end{equation}

Let $\{u_{1},\ldots,u_{n_{1}}\}$ and $\{u_{n_{1}+1},
\ldots,u_{n_{1}+n_{2}}\}$ be local coordinates for $M_{1}$ and
$M_{2}$, respectively. For simplicity, we use the following range of
indices:
$$
1\leq i,j,k\leq n_1,\ \ n_1+1\leq \alpha,\beta,\gamma\leq n_1+n_2.
$$

According to Section 4 of Li and Wang \cite{LW}, we can state the
following result.

\begin{proposition}[cf. \cite{LW}]\label{pr:3.1}
The Calabi product of $M_1$ and $M_2$
$$
\begin{aligned}
\psi:M^{n_{1}+n_{2}+1}:=\mathbb{R}\times M_1\times M_2\rightarrow
\mathbb{R}^{n_1+n_2+2},
\end{aligned}
$$
defined by \eqref{eqn:3.1} is a non-degenerate centroaffine
hyersurface, the centroaffine metric $h$ induced by $\psi$ is given
by
\begin{equation}\label{eqn:3.3}
h=\lambda du^2\oplus\tfrac{\lambda}{1+\lambda}
h^{1}\oplus\tfrac{1}{1+\lambda}h^2,
\end{equation}
with the property
\begin{equation}\label{eqn:3.4}
N(h)=\left\{
\begin{aligned}
&N(h^{1})+N(h^{2}),\ \ \ \ \ \ \ \ \ \ \ \ \ \ \
\lambda>0,\\
&n_1+1-N(h^1)+N(h^2),\ \ \
-1<\lambda<0,\\
&n_2+1+N(h^{1})-N(h^{2}), \ \ \ \ \lambda<-1.
\end{aligned}
\right.
\end{equation}
The difference tensor $K$ of $\psi$ takes the following form:
\begin{align}\label{eqn:3.5}
\begin{split}
&K(\tfrac{\psi _{u}}{\sqrt{\mid\lambda\mid}},\tfrac{\psi _{u}}{\sqrt{\mid\lambda\mid}})
=\lambda_{1}\tfrac{\psi _{u}}{\sqrt{\mid\lambda\mid}},
\ \ \ \ \ \ K(\tfrac{\psi _{u}}{\sqrt{\mid\lambda\mid}},\psi_{u_{i}})=\lambda_{2}\psi_{u_{i}},\\
&K(\tfrac{\psi _{u}}{\sqrt{\mid\lambda\mid}},\psi _{u_{\alpha}})=\lambda_{3}\psi _{u_{\alpha}},
\ \ \ \ K(\psi_{u_{i}},\psi _{u_{\alpha}})=0,
\end{split}
\end{align}
where $\lambda_1,\lambda_2,\lambda_3$ are constants satisfying
\begin{equation}\label{eqn:3.6}
\lambda_1=\lambda_2+\lambda_3,\ \lambda_2\lambda_3=-{\rm
sgn}\,\lambda, \ \lambda_2\neq\lambda_3.
\end{equation}

Moreover, $\psi$ is flat (resp. of parallel cubic form) if and only
if both $\psi_1$ and $\psi_2$ are flat (resp. of parallel cubic
form).
\end{proposition}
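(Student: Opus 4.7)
The plan is to verify each assertion by direct computation using the centroaffine Gauss formula \eqref{eqn:1.1} applied to $\psi$, exploiting the corresponding Gauss formulas for $\psi_1$ and $\psi_2$. First, I would compute the tangent frame of $\psi$ from \eqref{eqn:3.1}:
$$\psi_u=(e^u\psi_1,-\lambda e^{-\lambda u}\psi_2),\quad \psi_{u_i}=(e^u(\psi_1)_{u_i},0),\quad \psi_{u_\alpha}=(0,e^{-\lambda u}(\psi_2)_{u_\alpha}).$$
Since $\psi_1$ (resp.\ $\psi_2$) is transversal to $\{(\psi_1)_{u_i}\}$ (resp.\ $\{(\psi_2)_{u_\alpha}\}$) in its own ambient space, the position vector $\psi=(e^u\psi_1,e^{-\lambda u}\psi_2)$ does not lie in the span of this frame, so $\psi$ is transversal to $\psi_*(TM)$ and hence is a centroaffine hypersurface.

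Next, I would differentiate once more and compute each of the second-order derivatives $\psi_{uu}$, $\psi_{uu_i}$, $\psi_{uu_\alpha}$, $\psi_{u_iu_j}$, $\psi_{u_iu_\alpha}$, $\psi_{u_\alpha u_\beta}$. The mixed derivative $\psi_{u_iu_\alpha}$ vanishes identically, while $\psi_{uu_i}$ (resp.\ $\psi_{uu_\alpha}$) is simply a scalar multiple of $\psi_{u_i}$ (resp.\ $\psi_{u_\alpha}$); this immediately produces the horizontal nullity $K(\psi_{u_i},\psi_{u_\alpha})=0$ in \eqref{eqn:3.5} and reads off the scalars $\lambda_2,\lambda_3$ (after the normalization by $\sqrt{|\lambda|}$). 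For $\psi_{uu}$, $\psi_{u_iu_j}$ and $\psi_{u_\alpha u_\beta}$, applying \eqref{eqn:1.1} to $\psi_1,\psi_2$ in their respective slots decomposes each expression into a component tangent to $\psi_*(TM)$ and a component along $-\varepsilon\psi$; matching coefficients yields simultaneously the block form of the induced connection $\nabla$ and the metric $h$ in \eqref{eqn:3.3}. The Levi-Civita connection $\hat\nabla$ of the product metric \eqref{eqn:3.3} is then obtained from the Koszul formula; since the block weights depend only on $u$, only the $u$-direction contributes non-trivial off-diagonal Christoffel symbols, and subtracting $\hat\nabla$ from $\nabla$ delivers exactly the block form \eqref{eqn:3.5}, with $\lambda_1,\lambda_2,\lambda_3$ explicit functions of $\lambda$ from which the algebraic relations \eqref{eqn:3.6} fall out by substitution.

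For the signature formula \eqref{eqn:3.4} it suffices to examine the signs of the three block coefficients $\lambda$, $\lambda/(1+\lambda)$ and $1/(1+\lambda)$ across the three ranges $\lambda>0$, $-1<\lambda<0$, $\lambda<-1$; after choosing $\varepsilon$ so that $h$ is well defined in the centroaffine sense, the count $N(h)$ of negative directions splits into a contribution from the $u$-direction plus blockwise contributions from $h^1$ and $h^2$, yielding the three stated cases. Finally, for the last assertion, the blockwise form of $K$ and $h$ makes the Gauss equation \eqref{eqn:2.3} decouple: the mixed components hold identically, while the $M_1$- and $M_2$-blocks reproduce verbatim the Gauss equations of $\psi_1$ and $\psi_2$; consequently $\psi$ is flat iff both $\psi_i$ are. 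The same blockwise decoupling applied to $\hat\nabla K$ (equivalently, to $\hat\nabla C$ via \eqref{eqn:1.2}) yields the equivalence $\hat\nabla C=0 \Leftrightarrow \hat\nabla^i C^i=0$ for $i=1,2$.

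The main obstacle is the careful bookkeeping of the sign $\varepsilon$ and its interplay with the metric coefficients across the three ranges of $\lambda$: the Calabi product mixes the conventions of the two factors (each with its own $\varepsilon_i$) into a single centroaffine structure, and one must verify that $h$ is definite on each block with the correct sign so that the dimension count \eqref{eqn:3.4} and the sign relation $\lambda_2\lambda_3=-{\rm sgn}\,\lambda$ in \eqref{eqn:3.6} come out exactly as stated.
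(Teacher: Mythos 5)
Your proposal is correct and is exactly the direct verification that the paper itself omits: Proposition \ref{pr:3.1} is stated with a citation to Section 4 of \cite{LW} rather than proved, and the computation you outline (differentiating \eqref{eqn:3.1} twice, decomposing against the frame $\{\psi,\psi_u,\psi_{u_i},\psi_{u_\alpha}\}$, and subtracting the Levi-Civita connection) is the intended argument. One small correction: since the block coefficients $\lambda$, $\tfrac{\lambda}{1+\lambda}$, $\tfrac{1}{1+\lambda}$ in \eqref{eqn:3.3} are constants, $h$ is a genuine product metric, so $\hat\nabla$ has \emph{no} off-diagonal Christoffel symbols at all (not even in the $u$-direction); the entire off-diagonal content of $K$ comes from $\nabla$, which if anything makes your final subtraction step simpler than you anticipate and yields $\lambda_1=\tfrac{1-\lambda}{\sqrt{|\lambda|}}$, $\lambda_2=\tfrac{1}{\sqrt{|\lambda|}}$, $\lambda_3=\tfrac{-\lambda}{\sqrt{|\lambda|}}$, from which \eqref{eqn:3.6} is immediate.
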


\vskip 2mm

Similarly, the following result can be verified easily:

\begin{proposition}\label{pr:3.2}
The Calabi product of $M_1$ and a point
$$
\begin{aligned}
\tilde\psi:M^{n_1+1}=\mathbb{R}\times M_1\rightarrow
\mathbb{R}^{n_1+2}
\end{aligned}
$$
defined by \eqref{eqn:3.2} is a non-degenerate centroaffine
hyersurface, the centroaffine metric $\tilde{h}$ induced by
$\tilde\psi$ is given by
\begin{equation}\label{eqn:3.7}
\tilde{h}=\lambda du^2\oplus\tfrac{\lambda}{1+\lambda} h^{1},
\end{equation}
with the property
\begin{equation}\label{eqn:3.8}
N(\tilde{h})=\left\{
\begin{aligned}
&N(h^{1}),\ \ \ \ \ \ \ \ \ \ \ \ \ \ \ \ \ \ \ \
\lambda>0,\\
&n_1+1-N(h^{1}),\ \ \ \
-1<\lambda<0,\\
&N(h^{1})+1, \ \ \ \ \ \ \ \ \ \ \ \ \ \ \ \lambda<-1.
\end{aligned}
\right.
\end{equation}
The difference tensor $\tilde K$ of $\tilde\psi$  takes the
following form:
\begin{align}\label{eqn:3.9}
\begin{split}
&\tilde K(\tfrac{\tilde\psi_{u}}{\sqrt{\mid\lambda\mid}},\tfrac{\tilde\psi_{u}}{\sqrt{\mid\lambda\mid}})
=\lambda_{1}\tfrac{\tilde\psi_{u}}{\sqrt{\mid\lambda\mid}},
\ \ \tilde
K(\tfrac{\tilde\psi_{u}}{\sqrt{\mid\lambda\mid}},\tilde\psi_{u_{i}})=\lambda_{2}\tilde\psi_{u_{i}},
\end{split}
\end{align}
where $\lambda_1,\lambda_2$ are constants satisfying
\begin{equation}\label{eqn:3.10}
\lambda_1\neq2\lambda_2,\ \ \lambda_1\lambda_2-\lambda_2^2=-{\rm
sgn}\,\lambda.
\end{equation}

Moreover, $\tilde\psi$ is flat (resp. of parallel cubic form) if and
only if $\psi_1$ is flat (resp. of parallel cubic form).
\end{proposition}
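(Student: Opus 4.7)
The plan is to parallel the proof of Proposition \ref{pr:3.1}, specializing to the case where the second centroaffine factor is reduced to a point; the computation is more elementary here because only one centroaffine factor is involved. First I would compute the ambient first derivatives
\[
\tilde\psi_u = (e^u \psi_1, -\lambda e^{-\lambda u}), \qquad \tilde\psi_{u_i} = (e^u (\psi_1)_{u_i}, 0),
\]
and check, via the identity
\[
(e^u \psi_1, 0) = \tfrac{\lambda}{1+\lambda}\tilde\psi + \tfrac{1}{1+\lambda}\tilde\psi_u,
\]
that $\tilde\psi$ is transversal to its tangent plane (this uses $\lambda \neq -1$). Then, invoking the centroaffine Gauss formula for $\psi_1$, I would compute the second derivatives $\tilde\psi_{uu}$, $\tilde\psi_{uu_i}$, $\tilde\psi_{u_i u_j}$ and identify the unique decomposition into a tangential part (defining $\tilde\nabla$) and a multiple of $-\tilde\varepsilon\tilde\psi$ (defining $\tilde h$). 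The key identity
\[
\tilde\psi_{uu} = (1-\lambda)\tilde\psi_u + \lambda\tilde\psi
\]
yields $\tilde\nabla_{\partial_u}\partial_u = (1-\lambda)\partial_u$ and $\tilde h(\partial_u,\partial_u) = \lambda$, while $\tilde\psi_{uu_i} = \tilde\psi_{u_i}$ and the expression for $\tilde\psi_{u_iu_j}$ give $\tilde h(\partial_u,\partial_i) = 0$ and $\tilde h(\partial_i,\partial_j) = \tfrac{\lambda}{1+\lambda} h^1(\partial_i,\partial_j)$, confirming \eqref{eqn:3.7}.

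The signature statement \eqref{eqn:3.8} then follows from \eqref{eqn:3.7} by a case analysis on the signs of $\lambda$ and $1+\lambda$: for $\lambda > 0$ both factors preserve the sign convention; for $-1 < \lambda < 0$ the $du^2$ factor contributes one additional negative direction, while the $h^1$-factor flips sign so that its negative-definite dimension becomes $n_1 - N(h^1)$; and for $\lambda < -1$ only the $du^2$ factor flips. Collecting the three cases reproduces \eqref{eqn:3.8} exactly.

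For the difference tensor the decisive observation is that in the coordinates $(u, u_1,\ldots, u_{n_1})$ the components of $\tilde h$ are independent of $u$. Consequently every Christoffel symbol of $\hat{\tilde\nabla}$ involving a $u$-derivative of $\tilde h$ vanishes, giving $\hat{\tilde\nabla}_{\partial_u}\partial_u = 0$, $\hat{\tilde\nabla}_{\partial_u}\partial_i = 0$, and $\hat{\tilde\nabla}_{\partial_i}\partial_j = \hat\nabla^1_{\partial_i}\partial_j$ (the constant rescaling $\tfrac{\lambda}{1+\lambda}$ drops out of Koszul). Subtracting from $\tilde\nabla$ and normalizing $\partial_u$ to $\tilde h$-length $\sqrt{|\lambda|}$ produces \eqref{eqn:3.9} with
\[
\lambda_1 = \frac{1-\lambda}{\sqrt{|\lambda|}}, \qquad \lambda_2 = \frac{1}{\sqrt{|\lambda|}}.
\]
A direct computation gives $\lambda_1\lambda_2 - \lambda_2^2 = -\lambda/|\lambda| = -{\rm sgn}\,\lambda$, and $\lambda_1 = 2\lambda_2$ is equivalent to $\lambda = -1$, which is excluded; this is \eqref{eqn:3.10}.

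Finally, since $\tilde K(\partial_i,\partial_j)$ differs from $K^1(\partial_i,\partial_j)$ only by a multiple of $h^1(\partial_i,\partial_j)\partial_u$ and $\hat{\tilde\nabla} h^1 = 0$, the vanishing of $(\hat{\tilde\nabla}_X \tilde K)(Y,Z)$ for $X, Y, Z$ drawn from $\{\partial_u, \partial_i\}$ reduces cleanly to $\hat\nabla^1 K^1 = 0$; similarly flatness of $\tilde h$ reduces via \eqref{eqn:2.3} to flatness of $h^1$. The only genuinely delicate point in the whole argument is the bookkeeping of sign conventions for $\varepsilon_1$ and $\tilde\varepsilon$ across the three regimes of $\lambda$ so that \eqref{eqn:3.8} comes out as stated; all the remaining steps are routine verifications modelled directly on Proposition \ref{pr:3.1}.
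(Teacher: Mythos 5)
Your proposal is correct and is exactly the direct verification the paper has in mind (the paper omits the proof, calling it an easy analogue of Proposition \ref{pr:3.1}): computing $\tilde\psi_u$, $\tilde\psi_{uu}=(1-\lambda)\tilde\psi_u+\lambda\tilde\psi$, $\tilde\psi_{uu_i}=\tilde\psi_{u_i}$ and $\tilde\psi_{u_iu_j}$, reading off $\tilde h$ and $\tilde\nabla$, noting that $\hat{\tilde\nabla}$ is the product Levi--Civita connection, and extracting $\lambda_1=(1-\lambda)/\sqrt{|\lambda|}$, $\lambda_2=1/\sqrt{|\lambda|}$, which indeed satisfy \eqref{eqn:3.10}. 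All your intermediate identities check out, and your signature analysis reproduces \eqref{eqn:3.8}; the only caveat, which you already flag, is fixing the sign convention $-\tilde\varepsilon\tilde\psi$ for the normalization so that \eqref{eqn:3.7} reads as stated.
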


\begin{remark}\label{rm:3.1}
From \eqref{eqn:3.4} and \eqref{eqn:3.8}, it is easily seen that if
the Calabi product $\psi$ (resp. $\tilde \psi$) is locally strongly
convex, then the centroaffine metric of $\psi$ (resp. $\tilde\psi$)
induced by $-\varepsilon' \psi$ (resp. $-\varepsilon'\tilde \psi$) is positive,
where $\varepsilon'=-{\rm sgn}\,\lambda$.
\end{remark}

Next, as the converse of Proposition \ref{pr:3.1}, we can prove the
following theorem.
\begin{theorem}\label{th:3.1}
Let $\psi:{M}^{n}\rightarrow\mathbb{R}^{n+1}$ be a locally strongly
convex centroaffine hypersurface. Assume that there exist
distributions $\mathcal{D}_{1}$ (of dimension $1$,
spanned by a unit vector field $T$), $\mathcal{D}_{2}$
(of dimension $n_{1}$) and $\mathcal{D}_{3}$
(of dimension $n_{2}$) such that
\begin{enumerate}
\item[(i)] \ $1+n_{1}+n_{2}=n$,
\item[(ii)]\ the
centroaffine metric $h$ induced by $-\varepsilon \psi$
($\varepsilon=\pm1$) is positive definite,
\item[(iii)]\ $\mathcal{D}_{1}$, $\mathcal{D}_{2}$ and
$\mathcal{D}_{3}$ are mutually orthogonal
with respect to the centroaffine metric $h$,
\item[(iv)]\  there exist constants
$\lambda_1,\lambda_2$, and $\lambda_3$ such that
\begin{align}\label{eqn:3.11}
\begin{split}
K(T,T)&=\lambda_{1}T,\  K(T,V)=\lambda_{2}V,\
K(T,W)=\lambda_{3}W,\ K(V,W)=0,\\
&\forall\,V\in \mathcal{D}_{2},\ W\in \mathcal{D}_{3};\ \
\lambda_1=\lambda_2+\lambda_3,\ \lambda_2\lambda_3=\varepsilon, \
\lambda_2\neq\lambda_3.
\end{split}
\end{align}
\end{enumerate}
Then $\psi:{M}^{n}\rightarrow\mathbb{R}^{n+1}$ can be locally
decomposed as the Calabi product of two lower dimensional locally
strongly convex centroaffine hypersurfaces
$\psi_1:M_{1}^{n_1}\rightarrow \mathbb{R}^{n_1+1}$ and
$\psi_2:M_{2}^{n_2}\rightarrow \mathbb{R}^{n_2+1}$.
\end{theorem}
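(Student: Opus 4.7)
The plan is to reconstruct $\psi$ explicitly as a Calabi product by decomposing the position vector as $\psi = \xi_1 + \xi_2$, where
\[
\xi_1 = \frac{-\lambda_3}{\lambda_2 - \lambda_3}\,\psi + \frac{1}{\lambda_2 - \lambda_3}\,T, \qquad \xi_2 = \psi - \xi_1,
\]
and then identifying the images of $\xi_1, \xi_2$ with $e^{\lambda_2 u}\psi_1(p)$ and $e^{\lambda_3 u}\psi_2(q)$ inside complementary linear subspaces of $\mathbb{R}^{n+1}$. This mirrors Li--Wang's Theorem 4.5 in \cite{LW} but tracks the precise relation between $\lambda_1, \lambda_2, \lambda_3$ of \eqref{eqn:3.11} and the Calabi parameter $\lambda$ of \eqref{eqn:3.1} needed for consistency with the equiaffine analogue, Theorem 3 of \cite{HLV1}.

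The first step is to combine the Codazzi identity $(\hat\nabla_X K)(Y,Z) = (\hat\nabla_Y K)(X,Z)$ with $\hat\nabla h = 0$, the algebraic form \eqref{eqn:3.11} of $K$, and the inequality $\lambda_2 \neq \lambda_3$ to establish
\[
\hat\nabla_T T = 0, \qquad \hat\nabla_V T = 0, \qquad \hat\nabla_W T = 0
\]
for every $V \in \mathcal{D}_2$ and $W \in \mathcal{D}_3$. Evaluating $(\hat\nabla_V K)(T,T) = (\hat\nabla_T K)(V,T)$ and matching components in $T$, $\mathcal{D}_2$, $\mathcal{D}_3$ (using $K(T,V) = \lambda_2 V$, $K(T,W) = \lambda_3 W$, $K(V,W) = 0$) already forces the $\mathcal{D}_2$-part of $\hat\nabla_T T$ and of $\hat\nabla_V T$ to vanish; the symmetric computation with $W$ removes the $\mathcal{D}_3$-part of $\hat\nabla_T T$ and of $\hat\nabla_W T$. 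The remaining off-distribution components are then eliminated by combining further Codazzi identities applied to triples of the form $(V,W,W')$ and $(V,V',W)$ with the Gauss equation \eqref{eqn:2.3}, which, thanks to $K(V,W) = 0$ and $\lambda_2\lambda_3 = \varepsilon$, yields $\hat R(V,W)Z = 0$ for each $Z \in \{T, V, W\}$.

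Next, using Gauss's formula $D_X Y = \hat\nabla_X Y + K(X,Y) - \varepsilon h(X,Y)\psi$ together with the connection relations just proved, a direct computation shows that
\[
D_W \xi_1 = 0, \quad D_V \xi_2 = 0, \quad D_T \xi_1 = \lambda_2 \xi_1, \quad D_T \xi_2 = \lambda_3 \xi_2,
\]
the consistency of the last two being the algebraic identity $\alpha^2 + \alpha\beta\lambda_1 + \beta^2\varepsilon = 0$, which follows from the constraint $\lambda_2\lambda_3 = \varepsilon$ of (iv). Since each $\mathcal{D}_i$ is then parallel under $\hat\nabla$ and hence involutive, Frobenius yields a local diffeomorphism $M^n \cong I \times N_1 \times N_2$ with $T = \partial_u$ and $N_i$ a leaf of $\mathcal{D}_{i+1}$. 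The formulas above force $\xi_1(u,p,q) = e^{\lambda_2 u}\psi_1(p)$ to be independent of $q$, and $\xi_2(u,p,q) = e^{\lambda_3 u}\psi_2(q)$ to be independent of $p$; one then verifies that $\psi_1(N_1)$ and $\psi_2(N_2)$ span complementary subspaces $V_1 \oplus V_2 = \mathbb{R}^{n+1}$ of dimensions $n_1+1$ and $n_2+1$, that $\psi_i\colon N_i \to V_i$ are non-degenerate locally strongly convex centroaffine hypersurfaces, and that the rescaling $\tilde u = \lambda_2 u$ together with $\lambda := -\lambda_3/\lambda_2 \neq 0, -1$ matches the output with \eqref{eqn:3.1}.

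The main obstacle will be the first step, specifically the isolation of $\hat\nabla_V T = 0$ and $\hat\nabla_W T = 0$: the Codazzi identities merely assert symmetry of $\hat\nabla K$, so one must juggle several of them simultaneously to pin down individual components; moreover, the centroaffine setting contributes extra $-\varepsilon h(\cdot,\cdot)\psi$ terms in Gauss's formula beyond those appearing in the equiaffine proof of \cite{HLV1}, and these must be carefully tracked throughout the bookkeeping.
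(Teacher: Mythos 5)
Your proposal is correct and follows essentially the same route as the paper: establish $\hat\nabla_XT=0$ and the $\hat\nabla$-invariance of $\mathcal{D}_2,\mathcal{D}_3$ via the Codazzi symmetry of $\hat\nabla K$ (the paper's Lemma \ref{lm:3.1}, cited from \cite{HLV1}), apply the de Rham splitting, and recover the factors from the very same linear combinations $T-\lambda_3\psi$ and $\lambda_2\psi-T$ — your $\xi_i$ with $D_T\xi_i=\lambda_i\xi_i$ differ from the paper's $\psi_i$ only by the explicit integrating factors $f(u),g(u)$ that make $D_T\psi_i=0$. The only blemish is the aside ``$\alpha^2+\alpha\beta\lambda_1+\beta^2\varepsilon=0$,'' which as written is not the consistency identity (for $\xi=\alpha T+\beta\psi$ one needs $\alpha^2\varepsilon+\alpha\beta\lambda_1+\beta^2=0$), but the surrounding computation is right.
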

\begin{proof}
First of all, we have the following lemma, whose proof can be given
exactly by following the proof of Lemmas 1, 2, 3 and 4 of
\cite{HLV1}.
\begin{lemma}\label{lm:3.1} Under the assumptions of Theorem \ref{th:3.1},
for any vector $X\in TM,\ V\in \mathcal{D}_2$ and $W\in
\mathcal{D}_3$, the following hold
$$
\hat{\nabla}_XT=0,\ \hat{\nabla}_XV\in \mathcal{D}_{2},\
\hat{\nabla}_XW\in \mathcal{D}_{3}.
$$
\end{lemma}

Lemma \ref{lm:3.1}, together with the de Rham decomposition theorem,
implies that $(M,h)$ is locally isometric to $\mathbb{R}\times
M_{1}\times M_{2}$, where $T$ is tangent to $\mathbb{R}$, whereas
$\mathcal{D}_{2}$ (resp. $\mathcal{D}_{3}$) is tangent to $M_{1}$
(resp. $M_{2}$).

The product structure of $M$ implies the existence of local
coordinates $(u,p,q)$ for $M$ based on an open subset containing the
origin of $\mathbb{R}^{n_1+n_2+1}$, such that $\mathcal{D}_{1}$ is
given by $dp=dq=0$, $\mathcal{D}_{2}$
(resp. $\mathcal{D}_{3}$) is given by $du=dq=0$ (resp. $du=dp=0$). We may
assume that $T =\lambda_2\tfrac{\partial}{\partial u}$. Put
\begin{align}\label{eqn:3.12}
\psi_{1}=f(T-\lambda_{3}\psi),\ \psi_{2}=g(\lambda_{2}\psi -T),
\end{align}
where $f$ and $g$ are assumed to be nonzero functions
which depend only on the variable $u$, and are given by
$$
f(u)=\tfrac{1}{\lambda_2-\lambda_3}e^{-u},\ g(u)=
\tfrac{1}{\lambda_2-\lambda_3}e^{-\tfrac{\lambda_3}{\lambda_2}u}.
$$

A straightforward computation, by \eqref{eqn:3.12} and
\eqref{eqn:1.1}, shows that
$$
\begin{aligned}
D_{T}\psi_{1}&=-\lambda_{2}f(T-\lambda_{3}\psi)
+fD_{T}(T-\lambda_{3}\psi)\\
&=f(\lambda_{3}\lambda_{2}-\varepsilon)\psi
+f(-\lambda_{2}+\lambda_{1}-\lambda_{3})T\\
&=0.
\end{aligned}
$$
Similarly
$$
D_{W}\psi_{1}=0,\ \ \ D_{T}\psi_{2}=D_{V}\psi_{2}=0.
$$
The above relations imply that $\psi_{1}$ (resp. $\psi_{2})$ reduces
to a map of $M_{1}$ (resp. $M_{2}$) in $\mathbb{R}^{n+1}$. The facts
$$
\begin{aligned}
&d\psi_{1}(V)=D_{V}\psi_{1}=f(\lambda_{2}-\lambda_{3})V,\\
&d\psi_{2}(W)=D_{W}\psi_{2}=g(\lambda_{2}-\lambda_{3})W
\end{aligned}
$$
show that both maps $\psi_{1}$ and $\psi_{2}$ are actually
immersions. Denoting by $\nabla^{1}$ (resp. $\nabla^{2}$) the
$\mathcal{D}_{2}$ (resp. $\mathcal{D}_{3}$) component of $\nabla$,
we further find that
$$
\begin{aligned}
D_{V}d\psi_{1}(\tilde{V})
&=f(\lambda_{2}-\lambda_{3})D_{V}\tilde{V}\\
 &=f(\lambda_{2}-\lambda_{3})\Big(\nabla^{1}_{V}\tilde{V}
 -\varepsilon h(V,\tilde{V})\psi+\lambda_{2}h(V,\tilde{V}) T\Big)\\
&=d\psi_{1}(\nabla^{1}_{V}\tilde{V})+(\lambda_{2}
-\lambda_{3})\lambda_{2}h(V,\tilde{V})\psi_{1}.
\end{aligned}
$$
Hence $\psi_{1}$ can be interpreted as a centroaffine
immersion contained in an $(n_{1}+1)$-dimensional vector
subspace of $\mathbb{R}^{n+1}$ with induced connection $\nabla^{1}$
and centroaffine metric
\begin{align}\label{eqn:3.13}
h^{1}=\lambda_{2}(\lambda_{2}-\lambda_{3})h.
\end{align}
Similarly, we obtain that $\psi_{2}$ can be interpreted
as a centroaffine immersion contained in an $(n_{2}+1)$-dimensional
vector subspace of $\mathbb{R}^{n+1}$ with induced
connection $\nabla^{2}$ and centroaffine metric
\begin{align}\label{eqn:3.14}
h^{2}=\lambda_{3}(\lambda_{3}-\lambda_{2})h.
\end{align}
As both subspaces are complementary, we may assume that, up to a
linear transformation, the $(n_{1}+1)$-dimensional subspace is
spanned by the first $n_{1}+1$ coordinates of $\mathbb{R}^{n+1}$,
whereas the $(n_{2}+1)$-dimensional subspace is spanned by the last
$n_{2}+1$ coordinates of $\mathbb{R}^{n+1}$.

Solving \eqref{eqn:3.12} for the immersion $\psi$, we have
$$
\psi =\tfrac{1}{(\lambda_{2}-\lambda_{3})f}\psi_{1}
+\tfrac{1}{(\lambda_{2}-\lambda_{3})g}\psi_{2}=(e^{u}\psi_{1},e^{\tfrac{\lambda_3}{\lambda_{2}}u}\psi_{2}).
$$
From Proposition \ref{pr:3.1} we see that
$\psi$ is given as the Calabi product of the immersions $\psi_{1}$
and $\psi_{2}$. Moreover, from \eqref{eqn:3.13} and
\eqref{eqn:3.14}, we know that both $\psi_{1}$ and $\psi_{2}$ are
locally strongly convex.

We have completed the proof of Theorem \ref{th:3.1}.
\end{proof}

In Theorem \ref{th:3.1}, if additionally $M$ has parallel cubic
form, equivalently, $\hat{\nabla}K=0$, then by the totally same
proof as that of Theorem 3 in \cite{HLV1}, we can prove the
following theorem.
\begin{theorem}\label{th:3.2}
Let $\psi:{M}^{n}\rightarrow\mathbb{R}^{n+1}$ be a locally strongly
convex centroaffine hypersurface. Assume that $\hat{\nabla} K=0$
and there exist $h$-orthogonal distributions $\mathcal{D}_{1}$ (of dimension $1$,
spanned by a unit vector field $T$), $\mathcal{D}_{2}$ (of dimension $n_{1}$)
and $\mathcal{D}_{3}$ (of dimension $n_{2}$) such that
\begin{align}\label{eqn:3.15}
\begin{split}
&K(T,T)=\lambda_{1}T,\  K(T,V)=\lambda_{2}V,\
K(T,W)=\lambda_{3}W,\\
&\forall\,V\in \mathcal{D}_{2},\ W\in \mathcal{D}_{3};\ \
\lambda_1\neq2\lambda_2,\ \lambda_1\neq2\lambda_3,\
\lambda_2\neq\lambda_3.
\end{split}
\end{align}
Then $\psi:{M}^{n}\rightarrow\mathbb{R}^{n+1}$ can be locally
decomposed as the Calabi product of two locally strongly convex
centroaffine hypersurfaces $\psi_1:M_{1}^{n_1}\rightarrow
\mathbb{R}^{n_1+1}$ and $\psi_2:M_{2}^{n_2}\rightarrow
\mathbb{R}^{n_2+1}$ with parallel cubic form.
\end{theorem}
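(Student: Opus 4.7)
The plan is to derive from $\hat\nabla K=0$ the three pieces of data missing relative to the hypotheses of Theorem \ref{th:3.1}: the vanishing $K(V,W)=0$ for $V\in\mathcal{D}_2$ and $W\in\mathcal{D}_3$, together with the algebraic constraints $\lambda_1=\lambda_2+\lambda_3$ and $\lambda_2\lambda_3=\varepsilon$. Once these are in hand, Theorem \ref{th:3.1} produces the local Calabi product decomposition of $\psi$ into two lower-dimensional locally strongly convex centroaffine immersions $\psi_1,\psi_2$, and the ``parallel cubic form'' clause of Proposition \ref{pr:3.1} then transfers $\hat\nabla C=0$ from $\psi$ to both factors.

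First I would establish parallelism of $T$ and of $\mathcal{D}_2,\mathcal{D}_3$ under $\hat\nabla$. Differentiating $K(T,T)=\lambda_1T$ along an arbitrary $X$ and using $\hat\nabla K=0$ together with $h(\hat\nabla_XT,T)=0$ yields $2K(T,\hat\nabla_XT)=\lambda_1\hat\nabla_XT$; writing $\hat\nabla_XT=V'+W'$ with $V'\in\mathcal{D}_2,\ W'\in\mathcal{D}_3$ and invoking $\lambda_1\neq2\lambda_2$ and $\lambda_1\neq2\lambda_3$ forces $V'=W'=0$, hence $\hat\nabla T=0$. Differentiating $K(T,V)=\lambda_2V$ next shows that $\hat\nabla_XV$ lies in the $\lambda_2$-eigenspace of $K_T$; its $\mathcal{D}_3$-component vanishes because $\lambda_2\neq\lambda_3$, and its $T$-component (which can only be nonzero in the degenerate case $\lambda_1=\lambda_2$) is killed by the compatibility identity $h(\hat\nabla_XV,T)=-h(V,\hat\nabla_XT)=0$, so $\hat\nabla_XV\in\mathcal{D}_2$; analogously $\hat\nabla_XW\in\mathcal{D}_3$.

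With $\hat\nabla T=0$, the identity $\hat R(X,Y)T=0$ holds identically. Applying the Gauss equation \eqref{eqn:2.3} to $\hat R(T,V)T=0$ for a unit $V\in\mathcal{D}_2$ and expanding $[K_T,K_V]T$ using $K_TV=\lambda_2V$ and $K_TT=\lambda_1T$ produces $(\lambda_1\lambda_2-\lambda_2^2-\varepsilon)V=0$, i.e.\ $\lambda_2(\lambda_1-\lambda_2)=\varepsilon$; the same calculation on $\mathcal{D}_3$ gives $\lambda_3(\lambda_1-\lambda_3)=\varepsilon$. Subtracting and using $\lambda_2\neq\lambda_3$ yields $\lambda_1=\lambda_2+\lambda_3$ and $\lambda_2\lambda_3=\varepsilon$. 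Finally $\hat R(V,W)T=0$ reduces by the same procedure to $(\lambda_2-\lambda_3)K(V,W)=0$, so $K(V,W)=0$. All hypotheses of Theorem \ref{th:3.1} are now verified, and the conclusion follows.

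The delicate point is the parallelism step: in the borderline situation $\lambda_1=\lambda_2$ (or $\lambda_1=\lambda_3$) the $\lambda_2$-eigenspace (resp.\ $\lambda_3$-eigenspace) of $K_T$ is strictly larger than $\mathcal{D}_2$ (resp.\ $\mathcal{D}_3$), and the purely algebraic eigenvalue argument from $\hat\nabla K=0$ does not separate $\mathbb{R}T$ from these subspaces; the remedy is the metric compatibility identity $h(\hat\nabla_XV,T)=-h(V,\hat\nabla_XT)$ noted above. Once this wrinkle is handled, the remaining steps are routine manipulations using \eqref{eqn:2.3} and \eqref{eqn:2.5}, paralleling the proof of Theorem 3 in \cite{HLV1}.
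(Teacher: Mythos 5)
Your proposal is correct and follows essentially the route the paper intends: the paper gives no details for Theorem \ref{th:3.2} beyond citing the proof of Theorem 3 in \cite{HLV1}, and that argument is exactly what you reconstruct — use $\hat\nabla K=0$ to get $\hat\nabla T=0$ and the parallelism of $\mathcal{D}_2,\mathcal{D}_3$, then read off $\lambda_i(\lambda_1-\lambda_i)=\varepsilon$ and $K(V,W)=0$ from $\hat R(\cdot,\cdot)T=0$ via \eqref{eqn:2.3}, verify the hypotheses of Theorem \ref{th:3.1}, and transfer $\hat\nabla C=0$ to the factors by Proposition \ref{pr:3.1}. Your handling of the borderline case $\lambda_1=\lambda_2$ via metric compatibility is the same device used in the cited lemmas, so nothing is missing.
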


Similarly, as the converse of Proposition \ref{pr:3.2}, we can
prove the following theorem.
\begin{theorem}\label{th:3.3}
Let $\psi:{M}^{n}\rightarrow\mathbb{R}^{n+1}$ be a locally strongly
convex centroaffine hypersurface.  Assume that there exist two
distributions $\mathcal{D}_{1}$ (of dimension $1$,
spanned by a unit vector field $T$), $\mathcal{D}_{2}$ (of dimension $n-1$)
such that
\begin{enumerate}
\item[(i)]\ the centroaffine metric $h$ induced by
$-\varepsilon \psi$ ($\varepsilon=\pm1$) is positive definite,
\item[(ii)]\ $\mathcal{D}_{1}$ and $\mathcal{D}_{2}$ are orthogonal
with respect to the centroaffine metric $h$,
\item[(iii)]\ there exist constants
$\lambda_1$ and $\lambda_2$ such that
\begin{align}\label{eqn:3.16}
\begin{split}
K(T,T)&=\lambda_{1}T,\  K(T,V)=\lambda_{2}V,\ \ \forall\, V\in
\mathcal{D}_{2};\\
&\lambda_1\neq2\lambda_2,\
\lambda_1\lambda_2-\lambda_2^2=\varepsilon.
\end{split}
\end{align}
\end{enumerate}
Then $\psi:{M}^{n}\rightarrow\mathbb{R}^{n+1}$ can be locally
decomposed as the Calabi product of a locally strongly convex
centroaffine hypersurface $\psi_1:M_{1}^{n-1}\rightarrow
\mathbb{R}^{n}$ and a point.
\end{theorem}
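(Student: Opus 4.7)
The plan is to parallel closely the proof of Theorem~\ref{th:3.1}, treating the present setting as the degenerate case in which the second factor of the Calabi product collapses to a point (i.e., $n_2=0$ there). Setting $\lambda_3 := \lambda_1 - \lambda_2$, the hypothesis $\lambda_1\lambda_2 - \lambda_2^2 = \varepsilon$ rewrites as $\lambda_2\lambda_3 = \varepsilon$, while $\lambda_1 \neq 2\lambda_2$ becomes $\lambda_2 \neq \lambda_3$. Thus the algebraic content of \eqref{eqn:3.16} is exactly that of \eqref{eqn:3.11} with the $\mathcal{D}_3$-block removed, which allows us to import wholesale the machinery already deployed for Theorem~\ref{th:3.1}.

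First, I will establish the analog of Lemma~\ref{lm:3.1}: for every $X \in TM$ and every $V \in \mathcal{D}_2$,
\[
\hat{\nabla}_X T = 0, \qquad \hat{\nabla}_X V \in \mathcal{D}_2.
\]
The derivation follows the arguments of \cite{HLV1}, combining the prescribed form \eqref{eqn:3.16} of $K$ with the Codazzi-type symmetry \eqref{eqn:2.4}; the assumption $\lambda_2 \neq \lambda_3$ (equivalently $\lambda_1 \neq 2\lambda_2$) ensures that the eigenvalues of $K_T$ on $\mathcal{D}_1$ and $\mathcal{D}_2$ are distinct, which is the decisive input. By the de Rham decomposition theorem, $(M,h)$ is then locally isometric to $\mathbb{R} \times M_1$, and we obtain adapted coordinates $(u,p)$ in which $\mathcal{D}_1$ is cut out by $dp = 0$, $\mathcal{D}_2$ by $du = 0$, and $T = \lambda_2 \tfrac{\partial}{\partial u}$.

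Mimicking \eqref{eqn:3.12}, I will then introduce
\[
\psi_1 := f(u)\,\bigl(T - \lambda_3\,\psi\bigr), \qquad C := g(u)\,\bigl(\lambda_2\,\psi - T\bigr),
\]
with $f(u) = \tfrac{1}{\lambda_2-\lambda_3}e^{-u}$ and $g(u) = \tfrac{1}{\lambda_2-\lambda_3}e^{-(\lambda_3/\lambda_2)u}$. A direct calculation using \eqref{eqn:1.1} and \eqref{eqn:3.16} yields $D_T \psi_1 = 0$ and $D_T C = D_V C = 0$ for every $V \in \mathcal{D}_2$, while $D_V \psi_1 = f(\lambda_2-\lambda_3)V$. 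Since $TM = \mathcal{D}_1 \oplus \mathcal{D}_2$, the vanishing of $D_X C$ in all directions forces $C$ to be a genuinely constant vector of $\mathbb{R}^{n+1}$, which plays the role of the ``point'' in the Calabi product; the identities for $\psi_1$ show that $\psi_1$ descends to an immersion of $M_1$. A further computation of $D_V\,d\psi_1(\tilde V)$, exactly as in the proof of Theorem~\ref{th:3.1}, identifies $\psi_1$ as a locally strongly convex centroaffine immersion into an $n$-dimensional affine subspace, with centroaffine metric $h^1 = \lambda_2(\lambda_2-\lambda_3)h$.

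Finally, solving the linear system in $(\psi_1,C)$ for $\psi$, and choosing coordinates on $\mathbb{R}^{n+1}$ adapted to the direct sum $\mathrm{span}(\psi_1(M_1)) \oplus \mathrm{span}(C)$, yields $\psi = (e^u\psi_1(p),\,e^{(\lambda_3/\lambda_2)u})$; comparison with \eqref{eqn:3.2} (setting $\lambda = -\lambda_3/\lambda_2$) together with Proposition~\ref{pr:3.2} completes the decomposition. The main technical point, rather than a deep obstacle, is bookkeeping: one must verify that the derived parameter $\lambda = -\lambda_3/\lambda_2$ meets the constraints imposed in \eqref{eqn:3.2} (namely $\lambda \neq 0,-1$) and that the sign of the centroaffine metric $h^1$ is compatible with the locally strongly convex hypothesis, both of which reduce to the algebraic relations $\lambda_2\lambda_3 = \varepsilon \neq 0$ and $\lambda_2 \neq \lambda_3$ already in force.
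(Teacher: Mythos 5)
Your proposal is correct and follows essentially the same route as the paper: the same auxiliary lemma ($\hat{\nabla}_XT=0$, $\hat{\nabla}_XV\in\mathcal{D}_2$), de Rham splitting, and the same pair of maps, since with your substitution $\lambda_3=\lambda_1-\lambda_2$ one has $\lambda_3=\varepsilon/\lambda_2$ and $\lambda_2-\lambda_3=2\lambda_2-\lambda_1$, so your $\psi_1$, $f$, $g$ and the final form $\psi=(e^u\psi_1,e^{(\lambda_3/\lambda_2)u})$ coincide exactly with \eqref{eqn:3.17} and the conclusion via Proposition \ref{pr:3.2}. The only (harmless) cosmetic difference is that the paper writes $\varepsilon/\lambda_2$ where you write $\lambda_3$.
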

\begin{proof}
First, it is easily seen from \eqref{eqn:3.16} that we have
$$
\lambda_2\neq 0.
$$
Next, by a proof similar to those for Lemmas 5.6 and 5.7 in
\cite{HLLV}, we can prove the following lemma.
\begin{lemma}\label{lm:3.2}
Under the assumptions of Theorem \ref{th:3.3}, for any vector $X\in
TM$ and $V\in\mathcal{D}_2$, there hold
$$
\hat{\nabla}_XT=0,\ \ \hat{\nabla}_XV\in \mathcal{D}_{2}.
$$
\end{lemma}

From Lemma \ref{lm:3.2}, applying the de Rham decomposition theorem,
we see that $(M,h)$ is locally isometric with $\mathbb{R}\times
M_{1}$ such that $T$ is tangent to $\mathbb{R}$ and
$\mathcal{D}_{2}$ is tangent to $M_{1}$.

The above product structure of $M$ implies the existence of local
coordinates $(u,p)$ for $M$ based on an open subset containing the
origin of $\mathbb{R}^{n}$, such that $\mathcal{D}_{1}$ is given by
$dp=0$ and $\mathcal{D}_{2}$ is given by $du=0$. We may assume that
$T =\lambda_2\tfrac{\partial}{\partial u}$. Put
\begin{align}\label{eqn:3.17}
\psi_{1}=f\big(T-\tfrac{\varepsilon}{\lambda_2}\psi\big),\ \
\psi_{2}=g(\lambda_{2}\psi -T),
\end{align}
where $f$ and $g$ are assumed to be nonzero functions
which depend only on the variable $t$, and are given by
$$
f(u)=\tfrac{1}{2\lambda_2-\lambda_1}e^{-u},\ \ g(u)
=\tfrac{1}{2\lambda_2-\lambda_1} e^{\tfrac{\lambda_2-\lambda_1}{\lambda_2}u}.
$$

It follows from \eqref{eqn:3.17} that
$$
\begin{aligned}
D_{T}\psi_{1}&=-\lambda_{2}f\big(T-\tfrac{\varepsilon}{\lambda_2}
\psi\big)+f\big(D_{T}T-\tfrac{\varepsilon}{\lambda_2}D_{T}\psi\big)\\
&=f\big(-\lambda_{2}+\lambda_{1}-\tfrac{\varepsilon}{\lambda_2}\big)T\\
&=0.
\end{aligned}
$$
Similarly
$$
\begin{aligned}
&D_{T}\psi_{2}=D_{V}\psi_{2}=0,\\
&d\psi_{1}(V)=D_{V}\psi_{1}=(2\lambda_{2}-\lambda_{1})fV.\\
\end{aligned}
$$
The above relations imply that $\psi_{1}$ reduces to a map of $M_{1}$ in
$\mathbb{R}^{n+1}$. Whereas $\psi_{2}$ is a constant
vector in $\mathbb{R}^{n+1}$. Moreover, denoting by $\nabla^{1}$ the
$\mathcal{D}_{2}$ component of $\nabla$, we find that
$$
\begin{aligned}
D_{V}d\psi_{1}(\tilde{V})
&=f(2\lambda_{2}-\lambda_{1})D_{V}\tilde{V}\\
 &=f(2\lambda_{2}-\lambda_{1})\Big(\nabla^{1}_{V}\tilde{V}
 -\varepsilon h(V,\tilde{V})\psi+\lambda_{2}h(V,\tilde{V}) T\Big)\\
&=d\psi_{1}(\nabla^{1}_{V}\tilde{V})+(2\lambda_{2}
-\lambda_{1})\lambda_{2}h(V,\tilde{V})\psi_{1}.
\end{aligned}
$$
Hence $\psi_{1}$ can be interpreted as a centroaffine
immersion contained in an $n$-dimensional vector
subspace of $\mathbb{R}^{n+1}$ with induced connection
$\nabla^{1}$ and affine metric
\begin{align}\label{eqn:3.18}
h^{1}=\lambda_{2}(2\lambda_{2}-\lambda_{1})h.
\end{align}

As $\psi_{2}$ is transversal to the immersion $\psi_{1}$, we may
assume by a linear transformation that $\psi_{1}$ lies in the space
spanned by the first $n$ coordinates of $\mathbb{R}^{n+1}$, whereas
the constant vector $\psi_2$ lies in the direction of the last
coordinate.

Solving \eqref{eqn:3.17} for the immersion $\psi$, we have
$$
\begin{aligned}
\psi &=(e^{ u}\psi_{1},
e^{\tfrac{\lambda_1-\lambda_2}{\lambda_2}u}\psi_{2}).
\end{aligned}
$$
From Proposition \ref{pr:3.2} we see that
$\psi$ is given as the Calabi product of the immersion $\psi_{1}$
and a point. Moreover, from \eqref{eqn:3.18}, we know that
$\psi_{1}$ is a locally strongly convex centroaffine hypersurface.

This completes the proof of Theorem \ref{th:3.3}.
\end{proof}

Similarly, if $M$ in Theorem \ref{th:3.3} is assumed additionally
having parallel cubic form, then as deriving Theorem 4 in
\cite{HLV1}, we can prove the following theorem.
\begin{theorem}\label{th:3.4}
Let $\psi:{M}^{n}\rightarrow\mathbb{R}^{n+1}$ be a locally strongly
convex centroaffine hypersurface. Assume that $\hat{\nabla} K=0$ and
there exist $h$-orthogonal distributions $\mathcal{D}_{1}$ (of
dimension $1$, spanned by a unit vector field $T$) and
$\mathcal{D}_2$ (of dimension $n-1$) such that
\begin{align}\label{eqn:3.19}
\begin{split}
&K(T,T)=\lambda_{1}T,\  K(T,V)=\lambda_{2}V,\ \forall\,V\in D_{2};\
\ \lambda_1\neq2\lambda_2.
\end{split}
\end{align}
Then $\psi:{M}^{n}\rightarrow\mathbb{R}^{n+1}$ can be locally
decomposed as the Calabi product of a locally strongly convex
centroaffine hypersurface $\psi_1:M_{1}^{n-1}\rightarrow
\mathbb{R}^{n}$ with parallel cubic form and a point.
\end{theorem}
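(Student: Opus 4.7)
The plan is to reduce Theorem \ref{th:3.4} to Theorem \ref{th:3.3} by extracting from the parallelism hypothesis $\hat{\nabla}K=0$ both the missing algebraic relation $\lambda_{1}\lambda_{2}-\lambda_{2}^{2}=\varepsilon$ and the constancy of $\lambda_{1},\lambda_{2}$, and then to invoke Proposition \ref{pr:3.2} to transfer the parallel cubic form from $\psi$ to the factor $\psi_{1}$.

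First I compute $\hat{R}(T,V)T$ for an arbitrary $V\in\mathcal{D}_{2}$. The Gauss equation \eqref{eqn:2.3}, together with $K_{T}T=\lambda_{1}T$ and $K_{V}T=\lambda_{2}V$, gives
\begin{equation*}
\hat{R}(T,V)T=-\varepsilon V-K_{T}K_{V}T+K_{V}K_{T}T=(\lambda_{1}\lambda_{2}-\lambda_{2}^{2}-\varepsilon)V.
\end{equation*}
Applying the derivation identity \eqref{eqn:2.5} with $(X,Y,Z,U)=(T,V,T,T)$ yields
\begin{equation*}
\lambda_{1}\hat{R}(T,V)T=2K(T,\hat{R}(T,V)T),
\end{equation*}
which, after substituting the previous display, collapses to $(\lambda_{1}-2\lambda_{2})(\lambda_{1}\lambda_{2}-\lambda_{2}^{2}-\varepsilon)V=0$. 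Because $\lambda_{1}\neq 2\lambda_{2}$, this forces $\lambda_{1}\lambda_{2}-\lambda_{2}^{2}=\varepsilon$; in particular $\lambda_{2}\neq 0$ and $\lambda_{1}-\lambda_{2}\neq 0$.

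Next, writing $\hat{\nabla}_{X}T=aT+W$ with $W\in\mathcal{D}_{2}$, the equation $(\hat{\nabla}_{X}K)(T,T)=0$ expands to $(X(\lambda_{1})-a\lambda_{1})T+(\lambda_{1}-2\lambda_{2})W=0$, whence $W=0$ and $X(\lambda_{1})=a\lambda_{1}$; since $h(T,T)=1$ forces $a=h(\hat{\nabla}_{X}T,T)=0$, I conclude $\hat{\nabla}_{X}T=0$ and $\lambda_{1}$ is a constant. A parallel decomposition $\hat{\nabla}_{X}V=bT+W'$ inside $(\hat{\nabla}_{X}K)(T,V)=0$ yields $X(\lambda_{2})V+b(\lambda_{2}-\lambda_{1})T=0$, and $h$-orthogonality combined with $\lambda_{1}\neq\lambda_{2}$ gives $X(\lambda_{2})=0$ and $\hat{\nabla}_{X}V\in\mathcal{D}_{2}$. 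Hence $\lambda_{1},\lambda_{2}$ are constants and $\mathcal{D}_{2}$ is preserved by $\hat{\nabla}$.

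All hypotheses of Theorem \ref{th:3.3} are now verified: strong convexity provides the positive definite $h$, orthogonality of $\mathcal{D}_{1},\mathcal{D}_{2}$ is assumed, and the algebraic relation $\lambda_{1}\lambda_{2}-\lambda_{2}^{2}=\varepsilon$ with $\lambda_{1}\neq 2\lambda_{2}$ has just been established. Theorem \ref{th:3.3} therefore locally decomposes $\psi$ as the Calabi product of a locally strongly convex centroaffine hypersurface $\psi_{1}:M_{1}^{n-1}\to\mathbb{R}^{n}$ with a point, and the ``if and only if'' statement in Proposition \ref{pr:3.2} immediately transfers parallelism of the cubic form to $\psi_{1}$. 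The delicate point is the middle paragraph, where the eigenvalue constancy and the parallel transport of the distributions must be read off simultaneously from $\hat{\nabla}K=0$ by carefully separating $\mathcal{D}_{1}$- and $\mathcal{D}_{2}$-components.
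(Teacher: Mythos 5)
Your proposal is correct and follows essentially the route the paper intends: the paper proves Theorem \ref{th:3.4} only by reference to the derivation of Theorem 4 in \cite{HLV1}, which is precisely this reduction — use $\hat{\nabla}K=0$ together with \eqref{eqn:2.3} and \eqref{eqn:2.5} to force $\lambda_1\lambda_2-\lambda_2^2=\varepsilon$ and the constancy of the eigenvalues, then invoke Theorem \ref{th:3.3} and the equivalence in Proposition \ref{pr:3.2}. Your computations of $\hat{R}(T,V)T$ and of the $\mathcal{D}_1$- and $\mathcal{D}_2$-components of $(\hat{\nabla}_XK)(T,T)$ and $(\hat{\nabla}_XK)(T,V)$ check out.
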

%
%%%%%%%%%%%%%%%%%%%%%%%%%%%%%%%%%%%%%%%%%%%%%%%%%%%%%%%%%%%%%%%%%%%%%%%%%%%%%%%%%%%%%%%%%%
\numberwithin{equation}{section}

\section{Elementary discussions in terms of a typical
basis}\label{sect:4}

In this section, we consider an $n$-dimensional $(n\ge2)$ locally
strongly convex centroaffine hypersurface $M^n$ in
$\mathbb{R}^{n+1}$ with $\hat{\nabla}C=0$ and we choose $\varepsilon$ such that the
centroaffine metric $h$ is positive definite. Our method here follows
closely that of \cite{HLSV,HLV2}.

Since $\hat{\nabla}C=0$ implies that $h(C,C)$ is constant, there are
two cases. First, if $h(C,C)=0$, as $h$ being positive definite we
have $C=0$ and $M^n$ is an open part of a quadric which is centered at the origin. If otherwise,
$h(C,C)\not=0$, then $C$ never vanishes. We assume this for the
remainder of this section.

\subsection{The construction of the typical basis}\label{sect:4.1}
~

\vskip 1mm

Let $p\in M$ and $UM_{p}=\{u\in T_{p}M \mid h(u,u)=1\}$. We define a
function on $UM_{p}$ by $f(u)=h(K_{u}u,u)$. Let $e_{1}$ be an
element of $UM_{p}$ at which the function $f(u)$ attains an absolute
maximum. The following lemma about the construction of the typical
basis can be proved totally similar to that of \cite{HLSV} (see also
\cite{VLS} for its earlier version).

\begin{lemma}[See P. 191 of \cite{HLSV}]\label{lm:4.1}
There exists an orthonormal basis $\{e_1,\ldots,e_n\}$ of $T_pM$
satisfying:
\begin{enumerate}
\item[(i)] $K_{e_{1}}e_{i}=\lambda_ie_i$, for $i=1,\ldots,n$,
where $\lambda_1\ (\lambda_{1}>0)$ is the maximum of $f$. Moreover,
for $i\ge2$, the value of $\lambda_i$ satisfies
\begin{equation}\label{eqn:4.1}
(\lambda_{1}-2\lambda_{i})(\varepsilon-\lambda_{1}\lambda_{i}+\lambda_{i}^{2})=0.
\end{equation}
\item[(ii)] for $i\ge2$, if\ \ $\lambda_1=2\lambda_i$, then
$f(e_i)=0$; if $\lambda_1\not=2\lambda_i$, then
$\lambda_{1}^{2}-4\varepsilon>0$ and
$\lambda_i=\mu:=\tfrac{1}{2}(\lambda_{1}-\sqrt{\lambda_{1}^{2}-4\varepsilon})$.
\end{enumerate}
\end{lemma}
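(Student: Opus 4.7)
The plan is to combine a Lagrange--multiplier variational analysis at the maximizing vector $e_1$ with the algebraic consequences of $\hat\nabla K=0$ coming from the Ricci identity \eqref{eqn:2.5}. Setting $f(u)=-\tfrac12 C(u,u,u)$, a homogeneous cubic on $T_pM$, I would first examine its behaviour along the curve $u(s)=\cos(s)e_1+\sin(s)v$ for an arbitrary unit vector $v$ orthogonal to $e_1$. The total symmetry of $C$ yields $\tfrac{d}{ds}\big|_{s=0}f(u(s))=3h(K_{e_1}e_1,v)$, so the critical-point condition at the maximum forces $K_{e_1}e_1=\lambda_1 e_1$ with $\lambda_1=f(e_1)$; the hypothesis $C\not\equiv 0$ together with $f(-u)=-f(u)$ ensures $\lambda_1>0$. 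Since $K_{e_1}$ is $h$-self-adjoint (by total symmetry of $h(K\cdot,\cdot,\cdot)$), it preserves the orthogonal complement of $e_1$, where I can diagonalize it in an orthonormal basis $\{e_2,\ldots,e_n\}$, giving $K_{e_1}e_i=\lambda_i e_i$ and the diagonal part of (i).

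To derive the algebraic identity in \eqref{eqn:4.1} I would apply \eqref{eqn:2.5} with $X=Z=e_1$ and $Y=U=e_i$. The Gauss equation \eqref{eqn:2.3} gives $\hat R(e_1,e_i)e_1=(-\varepsilon+\lambda_1\lambda_i-\lambda_i^2)e_i$; expanding $K_{e_i}e_i=\sum_k b_k^i e_k$, it also gives $\hat R(e_1,e_i)e_i=\varepsilon e_1+\sum_k b_k^i(\lambda_i-\lambda_k)e_k$. Total symmetry of $h(K\cdot,\cdot,\cdot)$ then identifies $b_1^i=h(K_{e_i}e_i,e_1)=h(K_{e_1}e_i,e_i)=\lambda_i$. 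Projecting \eqref{eqn:2.5} onto $e_1$ and simplifying leads, after cancellation, to $(\lambda_1-2\lambda_i)[\varepsilon-\lambda_i(\lambda_1-\lambda_i)]=0$, which is precisely \eqref{eqn:4.1}.

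For part (ii), specializing the curve above to $v=e_i$ and computing directly yields
\begin{equation*}
f(u(s))=\lambda_1\cos^3(s)+3\lambda_i\cos(s)\sin^2(s)+f(e_i)\sin^3(s),
\end{equation*}
so $f(u(s))-\lambda_1=3s^2(\lambda_i-\lambda_1/2)+s^3 f(e_i)+O(s^4)$. The maximality of $f$ at $s=0$ forces $\lambda_i\le\lambda_1/2$ from the quadratic coefficient; when equality holds, the $s^3$ term must also vanish since its sign can otherwise be flipped by changing the sign of $s$, giving $f(e_i)=0$. If $\lambda_1\neq 2\lambda_i$, the second factor of \eqref{eqn:4.1} gives $\lambda_i^2-\lambda_1\lambda_i+\varepsilon=0$, so $\lambda_i=\tfrac12(\lambda_1\pm\sqrt{\lambda_1^2-4\varepsilon})$; reality of $\lambda_i$ forces $\lambda_1^2-4\varepsilon\ge 0$, equality would collapse both roots to $\lambda_1/2$ and contradict $\lambda_1\neq 2\lambda_i$, so $\lambda_1^2-4\varepsilon>0$, and the bound $\lambda_i\le\lambda_1/2$ selects the minus-sign root $\mu$. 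The main obstacle is the symmetry bookkeeping in the second paragraph: it is precisely the identification $b_1^i=\lambda_i$ (from the total symmetry of $K$) that causes the $e_1$-component of \eqref{eqn:2.5} to collapse to the clean factorization in \eqref{eqn:4.1}, rather than an otherwise opaque cubic in $\lambda_i$.
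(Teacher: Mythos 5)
Your proof is correct and follows essentially the same route as the one the paper invokes (it defers to P.~191 of \cite{HLSV} and its precursor \cite{VLS}): the first and second variation of $f$ along $\cos(s)e_1+\sin(s)v$ gives the eigenbasis, the sign of the $s^2$ and $s^3$ coefficients gives part (ii), and the $e_1$-component of the Ricci identity \eqref{eqn:2.5} with $X=Z=e_1$, $Y=U=e_i$, combined with $b_1^i=\lambda_i$ from total symmetry, collapses to the factorization \eqref{eqn:4.1}. All the individual computations (the curvature terms from \eqref{eqn:2.3}, the expansion of $f(u(s))$, and the root selection $\lambda_i=\mu$ from $\lambda_i\le\lambda_1/2$) check out.
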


\vskip 2mm

According to Lemma \ref{lm:4.1}, for a locally strongly convex
centroaffine hypersurface with parallel cubic form, we have to deal
with the following $(n+1)$-cases:

\vskip 1mm

\textbf{Case $\mathfrak{C}_1$}.\ \ $\lambda_{1}^{2}-4\varepsilon>0$
and $\lambda_{2}=\cdots=\lambda_{n}=\mu$.

\vskip 1mm

\textbf{Case $\mathfrak{C}_m$}.\ \ $\lambda_{1}^{2}-4\varepsilon>0$
and for some $m\ (2\leq m\leq n-1)$,
$$\lambda_{2}
=\cdots=\lambda_{m}=\tfrac{1}{2}\lambda_{1}, \ \
\lambda_{m+1}=\cdots=\lambda_{n}=\mu.
$$

\vskip 1mm

\textbf{Case $\mathfrak{C}_n$}.\ \
$\lambda_{1}^{2}-4\varepsilon\neq0$ and
$\lambda_{2}=\cdots=\lambda_{n}=\tfrac{1}{2}\lambda_{1}$.

\vskip 1mm

\textbf{Case $\mathfrak{B}$}.\ \ $\lambda_{1}^{2}-4\varepsilon=0$
and $\lambda_{2}=\cdots=\lambda_{n}=\tfrac{1}{2}\lambda_{1}$.

\vskip 2mm

In sequel of this paper, we are going to discuss these cases
separately.
\subsection{The settlement of the cases $\mathfrak{C}_1$ and
$\mathfrak{C}_n$}\label{sect:4.2}~

\vskip 1mm

First of all, about Case $\mathfrak{C}_1$, we have the following

\begin{theorem}\label{th:4.1}
If Case $\mathfrak{C}_1$ occurs, then $M^n$ can be locally
decomposed as the Calabi product of an $(n-1)$-dimensional locally
strongly convex centroaffine hypersurface in $\mathbb{R}^n$ with
parallel cubic form and a point.
\end{theorem}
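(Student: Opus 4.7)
The strategy is to verify the hypotheses of Theorem \ref{th:3.4} in a neighborhood of any reference point $p \in M^n$ and apply it with $\lambda_2 = \mu$. Since $\hat{\nabla} K = 0$ is already part of the standing assumption, the task reduces to constructing a smooth unit vector field $T$ together with its orthogonal complement $\mathcal{D}_2 := T^\perp$ of dimension $n-1$, and to checking the algebraic relations $K(T,T) = \lambda_1 T$ and $K(T,V) = \mu V$ for every $V \in \mathcal{D}_2$, with the constants $\lambda_1 \neq 2\mu$.

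At $p$, Lemma \ref{lm:4.1} furnishes an orthonormal basis $\{e_1, \ldots, e_n\}$ with $K_{e_1} e_1 = \lambda_1 e_1$ and $K_{e_1} e_i = \mu e_i$ for $i \geq 2$. Because $\lambda_1 - \mu = \tfrac{1}{2}\bigl(\lambda_1 + \sqrt{\lambda_1^2 - 4\varepsilon}\bigr) > 0$, the eigenvalue $\lambda_1$ of the symmetric operator $K_{e_1}$ is simple, so $e_1$ is the unique (up to sign) maximizer of $f(u) = h(K_u u, u)$ on $UM_p$. Using $\hat{\nabla} K = 0$, parallel transport of the frame $\{e_i\}$ along any curve issuing from $p$ yields an orthonormal frame $\{\tilde{e}_i\}$ that still satisfies $K_{\tilde{e}_1} \tilde{e}_1 = \lambda_1 \tilde{e}_1$ and $K_{\tilde{e}_1} \tilde{e}_i = \mu \tilde{e}_i$. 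In particular the maximum value $\lambda_1$ of $f$ is constant along such curves, and so $\tilde{e}_1$ agrees, up to sign, with the maximizer of $f$ at each nearby point. A standard continuity argument in a normal neighborhood of $p$ fixes the sign and produces a smooth unit vector field $T$ together with a smooth $(n-1)$-dimensional distribution $\mathcal{D}_2 := T^\perp$ satisfying $K(T,T) = \lambda_1 T$ and $K(T,V) = \mu V$ for every $V \in \mathcal{D}_2$.

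Finally, $\lambda_1 \neq 2\mu$ is immediate, since $\lambda_1 - 2\mu = \sqrt{\lambda_1^2 - 4\varepsilon} > 0$ is built into the definition of Case $\mathfrak{C}_1$, and $\mathcal{D}_1 := \mathrm{span}(T)$ and $\mathcal{D}_2$ are $h$-orthogonal by construction. All hypotheses of Theorem \ref{th:3.4} are therefore met, and its conclusion gives the desired decomposition of $M^n$ as the Calabi product of an $(n-1)$-dimensional locally strongly convex centroaffine hypersurface with parallel cubic form and a point. The main obstacle is the smoothness and sign-coherence of $T$ on a neighborhood, which amounts to ruling out discontinuities in the $f$-maximizing direction as the base point varies; this is precisely what the simplicity of the eigenvalue $\lambda_1$ of $K_T$ combined with the parallelism of $K$ guarantee, via the parallel-transport argument above.
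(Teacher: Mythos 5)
Your proposal is correct and follows essentially the same route as the paper: extend the Lemma \ref{lm:4.1} frame by $\hat{\nabla}$-parallel transport (the paper does this along geodesics through $p$), use $\hat{\nabla}K=0$ to preserve the relations $K(T,T)=\lambda_1T$, $K(T,V)=\mu V$ with $\lambda_1\neq2\mu$, and apply Theorem \ref{th:3.4}. Your extra worry about sign-coherence of the $f$-maximizer is not actually needed, since one only requires the algebraic identities for the transported frame rather than that $T$ remain the maximizer at nearby points, but this does not affect the validity of the argument.
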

\begin{proof}
In Case $\mathfrak{C}_1$, the difference tensor takes the
following form:
$$
\begin{aligned}
&K(e_1,e_1)=\lambda_1e_1,\ \ K(e_1,e_i)=\mu e_i,\ i=2,\ldots,n.
\end{aligned}
$$
By parallel translation along geodesics (with respect to
$\hat{\nabla}$) through $p$, we extend $\{e_1,\ldots,e_n\}$ to
obtain a local $h$-orthonormal basis denoted by
$\{E_1,\ldots,E_n\}$. Then
$$
\begin{aligned}
&K(E_1,E_1)=\lambda_1E_1,\ \ K(E_1,E_i)=\mu E_i,\
i=2,\ldots,n,\ \lambda_1\neq 2\mu,
\end{aligned}
$$
where both $\lambda_1$ and $\mu$ are defined in Lemma \ref{lm:4.1}.
Applying Theorem \ref{th:3.4}, we
conclude that $M^n$ can be decomposed as the
Calabi product of a locally strongly convex
centroaffine hypersurface with parallel cubic form and a point.
\end{proof}

\begin{theorem}\label{th:4.2}
Case $\mathfrak{C}_n$ does not occur.
\end{theorem}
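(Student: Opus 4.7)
The plan is to derive a contradiction from a single application of the parallel-$K$ curvature identity \eqref{eqn:2.5}, evaluated at the point $p$. In Case $\mathfrak{C}_n$ the typical basis at $p$ satisfies $K_{e_1}e_1=\lambda_1 e_1$ and $K_{e_1}e_i=\tfrac{\lambda_1}{2}e_i$ for every $i\geq 2$, with $\lambda_1>0$. Since $\lambda_1=2\lambda_i$ for $i\geq 2$, Lemma~\ref{lm:4.1}(ii) gives $h(K_{e_i}e_i,e_i)=0$, and the symmetries $h(K_{e_i}e_i,e_1)=h(K_{e_1}e_i,e_i)=\tfrac{\lambda_1}{2}$ yield
\[
K_{e_i}e_i=\tfrac{\lambda_1}{2}e_1+\sum_{k\geq 2,\,k\neq i}A_{ii}^k\,e_k,\qquad A_{ii}^k := h(K_{e_i}e_i,e_k).
\]

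Next, I will use \eqref{eqn:2.3} to compute the curvature components $\hat R(e_1,e_i)e_j$. Because $K_{e_1}$ is the scalar $\lambda_1$ on $e_1$ and the scalar $\tfrac{\lambda_1}{2}$ on every $e_k$ with $k\geq 2$, the commutator $[K_{e_1},K_{e_i}]$ is trivial to evaluate. A short calculation (using symmetry of $K$ to observe that $K_{e_i}e_k$ has no $e_1$-component whenever $k\neq i$, $k\geq 2$) produces
\[
\hat R(e_1,e_i)e_1=-\bigl(\varepsilon-\tfrac{\lambda_1^2}{4}\bigr)e_i,\ \ \hat R(e_1,e_i)e_i=\bigl(\varepsilon-\tfrac{\lambda_1^2}{4}\bigr)e_1,
\]
and, crucially, $\hat R(e_1,e_i)e_k=0$ for every $k\geq 2$ with $k\neq i$.

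I then apply \eqref{eqn:2.5} with $X=e_1$, $Y=Z=U=e_i$. The right-hand side becomes
\[
2K\bigl(\hat R(e_1,e_i)e_i,\,e_i\bigr)=2\bigl(\varepsilon-\tfrac{\lambda_1^2}{4}\bigr)K_{e_1}e_i=\lambda_1\bigl(\varepsilon-\tfrac{\lambda_1^2}{4}\bigr)e_i.
\]
Expanding the left-hand side via the decomposition of $K_{e_i}e_i$ together with the three curvature formulas, all the $A_{ii}^k$-contributions disappear (since $\hat R(e_1,e_i)e_k=0$ for $k\geq 2$, $k\neq i$), and the only surviving piece is $\tfrac{\lambda_1}{2}\hat R(e_1,e_i)e_1=-\tfrac{\lambda_1}{2}\bigl(\varepsilon-\tfrac{\lambda_1^2}{4}\bigr)e_i$. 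Equating the two expressions gives
\[
\tfrac{3\lambda_1}{2}\bigl(\varepsilon-\tfrac{\lambda_1^2}{4}\bigr)=0,
\]
which, since $\lambda_1>0$, forces $\lambda_1^2=4\varepsilon$, contradicting the defining hypothesis $\lambda_1^2-4\varepsilon\neq 0$ of Case $\mathfrak{C}_n$.

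There is essentially no real obstacle; the argument is a one-shot use of $\hat\nabla K=0$. The only subtle point is to notice that one need \emph{not} know the unknown coefficients $A_{ii}^k$ ($k\neq i$), because $\hat R(e_1,e_i)$ annihilates each $e_k$ with $k\geq 2$, $k\neq i$, so the $e_i$-component of the identity is entirely determined by $\lambda_1$ and $\varepsilon$. Consequently the argument works uniformly for every $n\geq 2$ and requires no auxiliary parallel-transport step.
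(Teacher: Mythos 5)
Your proof is correct and follows essentially the same route as the paper: both apply the identity \eqref{eqn:2.5} with $X=e_1$, $Y=Z=U=e_i$ and arrive at $\tfrac{3}{2}\lambda_1\bigl(\varepsilon-\tfrac14\lambda_1^2\bigr)=0$, which is incompatible with $\lambda_1>0$ and $\lambda_1^2\neq4\varepsilon$. The only (harmless) difference is that the paper first polarizes $f|_{\mathrm{span}\{e_2,\dots,e_n\}}\equiv0$ to get $h(K_{e_i}e_j,e_k)=0$ for $2\le i,j,k\le n$, so that $K_{e_i}e_i=\tfrac12\lambda_1e_1$ exactly, whereas you retain the unknown components $A_{ii}^k$ and observe that $\hat R(e_1,e_i)$ annihilates them.
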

\begin{proof}
Suppose on the contrary that Case $\mathfrak{C}_n$ does occur. From
(ii) of Lemma \ref{lm:4.1}, we have $f(v)=0$ for any $v\in{\rm
span}\,\{e_2,\ldots,e_n\}$. Then, by polarization, we can show that
\begin{equation}\label{eqn:4.2}
h(K_{e_i}e_j,e_k)=0,\ \ 2\leq i, j, k\leq n.
\end{equation}
Then, for any unit vector $v\in{\rm span}\,\{e_2,\ldots,e_n\}$, we
have
$$
K_{e_1}e_1=\lambda_1 e_1,\ K_{e_1}v=\tfrac{1}{2}\lambda_{1}v,\
K_vv=\tfrac{1}{2}\lambda_{1}e_1.
$$
Accordingly, by taking $X=e_1,\ Y=Z=U=v$ in \eqref{eqn:2.5}, we will
get $\lambda_1=0$. This contradiction completes the proof of Theorem \ref{th:4.2}.
\end{proof}

\vskip 2mm

\subsection{Intermediary cases $\{\mathfrak{C}_m\}_{2\leq
m\leq n-1}$ and an isotropic mapping $L$}\label{sect:4.3} ~

\vskip 1mm

Now, we consider the cases $\{\mathfrak{C}_m\}_{2\leq m\leq n-1}$.
In these cases, we denote by $\mathcal{D}_2$ and $\mathcal{D}_3$ the
two subspaces of $T_pM$:
$$
\mathcal{D}_2={\rm span}\{e_{2},\ldots,e_m\}\ \ {\rm and}\ \
\mathcal{D}_3={\rm span}\{e_{m+1},\ldots,e_n\}.
$$

First of all, we have the following
\begin{lemma}\label{lm:4.2}
Associated with the direct sum decomposition
$T_pM=\mathcal{D}_1\oplus \mathcal{D}_2 \oplus \mathcal{D}_3$, where
$\mathcal{D}_1={\rm span}\{e_{1}\}$, there hold the relations:
\begin{enumerate}
\item[(i)] $K_{e_1}v=\tfrac{1}{2}\lambda_1v,\ for\ any\ v
\in \mathcal{D}_2;\ K_{e_1}w=\mu w,\ for\ any\ w\in \mathcal{D}_3$.

\vskip 1mm

\item[(ii)] $K_{v_1}v_2 -\tfrac{1}{2}\lambda_1 h(v_1,v_2)e_1
\in \mathcal{D}_3,\ for\ any\ v_1,v_2\in \mathcal{D}_2$.

\vskip 1mm

\item[(iii)] $K_{v}w \in \mathcal{D}_2, \ for\ any\ v
\in \mathcal{D}_2,\ w\in \mathcal{D}_3$.
\end{enumerate}
\end{lemma}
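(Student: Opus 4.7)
My plan is to verify (i)--(iii) by decomposing each value of $K$ relative to $T_pM=\mathcal{D}_1\oplus\mathcal{D}_2\oplus\mathcal{D}_3$ and handling each component with three ingredients: the total symmetry of the cubic form $C(X,Y,Z)=-2h(K_XY,Z)$, the maximality of $\lambda_1$ built into Lemma~\ref{lm:4.1}, and the integrability identity~\eqref{eqn:2.5}.

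Part~(i) follows from Lemma~\ref{lm:4.1}(i) by linear extension, since $K_{e_1}|_{\mathcal{D}_2}=\tfrac12\lambda_1\,\mathrm{Id}$ and $K_{e_1}|_{\mathcal{D}_3}=\mu\,\mathrm{Id}$. For part~(ii), total symmetry gives $h(K_{v_1}v_2,e_1)=h(K_{e_1}v_1,v_2)=\tfrac12\lambda_1\,h(v_1,v_2)$, which handles the $\mathcal{D}_1$-component. To annihilate the $\mathcal{D}_2$-component, I observe that any unit $v\in\mathcal{D}_2$ is a $\tfrac12\lambda_1$-eigenvector of $K_{e_1}$, so replacing $e_2$ by $v$ produces another typical basis for which Lemma~\ref{lm:4.1}(ii) applies; the variational argument there (the $t^2$-term in $f(\cos t\,e_1+\sin t\,v)$ vanishes, so maximality forces the $t^3$-term to vanish as well) yields $f(v)=h(K_vv,v)=0$. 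Since $(v_1,v_2,v_3)\mapsto h(K_{v_1}v_2,v_3)$ is a totally symmetric trilinear form on $\mathcal{D}_2$ whose restriction to the diagonal vanishes, polarization yields $h(K_{v_1}v_2,v_3)=0$ for all $v_i\in\mathcal{D}_2$; the $\mathcal{D}_3$-component is unconstrained.

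For part~(iii), total symmetry gives $h(K_vw,e_1)=\tfrac12\lambda_1\,h(v,w)=0$, so only the $\mathcal{D}_3$-component of $K_vw$ needs to be eliminated. Writing $K_vw=v'+w'$ with $v'\in\mathcal{D}_2$, $w'\in\mathcal{D}_3$, I would plug $X=e_1$, $Y=w$, $Z=v$, $U=e_1$ into~\eqref{eqn:2.5}. A direct computation using~\eqref{eqn:2.3} and part~(i) gives $\hat R(e_1,w)v=(\tfrac12\lambda_1-\mu)\,w'$, while $\hat R(e_1,w)e_1=(\lambda_1\mu-\mu^2-\varepsilon)\,w=0$, the last identity being precisely the relation~\eqref{eqn:4.1} for the eigenvalue $\mu$. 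Substitution into~\eqref{eqn:2.5} then reduces to
\[
\bigl(\tfrac12\lambda_1-\mu\bigr)^2 w'=0,
\]
and since $\lambda_1^2-4\varepsilon>0$ forces $\tfrac12\lambda_1\ne\mu$, we conclude $w'=0$.

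The main obstacle is part~(iii): matching the two sides of~\eqref{eqn:2.5} only closes because the quadratic relation~\eqref{eqn:4.1} defining $\mu$ annihilates $\hat R(e_1,w)e_1$, and it is the nonvanishing of $\tfrac12\lambda_1-\mu$ -- which characterizes the cases $\mathfrak{C}_m$ as opposed to the exceptional case $\mathfrak{B}$ where $\mu=\tfrac12\lambda_1$ -- that allows one to extract $w'$. This is exactly why Case~$\mathfrak{B}$ has to be isolated and treated separately in Section~\ref{sect:5}.
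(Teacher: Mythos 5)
Your proposal is correct and follows essentially the same route as the paper: (i) is read off from Lemma \ref{lm:4.1}, (ii) comes from the maximality/polarization argument behind Lemma \ref{lm:4.1}(ii) together with the total symmetry of $C$, and (iii) is obtained by feeding $e_1$, $v$, $w$ into \eqref{eqn:2.5} and \eqref{eqn:2.3} and using $\tfrac12\lambda_1\neq\mu$. The only (immaterial) difference is in (iii): the paper takes $X=v$, $Y=w$, $Z=U=e_1$, deduces that $\hat R(v,w)e_1$ lies in the $\tfrac12\lambda_1$-eigenspace $\mathcal{D}_2$ of $K_{e_1}$, and then computes $\hat R(v,w)e_1=(\tfrac12\lambda_1-\mu)K_vw$, whereas you place the vectors in the slots $X=e_1$, $Y=w$, $Z=v$, $U=e_1$ and extract $(\tfrac12\lambda_1-\mu)^2w'=0$ directly; both computations are valid.
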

\begin{proof}
By definition we have (i). The claim (ii) follows from (ii) of Lemma
\ref{lm:4.1} or directly \eqref{eqn:4.2}. In order to prove the
third claim, we take $X=v\in \mathcal{D}_2$, $Y=w \in \mathcal{D}_3$
and $Z=U=e_1$ in \eqref{eqn:2.5} to obtain that
$$
\lambda_1\hat{R}(v,w)e_{1}=2K(\hat{R}(v,w)e_{1},e_1).
$$
Thus we have $\hat{R}(v,w)e_{1}\in \mathcal{D}_2$.

On the other hand, a direct calculation by \eqref{eqn:2.3} gives
$$
\hat{R}(v,w)e_{1}=-K_vK_we_{1}+K_wK_ve_{1}=\big(\tfrac{1}{2}\lambda_1-\mu\big)K_vw.
$$
Therefore, as $\mu\not=\tfrac{1}{2}\lambda_1$, combining with the
preceding result we get $K_vw \in \mathcal{D}_2$.
\end{proof}

With the remarkable conclusions of Lemma \ref{lm:4.2}, similar to
that in \cite{HLV2}, we can now introduce a bilinear map
$L:\mathcal{D}_2\times \mathcal{D}_2\rightarrow \mathcal{D}_3$,
defined by
\begin{equation}\label{eqn:4.3}
L(v_1,v_2):=K_{v_1}v_2 -\tfrac{1}{2}\lambda_1 h(v_1,v_2)e_1, \
v_1,v_2\in \mathcal{D}_2.
\end{equation}

The following lemmas show that the operator $L$ enjoys remarkable
properties and it becomes an important tool for exploring
information of the difference tensor. As we have
$\lambda_{1}^{2}-4\varepsilon>0$, for simplicity, from now on we
denote $\eta:=\tfrac{1}{2}\sqrt{\lambda_{1}^{2}-4\varepsilon}$.
\begin{lemma}\label{lm:4.3}
The bilinear map $L$ is isotropic in the sense that
\begin{align}\label{eqn:4.4}
h(L(v,v),L(v,v))=\tfrac{1}{2}\lambda_1\eta h(v,v)^2,\ \
\forall\,v\in \mathcal{D}_2.
\end{align}
Moreover, linearizing \eqref{eqn:4.4}, it follows for arbitrary
$v_1,v_2,v_3,v_4\in \mathcal{D}_2$ that
\begin{align}\label{eqn:4.5}
\begin{split}
&h(L(v_1,v_2),L(v_3,v_4))+h(L(v_1,v_3),L(v_2,v_4))+h(L(v_1,v_4),L(v_2,v_3))\\
&=\tfrac{1}{2}\lambda_1\eta  (h(v_1,v_2)h(v_3,v_4)
+h(v_1,v_3)h(v_2,v_4)+h(v_1,v_4)h(v_2,v_3)).
\end{split}
\end{align}
\end{lemma}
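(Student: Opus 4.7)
The plan is to derive the isotropy identity \eqref{eqn:4.4} as a direct consequence of the parallelism of $K$ (equivalently, \eqref{eqn:2.5}), and then obtain \eqref{eqn:4.5} by standard polarization. The key observation is that for a unit vector $v\in\mathcal{D}_2$, the vector $L(v,v)\in\mathcal{D}_3$, and by Lemma \ref{lm:4.2}(iii) we have $K_vL(v,v)\in\mathcal{D}_2$, so applying \eqref{eqn:2.5} with the choice $X=v$, $Y=e_1$, $Z=U=v$ should produce a single scalar identity relating $K_vL(v,v)$ to $v$, from which \eqref{eqn:4.4} follows via the symmetry \eqref{eqn:1.2} of $K$.

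Concretely, I would first use \eqref{eqn:2.3} together with Lemma \ref{lm:4.2} to compute, for a unit $v\in\mathcal{D}_2$, the three quantities
\begin{align*}
\hat R(v,e_1)e_1 &= -\eta^{2}v,\\
\hat R(v,e_1)v &= \eta^{2}e_1-\eta L(v,v),\\
\hat R(v,e_1)L(v,v) &= \eta\,K_vL(v,v),
\end{align*}
where each line exploits $\mu-\tfrac12\lambda_1=-\eta$ and $\tfrac14\lambda_1^{2}-\varepsilon=\eta^{2}$, together with the eigenvalue actions of $K_{e_1}$ on $\mathcal{D}_1,\mathcal{D}_2,\mathcal{D}_3$. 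Then \eqref{eqn:2.5} applied to $(X,Y,Z,U)=(v,e_1,v,v)$ reads
\[
\hat R(v,e_1)\bigl(L(v,v)+\tfrac12\lambda_1 e_1\bigr)=2K_v\bigl(\hat R(v,e_1)v\bigr),
\]
which, after substituting the three expressions above and using $K_ve_1=\tfrac12\lambda_1v$, collapses to
\[
3\eta\,K_vL(v,v)=\tfrac{3}{2}\lambda_1\eta^{2}v,\qquad\text{i.e.,}\qquad K_vL(v,v)=\tfrac12\lambda_1\eta\,v.
\]
Applying the symmetry $h(K_vv,L(v,v))=h(v,K_vL(v,v))$ from \eqref{eqn:1.2}, and noting that $h(K_vv,L(v,v))=h(L(v,v),L(v,v))$ since $e_1\perp\mathcal{D}_3$, yields \eqref{eqn:4.4} for unit $v$; homogeneity in $v$ (both sides scale as $\|v\|^{4}$) gives the identity for arbitrary $v\in\mathcal{D}_2$.

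Finally, \eqref{eqn:4.5} is obtained by polarization: replacing $v$ in \eqref{eqn:4.4} by $v_1+v_2+v_3+v_4$ (or equivalently, differentiating the quartic identity four times in independent directions), collecting the terms that are multilinear of type $(1,1,1,1)$ gives the claimed symmetric trilinear relation. The main obstacle I anticipate is the bookkeeping in the $\hat R$-computations: one must repeatedly invoke the decomposition $K_vv=L(v,v)+\tfrac12\lambda_1 e_1$ and carefully use Lemma \ref{lm:4.2}(iii) to know that $K_vL(v,v)\in\mathcal{D}_2$ (so that $K_{e_1}$ acts on it by $\tfrac12\lambda_1$), since without that the cancellations producing the clean coefficient $\tfrac12\lambda_1\eta$ would not occur. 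Once these components are correctly tracked, the derivation is a short algebraic computation.
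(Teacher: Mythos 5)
Your proof is correct and takes essentially the same route as the paper: both arguments apply the parallelism identity \eqref{eqn:2.5} with one slot equal to $e_1$ and the remaining slots in $\mathcal{D}_2$, compute the curvature terms via \eqref{eqn:2.3} and Lemma \ref{lm:4.2}, and extract the isotropy relation. The only difference is the order of operations: the paper takes $Y=v_1$, $Z=v_2$, $U=v_3$ independent and obtains the fully polarized vector identity \eqref{eqn:4.6} first, then contracts with $v_4$ and specializes, whereas you first derive the diagonal identity $K_vL(v,v)=\tfrac12\lambda_1\eta\,v$ (hence \eqref{eqn:4.4}) and recover \eqref{eqn:4.5} by polarizing the quartic — both are valid, and your curvature computations and coefficients check out.
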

\begin{proof}
We use \eqref{eqn:2.5} and take $X=e_1$ and $Y=v_1,\ Z=v_2,\ U=v_3$
in $\mathcal{D}_2$. By using \eqref{eqn:2.3} and the definition of
$L$, it follows immediately that
\begin{align}\label{eqn:4.6}
\begin{split}
&K(L(v_1,v_2),v_3)+K(L(v_1,v_3),v_2)+K(L(v_2,v_3),v_1)\\&=
\tfrac{1}{2}\lambda_1\eta  (h(v_1,v_2)v_3+h(v_1,v_3)v_2+h(v_2,v_3)v_1).
\end{split}
\end{align}
Taking the product of \eqref{eqn:4.6} with $v_4\in
\mathcal{D}_2$, we can obtain \eqref{eqn:4.5}.
Finally, we choose $v_1=v_2=v_3=v_4=v$ in \eqref{eqn:4.5},
then we get \eqref{eqn:4.4}.
\end{proof}

Since $L:\mathcal{D}_2\times \mathcal{D}_2\rightarrow \mathcal{D}_3$
is isotropic, we see from \eqref{eqn:4.4} that, if $\dim\,
\mathcal{D}_2 \geq1$, then the image space of $L$ has positive
dimension, i.e. $\dim\,({\rm Im}\,L)\geq1$. Moreover, the following
well-known properties hold.

\begin{lemma}[cf. \cite{HLSV,HLV2}]\label{lm:4.4}
If\ \ $\dim\, \mathcal{D}_2\geq1$, for orthonormal vectors
$v_1,v_2,v_3$ and $v_4\in \mathcal{D}_2$, there hold
\begin{equation}\label{eqn:4.7}
h(L(v_1,v_1),L(v_1,v_2))=0,
\end{equation}
\begin{equation}\label{eqn:4.8}
h(L(v_1,v_1),L(v_2,v_2))+2h(L(v_1,v_2),L(v_1,v_2))=\tfrac{1}{2}\lambda_1\eta,
\end{equation}
\begin{equation}\label{eqn:4.9}
h(L(v_1,v_1),L(v_2,v_3))+2h(L(v_1,v_2),L(v_1,v_3))=0,
\end{equation}
\begin{align}\label{eqn:4.10}
\begin{split}
h(L(v_1,v_2),L(v_3,v_4))&+h(L(v_1,v_3),L(v_2,v_4))\\&+h(L(v_1,v_4),L(v_2,v_3))=0.
\end{split}
\end{align}
\end{lemma}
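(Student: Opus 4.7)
The plan is to derive all four identities as immediate specializations of the polarized identity \eqref{eqn:4.5}, which Lemma \ref{lm:4.3} has already established. Since $L$ is symmetric and $v_1,v_2,v_3,v_4$ are orthonormal, every inner product $h(v_i,v_j)$ with $i\ne j$ vanishes while $h(v_i,v_i)=1$, so the right-hand side of \eqref{eqn:4.5} collapses in a controlled way for each of the four choices of arguments needed.

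Concretely, I would proceed as follows. First, to obtain \eqref{eqn:4.7}, substitute $(v_1,v_1,v_1,v_2)$ for $(v_1,v_2,v_3,v_4)$ in \eqref{eqn:4.5}; the three terms on the left coincide by symmetry of $L$, giving $3\,h(L(v_1,v_1),L(v_1,v_2))$, while the right-hand side contains the factor $h(v_1,v_2)=0$. Second, to obtain \eqref{eqn:4.8}, substitute $(v_1,v_1,v_2,v_2)$; the left becomes $h(L(v_1,v_1),L(v_2,v_2))+2\,h(L(v_1,v_2),L(v_1,v_2))$ and the right reduces to $\tfrac{1}{2}\lambda_1\eta\cdot(1+0+0)=\tfrac{1}{2}\lambda_1\eta$. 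Third, for \eqref{eqn:4.9}, substitute $(v_1,v_1,v_2,v_3)$; after using the symmetry $L(v_1,v_3)=L(v_3,v_1)$ the left coalesces into $h(L(v_1,v_1),L(v_2,v_3))+2\,h(L(v_1,v_2),L(v_1,v_3))$, while the right vanishes because each surviving product contains an off-diagonal $h(v_i,v_j)$. Finally, for \eqref{eqn:4.10}, substitute the four distinct orthonormal vectors directly: the right-hand side vanishes term by term.

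Since the key analytic input, namely the isotropy identity \eqref{eqn:4.4} and its full polarization \eqref{eqn:4.5}, is already in hand, there is no real obstacle; the only care needed is in bookkeeping the coincidences produced by the symmetry of $L$ and by choosing the substitution tuples so that enough of the $h(v_i,v_j)$ factors on the right-hand side vanish. In particular, no further use of \eqref{eqn:2.3} or \eqref{eqn:2.5} is required at this stage — Lemma \ref{lm:4.4} is strictly a corollary of Lemma \ref{lm:4.3}.
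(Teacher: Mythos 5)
Your proposal is correct and is essentially the intended argument: the paper states Lemma \ref{lm:4.4} as a known consequence (citing \cite{HLSV,HLV2}) of the polarized isotropy identity \eqref{eqn:4.5} established in Lemma \ref{lm:4.3}, and your four substitutions, with the bookkeeping of coincident terms coming from the symmetry of $L$, reproduce exactly that derivation. No further input beyond \eqref{eqn:4.5} and the orthonormality of the $v_i$ is needed, as you observe.
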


\begin{lemma}\label{lm:4.5}
In Cases $\{\mathfrak{C}_m\}_{2\le m\le n-1}$, if it occurs that
${\rm Im}\,L\neq \mathcal{D}_3$, then for any $v_1$, $v_2\in
\mathcal{D}_2$ and $w\in \mathcal{D}_3$ with $w\perp{\rm Im}\,L$, we
have
\begin{align}\label{eqn:4.11}
K(L(v_1,v_2),w)=\eta\mu h(v_1,v_2)w.
\end{align}
\end{lemma}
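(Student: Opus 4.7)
\textbf{Proof plan for Lemma \ref{lm:4.5}.}
The plan is to first extract the strongest direct consequence of the hypothesis $w\perp \mathrm{Im}\,L$, and then feed it into the curvature identity \eqref{eqn:2.5} applied to a well-chosen tuple to isolate $K_wL(v_1,v_2)$.

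\emph{Step 1: Show that $K(v,w)=0$ for every $v\in\mathcal{D}_2$.}  By Lemma \ref{lm:4.2}(iii) we already have $K(v,w)\in\mathcal{D}_2$, so it suffices to show that it is $h$-orthogonal to $\mathcal{D}_2$. For any $v'\in\mathcal{D}_2$, the symmetry $h(K_XY,Z)=h(K_XZ,Y)$ from \eqref{eqn:1.2} gives
$$h(K(v,w),v')=h(K(v,v'),w)=h\bigl(\tfrac12\lambda_1h(v,v')e_1+L(v,v'),w\bigr)=0,$$
since $w\perp e_1$ and $w\perp\mathrm{Im}\,L$.  The same symmetry argument (combined with $h(K_we_1,L(v_1,v_2))=\mu h(w,L(v_1,v_2))=0$) also shows that $K_wL(v_1,v_2)$ lies in $\mathcal{D}_3$; I will use this in Step 2.

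\emph{Step 2: Apply \eqref{eqn:2.5} with $X=v_1,\ Y=w,\ Z=v_2,\ U=e_1$.}  Using $K(v_2,e_1)=\tfrac12\lambda_1v_2$, the left hand side equals $\tfrac12\lambda_1\hat R(v_1,w)v_2$.  A direct computation of $\hat R(v_1,w)e_1$ via \eqref{eqn:2.3}, together with the algebraic identity $\tfrac12\lambda_1-\mu=\eta$, yields $\hat R(v_1,w)e_1=\eta K(v_1,w)$, which vanishes by Step 1.  Expanding $\hat R(v_1,w)v_2$ by \eqref{eqn:2.3} and using Step 1 to kill $K_wv_2$, together with $K_{v_1}v_2=\tfrac12\lambda_1h(v_1,v_2)e_1+L(v_1,v_2)$ and the quadratic relation $\lambda_1\mu-\mu^2=\varepsilon$ (which gives $\tfrac12\lambda_1\mu-\varepsilon=-\mu\eta$), one finds
$$\hat R(v_1,w)v_2=-\mu\eta\,h(v_1,v_2)\,w+K_wL(v_1,v_2).$$
By the last observation of Step 1 this expression lies entirely in $\mathcal{D}_3$.

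\emph{Step 3: Solve for $K_wL(v_1,v_2)$.}  The identity \eqref{eqn:2.5} thereby reduces to $\tfrac12\lambda_1\,\hat R(v_1,w)v_2=K_{e_1}\hat R(v_1,w)v_2=\mu\,\hat R(v_1,w)v_2$ (the last equality using that the vector is in $\mathcal{D}_3$).  Since $\tfrac12\lambda_1-\mu=\eta\neq0$, we get $\hat R(v_1,w)v_2=0$, and the formula from Step 2 collapses to $K_wL(v_1,v_2)=\mu\eta\,h(v_1,v_2)\,w$, which is exactly \eqref{eqn:4.11}.

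The main obstacle is really just keeping the bookkeeping clean: one has to verify both that $K_wL(v_1,v_2)\in\mathcal{D}_3$ (so that $K_{e_1}$ acts on it with eigenvalue $\mu$) and that $K(v,w)=0$, which together strip \eqref{eqn:2.5} down to a single scalar equation in the $\mathcal{D}_3$-direction.  Once those two orthogonality-type facts are established, the quadratic relations satisfied by $\lambda_1,\mu,\eta$ make the algebra collapse immediately.
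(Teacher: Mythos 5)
Your proof is correct and follows essentially the same route as the paper: both first establish $K(v,w)=0$ for all $v\in\mathcal{D}_2$ and then exploit the parallelism identity \eqref{eqn:2.5} together with the quadratic relations among $\lambda_1,\mu,\eta$. The only difference is the choice of arguments: the paper feeds the tuple $(e_1,v_1,v_2,w)$ into \eqref{eqn:2.5}, so that the left-hand side $\hat{R}(e_1,v_1)K(v_2,w)$ vanishes outright and the identity reduces immediately to $K(\hat{R}(e_1,v_1)v_2,w)=0$, whereas your permutation $(v_1,w,v_2,e_1)$ forces the extra (correctly verified) observation that $K_wL(v_1,v_2)\in\mathcal{D}_3$ before the eigenvalue argument can close the computation.
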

\begin{proof}
For every $v\in \mathcal{D}_2$ and $w\perp{\rm Im}(L)$, we apply
(iii) of Lemma \ref{lm:4.2} and \eqref{eqn:2.3} to obtain
$$
\begin{aligned}
K(v,w)&=\sum_{i=2}^{m}h(K(v,w),e_i)e_i=\sum_{i=2}^{m}h(K(v,e_i),w)e_i\\
&=\sum_{i=2}^{m}h(L(v,e_i),w)e_i=0,
\end{aligned}
$$
$$
\begin{aligned}
\hat{R}(e_1,v)w=-K(K(v,w),e_1)+K(v,K(e_1,w))=0.
\end{aligned}
$$
Then, for $v_1,v_2$ and $w$ as in the assumptions, the following equation
$$
\begin{aligned}
\hat{R}(e_1,v_1)K(v_2,w)=K(\hat{R}(e_1,v_1)v_2,w)+K(v_2,\hat{R}(e_1,v_1)w)
\end{aligned}
$$
becomes equivalent to $K(\hat{R}(e_1,v_1)v_2,w)=0$. On the other
hand, direct calculation gives that
$$
\begin{aligned}
\hat{R}(e_1,v_1)v_2&=\varepsilon h(v_1,v_2)e_1
-K(e_1,K(v_1,v_2))+K(v_1,K(e_1,v_2))\\
&=\varepsilon h(v_1,v_2)e_1-K(e_1,L(v_1,v_2)
+\tfrac{1}{2}\lambda_1h(v_1,v_2)e_1)\\
&\ \ \ +\tfrac{1}{2}\lambda_1(L(v_1,v_2)
+\tfrac{1}{2}\lambda_1 h(v_1,v_2)e_1)\\
&=-\eta^2h(v_1,v_2)e_1+\eta L(v_1,v_2).
\end{aligned}
$$
Then \eqref{eqn:4.11} immediately follows.
\end{proof}

\begin{lemma}\label{lm:4.6}
In Cases $\{\mathfrak{C}_m\}_{2\le m\le n-1}$, let $v_1,v_2,v_3,v_4
\in \mathcal{D}_2$ and $\{u_1,\ldots,u_{m-1}\}$ be an orthonormal
basis of $\mathcal{D}_2$, then we have
\begin{align}\label{eqn:4.12}
K(L(v_1,v_2),L(v_3,v_4))=&\mu h(L(v_1,v_2),L(v_3,v_4))e_1
+\mu \eta h(v_1,v_2)L(v_3,v_4)\nonumber \\
&+\sum_{i=1}^{m-1}
h(L(v_1,u_i),L(v_3,v_4))L(u_i,v_2)\nonumber \\
& +\sum_{i=1}^{m-1}
h(L(v_2,u_i),L(v_3,v_4))L(u_i,v_1).
\end{align}
\end{lemma}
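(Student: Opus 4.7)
The plan is to derive \eqref{eqn:4.12} from the parallelism identity \eqref{eqn:2.5}, choosing the arguments so that on the left-hand side we force the expression $K(L(v_1,v_2),L(v_3,v_4))$ to appear, while the right-hand side involves only quantities we already control via Lemma \ref{lm:4.2}, Lemma \ref{lm:4.3} and the computation already carried out in the proof of Lemma \ref{lm:4.5}.

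\textbf{Preliminary reductions.} I would first record three auxiliary formulas, each a short direct calculation. (i) From the proof of Lemma \ref{lm:4.5} we already know that $\hat R(e_1,v_1)v_2=-\eta^2 h(v_1,v_2)e_1+\eta L(v_1,v_2)$ for $v_1,v_2\in\mathcal{D}_2$. (ii) For $v\in\mathcal{D}_2$ and any $w\in\mathcal{D}_3$, part (iii) of Lemma \ref{lm:4.2} gives $K(v,w)\in\mathcal{D}_2$; projecting onto the basis $\{u_i\}$ and using the definition of $L$ yields
\begin{equation*}
K(v,w)=\sum_{i=1}^{m-1}h(L(v,u_i),w)\,u_i.
\end{equation*}
In particular $K(v_2,L(v_3,v_4))=\sum_i h(L(v_2,u_i),L(v_3,v_4))\,u_i$. (iii) Using \eqref{eqn:2.3} with $X=e_1$, $Y=v_1$, $Z=L(v_3,v_4)$ together with $K_{e_1}L(v_3,v_4)=\mu L(v_3,v_4)$ and the expression above for $K_{v_1}L(v_3,v_4)$, a short computation gives
\begin{equation*}
\hat R(e_1,v_1)L(v_3,v_4)=-\eta\sum_{i=1}^{m-1}h(L(v_1,u_i),L(v_3,v_4))\,u_i.
\end{equation*}

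\textbf{Main step.} Now I would apply \eqref{eqn:2.5} in the form
\begin{equation*}
\hat R(e_1,v_1)\,K(v_2,L(v_3,v_4))=K(\hat R(e_1,v_1)v_2,\,L(v_3,v_4))+K(v_2,\,\hat R(e_1,v_1)L(v_3,v_4)).
\end{equation*}
Substituting the three auxiliary formulas above expands both sides into linear combinations of the vectors $e_1$, $L(v_1,u_i)$, $L(v_2,u_i)$ and $L(v_3,v_4)$; the only new term is $\eta K(L(v_1,v_2),L(v_3,v_4))$, which arises on the right-hand side from applying $K(\,\cdot\,,L(v_3,v_4))$ to the $L(v_1,v_2)$-piece of $\hat R(e_1,v_1)v_2$. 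Collecting terms and dividing by $\eta$ (legitimate because $\eta>0$ in the cases $\{\mathfrak{C}_m\}_{2\le m\le n-1}$), the $\mathcal{D}_1$-coefficient simplifies using $\tfrac12\lambda_1-\eta=\mu$, and one reads off precisely \eqref{eqn:4.12}.

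\textbf{Expected obstacle.} None of the individual identities is delicate, but the expansion produces several sums indexed over $\{u_i\}$ whose $e_1$-components must cancel in the right pattern (using $\sum_i h(v_2,u_i)\,L(v_1,u_i)=L(v_1,v_2)$ and the symmetry of $L$). Keeping track of these bookkeeping cancellations and the sign arising from $\mu-\tfrac12\lambda_1=-\eta$ is the only place where care is needed; this is where I expect the calculation to look deceptively long, and it should be organized by first separating each intermediate expression into its $\mathcal{D}_1$-, $\mathcal{D}_2$- and $\mathcal{D}_3$-components before equating.
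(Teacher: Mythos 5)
Your proposal is correct and is essentially the paper's own proof: the paper likewise applies \eqref{eqn:2.5} to $\hat R(e_1,v_1)K(v_2,L(v_3,v_4))$, uses the same three auxiliary formulas \eqref{eqn:4.14}--\eqref{eqn:4.16}, and solves for $K(L(v_1,v_2),L(v_3,v_4))$ after dividing by $\eta$. The bookkeeping you flag (the identity $\sum_i h(v_1,u_i)L(v_2,u_i)=L(v_1,v_2)$ and the simplification $\tfrac12\lambda_1-\eta=\mu$) works out exactly as you anticipate.
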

\begin{proof}
By \eqref{eqn:2.5}, we have, for $v_1,v_2,v_3,v_4
\in \mathcal{D}_2$, that
\begin{align}\label{eqn:4.13}
&\hat{R}(e_1,v_1)K(v_2,L(v_3,v_4))\nonumber \\
&=K(\hat{R}(e_1,v_1)v_2,
L(v_3,v_4))+K(v_2,\hat{R}(e_1,v_1)L(v_3,v_4)).
\end{align}
Applying
\eqref{eqn:2.3} for $v_1, v_2\in \mathcal{D}_2$, we obtain that
\begin{equation}\label{eqn:4.14}
\hat{R}(e_1,v_1)v_2=-\eta^2h(v_1,v_2)e_1+\eta L(v_1,v_2).
\end{equation}
Similarly, for $v\in \mathcal{D}_2$ and
$w\in \mathcal{D}_3$, we have that
\begin{equation}\label{eqn:4.15}
\hat{R}(e_1,v)w=-\eta K(v,w).
\end{equation}
By Lemma \ref{lm:4.2}, $K(v_2,L(v_3,v_4))\in \mathcal{D}_2$ and
we can write
\begin{equation}\label{eqn:4.16}
K(v_2,L(v_3,v_4))=\sum_{i=1}^{m-1}
h(L(v_2,u_i),L(v_3,v_4))u_i.
\end{equation}

Now, we can compute both sides of \eqref{eqn:4.13} to obtain
\begin{align*}%\label{eqn:4.20}
&{\rm LHS}=-\eta^2h(L(v_1,v_2),L(v_3,v_4))e_1+\eta \sum_{i=1}^{m-1}
h(L(v_2,u_i),L(v_3,v_4))L(u_i,v_1),\\
&{\rm RHS}=-\mu \eta^2 h(v_1,v_2)L(v_3,v_4)
+\eta K(L(v_1,v_2),L(v_3,v_4))\\
&\ \ \ \ \ \ \ \ \
-\tfrac{1}{2}\lambda_1\eta  h(L(v_1,v_2),L(v_3,v_4))e_1\\
&\ \ \ \ \ \ \ \ \ -\eta\sum_{i=1}^{m-1}
h(L(v_1,u_i),L(v_3,v_4))L(u_i,v_2).
\end{align*}
From these computations we immediately get \eqref{eqn:4.12}.
\end{proof}

We note that \eqref{eqn:4.12} has very important consequences which
will be used in sequel sections. For example, we have
\begin{lemma}\label{lm:4.7}
For Case $\mathfrak{C}_m$ with $m\ge3$, let $\{u_1,\ldots,u_{m-1}\}$
be an orthonormal basis of $\mathcal{D}_2$, then for $p\neq j$, we
have
\begin{align}\label{eqn:4.17}
0=&\big(\eta(\eta+\tfrac{1}{2}\lambda_1 )-4h(L(u_j,u_p),
L(u_j,u_p))\big)L(u_j,u_p)\nonumber \\
&-\sum_{i\neq p}
4h(L(u_j,u_i),
L(u_j,u_p))L(u_i,u_j).
\end{align}
In particular, if $L(u_1,u_2)\neq 0$ and $L(u_1,u_i)$
is orthogonal to $L(u_1,u_2)$ for all
$i\neq 2$, then
\begin{align}\label{eqn:4.18}
h(L(u_1,u_2),L(u_1,u_2))=\tfrac{1}{4}\eta(\eta+\tfrac{1}{2}\lambda_1)
=:\tau.
\end{align}
\end{lemma}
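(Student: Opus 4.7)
The strategy is to exploit the symmetry of $K$ by computing the vector $K(L(u_j,u_j),L(u_j,u_p))$ in two different ways via Lemma~\ref{lm:4.6}, and then equating the two resulting expressions. Both sides of the identity $K(X,Y)=K(Y,X)$ are accessible through formula \eqref{eqn:4.12}, but with different choices of the four arguments; that asymmetry is what will produce a nontrivial linear relation among the $L(u_i,u_j)$'s.

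Concretely, I would apply \eqref{eqn:4.12} first with $(v_1,v_2,v_3,v_4)=(u_j,u_j,u_j,u_p)$ and then again with $(v_1,v_2,v_3,v_4)=(u_j,u_p,u_j,u_j)$. In the first application the $e_1$-component vanishes by \eqref{eqn:4.7}, the term $\mu\eta h(v_1,v_2)L(v_3,v_4)$ contributes $\mu\eta L(u_j,u_p)$, and the two remaining sums coincide to give $2\sum_i h(L(u_j,u_i),L(u_j,u_p))L(u_i,u_j)$. In the second application the $e_1$-component again vanishes by \eqref{eqn:4.7}, the term carrying $h(u_j,u_p)$ vanishes since $j\ne p$, the third sum collapses to $\tfrac{1}{2}\lambda_1\eta L(u_j,u_p)$ using \eqref{eqn:4.7} together with \eqref{eqn:4.4}, and the fourth sum is handled case by case on $i$: for $i=j$ by \eqref{eqn:4.7}, for $i=p$ by \eqref{eqn:4.8}, and for $i\ne j,p$ by \eqref{eqn:4.9}.

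Collecting terms and isolating the coefficient of $L(u_j,u_p)$ from those of $L(u_i,u_j)$ with $i\ne j,p$, equating the two formulas produces
\[
\bigl((\mu-\lambda_1)\eta+4h(L(u_j,u_p),L(u_j,u_p))\bigr)L(u_j,u_p)+4\sum_{i\ne j,p}h(L(u_j,u_i),L(u_j,u_p))L(u_i,u_j)=0.
\]
Inserting $\mu=\tfrac{1}{2}\lambda_1-\eta$ yields $(\mu-\lambda_1)\eta=-\eta(\eta+\tfrac{1}{2}\lambda_1)$, while \eqref{eqn:4.7} gives $h(L(u_j,u_j),L(u_j,u_p))=0$, so the summation $\sum_{i\ne j,p}$ may be harmlessly enlarged to $\sum_{i\ne p}$. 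A global sign change then delivers \eqref{eqn:4.17} exactly.

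For \eqref{eqn:4.18} I would simply specialize \eqref{eqn:4.17} to $j=1$, $p=2$: the hypothesis forces every term in the sum over $i\ne 2$ to vanish, and since $L(u_1,u_2)\ne 0$ the remaining scalar coefficient must be zero, giving $h(L(u_1,u_2),L(u_1,u_2))=\tfrac{1}{4}\eta(\eta+\tfrac{1}{2}\lambda_1)=\tau$. The main obstacle is the bookkeeping in the second application of \eqref{eqn:4.12}, where one must cleanly separate the $i=j$ and $i=p$ contributions in each sum and invoke the correct identity from Lemma~\ref{lm:4.4} for each case; once this is done, the rest is an algebraic rearrangement.
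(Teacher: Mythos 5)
Your proof is correct and follows essentially the same route as the paper: the paper forms the antisymmetrized identity \eqref{eqn:4.19} from \eqref{eqn:4.12} and substitutes $v_2=v_3=v_4=u_j$, $v_1=u_p$, which is exactly your two applications of \eqref{eqn:4.12} to the swapped pairs $(u_j,u_j;u_j,u_p)$ and $(u_j,u_p;u_j,u_j)$, cleaned up with the isotropy identities \eqref{eqn:4.4} and \eqref{eqn:4.7}--\eqref{eqn:4.9}. The bookkeeping and the final specialization to $j=1$, $p=2$ check out.
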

\begin{proof}
By \eqref{eqn:4.12}, interchanging the couples of
indices $\{1,2\}$ and $\{3,4\}$ we find
the following condition:
\begin{align}\label{eqn:4.19}
0=&\mu \eta \big(h(v_1,v_2)L(v_3,v_4))-h(v_3,v_4)L(v_1,v_2))
\big)\nonumber \\
&+\sum_{i=1}^{m-1}
h(L(v_1,u_i),L(v_3,v_4))L(u_i,v_2)\nonumber \\
& +\sum_{i=1}^{m-1}
h(L(v_2,u_i),L(v_3,v_4))L(u_i,v_1)\nonumber \\
& -\sum_{i=1}^{m-1}
h(L(v_3,u_i),L(v_1,v_2))L(u_i,v_4)\nonumber \\
&-\sum_{i=1}^{m-1}
h(L(v_4,u_i),L(v_1,v_2))L(u_i,v_3).
\end{align}

If we take $v_2=v_3=v_4=u_j$ and $v_1=u_p$ with $j\not=p$, then by
using also the isotropy condition, \eqref{eqn:4.19} reduces to
\eqref{eqn:4.17}. Taking $j=1$ and $p=2$ in \eqref{eqn:4.17}, we
obtain \eqref{eqn:4.18}.
\end{proof}

\vskip 1mm

\subsection{The mapping $P_v:\mathcal{D}_2\rightarrow \mathcal{D}_2$ with unit vector
$v\in \mathcal{D}_2$}\label{sect:4.4}~

\vskip 1mm

We now define for any given unit vector
$v\in \mathcal{D}_2$ a linear map $P_v:\mathcal{D}_2
\rightarrow \mathcal{D}_2$ by
\begin{equation}\label{eqn:4.20}
P_v\tilde{v}=K_vL(v,\tilde{v}),\ \forall\,\tilde{v}\in
\mathcal{D}_2.
\end{equation}

It is easily seen that $P_v$ is a symmetric linear operator
satisfying
\begin{equation}\label{eqn:4.21}
h(P_v\tilde{v},v')=h(L(v,\tilde{v}),L(v,v'))=h(P_vv',\tilde{v}),
\end{equation}
for any $\tilde{v},\,v'\in \mathcal{D}_2$. Moreover, we have

\begin{lemma}\label{lm:4.8}
For any unit vector $v\in \mathcal{D}_2$, the operator
$P_v:\mathcal{D}_2 \rightarrow \mathcal{D}_2$ has
$\sigma=\tfrac{1}{2}\lambda_1\eta $ as an eigenvalue with
eigenvector $v$. In the orthogonal complement $\{v\}^\perp$ of
$\{v\}$ in $\mathcal{D}_2$ the operator $P_v$ has at most two
eigenvalues, namely $0$ and $\tau$, defined as in \eqref{eqn:4.18}.
\end{lemma}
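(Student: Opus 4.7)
The plan is to obtain both assertions as direct consequences of the isotropy identity (4.6) and the constraint provided by Lemma \ref{lm:4.7}.

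First I would compute $P_v v$. Applying the linearized isotropy relation (4.6) with $v_1=v_2=v_3=v$ and using $h(v,v)=1$, one obtains $3K_vL(v,v)=\tfrac{3}{2}\lambda_1\eta\,v$, so $P_vv=\tfrac{1}{2}\lambda_1\eta\,v=\sigma v$. In particular $v$ is an eigenvector of $P_v$ with eigenvalue $\sigma$, and since $P_v$ is symmetric by (4.21), its remaining eigenvectors can be taken inside $\{v\}^\perp\subset \mathcal{D}_2$.

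Next I would analyze a generic unit eigenvector $\tilde v\in\{v\}^\perp$ of $P_v$ with eigenvalue $\alpha$. From (4.21), $\alpha=h(P_v\tilde v,\tilde v)=h(L(v,\tilde v),L(v,\tilde v))$, so $\alpha\geq 0$ with equality iff $L(v,\tilde v)=0$. The case $\alpha=0$ gives the eigenvalue $0$. For the case $\alpha\neq 0$, extend $\{v,\tilde v\}$ to an $h$-orthonormal basis $\{u_1=v,\,u_2=\tilde v,\,u_3,\ldots,u_{m-1}\}$ of $\mathcal{D}_2$. For each $i\geq 3$, the eigenvector property together with (4.21) yields
\[
h(L(v,\tilde v),L(v,u_i))=h(P_v\tilde v,u_i)=\alpha\,h(\tilde v,u_i)=0,
\]
while $h(L(v,v),L(v,\tilde v))=0$ is exactly (4.7). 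Thus $L(u_1,u_i)$ is $h$-orthogonal to $L(u_1,u_2)$ for all $i\neq 2$, and since $L(u_1,u_2)=L(v,\tilde v)\neq 0$, the second part of Lemma \ref{lm:4.7} applies and gives $\alpha=h(L(v,\tilde v),L(v,\tilde v))=\tau$.

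Combining both cases shows that the eigenvalues of $P_v$ restricted to $\{v\}^\perp$ lie in $\{0,\tau\}$, completing the proof. I do not foresee a substantive obstacle: the argument is a clean bookkeeping of the isotropy identities already established. The one subtle point is to set up the orthonormal basis so that the hypothesis of Lemma \ref{lm:4.7} is met --- this is exactly what the eigenvalue/eigenvector condition delivers, via (4.21), together with the built-in orthogonality (4.7). The marginal case $\dim \mathcal{D}_2=1$ (i.e.\ $m=2$) makes the second assertion vacuous and requires no separate treatment.
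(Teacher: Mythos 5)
Your proposal is correct and follows essentially the same route as the paper: the first assertion comes from the isotropy identities (you read it off (4.6) directly, the paper equivalently uses (4.4) and (4.7)), and the second comes from Lemma \ref{lm:4.7} applied to an orthonormal basis adapted to an eigenvector of $P_v$, with the required orthogonality $h(L(v,\tilde v),L(v,u_i))=0$ supplied by the symmetry relation (4.21). The paper packages this as equation (4.24) by pairing (4.17) with $L(u_1,u_p)$, while you invoke the ``in particular'' clause (4.18); these are the same computation.
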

\begin{proof}
By \eqref{eqn:4.4}, we have
\begin{equation}\label{eqn:4.22}
h(P_vv,v)=h(L(v,v),L(v,v))=\tfrac{1}{2}\lambda_1\eta .
\end{equation}
Taking $v'\perp v$, we get
\begin{equation}\label{eqn:4.23}
h(P_vv,v')=h(L(v,v'),L(v,v))=0.
\end{equation}
\eqref{eqn:4.22} and \eqref{eqn:4.23} imply that
$P_vv=\tfrac{1}{2}\lambda_1\eta v$.

Next, we take an orthonormal basis $\{u_1,\ldots,u_{m-1}\}$ of
$\mathcal{D}_2$ consisting of eigenvectors of $P_v$ such that
$P_vu_i=\sigma_iu_i,\ i=1,\ldots,m-1$, with $u_1=v$ and
$\sigma_1=\sigma$. We take the inner product of \eqref{eqn:4.17}
with $L(u_1,u_p)$ for $j=1$ and any $p\geq 2$. We obtain that
\begin{equation}\label{eqn:4.24}
h(L(u_1,u_p),L(u_1,u_p))\big(\tau-h(L(u_1,u_p),L(u_1,u_p))\big)=0,\
\ p\ge2.
\end{equation}
Here, to derive \eqref{eqn:4.24}, we have used that
\begin{equation*}
h(L(u_1,u_p),L(u_1,u_i))=h(u_p,P_{u_1}u_i)=0,\ i\neq p.
\end{equation*}
From \eqref{eqn:4.24}, we immediately get the remaining assertion.
\end{proof}
In the following we denote by $V_v(0)$ and $V_v(\tau)$
the eigenspaces of $P_v$ (in the orthogonal
complement of $\{v\}$) with respect to the eigenvalues 0
and $\tau$, respectively. Note that in exceptional cases
it can happen that $\sigma=\tau$.
\begin{lemma}\label{lm:4.9}
Let $v,u\in \mathcal{D}_2$ be two unit orthogonal vectors. Then the
following statements are equivalent:
\begin{enumerate}
\item[(i)] $u\in V_v(0)$.
\item[(ii)] $L(u,v)=0$.
\item[(iii)] $L(u,u)=L(v,v)$.
\item[(iv)] $v\in V_u(0)$.
\end{enumerate}
Moreover, any of the previous statements implies that
\begin{enumerate}
\item[(v)] $P_v=P_u$ on $\{u,v\}^{\perp} $.
\end{enumerate}
\end{lemma}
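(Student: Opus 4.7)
The plan is to first establish the equivalence of (i), (ii), (iv) directly from the definition of $P_v$ and the bilinearity/symmetry of $L$, then derive (iii) from (ii) via the isotropy identities of Lemma \ref{lm:4.4}, and finally deduce (v) by a symmetrization argument using the polarized isotropy formula \eqref{eqn:4.5}.

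First, I would note that by formula \eqref{eqn:4.21} one has $h(P_v u, u) = h(L(v,u), L(v,u))$. Since $P_v u \in \mathcal{D}_2$ (by Lemma \ref{lm:4.2}(iii)) and $h$ is positive definite, $P_v u = 0$ implies $h(P_v u, u) = 0$, whence $L(v,u)=0$ because $L(v,u)\in\mathcal{D}_3$ and $h$ is positive definite there; conversely $L(v,u)=0$ trivially gives $P_v u = K_v L(v,u) = 0$. This settles (i) $\Leftrightarrow$ (ii). Since $K$ is symmetric, $L(u,v)=L(v,u)$, so the same argument applied to $P_u$ yields (ii) $\Leftrightarrow$ (iv).

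Next I would prove (ii) $\Leftrightarrow$ (iii) using the isotropy identity \eqref{eqn:4.4} (which gives $h(L(u,u),L(u,u)) = h(L(v,v),L(v,v)) = \tfrac12\lambda_1\eta$) together with \eqref{eqn:4.8} applied to the orthonormal pair $\{v,u\}$:
\begin{equation*}
h(L(u,u),L(v,v)) + 2\,h(L(u,v),L(u,v)) = \tfrac{1}{2}\lambda_1\eta.
\end{equation*}
Combining these, a direct computation of $\|L(u,u)-L(v,v)\|_h^2$ shows $\|L(u,u)-L(v,v)\|_h^2 = 4\,h(L(u,v),L(u,v))$, which vanishes iff $L(u,v)=0$. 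Hence (ii) $\Leftrightarrow$ (iii).

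Finally, to establish the implication to (v), I would substitute $v_1=v_3=v$, $v_2=w$, $v_4=w'$ (with $w,w'\in\{u,v\}^\perp\cap\mathcal{D}_2$) into the polarized identity \eqref{eqn:4.5}; since $h(v,w)=h(v,w')=0$ this reduces to
\begin{equation*}
2\,h(L(v,w),L(v,w')) + h(L(v,v),L(w,w')) = \tfrac{1}{2}\lambda_1\eta\, h(w,w'),
\end{equation*}
and the analogous identity with $u$ in place of $v$ holds. By the equivalent condition (iii) we have $L(v,v) = L(u,u)$, so the two right-hand sides coincide; using \eqref{eqn:4.21} this yields $h(P_v w, w') = h(P_u w, w')$ for all $w,w'\in\{u,v\}^\perp$, hence $P_v = P_u$ on $\{u,v\}^\perp$. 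I expect the main subtlety to lie in writing $P_v$ and $P_u$ in comparable form on the common complement $\{u,v\}^\perp$: the key observation is that the ``obstruction term'' $h(L(v,v),L(w,w'))$ depends on $v$ only through the vector $L(v,v)\in\mathcal{D}_3$, which is exactly what condition (iii) controls.
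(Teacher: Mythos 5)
Your proposal is correct and follows essentially the same route as the paper: the identity $h(P_vu,u)=h(L(v,u),L(v,u))=h(P_uv,v)$ together with positive definiteness for (i)$\Leftrightarrow$(ii)$\Leftrightarrow$(iv), the computation $\|L(u,u)-L(v,v)\|_h^2=4\,h(L(u,v),L(u,v))$ from \eqref{eqn:4.4} and \eqref{eqn:4.8} for (ii)$\Leftrightarrow$(iii), and the polarized isotropy identity specialized to $v_1=v_3=v$ (the paper quotes \eqref{eqn:4.6}, whose inner product with a fourth vector is exactly your \eqref{eqn:4.5}) for (v). No gaps.
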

\begin{proof}
As $h(P_vu,u)=h(L(v,u),L(v,u))=h(P_uv,v)$, the equivalence
of (i), (ii) and (iv) follows immediately. As $u$ and $v$ are
orthogonal, \eqref{eqn:4.4} and \eqref{eqn:4.8} imply that
\begin{equation*}
h(L(v,v)-L(u,u),L(v,v)-L(u,u))=4h(L(v,u),L(v,u)).
\end{equation*}
It follows that (ii) is equivalent to (iii).

Now we assume that (i), (ii), (iii) and (iv) are satisfied. In order
to prove (v), we see that the space spanned by $\{u,v\}$ is
invariant by $P_v$ and $P_u$, also its orthogonal complement is
invariant. By taking $v_1,v_2 \in\{u,v\}^\perp$ and using
\eqref{eqn:4.6}, we find
$$
\begin{aligned}
h(v_1,P_vv_2)&=h(L(v,v_1),L(v,v_2))\\
&=-\tfrac{1}{2}h(L(v,v),L(v_1,v_2))+
\tfrac{1}{4}\lambda_1\eta h(v_1,v_2)\\
&=-\tfrac{1}{2}h(L(u,u),L(v_1,v_2))+
\tfrac{1}{4}\lambda_1\eta h(v_1,v_2)\\
&=h(v_1,P_uv_2).
\end{aligned}
$$
This completes the proof.
\end{proof}

\begin{lemma}\label{lm:4.10}
Let $v,\tilde{v}\in \mathcal{D}_2$ be two unit orthogonal vectors, then
\begin{equation}\label{eqn:4.25}
\begin{aligned}
h(L(v,\tilde{v}),L(v,\tilde{v}))=\tau
\end{aligned}
\end{equation}
holds if and only if $\tilde{v}\in V_v(\tau)$. Moreover, if we
assume $u\in V_v(0)$ and the equality in \eqref{eqn:4.25} holds,
then $u\in V_{\tilde{v}}(\tau)$.
\end{lemma}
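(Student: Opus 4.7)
The plan is to exploit the spectral structure of $P_v$ together with the isotropy identity \eqref{eqn:4.8}. By Lemma \ref{lm:4.8}, on the orthogonal complement $\{v\}^\perp \subset \mathcal{D}_2$ the symmetric operator $P_v$ admits only the eigenvalues $0$ and $\tau$; since Lemma \ref{lm:4.1}(i) gives $\lambda_1>0$ and since in the present cases $\eta=\tfrac12\sqrt{\lambda_1^2-4\varepsilon}>0$, we have $\tau = \tfrac14\eta(\eta+\tfrac12\lambda_1)>0$. Hence the eigenspaces $V_v(0)$ and $V_v(\tau)$ are $h$-orthogonal and together span $\{v\}^\perp$. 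Decomposing the unit vector $\tilde v \perp v$ as $\tilde v = \tilde v_0 + \tilde v_\tau$ with $\tilde v_0 \in V_v(0)$ and $\tilde v_\tau \in V_v(\tau)$, I would compute
$$h(L(v,\tilde v), L(v,\tilde v)) = h(P_v\tilde v, \tilde v) = \tau\, h(\tilde v_\tau, \tilde v_\tau).$$
Since $h(\tilde v_0, \tilde v_0) + h(\tilde v_\tau, \tilde v_\tau) = 1$, this quantity equals $\tau$ if and only if $\tilde v_0 = 0$, i.e., $\tilde v \in V_v(\tau)$, which settles the equivalence.

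For the second assertion, suppose $u \in V_v(0)$ is a unit vector and that equality holds in \eqref{eqn:4.25}, so by what was just proved, $\tilde v \in V_v(\tau)$. As $V_v(0)$ and $V_v(\tau)$ are eigenspaces of the symmetric operator $P_v$ for the distinct eigenvalues $0$ and $\tau$, they are $h$-orthogonal, so $h(u,\tilde v) = 0$ and $(u,\tilde v)$ is an orthonormal pair. Lemma \ref{lm:4.9} moreover gives $L(u,u) = L(v,v)$. I would then apply the identity \eqref{eqn:4.8} first to $(v,\tilde v)$, using the hypothesis $h(L(v,\tilde v),L(v,\tilde v))=\tau$, to extract
$$h(L(v,v), L(\tilde v, \tilde v)) = \tfrac12\lambda_1\eta - 2\tau,$$
and then to $(u,\tilde v)$; substituting $L(u,u) = L(v,v)$ into the latter and comparing yields $h(L(u,\tilde v), L(u,\tilde v)) = \tau$. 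Invoking the already established first half of the lemma with the roles of $v$ and $\tilde v$ interchanged (applied to the unit orthogonal pair $(\tilde v, u)$) then gives $u \in V_{\tilde v}(\tau)$, as required.

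The only mild obstacle I foresee is ensuring $\tau > 0$, which is what secures the orthogonality $V_v(0) \perp V_v(\tau)$ needed both to recover $\tilde v$ from its quadratic form value in the first part and to deduce $u \perp \tilde v$ in the second part. I would record the inequalities $\lambda_1>0$ and $\eta>0$ up front to make this transparent. Once these signs are in hand the remainder is bookkeeping with the quadratic identities in Lemmas \ref{lm:4.3} and \ref{lm:4.4}, so no substantial further difficulty is expected.
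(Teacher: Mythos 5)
Your proposal is correct and follows essentially the same route as the paper: the forward direction and the characterization via the spectral decomposition of $P_v$ on $\{v\}^\perp$ (the paper just phrases this as a three-case analysis on whether $V_v(0)$ or $V_v(\tau)$ is empty, which your uniform decomposition subsumes, both relying on $\tau>0$), and the second assertion via $L(u,u)=L(v,v)$ combined with the polarized isotropy identity \eqref{eqn:4.8}. The explicit verification that $\tau>0$ and that $u\perp\tilde v$ is a welcome clarification of points the paper leaves implicit.
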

\begin{proof}
If $\tilde{v}\in V_v(\tau)$, then $h(L(v,\tilde{v}),L(v,\tilde{v}))
=h(\tilde{v},P_v\tilde{v})=\tau$.

Conversely, if $h(L(v,\tilde{v}),L(v,\tilde{v}))=\tau$,
we should consider the following three cases:

(i) $V_v(0)=\emptyset$. From Lemma \ref{lm:4.8}, it is easily seen
that $\tilde{v}\in V_v(\tau)$.

(ii) $V_v(\tau)=\emptyset$. In this case, Lemma \ref{lm:4.8}
implies that $\tilde{v}\in V_v(0)$. By Lemma \ref{lm:4.9}, we have
$h(L(v,\tilde{v}),L(v,\tilde{v}))=0. $
This is a contradiction.

(iii) $V_v(0)\neq\emptyset$ and $V_v(\tau)\neq\emptyset$. We can
write
$$
\begin{aligned}
\tilde{v}=\cos\theta v_0+\sin\theta v_1,\  h(v_0,v_0)=h(v_1,v_1)=1,
\end{aligned}
$$
where $v_0 \in V_v(0)$ and $v_1\in V_v(\tau)$. Then we get
$$
\begin{aligned}
\tau=h(L(v,\tilde{v}),L(v,\tilde{v}))=\sin^2\theta\tau,
\end{aligned}
$$
which means that $\sin\theta=\pm1$ and $\cos\theta=0$.
Therefore, $\tilde{v}\in V_v(\tau)$.

Taking unit vector $u \in V_v(0)$, we have $L(u,u)=L(v,v)$.
Consequently,
$$
\begin{aligned}
&h(L(\tilde{v},u),L(\tilde{v},u))\\
&=-\tfrac{1}{2}h(L(\tilde{v},\tilde{v}),L(u,u))+
\tfrac{1}{4}\lambda_1\eta \\
&=-\tfrac{1}{2}h(L(v,v),L(\tilde{v},\tilde{v}))+
\tfrac{1}{4}\lambda_1\eta\\
&=h(v,P_{\tilde{v}}v)=\tau.
\end{aligned}
$$
Applying the first assertion of Lemma \ref{lm:4.10}, we have $u\in
V_{\tilde{v}}(\tau)$.
\end{proof}

\begin{lemma}\label{lm:4.11}
Let $v_1,v_2,v_3\in \mathcal{D}_2$ be orthonormal vectors satisfying
$v_1,v_2\in V_{v_3}(\tau)$, then for any vector $v\in
\mathcal{D}_2$, we have $h(L(v_1,v_2),L(v,v_3))=0$.
\end{lemma}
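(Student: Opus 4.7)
The plan is to apply the linearized identity~\eqref{eqn:4.19} with the substitution $(v_1,v_2,v_3,v_4)\leftarrow(v_1,v_2,v_3,v_3)$ and then to contract the resulting vector equation with $L(v_1,v_2)$ in order to isolate the quantities to be shown to vanish. Concretely, I would first fix an orthonormal basis $\{u_1,\ldots,u_{m-1}\}$ of $\mathcal{D}_2$ and set $T_i:=h(L(v_1,v_2),L(u_i,v_3))$; since $v\mapsto h(L(v_1,v_2),L(v,v_3))$ is linear, it is enough to prove that $T_i=0$ for every $i$.

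With the above substitution, the orthonormality of $v_1,v_2,v_3$ makes the first term of~\eqref{eqn:4.19} collapse to $-\mu\eta\,L(v_1,v_2)$, while the last two sums merge into $-2\sum_i T_i\,L(u_i,v_3)$. The only genuine computation I would need is the evaluation of the scalars $h(L(v_3,v_3),L(v_a,u_i))$ for $a\in\{1,2\}$: applying the polarized isotropy identity~\eqref{eqn:4.5} to the quadruple $(v_3,v_3,v_a,u_i)$ and using the eigenvalue relation $h(L(v_3,v_a),L(v_3,u_i))=\tau\,h(v_a,u_i)$, which follows from $v_a\in V_{v_3}(\tau)$, one reduces the computation to a short arithmetic step that rewrites $\tfrac{1}{2}\lambda_1\eta-2\tau$ as $\tfrac{1}{2}\eta\mu$ via the relation $\mu+\eta=\tfrac{1}{2}\lambda_1$ coming from Lemma~\ref{lm:4.1}, yielding $h(L(v_3,v_3),L(v_a,u_i))=\tfrac{1}{2}\eta\mu\,h(v_a,u_i)$. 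Substituting back, the two middle sums in~\eqref{eqn:4.19} each contribute $\tfrac{1}{2}\eta\mu\,L(v_1,v_2)$, whose total $\mu\eta\,L(v_1,v_2)$ exactly cancels the first term, and the identity reduces to the clean vector equation
\[
\sum_i T_i\,L(u_i,v_3)=0.
\]

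The final step is a one-line contraction: taking the $h$-inner product of the above with $L(v_1,v_2)$ and using $h(L(u_i,v_3),L(v_1,v_2))=T_i$ by definition produces $\sum_i T_i^{\,2}=0$, whence $T_i=0$ for every $i$ by positive-definiteness of $h$, which proves the lemma. The main obstacle I foresee is keeping track of the five terms appearing in~\eqref{eqn:4.19} and carrying out the arithmetic reduction of $\tfrac{1}{2}\lambda_1\eta-2\tau$; once those are handled, the clever choice of contracting with $L(v_1,v_2)$ and the positivity of $h$ finish the proof immediately.
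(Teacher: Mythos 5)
Your proof is correct and follows essentially the same route as the paper's: both apply \eqref{eqn:4.19} with $v_4=v_3$, evaluate the two middle sums as $\tfrac12\eta\mu\,L(v_1,v_2)$ each so that they cancel the leading $-\mu\eta L(v_1,v_2)$ term, and arrive at the vector identity $\sum_i T_i\,L(u_i,v_3)=0$. The only (cosmetic) difference is in the finish: the paper chooses the $u_i$ to be eigenvectors of $P_{v_3}$ so that the vectors $L(u_i,v_3)$ are mutually orthogonal and each coefficient vanishes separately, whereas you work in an arbitrary orthonormal basis and contract with $L(v_1,v_2)$ to obtain $\sum_i T_i^{\,2}=0$, which is a marginally cleaner way to conclude.
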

\begin{proof}
Using the linearity of the assertion with $v$, we may assume that
$v$ is an eigenvector of $P_{v_3}$. Let $\{u_1,\ldots,u_{m-1}\}$ be
an orthonormal basis of $\mathcal{D}_2$ consisting of eigenvectors
of $P_{v_3}$ such that $u_1=v_1,\ u_2=v_2$ and $u_3=v_3$. We now use
\eqref{eqn:4.19} for $v_3=v_4$ to obtain
\begin{align}\label{eqn:4.26}
0=&-\mu \eta L(v_1,v_2)+\sum_{i=1}^{m-1}
h(L(v_1,u_i),L(v_3,v_3))L(u_i,v_2)\nonumber \\
& +\sum_{i=1}^{m-1}
h(L(v_2,u_i),L(v_3,v_3))L(u_i,v_1)\nonumber \\
& -2\sum_{i=1}^{m-1}
h(L(v_3,u_i),L(v_1,v_2))L(u_i,v_3).
\end{align}

On the other hand, from \eqref{eqn:4.7}\,--\,\eqref{eqn:4.9}, we
have
$$
\begin{aligned}
&h(L(v_1,u_i),L(v_3,v_3))=h(L(v_2,u_j),L(v_3,v_3))=0,\
i\neq1,j\neq2,\\
&h(L(v_1,v_1),L(v_3,v_3))=h(L(v_2,v_2),L(v_3,v_3))=
\tfrac{1}{2}\lambda_1\eta -2\tau .
\end{aligned}
$$

Inserting the above into \eqref{eqn:4.26}, we obtain
\begin{align}\label{eqn:4.27}
0=\sum_{i=1}^{m-1}
h(L(v_3,u_i),L(v_1,v_2))L(u_i,v_3).
\end{align}

Since $h(L(u_i,v_3),L(u_j,v_3))=h(P_{v_3}u_i,u_j)=0$ if $i\neq j$,
the equation \eqref{eqn:4.27} implies that
$h(L(v_3,u_i),L(v_1,v_2))=0$ holds for all $u_i\notin V_{v_3}(0)$.
Combining with Lemma \ref{lm:4.9}, this immediately shows that for
any vector $v\in \mathcal{D}_2$, we have $h(L(v_1,v_2),L(v,v_3))=0$.
\end{proof}

\vskip 1mm

\subsection{Direct sum
decomposition for $\mathcal{D}_2$}\label{sect:4.5}~

\vskip 1mm

For our purpose, a crucial matter is to introduce a direct sum
decomposition for $\mathcal{D}_2$ based on the preceding Lemmas.
First, pick any unit vector $v_1\in\mathcal{D}_2$ and recall that
$\tau=\tfrac{1}{4}\eta(\eta+\tfrac{1}{2}\lambda_1)$, then by Lemma
\ref{lm:4.8}, we have a direct sum decomposition for
$\mathcal{D}_2$:
$$
\mathcal{D}_2=\{v_1\}\oplus V_{v_1}(0)\oplus V_{v_1}(\tau),
$$
where, here and later on, we denote also by $\{\cdot\}$ the vector
space spanned by its elements. If $V_{v_1}(\tau)\neq \emptyset$, we
take an arbitrary unit vector $v_2\in V_{v_1}(\tau)$. Then by Lemma
\ref{lm:4.10} we have:
$$
v_1\in V_{v_2}(\tau),\ \ V_{v_1}(0)\subset V_{v_2}(\tau)\ \ {\rm
and} \ \ V_{v_2}(0)\subset V_{v_1}(\tau).
$$
From this we deduce that
$$
\mathcal{D}_2=\{v_1\}\oplus V_{v_1}(0)\oplus \{v_2\}\oplus
V_{v_2}(0) \oplus\big(V_{v_1}(\tau)\cap V_{v_2}(\tau)\big).
$$

If $V_{v_1}(\tau)\cap V_{v_2}(\tau)\neq \emptyset$, we further pick a
unit vector $v_3\in V_{v_1}(\tau)\cap V_{v_2}(\tau)$. Then
$$
\mathcal{D}_2=\{v_3\}\oplus V_{v_3}(0)\oplus V_{v_3}(\tau),
$$
and by Lemma \ref{lm:4.10} we have
$$
v_1,v_2\in V_{v_3}(\tau);\ V_{v_1}(0),\ V_{v_2}(0)\subset
V_{v_3}(\tau).
$$
It follows that
$$
\begin{aligned}
\mathcal{D}_2=\{v_1\}\oplus V_{v_1}(0)\oplus
\{v_2\}\oplus V_{v_2}(0)&\oplus\{v_3\}\oplus V_{v_3}(0)\\
&\oplus\big(V_{v_1}(\tau)\cap V_{v_2}(\tau)\cap V_{v_3}(\tau)\big).
\end{aligned}
$$

Considering that $\dim\,(\mathcal{D}_2)=m-1$ is finite, by
induction, we get
\begin{proposition}\label{pr:4.1}
In Cases $\{\mathfrak{C}_m\}_{2\le m\le n-1}$, there exists an
integer $k_0$ and unit vectors $v_1,\ \ldots,\ v_{k_0}\in
\mathcal{D}_2$ such that
\begin{align}\label{eqn:4.28}
\mathcal{D}_2=\{v_1\}\oplus V_{v_1}(0)\oplus \cdots
\oplus \{v_{k_0}\}\oplus V_{v_{k_0}}(0).
\end{align}
\end{proposition}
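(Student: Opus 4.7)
The plan is to prove the decomposition by a finite inductive construction that iteratively peels off a pair $\{v_j\} \oplus V_{v_j}(0)$ at each step. As the base case, pick any unit vector $v_1 \in \mathcal{D}_2$. By Lemma \ref{lm:4.8}, the operator $P_{v_1}$ has only the eigenvalues $\sigma, 0, \tau$, which gives the orthogonal splitting $\mathcal{D}_2 = \{v_1\} \oplus V_{v_1}(0) \oplus V_{v_1}(\tau)$. Set $W_1 := V_{v_1}(\tau)$. If $W_1 = \{0\}$, we are done with $k_0 = 1$.

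The inductive hypothesis at step $j$ is that we have mutually orthogonal unit vectors $v_1, \ldots, v_j \in \mathcal{D}_2$ together with an orthogonal decomposition
\begin{equation*}
\mathcal{D}_2 = \bigoplus_{i=1}^{j}\bigl(\{v_i\}\oplus V_{v_i}(0)\bigr)\oplus W_j,\qquad W_j := \bigcap_{i=1}^{j} V_{v_i}(\tau).
\end{equation*}
If $W_j = \{0\}$, stop and set $k_0 = j$. Otherwise pick a unit vector $v_{j+1} \in W_j$ and apply Lemma \ref{lm:4.8} to $P_{v_{j+1}}$ to obtain $\mathcal{D}_2 = \{v_{j+1}\} \oplus V_{v_{j+1}}(0) \oplus V_{v_{j+1}}(\tau)$. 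The inductive step is to refine $W_j$ as $W_j = \{v_{j+1}\} \oplus V_{v_{j+1}}(0) \oplus W_{j+1}$, where $W_{j+1} := W_j \cap V_{v_{j+1}}(\tau)$. Two facts are needed: (a) $v_i \in V_{v_{j+1}}(\tau)$ for every $i \le j$, which follows from $v_{j+1} \in V_{v_i}(\tau)$ via the symmetry $h(P_{v_{j+1}}v_i,v_i) = h(L(v_{j+1},v_i),L(v_{j+1},v_i)) = h(L(v_i,v_{j+1}),L(v_i,v_{j+1})) = \tau$ combined with the first assertion of Lemma \ref{lm:4.10}; and (b) $V_{v_{j+1}}(0) \subset W_j$, obtained by applying the \emph{moreover} part of Lemma \ref{lm:4.10} to each $u \in V_{v_{j+1}}(0)$ with $v = v_{j+1}$ and $\tilde v = v_i$, which forces $u \in V_{v_i}(\tau)$ for every $i \le j$. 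Combining (a) and (b) with the splitting of $\mathcal{D}_2$ under $P_{v_{j+1}}$ and intersecting with $W_j$ yields the claimed refinement, giving
\begin{equation*}
\mathcal{D}_2 = \bigoplus_{i=1}^{j+1}\bigl(\{v_i\}\oplus V_{v_i}(0)\bigr)\oplus W_{j+1}.
\end{equation*}

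Termination is automatic: at each successful step $\dim W_{j+1} \le \dim W_j - 1$ because at least the one-dimensional summand $\{v_{j+1}\}$ is removed. Since $\dim \mathcal{D}_2 = m-1 < \infty$, the process halts after some $k_0 \le m-1$ steps with $W_{k_0} = \{0\}$, producing exactly the decomposition stated in \eqref{eqn:4.28}. The main obstacle, already handled by the preceding lemmas, is the verification of (b): that the $0$-eigenspace of each newly chosen $P_{v_{j+1}}$ drops into the $\tau$-eigenspaces of all previously chosen operators, so that the induction closes without any residual cross-terms. Orthogonality of all summands is built in because each $V_{v_i}(0)$ lies in $\{v_i\}^\perp$ and inside $V_{v_{i'}}(\tau)$ for every other $i' \ne i$ by (a) and (b), while the $v_i$'s themselves are mutually orthogonal by construction (each $v_{j+1}$ was chosen inside $\bigcap_{i \le j} V_{v_i}(\tau) \subset \{v_1,\ldots,v_j\}^\perp$).
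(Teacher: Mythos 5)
Your proposal is correct and follows essentially the same route as the paper: both iteratively pick $v_{j+1}$ in the common $\tau$-eigenspace $\bigcap_{i\le j}V_{v_i}(\tau)$, use Lemma \ref{lm:4.8} to split off $\{v_{j+1}\}\oplus V_{v_{j+1}}(0)$, and invoke Lemma \ref{lm:4.10} (both its equivalence and its \emph{moreover} part) to show the new $0$-eigenspace sits inside all previous $\tau$-eigenspaces, terminating by finite-dimensionality. You have merely written out formally the induction that the paper carries through explicitly for the first two or three steps before appealing to induction.
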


\vskip 1mm

In what follows, we will study the decomposition \eqref{eqn:4.28} in
more details. %We denote $\{v_k\}\oplus V_{v_k}(0)$ by $V_k$.
\begin{lemma}\label{lm:4.12}
\begin{enumerate}
\item[(i)]
For any unit vector $u_1\in\{v_1\}\oplus V_{v_1}(0)$, we have
$$
\{v_1\}\oplus V_{v_1}(0)=\{u_1\}\oplus V_{u_1}(0).
$$
\item[(ii)] For any orthonormal vectors $u_1,\tilde{u}_1\in\{v_1\}
\oplus V_{v_1}(0)$, we
have $L(u_1, \tilde{u}_1)=0$.
\end{enumerate}
\end{lemma}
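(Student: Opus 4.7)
The plan is to first prove part (ii), and then derive part (i) from it via a dimension/symmetry argument.

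For part (ii), I would proceed in three steps. Start by observing that Lemma \ref{lm:4.9} applied to any unit vector $w \in V_{v_1}(0)$ yields $L(v_1, w) = 0$ and $L(w, w) = L(v_1, v_1)$. Next, polarize the second identity inside $V_{v_1}(0)$: pick an orthonormal basis $\{u_1, \ldots, u_k\}$ of $V_{v_1}(0)$ and apply $L(\cdot,\cdot) = L(v_1,v_1)$ to the unit vector $(u_i + u_j)/\sqrt{2}$, which still lies in the subspace $V_{v_1}(0)$. This forces $L(u_i, u_j) = 0$ for $i \neq j$, and by bilinearity we get $L(w, \tilde{w}) = h(w, \tilde{w})\, L(v_1, v_1)$ for all $w, \tilde{w} \in V_{v_1}(0)$. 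Finally, write the orthonormal pair in the statement as $u_1 = a v_1 + w$ and $\tilde{u}_1 = b v_1 + \tilde{w}$ with $w, \tilde{w} \in V_{v_1}(0)$; expanding $L(u_1, \tilde{u}_1)$ by bilinearity and using $L(v_1, w) = L(v_1, \tilde{w}) = 0$ together with the previous identity yields
\begin{equation*}
L(u_1, \tilde{u}_1) = \bigl(ab + h(w, \tilde{w})\bigr)\, L(v_1, v_1) = h(u_1, \tilde{u}_1)\, L(v_1, v_1) = 0.
\end{equation*}
A side product of the same computation (with $\tilde u_1=u_1$) is $L(u_1, u_1) = L(v_1, v_1)$, so by Lemma \ref{lm:4.9}(iii) every unit vector of $\{v_1\}\oplus V_{v_1}(0)$ gives the same image $L(v_1,v_1)$.

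For part (i), fix a unit $u_1 \in \{v_1\} \oplus V_{v_1}(0)$ and let $U := (\{v_1\} \oplus V_{v_1}(0)) \cap \{u_1\}^{\perp}$. Part (ii) applied to $u_1$ and any unit vector of $U$ shows $L(u_1, \cdot)|_{U} = 0$, i.e.\ $U \subseteq V_{u_1}(0)$. Therefore
\begin{equation*}
\{v_1\} \oplus V_{v_1}(0) = \{u_1\} \oplus U \subseteq \{u_1\} \oplus V_{u_1}(0).
\end{equation*}
In particular $v_1 \in \{u_1\} \oplus V_{u_1}(0)$, so the hypothesis of (ii) is symmetric in $v_1$ and $u_1$. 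Running the same argument with the roles of $v_1$ and $u_1$ interchanged gives the reverse inclusion $\{u_1\} \oplus V_{u_1}(0) \subseteq \{v_1\} \oplus V_{v_1}(0)$, and the desired equality follows.

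The only delicate point is the bilinear extension in Step~2 of part (ii): Lemma \ref{lm:4.9} as stated concerns unit orthogonal vectors, so one must verify that $(u_i + u_j)/\sqrt{2}$ still belongs to $V_{v_1}(0)$ (it does, since $V_{v_1}(0)$ is a linear eigenspace of the symmetric operator $P_{v_1}$) before applying Lemma \ref{lm:4.9}(iii) to it. Once this is secured, both parts reduce to routine bilinear bookkeeping, and the symmetry argument for (i) is essentially automatic from (ii).
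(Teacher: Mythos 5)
Your proof is correct, but it runs in the opposite logical direction from the paper's. The paper proves (i) first --- reducing to the special case $u_1\perp v_1$ via Lemma \ref{lm:4.9}(v) (the identity $P_{u_1}=P_{v_1}$ on $\{u_1,v_1\}^\perp$), and then handling the general unit vector $u_1$ by a case split on $\dim V_{v_1}(0)$ being $0$, $1$ or $\ge 2$, with an explicit rotation computation in the one-dimensional case --- and then (ii) falls out in one line. You instead prove (ii) first by polarizing the equivalence (i)$\Leftrightarrow$(iii) of Lemma \ref{lm:4.9} over the linear subspace $V_{v_1}(0)$, obtaining the uniform identity $L(a,b)=h(a,b)\,L(v_1,v_1)$ on all of $\{v_1\}\oplus V_{v_1}(0)$, and then deduce (i) by converting $L(u_1,\cdot)=0$ back into membership of $V_{u_1}(0)$ and invoking symmetry for the reverse inclusion. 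This is a genuine simplification: it avoids both Lemma \ref{lm:4.9}(v) and the dimension case analysis, and the identity $L(a,b)=h(a,b)L(v_1,v_1)$ that you establish along the way is precisely what the paper has to re-derive later (compare Lemma \ref{lm:7.1}). The one point you should make explicit is that your part (ii), as proved, holds with any unit vector of $\mathcal{D}_2$ in place of $v_1$ --- your argument only uses Lemma \ref{lm:4.9}, which is stated for arbitrary orthogonal unit vectors --- since the symmetry step in (i) applies (ii) to the block based at $u_1$ rather than at $v_1$. You flag the right delicate point (that $(u_i+u_j)/\sqrt2$ lies in the eigenspace $V_{v_1}(0)$ before Lemma \ref{lm:4.9} is applied to it), and with that secured the argument is complete.
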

\begin{proof}
(i)\ \ We first assume the special case that $u_1 \perp v_1$. Then
we have $u_1\in V_{v_1}(0)$ and thus $L(u_1, v_1)=0$, hence $v_1\in
V_{u_1}(0)$. Let $u\in V_{v_1}(0)$ and write $u=x_1u_1+u'$ with
$u'\perp u_1$. By (v) in Lemma \ref{lm:4.9} we have
$P_{u_1}u'=P_{v_1}u'=P_{v_1}(u-x_1u_1)=0$. Therefore, $u'\in
V_{u_1}(0)$ and $\{v_1\}\oplus V_{v_1}(0)\subset \{u_1\}\oplus
V_{u_1}(0)$. Similarly, we obtain $\{u_1\}\oplus
V_{u_1}(0)\subset\{v_1\}\oplus V_{v_1}(0)$.

Next we consider the general case in three subcases. (a) If
$V_{v_1}(0)=\emptyset$, there is nothing to prove. (b) If
$\dim\,(V_{v_1}(0))\geq 2$, we can take a vector $\tilde{u}\in
V_{v_1}(0)$ which is orthogonal to both $u_1$ and $v_1$. Applying
twice the previous result then completes the proof. (c) If
$\dim\,(V_{v_1}(0))=1$, there exists $v_0\in V_{v_1}(0)$ such that
$V_{v_1}(0)=\{v_0\}$. Denote $u_1=\cos\theta v_1+\sin\theta v_0$. By
Lemma \ref{lm:4.9}, we see that
$$
L(\cos\theta v_1+\sin\theta v_0,\cos\theta v_0-\sin\theta v_1)=0,
$$
thus $\cos\theta\, v_0-\sin\theta\, v_1\in V_{u_1}(0)$.
Therefore, $\{v_1\}\oplus V_{v_1}(0)\subset\{u_1\}\oplus
V_{u_1}(0)$. If $\{v_1\}\oplus V_{v_1}(0)\subsetneq\{u_1\}\oplus
V_{u_1}(0)$, we have a unit vector $x\in\{u_1\}\oplus V_{u_1}(0)$
which is orthogonal to both $u_1$ and $v_1$. As $\{v_1\}\oplus
V_{v_1}(0) =\{x\}\oplus V_{x}(0)=\{u_1\}\oplus V_{u_1}(0)$, we get a
contradiction.

(ii)\ \ From (i) we have that $\{v_1\}\oplus
V_{v_1}(0)=\{u_1\}\oplus V_{u_1}(0)$. As $u_1$ and $\tilde{u}_1$ are
orthogonal, this implies that $\tilde{u}_1\in V_{u_1}(0)$.
Consequently, we have $L(u_1, \tilde{u}_1)=0$.
\end{proof}

\begin{lemma}\label{lm:4.13}
In the decomposition \eqref{eqn:4.28}, if we pick a unit vector
$u_2\in V_{v_2}(0)$, then there exists a unique vector $u_1\in
\{v_1\}\oplus V_{v_1}(0)$ such that $L(u_1, v_2) =L(v_1, u_2)$.
Moreover, $u_1$ is a unit vector in $V_{v_1}(0)$ and $L(v_1,
v_2)=-L(u_1, u_2)$.
\end{lemma}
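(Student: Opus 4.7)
The plan is to construct $u_1$ explicitly as $\tau^{-1}K(L(v_1,u_2),v_2)$ and then verify its defining properties using the isotropic structure of $L$ from Lemmas \ref{lm:4.3}--\ref{lm:4.11}. First I would observe that the direct sum in Proposition \ref{pr:4.1} is actually $h$-orthogonal: by Lemma \ref{lm:4.10}, $V_{v_i}(0)\subset V_{v_j}(\tau)$ for $i\neq j$, and since $V_{v_j}(\tau)\perp\{v_j\}\oplus V_{v_j}(0)$ as orthogonal eigenspaces of the symmetric operator $P_{v_j}$, all blocks $\{v_i\}\oplus V_{v_i}(0)$ are mutually $h$-orthogonal. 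In particular $u_2\in V_{v_1}(\tau)$, whence $|L(v_1,u_2)|^2=\tau$, and $|L(w,v_2)|^2=\tau|w|^2$ for $w\in V_{v_1}(0)$.

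For the form of $u_1$ and uniqueness: writing $u_1=\alpha v_1+w_1$ with $w_1\in V_{v_1}(0)$ and pairing the target identity $L(u_1,v_2)=L(v_1,u_2)$ with $L(v_1,v_2)$, the relations $h(L(v_1,v_2),L(v_1,u_2))=\tau h(v_2,u_2)=0$ and $h(L(w,v_2),L(v_1,v_2))=\tau h(w,v_1)=0$ for $w\in V_{v_1}(0)$ force $\alpha\tau=0$, hence $\alpha=0$ and $u_1\in V_{v_1}(0)$. Uniqueness then follows from the injectivity of $w\mapsto L(w,v_2)$ on $V_{v_1}(0)$: if $L(w,v_2)=0$, then $w\in V_{v_2}(0)\cap V_{v_1}(0)=\{0\}$ by the orthogonality of the blocks.

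For existence, I set $\hat{u}_1:=K(L(v_1,u_2),v_2)\in\mathcal{D}_2$ and first verify $\hat{u}_1\in V_{v_1}(0)$ by checking that all other block-components vanish. Using the adjointness $h(v,\hat{u}_1)=h(L(v,v_2),L(v_1,u_2))$ together with the linearization \eqref{eqn:4.5}, the $v_1$, $v_2$ and $V_{v_2}(0)$ components are immediate, and for $v\in\{v_j\}\oplus V_{v_j}(0)$ with $j\geq 3$ one uses that $v,u_2\in V_{v_1}(\tau)$ and invokes Lemma \ref{lm:4.11} to conclude $L(v,u_2)\perp L(v_2,v_1)$. Setting $u_1:=\tau^{-1}\hat{u}_1$, a short calculation writes $L(u_1,v_2)$ as the orthogonal projection of $L(v_1,u_2)$ onto $E_1:=L(V_{v_1}(0),v_2)\subset\mathcal{D}_3$. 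Hence existence reduces to showing $L(v_1,u_2)\in E_1$, which I would establish by introducing the symmetric dual operator $\Psi(w):=\tau^{-1}K(L(w,v_2),v_1)$ from $V_{v_1}(0)$ into $V_{v_2}(0)$ and showing that $\Phi:u_2\mapsto u_1$ and $\Psi$ are mutually inverse $h$-isometries; in particular $E_1=L(v_1,V_{v_2}(0))$, which contains $L(v_1,u_2)$, and $|u_1|=|u_2|=1$.

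Finally, to establish $L(v_1,v_2)=-L(u_1,u_2)$ I would apply \eqref{eqn:4.6} to the triple $(u_1,u_2,v_2)$ (using $L(u_2,v_2)=0$ and $u_1\perp u_2,v_2$) to obtain $K(L(u_1,u_2),v_2)+K(L(u_1,v_2),u_2)=0$, substitute the established $L(u_1,v_2)=L(v_1,u_2)$, and combine with the auxiliary identity $K(L(v_1,u_2),u_2)=K(L(v_1,v_2),v_2)$ that follows from \eqref{eqn:4.6} on the triples $(v_1,v_2,v_2)$ and $(v_1,u_2,u_2)$ together with $L(u_2,u_2)=L(v_2,v_2)$ from Lemma \ref{lm:4.9}. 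This yields $K(L(u_1,u_2)+L(v_1,v_2),v_2)=0$; a mirror computation with $v_1$ in place of $v_2$ gives the analogous equation, and together these pin down $L(u_1,u_2)=-L(v_1,v_2)$. The main obstacle is the identification $E_1=L(v_1,V_{v_2}(0))$ in the existence step: it is not a formal consequence of the earlier identities but requires a genuinely symmetric use of the rigidity of the $P_v$ eigenspace structure.
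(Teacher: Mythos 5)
Your reduction of the problem is sound as far as it goes: the identification of $\hat u_1:=K(L(v_1,u_2),v_2)$ (equivalently, via \eqref{eqn:4.5} and $L(v_2,u_2)=0$, the vector with components $-h(L(u_2,\cdot),L(v_1,v_2))$, which is exactly the $u_1$ the paper writes down), the verification that $\hat u_1\in V_{v_1}(0)$ using Lemma \ref{lm:4.11} and the eigenspace structure, the uniqueness argument, and the observation that $L(u_1,v_2)$ is the orthogonal projection of $L(v_1,u_2)$ onto $E_1=L(V_{v_1}(0),v_2)$ are all correct. The final step ($L(v_1,v_2)=-L(u_1,u_2)$) also goes through, although the paper's route via \eqref{eqn:4.5} and Cauchy--Schwarz is shorter than your detour through \eqref{eqn:4.6}.

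The gap is precisely where you flag it, and it is not a gap you can close with the tools you invoke. Existence requires $L(v_1,u_2)\in E_1$, i.e.\ that the projection is the whole vector. Your proposed fix --- showing that $\Phi:u_2\mapsto\tau^{-1}K(L(v_1,u_2),v_2)$ and $\Psi:w\mapsto\tau^{-1}K(L(w,v_2),v_1)$ are mutually inverse isometries --- is circular: the adjointness $h(\Phi u_2,w)=h(\Psi w,u_2)$ is automatic, so ``mutually inverse isometries'' amounts to $\|\Phi u_2\|=\|u_2\|$, and since $\|\Phi u_2\|^2=\tau^{-1}\|P_{E_1}L(v_1,u_2)\|^2\le\tau^{-1}\|L(v_1,u_2)\|^2=\|u_2\|^2$ with equality iff $L(v_1,u_2)\in E_1$, this is literally a restatement of what you need to prove. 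The identities \eqref{eqn:4.5}, \eqref{eqn:4.6} and Lemma \ref{lm:4.11} only control inner products of $L$-values against other $L$-values in your list; they cannot rule out a component of $L(v_1,u_2)$ orthogonal to $L(\mathcal{D}_2,v_2)$ inside $\mathcal{D}_3$.

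The missing ingredient is Lemma \ref{lm:4.6}, i.e.\ formula \eqref{eqn:4.12}, which is where $\hat\nabla K=0$ is used at second order in $K$. The paper applies \eqref{eqn:4.12} to $0=K(L(v_2,u_2),L(v_1,v_2))$ and obtains
$$
0=\tau L(v_1,u_2)+\sum_{i}h(L(u_2,\tilde u_i),L(v_1,v_2))\,L(v_2,\tilde u_i),
$$
which \emph{exhibits} $\tau L(v_1,u_2)$ as an explicit element of $L(\mathcal{D}_2,v_2)$; the component checks (your Lemma \ref{lm:4.11} argument, plus \eqref{eqn:4.7} and Lemma \ref{lm:4.9}) then cut the sum down to $V_{v_1}(0)$, giving existence outright. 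Without invoking \eqref{eqn:4.12} in this way, the surjectivity statement $E_1=L(v_1,V_{v_2}(0))$ does not follow, so your proof is incomplete at its central step.
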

\begin{proof}
Let $u_1^l,\ldots,u_{p_l}^l$ be an orthonormal basis of
$V_{v_l}(0),\,1\leq l\leq k_0$, such that $u_1^2=u_2$. Then
$$
\{v_1,\ldots,v_{k_0},u_1^1,\ldots,u_{p_1}^1,\ldots,
u_1^{k_0},\ldots,u_{p_{k_0}}^{k_0}\}=
:\{\tilde{u}_i\}_{1\leq i\leq m-1}
$$
forms an orthonormal basis of $\mathcal{D}_2$. Now we
use \eqref{eqn:4.12} with the vectors $v_2, u_2, v_1, v_2$.
As by Lemma \ref{lm:4.9} $L(v_2,u_2)=0$ and by our
decomposition $v_1\in V_{v_2}(\tau)$, we obtain
$$
\begin{aligned}
0&=K(L(v_2,u_2),L(v_1,v_2)) \\
&=\mu h(L(v_2,u_2),L(v_1,v_2))e_1+\sum_{i=1}^{m-1}
h(L(v_2,\tilde{u}_i),L(v_1,v_2))L(\tilde{u}_i,u_2) \\
&\ \ \  +\sum_{i=1}^{m-1}
h(L(u_2,\tilde{u}_i),L(v_1,v_2))L(v_2,\tilde{u}_i)\\
&=\tau L(v_1,u_2)+\sum_{i=1}^{m-1}
h(L(u_2,\tilde{u}_i),L(v_1,v_2))L(v_2,\tilde{u}_i).
\end{aligned}
$$
Let us take
$$
u_1=-\frac{1}{\tau}\sum_{i=1}^{m-1}
h(L(u_2,\tilde{u}_i),L(v_1,v_2))\tilde{u}_i.
$$
By Lemma \ref{lm:4.11}, we have
\begin{align}\label{eqn:4.29}
h(L(u_2,\tilde{u}_i),L(v_1,v_2))=0,\ \ \tilde{u}_i \notin
\{v_1\}\oplus V_{v_1}(0)\oplus\{v_2\}\oplus V_{v_2}(0).
\end{align}
Applying \eqref{eqn:4.7} and Lemma \ref{lm:4.9}, we get
\begin{align}\label{eqn:4.30}
h(L(u_2,\tilde{u}_i),L(v_1,v_2))=0,\ \ \tilde{u}_i \in \{v_2\}\oplus
V_{v_2}(0).
\end{align}
Moreover, note that $v_2\in V_{v_1}(\tau)$, thus we have
\begin{align}\label{eqn:4.31}
h(L(u_2,v_1),L(v_1,v_2))=0.
\end{align}
It follows from \eqref{eqn:4.29}, \eqref{eqn:4.30} and
\eqref{eqn:4.31} that $u_1\in V_{v_1}(0)$.

In order to prove the uniqueness of $u_1\in \{v_1\}\oplus V_{v_1}(0)$,
suppose that $\tilde{u}_1\in \{v_1\}\oplus V_{v_1}(0)$ such
that $L(\tilde{u}_1, v_2)=L(v_1, u_2)$, then we have
$L(u_1-\tilde{u}_1, v_2)=0$. It follows from the Lemma \ref{lm:4.9}
that $u_1-\tilde{u}_1\in V_{v_2}(0)$. On the other hand,
we also have $u_1-\tilde{u}_1\in \{v_1\}\oplus V_{v_1}(0)$; so we
must have $u_1=\tilde{u}_1$.

From the following fact
$$
V_{v_1}(0)\subset V_{v_2}(\tau),\ \ V_{v_2}(0)\subset V_{v_1}(\tau)
$$
we have $h(u_1,u_1)\tau=h(L(u_1, v_2),L(u_1, v_2)) =h(L(v_1,
u_2),L(v_1, u_2))=\tau$.
Hence, $u_1$ is a unit vector.

In order to prove the fact that $L(u_1, v_2)=L(v_1, u_2)$ and
$L(v_1, v_2)=-L(u_1, u_2)$ are equivalent, we use \eqref{eqn:4.5}
and the Cauchy-Schwarz inequality. In fact, if we first suppose
$L(u_1, v_2)=L(v_1, u_2)$, then applying \eqref{eqn:4.5} we get
$$
h(L(v_1, v_2),-L(u_1, u_2)) =h(L(v_1, u_2),L(v_2, u_1))=h(L(v_2,
u_1),L(v_2, u_1))=\tau.
$$
On the other hand, Lemma \ref{lm:4.12} implies that $v_1,u_1\in
V_{v_2}(\tau)=V_{u_2}(\tau)$ and thus
$$
h(L(v_1, v_2),L(v_1,
v_2)) =h(L(u_1, u_2),L(u_1, u_2))=\tau.
$$
Then, by Cauchy-Schwarz inequality we immediately have $L(v_1,
v_2)=-L(u_1, u_2)$.

The converse can be proved in a similar way.
\end{proof}

\vskip 2mm

To state the next lemma, we denote $V_l= \{v_l\}\oplus V_{v_l}(0)$
in the decomposition \eqref{eqn:4.28} for each $1\leq l\leq k_0$.
Then we have

\begin{lemma}\label{lm:4.14}
With respect to the decomposition \eqref{eqn:4.28}, the following
hold.
\begin{enumerate}
\item[(1)] For any unit vector $a\in V_j$,
\begin{align}\label{eqn:4.32}
K(L(a,a),L(a,a))=\tfrac{1}{2}\lambda_1\mu\eta e_1
+\eta(\mu+\lambda_1)L(a,a).
\end{align}
\item[(2)] For $j\neq l$ and any unit vector $a\in V_j,\ b\in V_l$,
\begin{align}\label{eqn:4.33}
K(L(a,a),L(a,b))=\tfrac{1}{2}\eta(\mu+\lambda_1)L(a,b),
\end{align}
\begin{align}\label{eqn:4.34}
K(L(a,a),L(b,b))=\tfrac{1}{2}\eta\mu^2e_1
+\eta\mu(L(a,a)+L(b,b)),
\end{align}
\begin{align}\label{eqn:4.35}
K(L(a,b),L(a,b))=\mu \tau e_1
+\tau(L(a,a)+L(b,b)).
\end{align}
\item[(3)] For distinct $j,\,l,\,q,\,s$ and any unit vector $a\in V_j,\ b,\,b'\in V_l,\
c\in V_q,\ d\in V_s$, where $b$ and $b'$ are orthogonal, the
following relations hold
\begin{align}\label{eqn:4.36}
K(L(a,b),L(a,c))=\tau L(b,c),
\end{align}
\begin{align}\label{eqn:4.37}
K(L(a,a),L(b,c))=\eta\mu L(b,c),
\end{align}
\begin{align}\label{eqn:4.38}
K(L(a,b),L(a,b'))=0,
\end{align}
\begin{align}\label{eqn:4.39}
K(L(a,b),L(c,d))=0.
\end{align}
\item[(4)]For distinct $j,\,l,\,q$ and orthogonal unit vector $a_1,\,a_2\in V_j$
and unit vectors $b\in V_l,\,c\in V_q$, it holds
\begin{align}\label{eqn:4.40}
K(L(a_1,b),L(a_2,c))=\tau L(b,c'),
\end{align}
where $c'\in V_q$ is the unique unit vector
satisfying $L(a_1,c')=L(a_2,c)$.
\end{enumerate}
\end{lemma}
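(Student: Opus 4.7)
The overall plan is to derive every identity in parts (1)--(4) by specializing the master formula \eqref{eqn:4.12} of Lemma \ref{lm:4.6} to the relevant quadruple $(v_1,v_2,v_3,v_4)$. In every case I would choose an $h$-orthonormal basis $\{u_i\}$ of $\mathcal{D}_2$ subordinate to the decomposition $\mathcal{D}_2 = V_1 \oplus \cdots \oplus V_{k_0}$ of Proposition \ref{pr:4.1}, so that the unit vectors $a,b,c,\ldots$ appearing in the statement are among the $u_i$ and each block $V_l$ is spanned by a group of basis vectors. The content of the lemma will then be that only a very limited collection of the scalar factors $h(L(v_i,u_j),L(v_k,v_l))$ entering the two sums in \eqref{eqn:4.12} are nonzero, and their values are all accessible from \eqref{eqn:4.4}--\eqref{eqn:4.10} together with Lemmas \ref{lm:4.9}--\ref{lm:4.13}.

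For part (1), the choice $v_1=v_2=v_3=v_4=a$ makes \eqref{eqn:4.4} deliver the scalar coefficient $\tfrac{1}{2}\lambda_1\eta$, and by \eqref{eqn:4.7} both sums in \eqref{eqn:4.12} collapse to their $u_i=a$ contribution; equation \eqref{eqn:4.32} follows immediately. For parts (2) and (3) the crucial structural input is that whenever $a\in V_j$ and $b\in V_l$ with $j\neq l$, one has $b\in V_a(\tau)$, hence $P_ab=\tau b$; this reduces each sum to the single basis vector that equals the argument in question. The scalar coefficients will then be read off from \eqref{eqn:4.7}--\eqref{eqn:4.9} (for instance $h(L(a,a),L(b,b))=\tfrac{1}{2}\lambda_1\eta-2\tau$ from \eqref{eqn:4.8}), and the identities \eqref{eqn:4.33}--\eqref{eqn:4.37} then fall out after the substitution $\mu=\tfrac{1}{2}\lambda_1-\eta$. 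Equation \eqref{eqn:4.38} additionally uses $L(b,b')=0$ from Lemma \ref{lm:4.12}(ii), and \eqref{eqn:4.39} is the generic case in which every inner product in sight vanishes directly by Lemma \ref{lm:4.11}.

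Part (4), identity \eqref{eqn:4.40}, is what I expect to be the main obstacle. Upon substituting $(v_1,v_2,v_3,v_4)=(a_1,b,a_2,c)$ into \eqref{eqn:4.12}, the term $\mu\eta h(a_1,b)L(a_2,c)$ dies since $a_1\perp b$, and the $e_1$-coefficient $\mu h(L(a_1,b),L(a_2,c))$ will be shown to vanish via the linearization \eqref{eqn:4.10} (using $L(a_1,a_2)=0$ from Lemma \ref{lm:4.12}(ii)) together with Lemma \ref{lm:4.11} applied with $v_3=b$ and $v_1=a_1,\,v_2=c\in V_b(\tau)$. For the second sum $\sum_i h(L(b,u_i),L(a_2,c))L(u_i,a_1)$, I would check term by term that everything vanishes: for $u_i$ lying in $V_j$ or $V_l$ either $L(u_i,a_1)=0$ or the inner product vanishes by Lemma \ref{lm:4.11}; for $u_i$ in $V_q$ or in some other block $V_r$, Lemma \ref{lm:4.11} applied with $v_3=a_2$ finishes the job. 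For the first sum $\sum_i h(L(a_1,u_i),L(a_2,c))L(u_i,b)$, the contributions from outside $V_c(0)$ vanish by the same type of argument, while inside $V_c(0)$ the correspondence furnished by Lemma \ref{lm:4.13} (applied to the block pair $(V_j,V_q)$, which is legitimate thanks to Lemma \ref{lm:4.12}(i)) sets up an isometric bijection $V_c(0)\to V_{a_1}(0)$ via $L(a_1,c'_i)=L(a_i^*,c)$; consequently only the single basis vector $c'\in V_c(0)$ matching $a_2\in V_{a_1}(0)$ survives, producing precisely $\tau L(c',b)=\tau L(b,c')$. The main obstacle is exactly this last bookkeeping, namely verifying that the Lemma \ref{lm:4.13} correspondence is an isometry, so that only one basis vector contributes and the surviving term packages cleanly as $\tau L(b,c')$ rather than as a genuine sum.
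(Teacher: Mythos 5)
Your proposal is correct and follows essentially the same route as the paper: the authors likewise take an orthonormal basis of $\mathcal{D}_2$ adapted to the block decomposition \eqref{eqn:4.28}, specialize the master identity \eqref{eqn:4.12} of Lemma \ref{lm:4.6} to each quadruple, and evaluate the surviving scalar products via the isotropy relations \eqref{eqn:4.4}--\eqref{eqn:4.10} and Lemmas \ref{lm:4.9}--\ref{lm:4.13} (they explicitly single out \eqref{eqn:4.36} and \eqref{eqn:4.40} as the model computations). The "main obstacle" you flag in part (4) is already dispatched by Lemma \ref{lm:4.13}, which guarantees the corresponding vector $c'$ is a unit vector, so the first sum collapses by linearity to $\tau L(b,c')$ exactly as you describe.
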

\begin{proof}
Take an orthonormal basis of $\mathcal{D}_2$ such that it consists
of the orthonormal basis of all $V_l,\ 1\leq l\leq k_0$, the
assertions are direct consequences of Lemma \ref{lm:4.6}. Take for
example, from the fact $h(L(a,b),L(a,c))=h(P_ab,c)=0$, eq.
\eqref{eqn:4.6} and Lemma \ref{lm:4.6}, we immediately get
\eqref{eqn:4.36}. As another example, from \eqref{eqn:4.36}, Lemmas
\ref{lm:4.12} and \ref{lm:4.13} we can get \eqref{eqn:4.40}.
\end{proof}

\begin{proposition}\label{pr:4.2}
In the decomposition \eqref{eqn:4.28}, if $k_0=1$, then $\dim\,({\rm
Im}\,L)=1$. If $k_0\geq 2$, then $\dim\, V_{v_1}(0)=\cdots=\dim\,
V_{v_{k_0}}(0)$ and the dimension which we denote by $\mathfrak{p}$
can only be equal to 0, 1, 3 or 7.
\end{proposition}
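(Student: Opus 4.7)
The plan is to split the proof according to the value of $k_0$.

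When $k_0=1$, the decomposition reduces to $\mathcal{D}_2=\{v_1\}\oplus V_{v_1}(0)$. I would show directly that ${\rm Im}\,L$ is spanned by $L(v_1,v_1)$: by Lemma \ref{lm:4.9}(iii) every unit $u\in\mathcal{D}_2$ satisfies $L(u,u)=L(v_1,v_1)$, and by Lemma \ref{lm:4.12}(ii) the value $L(u,\tilde u)$ vanishes for every orthonormal pair $u,\tilde u$ in $\mathcal{D}_2$. Polarizing the diagonal identity and combining with the off-diagonal vanishing collapses every value of $L$ to a scalar multiple of $L(v_1,v_1)$, which is nonzero by the isotropy relation \eqref{eqn:4.4}; thus $\dim({\rm Im}\,L)=1$.

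For $k_0\ge 2$, I would first equalize the dimensions of the $V_{v_j}(0)$. Fixing $j\neq l$, Lemma \ref{lm:4.13} applied with $(v_j,v_l)$ in place of $(v_1,v_2)$ associates to each unit $u\in V_{v_j}(0)$ a unique unit vector $\Phi_{jl}(u)\in V_{v_l}(0)$ characterized by $L(\Phi_{jl}(u),v_j)=L(v_l,u)$. Bilinearity of $L$ extends $\Phi_{jl}$ to a linear map $V_{v_j}(0)\to V_{v_l}(0)$, and the corresponding assignment $\Phi_{lj}$ is its inverse by the uniqueness clause. Hence all $V_{v_j}(0)$ have a common dimension $\mathfrak{p}$.

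To establish $\mathfrak{p}\in\{0,1,3,7\}$, assuming $\mathfrak{p}\ge1$, I would extract a real composition algebra from the data of $L$. Set $V_j=\{v_j\}\oplus V_{v_j}(0)$, so $\dim V_j=\mathfrak{p}+1$. For any unit $a\in V_1$, Lemma \ref{lm:4.12}(i) yields $V_1=\{a\}\oplus V_a(0)$ with $V_a(0)\subset V_1$; since the summands of \eqref{eqn:4.28} are mutually $h$-orthogonal, $V_2\perp(\{a\}\oplus V_a(0))$, and Lemma \ref{lm:4.8} then places $V_2\subset V_a(\tau)$. Consequently $h(L(a,b),L(a,b))=\tau$ for every pair of unit vectors $a\in V_1,\,b\in V_2$, which by bilinearity upgrades to
\[
h(L(a,b),L(a,b))=\tau\,h(a,a)\,h(b,b),\quad a\in V_1,\ b\in V_2.
\]
Transporting this norm-multiplicative pairing via the identification $\Phi_{12}\colon V_{v_2}(0)\to V_{v_1}(0)$ (extended by $v_2\mapsto v_1$) together with the linear isometry $b\mapsto L(v_1,b)/\sqrt{\tau}$ from $V_2$ onto its image, I would produce a bilinear product $\star\colon V_1\times V_1\to V_1$ with two-sided unit $v_1$ satisfying $h(x\star y,x\star y)=h(x,x)\,h(y,y)$. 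Hurwitz's theorem on real normed composition algebras then forces $\dim V_1=\mathfrak{p}+1\in\{1,2,4,8\}$, giving $\mathfrak{p}\in\{1,3,7\}$; together with the trivial case $\mathfrak{p}=0$, this completes the conclusion.

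The main obstacle lies in the third step: verifying that all the natural identifications are compatible so that the transported multiplication really is bilinear and unital, and in particular that the image ${\rm Im}\,L|_{V_1\times V_2}$ is exactly $(\mathfrak{p}+1)$-dimensional so that Hurwitz (rather than only a weaker Radon--Hurwitz inequality for non-square normed bilinear forms) can be invoked. The key tools for this compatibility check are the structural identities of Lemma \ref{lm:4.14}, most notably the orthogonality relations \eqref{eqn:4.38} and \eqref{eqn:4.39}, which constrain the interaction between the various pieces $L(V_j,V_l)$ and prevent the image from escaping the desired subspace.
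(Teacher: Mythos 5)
Your first two steps coincide with the paper's own proof: the case $k_0=1$ is disposed of by Lemmas \ref{lm:4.9} and \ref{lm:4.12} exactly as you describe, and the equality of the dimensions $\dim V_{v_j}(0)$ comes from the length-preserving bijections supplied by Lemma \ref{lm:4.13}. Your third step is genuinely different. The paper does not pass through composition algebras: it uses Lemma \ref{lm:4.13} to define, for each basis vector $u_j^2\in V_{v_2}(0)$, a linear map $\mathfrak{T}_j:V_1\to V_1$ by $L(v,u_j^2)=L(v_2,\mathfrak{T}_j(v))$, shows that $\mathfrak{T}_1(v),\ldots,\mathfrak{T}_{\mathfrak{p}}(v)$ are unit vectors orthogonal to $v$ and to each other (together with $\mathfrak{T}_j^2=-\mathrm{id}$), and concludes that they furnish a global tangent frame on the sphere $S^{\mathfrak{p}}(1)\subset V_1$; the restriction $\mathfrak{p}\in\{1,3,7\}$ then follows from the Bott--Milnor/Kervaire theorem on parallelizable spheres. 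Your route instead manufactures a norm-multiplicative bilinear pairing $V_1\times V_2\to \mathrm{Im}\,L|_{V_1\times V_2}$ and appeals to Hurwitz; this works and trades a deep topological input for a purely algebraic one, which is a genuine gain in elementarity. But the dimension count you flag as the main obstacle is really needed, and the relations \eqref{eqn:4.38}--\eqref{eqn:4.39} only yield that each $b\mapsto L(a,b)$ is a conformal injection of $V_2$; the containment $L(a,V_2)\subset L(v_1,V_2)$ for every $a\in V_1$, which is what makes the target space exactly $(\mathfrak{p}+1)$-dimensional, comes instead from Lemma \ref{lm:4.13} applied with base point $a$ in place of $v_1$ (legitimate by Lemma \ref{lm:4.12}(i)) -- that is, from precisely the mechanism the paper uses to define its maps $\mathfrak{T}_j$. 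With that supplied, the standard reduction (precompose with the inverses of the isometries $L(\cdot,v_2)$ and $L(v_1,\cdot)$ to create a two-sided unit) makes Hurwitz's theorem applicable and your argument closes.
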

\begin{proof}
If $k_0=1$, from Lemmas \ref{lm:4.9} and \ref{lm:4.12} we see that $L(v_1, v_1)$ is a
basis of the image ${\rm Im}\,L$, so we have $\dim\,({\rm
Im}\,L)=1$. As a direct consequence of Lemma \ref{lm:4.13}, for any
$j\neq l$, we can define a one-to-one linear map from $V_{v_j}(0)$
to $V_{v_l}(0)$, which preserves the length of vectors. Hence
$V_{v_j}(0)$ and $V_{v_l}(0)$ are isomorphic and have the same
dimension which we denote by $\mathfrak{p}$. To make the following
discussion meaningful, we now assume $\mathfrak{p}\geq1$.

Let $\{v_l,u_1^l,\ldots,u_\mathfrak{p}^l\}$ be an orthonormal basis
of $V_l$. For each $j=1,\ldots,\mathfrak{p}$, Lemmas \ref{lm:4.12}
and \ref{lm:4.13} show that we can define a linear map
$\mathfrak{T}_j:V_1 \rightarrow V_1$ such that, for any unit vector
$v$, the image $\mathfrak{T}_j(v)$ satisfies
\begin{align}\label{eqn:4.41}
L(v,u_j^2)=L(v_2,\mathfrak{T}_j(v)).
\end{align}

The linear map $\mathfrak{T}_j:\ V_1 \rightarrow V_1$ has the
following properties:
\begin{enumerate}
\item[(P1)] For any $v\in V_1$, $h(\mathfrak{T}_j(v),\mathfrak{T}_j(v))=h(v,v)$,
i.e., $\mathfrak{T}_j$ preserves the length of vectors.
\item[(P2)] For all $v\in V_1$, we have $\mathfrak{T}_j(v)\perp v$.
\item[(P3)] $\mathfrak{T}_j^2=-{\rm id}$.
\item[(P4)] For any $j\neq l$,
we have $h(\mathfrak{T}_j(v),\mathfrak{T}_l(v))=0$ for all $v\in
V_1$.

\end{enumerate}

(P1) and (P2) can be easily seen from Lemma \ref{lm:4.13} and the
definition of $\mathfrak{T}_j(v)$. We now verify (P3) and (P4). For
any unit vector $v\in V_1$, we have
\begin{align}\label{eqn:4.42}
L(v_2,\mathfrak{T}_j^2(v))=L(u_j^2,\mathfrak{T}_j(v)).
\end{align}
Using the fact $\{\mathfrak{T}_j(v)\}\oplus
V_{\mathfrak{T}_j(v)}(0)=V_1$ and $u_j^2\in V_{v_2}(0)\subset
V_{\mathfrak{T}_j(v)}(\tau)$, we have
$$
\begin{aligned}
h(L(u_j^2,\mathfrak{T}_j(v)),L(u_j^2,\mathfrak{T}_j(v)))
&=h(L(v_2,\mathfrak{T}_j(v)),L(v_2,\mathfrak{T}_j(v)))\\
&=h(L(v,v_2),L(v,v_2))=\tau.
\end{aligned}
$$
Since $v,\,\mathfrak{T}_j(v),\,v_2,\,u_j^2$ are orthonormal vectors,
by \eqref{eqn:4.10}, \eqref{eqn:4.41} and $L(v_2,u_j^2)=0$, we see
that
$$
\begin{aligned}
0&=h(L(v,v_2),L(u_j^2,\mathfrak{T}_j(v)))
+h(L(v,\mathfrak{T}_j(v)),L(v_2,u_j^2))\\
&+h(L(v,u_j^2),L(\mathfrak{T}_j(v),v_2))\\
&=h(L(v,v_2),L(u_j^2,\ \mathfrak{T}_j(v)))
+h(L(v_2,\mathfrak{T}_j(v)),L(v_2,\mathfrak{T}_j(v))).
\end{aligned}
$$
Applying the Cauchy-Schwarz inequality we deduce
\begin{align}\label{eqn:4.43}
L(u_j^2,\mathfrak{T}_j(v))=-L(v,v_2).
\end{align}

Combining \eqref{eqn:4.42} and \eqref{eqn:4.43},
we get $L(v_2,\mathfrak{T}_j^2(v)+v)=0$, which implies that
$\mathfrak{T}_j^2(v)+v\in V_{v_2}(0)$. As $\mathfrak{T}_j^2(v)+
v\in V_1\subset V_{v_2}(\tau)$, it follows that
$\mathfrak{T}_j^2(v)=-v$ for a
unit vector $v$ and then by linearity for all $v\in V_1$,
as claimed by (P3).

To verify (P4), we note that, if $j\neq l$, and
$\mathfrak{T}_j(v),\,\mathfrak{T}_l(v)\in V_{v}(0)$, then by
definition
$$
\begin{aligned}
L(v_2,\mathfrak{T}_j(v))=L(v,u_j^2)
\perp L(v,u_l^2)=L(v_2,\mathfrak{T}_l(v)).
\end{aligned}
$$
If we assume $\mathfrak{T}_l(v)=a\mathfrak{T}_j(v)+x$,
where $x\perp \mathfrak{T}_j(v)$, then
$$
\begin{aligned}
0&=h(L(v_2,\mathfrak{T}_j(v)),L(v_2,\mathfrak{T}_l(v)))\\
&=h(L(v_2,\mathfrak{T}_j(v)),aL(v_2,\mathfrak{T}_j(v))+L(v_2,x))\\
&=a\tau.
\end{aligned}
$$
Hence, $a=0$ and $\mathfrak{T}_l(v)\perp \mathfrak{T}_j(v)$.

We look at the unit hypersphere $S^\mathfrak{p}(1)\subset V_1$, the
above properties (P1)\,--\,(P4) show that at $v\in
S^\mathfrak{p}(1)$ one has
$$
T_vS^\mathfrak{p}(1)={\rm
span}\,\{\mathfrak{T}_1(v),\ldots,\mathfrak{T}_\mathfrak{p}(v)\}.
$$

Hence, by the properties (P1)\,--\,(P4), the
$\mathfrak{p}$-dimensional sphere $\mathbb{S}^\mathfrak{p}(1)$ is
parallelizable. Then, according to R. Bott and J. Milnor \cite{BM}
and M. Kervaire \cite{Ke}, the dimension $\mathfrak{p}$ can only be
equal to $1$, $3$ or $7$.
\end{proof}

%%%%%%%%%%%%%%%%%%%%%%%%%%%%%%%%%%%%%%%%%%%%%%%%%%%%%%%%%%%%%%%%
\numberwithin{equation}{section}

\section{The exceptional case $\mathfrak{B}$}\label{sect:5}

In this section, we shall study an $n$-dimensional ($n\geq2$)
locally strongly convex centroaffine hypersurface $M^n$ which has
parallel cubic form, such that Case $\mathfrak{B}$ occurs. The main
result of this section is the following theorem.

\begin{theorem}\label{th:5.1}
Let $x: M^n\to\mathbb{R}^{n+1}$ ($n\geq2$) be a locally strongly
convex centroaffine hypersurface which has parallel cubic form. If
Case $\mathfrak{B}$ occurs, then $M^n$ is locally centroaffinely
equivalent to the hypersurface:
\begin{equation}\label{eqn:5.2}
x_{n+1}=\tfrac{1}{2x_{1}}\sum_{k=2}^{n}x_{k}^{2} + x_{1}\ln x_{1}.
\end{equation}
\end{theorem}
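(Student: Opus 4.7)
The plan is to exploit the rigidity of Case $\mathfrak{B}$ to write an explicit PDE system for the position vector $x$, integrate it, and identify the solution with \eqref{eqn:5.2}. Since $C\neq 0$ we have $\lambda_1 = \max f > 0$, and $\lambda_1^2 = 4\varepsilon$ then forces $\varepsilon = 1$, $\lambda_1 = 2$, $\lambda_i = 1$ for all $i\ge 2$. Since $\lambda_1 = 2\lambda_i$, Lemma \ref{lm:4.1}(ii) gives $f(e_i) = 0$ for every $i\ge 2$, and the polarization proving \eqref{eqn:4.2} applies verbatim to yield $h(K_{e_i}e_j, e_k) = 0$ for all $i,j,k\ge 2$. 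Combined with $h(K_{e_i}e_j, e_1) = h(K_{e_1}e_i, e_j) = \delta_{ij}$, this pins down $K(e_i, e_j) = \delta_{ij}\, e_1$ completely.

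Next I would verify that $(M,h)$ is flat. Because $\hat\nabla K = 0$ and $\hat\nabla h = 0$, equation \eqref{eqn:2.3} makes $\hat R$ parallel, so it suffices to compute $\hat R$ at $p$. A bracket-by-bracket check using the explicit $K$ above shows that each $[K_{e_\alpha}, K_{e_\beta}]$ cancels the term $\varepsilon(h(Y,Z)X - h(X,Z)Y)$, hence $\hat R \equiv 0$. Flatness together with $\hat\nabla K = 0$ then furnishes local coordinates $(u, t_2, \ldots, t_n)$ around $p$ in which $\{\partial_u, \partial_2, \ldots, \partial_n\}$ is a parallel orthonormal frame extending $\{e_1, \ldots, e_n\}$ and in which $K$ has the constant components above. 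Inserting this into \eqref{eqn:1.1} with all Christoffel symbols of $\hat\nabla$ vanishing, the centroaffine Gauss equation collapses to
\begin{equation*}
x_{uu} = 2x_u - x, \qquad x_{u i} = x_i, \qquad x_{ij} = \delta_{ij}(x_u - x) \quad (2\le i,j \le n).
\end{equation*}
The middle equation forces $x = e^u F(t_2, \ldots, t_n) + G(u)$, the first then forces $G(u) = e^u(A_0 + A_1 u)$, and the last forces $F$ to be a quadratic polynomial with Hessian $A_1 \delta_{ij}$. Collecting terms,
\begin{equation*}
x = e^u \Bigl(D + A_1 u + \sum_{i=2}^n B_i t_i + \tfrac{1}{2} A_1 \sum_{i=2}^n t_i^2 \Bigr)
\end{equation*}
for constant vectors $D, A_1, B_2, \ldots, B_n \in \mathbb{R}^{n+1}$.

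Finally I would match with \eqref{eqn:5.2}. Evaluating $\{x, x_u, x_2, \ldots, x_n\}$ at $u = t_i = 0$ returns $\{D, D+A_1, B_2, \ldots, B_n\}$, and since the position vector is transversal to the tangent plane, these $n+1$ vectors span $\mathbb{R}^{n+1}$; hence $\{D, A_1, B_2, \ldots, B_n\}$ is a basis. An ambient linear transformation, which is centroaffine, sends $D \mapsto e_1$, $B_i \mapsto e_i$, $A_1 \mapsto e_{n+1}$, so the components of $x$ become $e^u$, $e^u t_i$ ($2\le i\le n$), and $e^u(u + \tfrac{1}{2}\sum t_i^2)$; eliminating $u = \ln x_1$ and $t_i = x_i / x_1$ reproduces \eqref{eqn:5.2}. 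The only step with genuine content is the flatness computation; everything after that is routine ODE integration, and the only subtle point is the linear independence of the integration constants $D, A_1, B_2, \ldots, B_n$, which follows from nondegeneracy of the immersion.
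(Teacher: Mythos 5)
Your proposal is correct and follows essentially the same route as the paper: pin down $\varepsilon=1$, $\lambda_1=2$ and the constant form $K(e_i,e_j)=\delta_{ij}e_1$ via the polarization of $f|_{e_1^\perp}=0$, prove flatness of $(M,h)$ from \eqref{eqn:2.3}, pass to Cartesian coordinates with a parallel frame, integrate the resulting linear PDE system \eqref{eqn:5.11}--\eqref{eqn:5.14} for the position vector, and normalize the linearly independent integration constants by an ambient linear map. The only cosmetic differences are the order in which you integrate the equations and your use of parallelism of $\hat R$ to reduce flatness to a pointwise bracket computation, which matches the paper's direct verification in the parallel frame.
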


To begin with, we prove the following lemma.

\begin{lemma}\label{lm:5.1}
In Case $\mathfrak{B}$, there exists an orthonormal basis
$\{e_1,\ldots,e_n\}$ of $T_pM$ such that the difference tensor $K$
satisfies
\begin{equation}\label{eqn:5.2}
K_{e_1}e_1=2e_1,\ K_{e_1}e_i=e_i,\ K_{e_i}e_j=\delta_{ij} e_1,\ \ i,
j=2,\ldots,n.
\end{equation}
\end{lemma}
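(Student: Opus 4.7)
The plan is as follows. First I would observe that Case $\mathfrak{B}$, combined with the positivity $\lambda_1>0$ from Lemma \ref{lm:4.1}(i), forces $\varepsilon=1$ and $\lambda_1=2$, so $\mu=\tfrac{1}{2}\lambda_1=1$. Hence the first two asserted relations, $K_{e_1}e_1=2e_1$ and $K_{e_1}e_i=e_i$ for $i\ge 2$, come directly from Lemma \ref{lm:4.1}(i), and only the identities $K_{e_i}e_j=\delta_{ij}e_1$ for $i,j\ge 2$ remain to be proved.

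The crucial observation is that in Case $\mathfrak{B}$ the endomorphism $K_{e_1}$ restricted to $\{e_1\}^{\perp}$ acts as $\mu\cdot\mathrm{id}$, so \emph{every} $h$-orthonormal basis of $\{e_1\}^{\perp}$ qualifies as $\{e_2,\ldots,e_n\}$ in the construction of Lemma \ref{lm:4.1}. Consequently, for any unit vector $v\in\{e_1\}^{\perp}$, placing $v$ as one of the basis vectors $e_i$ and invoking Lemma \ref{lm:4.1}(ii) (applicable since $\lambda_1=2\lambda_i$) yields $f(v)=h(K_v v,v)=0$. Since $f|_{\{e_1\}^{\perp}}$ is a homogeneous cubic form that vanishes on the whole unit sphere, it vanishes identically on $\{e_1\}^{\perp}$. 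Polarizing and using the total symmetry of $h(K_XY,Z)$ from \eqref{eqn:1.2}, I would then obtain, exactly as in the argument producing \eqref{eqn:4.2} for Case $\mathfrak{C}_n$,
\begin{equation*}
h(K_{e_i}e_j,e_k)=0,\qquad 2\le i,j,k\le n,
\end{equation*}
so that $K_{e_i}e_j\in\mathrm{span}\{e_1\}$ for all $i,j\ge 2$.

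It remains only to read off the coefficient along $e_1$. Using once more the symmetry $h(K_{e_i}e_j,e_1)=h(K_{e_i}e_1,e_j)$ together with the already-established identity $K_{e_1}e_i=\mu e_i$, I would compute
\begin{equation*}
h(K_{e_i}e_j,e_1)=\mu\, h(e_i,e_j)=\delta_{ij},
\end{equation*}
which, combined with the previous step, delivers $K_{e_i}e_j=\delta_{ij}e_1$. The only subtle point in the argument is the passage from $f(e_i)=0$ for the particular constructed basis to $f(v)=0$ for \emph{every} unit vector $v\in\{e_1\}^{\perp}$; this is precisely where the defining feature of Case $\mathfrak{B}$---namely $\lambda_2=\cdots=\lambda_n=\tfrac{1}{2}\lambda_1$ with $\lambda_1^2-4\varepsilon=0$---makes the basis choice on $\{e_1\}^{\perp}$ completely free, so no further obstacle arises.
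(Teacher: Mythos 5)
Your proposal is correct and follows essentially the same route as the paper: deduce $\varepsilon=1$, $\lambda_1=2$ from $\lambda_1^2-4\varepsilon=0$ and $\lambda_1>0$, use Lemma \ref{lm:4.1}(ii) to get $f(v)=0$ on all of $\{e_1\}^{\perp}$ (the paper invokes this exactly as in its derivation of \eqref{eqn:4.2}), polarize to obtain $h(K_{e_i}e_j,e_k)=0$ for $i,j,k\ge2$, and read off the $e_1$-component by symmetry of the cubic form. The only difference is that you spell out the details the paper leaves implicit, including the (valid) observation that the degeneracy of the eigenvalue $\tfrac{1}{2}\lambda_1$ makes the basis choice on $\{e_1\}^{\perp}$ free.
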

\begin{proof}
Let $\{e_1,\ldots,e_n\}$ be the orthonormal basis determined in
Lemma \ref{lm:4.1}. By assumption, $\lambda_{1}^{2}-4\varepsilon=0$,
we have
\begin{equation}\label{eqn:5.3}
\varepsilon=1,\ \ \lambda_{1}=2.
\end{equation}

Similar to the proof of \eqref{eqn:4.2}, we now have
\begin{equation}\label{eqn:5.4}
h(K_{e_i}e_j,e_k)=0,\ \ 2\leq i, j, k\leq n.
\end{equation}

From these results we easily get the assertion of Lemma
\ref{lm:5.1}.
\end{proof}

Next, as an extension of Lemma \ref{lm:5.1} we can prove the
following lemma.

\begin{lemma}\label{lm:5.2}
If Case $\mathfrak{B}$ occurs, then around $p$ there exists a local
orthonormal basis $\{E_{1},\ldots,E_{n}\}$ such that
$\hat{\nabla}_{X}E_1=0$ for all $X \in TM^n$, and
\begin{equation}\label{eqn:5.5}
K_{E_1}E_1=2 E_1,\ K_{E_1}E_i= E_i,\ K_{E_i}E_j=\delta_{ij} E_1,\ \
i,j=2,\ldots,n.
\end{equation}
Moreover, $(M^n,h)$ is locally isometric to the Euclidean space
$\mathbb{R}^n$.
\end{lemma}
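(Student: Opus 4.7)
The plan is to exploit the rigidity provided by $\hat{\nabla}K=0$: once the point-wise structure of Lemma \ref{lm:5.1} is in place at $p$, both $h$ and $K$ are parallel, so any algebraic combination of them is parallel. In particular, the curvature tensor $\hat{R}$, which by the Gauss equation \eqref{eqn:2.3} is expressible purely in terms of $h$ and $K$, will be $\hat{\nabla}$-parallel, and it is enough to evaluate it once at $p$ in order to control it on a neighborhood.

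The first step I would carry out is to substitute the explicit formulas for $K$ in Lemma \ref{lm:5.1}, together with $\varepsilon=1$, into \eqref{eqn:2.3} and check directly that $\hat{R}(e_i,e_j)e_k=0$ for all $i,j,k\in\{1,\dots,n\}$. The key observation is that the eigenvalue pattern $K_{e_1}e_1=2e_1$, $K_{e_1}e_i=e_i$, $K_{e_i}e_j=\delta_{ij}e_1$ forces the commutator $[K_{e_i},K_{e_j}]e_k$ to equal $\delta_{jk}e_i-\delta_{ik}e_j$ when $i,j,k\ge 2$, which cancels exactly the sectional part $h(e_j,e_k)e_i-h(e_i,e_k)e_j$; the analogous cancellation in the mixed cases involving $e_1$ is produced by the doubled eigenvalue $K_{e_1}e_1=2e_1$ against $K_{e_1}e_i=e_i$.

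Next, since $\hat{\nabla}h=0$ by definition of the Levi-Civita connection and $\hat{\nabla}K=0$ by hypothesis, differentiating the Gauss equation \eqref{eqn:2.3} immediately gives $\hat{\nabla}\hat{R}=0$. A $\hat{\nabla}$-parallel tensor that vanishes at $p$ vanishes on the connected component through $p$, so $\hat{R}\equiv 0$ on a neighborhood $U$ of $p$. Consequently $(U,h)$ is flat and hence locally isometric to the Euclidean space $\mathbb{R}^n$.

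Finally, on a convex normal neighborhood $U$ parallel transport is path-independent, so extending $\{e_1,\dots,e_n\}$ from $p$ by parallel translation produces a local orthonormal frame $\{E_1,\dots,E_n\}$ with $\hat{\nabla}_X E_i=0$ for every tangent vector $X$; in particular $\hat{\nabla}_X E_1=0$. Because $\hat{\nabla}K=0$, the components of $K$ in this parallel frame are constant, hence equal to the values at $p$ given by Lemma \ref{lm:5.1}, and \eqref{eqn:5.5} follows. I expect the only genuine work to be the curvature computation at $p$: it is purely algebraic but must be organised carefully to see the cancellations, after which the remaining assertions are routine consequences of $\hat{\nabla}$-parallelism and local Riemannian flatness.
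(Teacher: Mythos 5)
Your proof is correct, and it reorganizes the argument in a way that genuinely differs from the paper's. The paper first builds the frame by parallel translation along radial geodesics through $p$, reads off \eqref{eqn:5.5} from $\hat{\nabla}K=0$, deduces flatness from the Gauss equation \eqref{eqn:2.3} on the whole neighbourhood, and then proves $\hat{\nabla}_{X}E_1=0$ by a separate hands-on computation: writing $\hat{\nabla}_{E_j}E_i=\sum_k\Gamma_{ij}^kE_k$ and extracting $\Gamma_{ij}^1=0$ from the components of $(\hat{\nabla}_{E_i}K)(E_i,E_i)=(\hat{\nabla}_{E_1}K)(E_i,E_i)=0$. You instead evaluate $\hat{R}$ once at $p$ (your cancellation checks are correct: for $i,j,k\ge2$ one gets $[K_{e_i},K_{e_j}]e_k=\delta_{jk}e_i-\delta_{ik}e_j$, and the mixed cases involving $e_1$ cancel against the eigenvalue $2$), observe that $\hat{R}$ is an algebraic contraction of the parallel tensors $h$ and $K$ and is therefore itself parallel, and propagate $\hat{R}(p)=0$ to $\hat{R}\equiv0$. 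Flatness then hands you, for free, a \emph{fully} parallel orthonormal frame via path-independence of parallel transport on a simply connected normal neighbourhood, which yields $\hat{\nabla}_XE_i=0$ for all $i$ — stronger than the $\hat{\nabla}_XE_1=0$ the lemma asks for — and \eqref{eqn:5.5} follows since $K$ has constant components in any parallel frame. Your route buys a cleaner derivation of the parallelism of $E_1$ at the cost of invoking the standard fact that flat plus simply connected gives trivial holonomy; the paper's route is more elementary and self-contained, extracting exactly the Christoffel symbols it needs directly from $\hat{\nabla}K=0$ without appealing to holonomy. Both are complete proofs.
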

\begin{proof}
Let $\{e_{1},\ldots,e_{n}\}$ be the orthonormal basis of $T_pM$,
given by Lemma \ref{lm:5.1}. By parallel translation of
$\{e_i\}_{i=1}^n$ along geodesics through $p$, we can obtain an
$h$-orthonormal basis, denoted by $\{E_{1},\ldots,E_{n}\}$, in a
normal neighbourhood around $p$. Since $\hat{\nabla}K=0$, the
difference tensor $K$ takes the form of \eqref{eqn:5.5}.

It follows from \eqref{eqn:2.3}, \eqref{eqn:5.3} and \eqref{eqn:5.5}
that $(M^n,h)$ satisfies $\hat{R}(E_i, E_j)E_j=0$ for any $i,j$,
i.e., $(M^n,h)$ is flat and it is locally isometric to the Euclidean
space $\mathbb{R}^n$.

To show that $\hat{\nabla}_{X}E_1=0$ for any $X\in TM^n$, we denote
$\hat{\nabla}_{E_j}E_i=\sum_k\Gamma_{ij}^kE_k$, where
$\Gamma_{ij}^k=-\Gamma_{kj}^i$, $1\leq i,j,k\leq n$. By using
$\hat\nabla K=0$ and \eqref{eqn:5.5}, straightforward calculations
of the equations
$$
0=(\hat{\nabla}_{E_i}K)(E_i, E_i) =(\hat{\nabla}_{E_1}K)(E_i, E_i),\
i\not=1
$$
give that $\Gamma_{ij}^1=0$ for $1\leq i,j\leq n$.
It follows that
\begin{equation}\label{eqn:5.6}
\hat{\nabla}_{E_i}E_{1}=0,\ \ 1\le i\le n.
\end{equation}
This completes the proof of Lemma \ref{lm:5.2}.
\end{proof}

Now we will prove Theorem \ref{th:5.1}.

\begin{proof}[Proof of Theorem \ref{th:5.1}.]
As proved in Lemma \ref{lm:5.2}, $\hat{\nabla}_{X}E_1=0$ and
$(M,h)$ is locally isometric to $\mathbb{R}^{n}$, we may choose
local coordinates $(u_1,u_2,\ldots,u_n)$ on $M^n$ such that the
metric $h$ has the following expression:
\begin{equation}\label{eqn:5.7}
h=du_1^2+du_2^2+du_3^2+\cdots+du_n^2,
\end{equation}
and that $\tfrac{\partial}{\partial u_1}=E_1$. It follows from
\eqref{eqn:5.7} that
\begin{equation}\label{eqn:5.8}
\hat{\nabla}_{\partial u_i}\partial u_j=0,\ \ 1\leq i,j\leq n,
\end{equation}
where, and also later on, we use the notations $\partial u_k
=\tfrac{\partial\ }{\partial u_k},\ k=1,\ldots,n$.

By using \eqref{eqn:5.5}, we get that
\begin{equation}\label{eqn:5.9}
K_{\partial u_1}X=X,\ \ K_X Y= h(X,Y)\partial u_1,\ \ X,Y\in
\{\partial u_1\}^\perp.
\end{equation}
By using \eqref{eqn:5.5}, \eqref{eqn:5.7} and \eqref{eqn:5.9}, we get that
\begin{equation}\label{eqn:5.10}
\left\{
\begin{aligned}
&K_{\partial u_1}\partial u_1=2\partial u_1,\ \ K_{\partial
u_1}\partial u_k=
\partial u_k,\ \ 2\leq k\leq n,\\
& K_{\partial u_k}\partial u_j=\delta_{kj}\partial u_1,\ \ 2\leq
j,k\leq n.
\end{aligned}
\right.
\end{equation}

Write $x=x(u_1,\ldots,u_n)\in\mathbb{R}^{n+1}$. From
\eqref{eqn:5.10}, \eqref{eqn:5.8}, and using \eqref{eqn:1.1} with
the fact $\varepsilon=1$, we have
\begin{equation}\label{eqn:5.11}
x_{u_1u_1}=2x_{u_1}-x,
\end{equation}
\begin{equation}\label{eqn:5.12}
x_{u_1u_k}=x_{u_k},\ \ 2\leq k\leq n,
\end{equation}
\begin{equation}\label{eqn:5.13}
x_{u_ku_k}=x_{u_1}-x,\ \ 2\leq k\leq n,
\end{equation}
\begin{align}\label{eqn:5.14}
x_{u_ku_j}=0,\ \ 2\leq j,k\leq n\ {\rm and}\ j\neq k.
\end{align}

First of all, we can solve \eqref{eqn:5.11} to obtain that
\begin{align}\label{eqn:5.15}
x=&P_1(u_2,\ldots,u_n)e^{u_1}+P_2(u_2,\ldots,u_n) u_1e^{u_1},
\end{align}
where $P_1(u_2,\ldots,u_n)$ and $P_2 (u_2,\ldots,u_n)$ are
$\mathbb{R}^{n+1}$-valued functions.

Inserting \eqref{eqn:5.15} into \eqref{eqn:5.12}, we obtain
$\tfrac{\partial P_2}{\partial u_k}=0$, $2\le k\le n$, which shows
that $P_2(u_2,\ldots,u_n)$ is a constant vector denoted by $A_1$.
Hence, we have
\begin{align}\label{eqn:5.16}
x=P_1(u_2,\ldots,u_n)e^{u_1} +A_1u_1e^{u_1}.
\end{align}

Putting \eqref{eqn:5.16} into \eqref{eqn:5.13} for $k=2$, we
further obtain that
\begin{align}\label{eqn:5.17}
\tfrac{\partial^2P_1}{\partial u_2\partial u_2}=A_1.
\end{align}
Thus, we can write
\begin{align}\label{eqn:5.18}
x=\Big(\tfrac{1}{2}u_2^2A_1
+P_3(u_3,\ldots,u_n)u_2+P_4(u_3,\ldots,u_n)\Big)e^{u_1}
+u_1e^{u_1}A_1.
\end{align}

From \eqref{eqn:5.14} and \eqref{eqn:5.18}, we can derive that
$P_3(u_3,\ldots,u_n)$ is a constant vector denoted by $A_2$. Hence,
we have
$$
x=\left(\tfrac{1}{2}u_2^2A_1
+u_2A_2+P_4(u_3,\ldots,u_n)\right)e^{u_1} +u_1e^{u_1}A_1.
$$

If we carry out such procedure by induction for other $u_k$ with
$k\ge3$, we can finally obtain constant vectors
$\{A_1,A_2,\ldots,A_{n+1}\}$ such that $x(u_1,\ldots,u_n)$ has the
following expression:
\begin{align}\label{eqn:5.19}
x=\Big(\tfrac{1}{2}\sum_{k=2}^n u_k^2+u_1\Big)e^{u_1}A_1
+\sum_{k=2}^nu_ke^{u_1}A_k+e^{u_1}A_{n+1}.
\end{align}

The nondegeneracy of $x$ implies that it lies linearly full in
$\mathbb{R}^{n+1}$ and thus $A_{1}$, $\ldots$, $A_{n+1}$ are
linearly independent vectors. Thus, up to a centroaffine
transformation, $x$ can be written as
$$
x=\Big(e^{u_1},u_2e^{u_1},\ldots,u_ne^{u_1},
\big(\tfrac{1}{2}\sum_{k=2}^n u_k^2+u_1\big)e^{u_1}\Big),
$$
which is easily seen to be locally centroaffinely equivalent to the
hypersurface given in Theorem \ref{th:5.1}.

We have completed the proof of Theorem \ref{th:5.1}.
\end{proof}

%%%%%%%%%%%%%%%%%%%%%%%%%%%%%%%%%%%%%%%%%%%%%%%%%%%%%%%%%%%%%%%%%%%
\numberwithin{equation}{section}

\section{Centroaffine surfaces in
$\mathbb{R}^3$ with $\tilde\nabla C=0$}\label{sect:6}

Although Theorem \ref{th:1.1} gives a complete classification of
locally strongly convex centroaffine hypersurfaces in
$\mathbb{R}^{n+1}$ with parallel cubic form, its statement involving
the Calabi product constructions actually makes use of the induction
procedure. Therefore, in order to guarantee the validity of such
induction procedure, we need first consider the lowest dimension case
(i.e. $n=2$). This problem will be settled by
the following theorem.

\begin{theorem}\label{th:6.1}
Let $x:M^2\to\mathbb{R}^3$ be a locally strongly convex centroaffine
surface which has parallel cubic form. Then $x$ is locally
centroaffinely equivalent to one of the following hypersurfaces:
\begin{enumerate}
\item[(i)] quadrics (C=0);

\vskip 1mm

\item[(ii)] $x_{1}^{\alpha_{1}}x_{2}^{\alpha_{2}}
x_{3}^{\alpha_{3}}=1$, where $\{\alpha_i\}$ are real numbers which
satisfy
$$
\alpha_{i}>0,\ i=1,2,3;\ {\rm or}\ \alpha_{1}<0,\ \alpha_{2},
\alpha_{3}>0,\ \alpha_{1}+\alpha_{2}+\alpha_{3}<0;
$$

\vskip 1mm

\item[(iii)] $x_{1}^{\alpha_{1}}
(x_{2}^2+x_{3}^2)^{\alpha_{2}} \exp (\alpha
_{3}\arctan\tfrac{x_2}{x_3})=1,\ \ \alpha_{1}<0,\
\alpha_{1}+2\alpha_{2}>0$;

\vskip 1mm

\item[(iv)] $x_{3}= x_{1}(\ln x_{1}
-\alpha_{2}\ln x_{2}),\ \ 0<\alpha_{2}<1;$

\vskip 1mm

\item[(v)] $x_{3}=\tfrac{1}{2x_{1}}x_{2}^{2}
+x_{1}\ln x_{1}$,
\end{enumerate}
where $\alpha_1,\alpha_2, \alpha_3$ are constants and $(x_1,x_2,x_3)$
is the coordinate of $\mathbb{R}^3$.
\end{theorem}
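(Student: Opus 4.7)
The plan is to reduce the classification to the structural results in Sections~\ref{sect:4} and \ref{sect:5} together with a one-variable ordinary differential equation, then read off the global shape of the resulting surfaces by an explicit Calabi product computation.

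First I would dispose of case (i) at once: if $C=0$ the surface is an open part of a hyperquadric, by the remark following \eqref{eqn:2.2}. Assuming henceforth that $C\neq 0$, I apply Lemma~\ref{lm:4.1} at a point $p\in M^2$. Since $n=2$ forces $2\le m\le n-1$ to be empty, the only possibilities are Case~$\mathfrak{C}_1$, Case~$\mathfrak{C}_2=\mathfrak{C}_n$ and Case~$\mathfrak{B}$. Theorem~\ref{th:4.2} immediately excludes $\mathfrak{C}_n$, and Theorem~\ref{th:5.1} shows that Case~$\mathfrak{B}$ gives precisely the hypersurface in case~(v). Thus only Case~$\mathfrak{C}_1$ remains to analyse.

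In Case~$\mathfrak{C}_1$, Theorem~\ref{th:4.1} presents $M^2$ locally as the Calabi product $\tilde\psi(u,p)=(e^u\psi_1(p),e^{-\lambda u})$ of a locally strongly convex $1$-dimensional centroaffine curve $\psi_1\colon I\to\mathbb{R}^2$ with parallel cubic form and a point, for some admissible constant $\lambda\neq 0,-1$. To classify such $\psi_1$, I would parameterise $\psi_1$ by arc length $t$ of the positive-definite metric $h^1$ and write $K(\partial_t,\partial_t)=\kappa\,\partial_t$ with $\kappa\in\mathbb{R}$ a constant (the condition $\hat\nabla K=0$ reduces to constancy of $\kappa$ in dimension one). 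Then the centroaffine Gauss formula \eqref{eqn:1.1} becomes the linear ODE
\begin{equation*}
\psi_1''(t)-\kappa\,\psi_1'(t)+\varepsilon_1\,\psi_1(t)=0,\qquad \varepsilon_1=\pm1.
\end{equation*}
Its characteristic polynomial has discriminant $\Delta:=\kappa^2-4\varepsilon_1$ and partitions the solutions into three subtypes: (a)~$\Delta>0$, two distinct real roots, so after a linear change of basis $\psi_1(t)=(e^{\rho_1 t},e^{\rho_2 t})$; (b)~$\Delta=0$ (forcing $\varepsilon_1=1$, $\kappa=\pm2$), giving $\psi_1(t)=e^{\kappa t/2}(1,t)$; (c)~$\Delta<0$ (forcing $\varepsilon_1=1$), giving $\psi_1(t)=e^{\kappa t/2}(\cos\omega t,\sin\omega t)$ with $\omega=\tfrac12\sqrt{-\Delta}$.

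To conclude, I substitute each of these three forms into the Calabi product $\tilde\psi(u,t)=(e^u\psi_1(t),e^{-\lambda u})$ and eliminate the parameters $(u,t)$ in favour of the ambient coordinates. A direct calculation shows that subtype~(a) produces a power-product equation $x_1^{\alpha_1}x_2^{\alpha_2}x_3^{\alpha_3}=1$ (case~(ii)); subtype~(c) produces, after collecting $x_2^2+x_3^2$ and $\arctan(x_2/x_3)$, the mixed form in case~(iii); and subtype~(b) produces, after a permutation of coordinates, $x_3=x_1(\ln x_1-\alpha_2\ln x_2)$ (case~(iv)). The exponents $\alpha_i$ arise as explicit rational expressions in $\rho_1,\rho_2,\kappa,\omega$ and $\lambda$. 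I expect the main technical obstacle to be the bookkeeping: ensuring that the restrictions $\lambda\neq 0,-1$ coming from Proposition~\ref{pr:3.2} together with the positivity requirement on the induced centroaffine metric dictated by Remark~\ref{rm:3.1} and \eqref{eqn:3.8} translate \emph{exactly} into the sign constraints on $(\alpha_1,\alpha_2,\alpha_3)$ listed in (ii)--(iv). This is a case analysis by the signs of $\lambda$ and $\varepsilon_1$, but after the explicit parametric expressions are available it involves only elementary algebra.
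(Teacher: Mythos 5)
Your proposal is correct, and for the only substantive case ($\mathfrak{C}_1$) it takes a genuinely different route from the paper. The paper explicitly declines to use the Calabi-product reduction here --- it remarks that the statement of Theorem \ref{th:4.1} is too ``sketchy'' for an explicit classification --- and instead extends the typical basis to a parallel frame (Lemma \ref{lm:6.2}), deduces flatness of $(M^2,h)$, and integrates the full immersion system \eqref{eqn:6.6}--\eqref{eqn:6.8} in flat coordinates, with the trichotomy governed by the sign of $a_1^2+4(\mu^2-\varepsilon)$. You instead split off a one-dimensional factor via Theorems \ref{th:4.1} and \ref{th:3.4}, classify locally strongly convex centroaffine curves with parallel cubic form through the constant-coefficient ODE $\psi_1''-\kappa\,\psi_1'+\varepsilon_1\psi_1=0$, and reassemble with the explicit formula \eqref{eqn:1.4}; your discriminant $\kappa^2-4\varepsilon_1$ is the one-dimensional shadow of the paper's, since \eqref{eqn:6.11} is precisely the curve equation after the metric rescaling $h^1=\lambda_2(2\lambda_2-\lambda_1)h$ of \eqref{eqn:3.18}. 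Your route isolates the only genuinely new input (the curve classification) and reuses the structure theory of Section \ref{sect:3}; the paper's route keeps the argument self-contained and avoids having to confirm that the decomposition machinery is legitimate with an $n_1=1$ factor. If you write yours up, two points must be made explicit: that the product constant satisfies $\lambda\neq 0,-1$ (this follows from $\lambda=1-\lambda_1/\mu$ together with $\lambda_1>0$, $\mu\neq 0$ and $\lambda_1\neq 2\mu$ from Lemma \ref{lm:4.1}), and the translation via \eqref{eqn:3.8} of local strong convexity into the stated sign constraints on the $\alpha_i$, which you defer to ``bookkeeping'' but which is where the actual content of the inequalities in (ii)--(iv) lives (the paper is admittedly equally terse on this last step).
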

\begin{remark}\label{rm:6.1}
Centroaffine surfaces with parallel cubic form have been studied in
\cite{LW2}, where the authors made use of Theorem 1.3 in \cite{LW}.
Comparing our theorem with the result in \cite{LW2}, one can see
that the surface (v) of Theorem \ref{th:6.1} is missing in
\cite{LW2}. This appearance is because in \cite{LW} the authors only
obtained the classification of canonical centroaffine hypersurfaces
for $N(h)\leq1$, hence in \cite{LW2} the conclusion for the case
$N(h)=2$ is unfortunately not correct stated. Here, the fact that
the surface (v) corresponds to $n=2,\ v=3$ and $N(h)=2$ in corollary
\ref{cr:1.1} should be emphasized.
\end{remark}

In order to prove Theorem \ref{th:6.1}, we first notice that, for
$n=2$, it follows from Theorem \ref{th:5.1} that in Case
$\mathfrak{B}$ the surface $M^2$ is centroaffinely equivalent to the
surface (v). Thus, taking into consideration of Theorem
\ref{th:4.2}, we see that what we need to consider is Case
$\mathfrak{C}_1$ with $n=2$ in a more explicit way, rather than like
the sketchy statement of Theorem \ref{th:4.1}.

\vskip 1mm

To begin with, we state the following lemma which is a direct
consequence of Lemma \ref{lm:4.1}.

\begin{lemma}[cf. Lemma \ref{lm:4.1}]\label{lm:6.1}
If Case $\mathfrak{C}_1$ occurs, then there exists an orthonormal
basis $\{e_{1},e_{2}\}$ of $T_pM^2$ such that the difference tensor
$K$ takes the following form:
\begin{align*}
K_{e_{1}}e_{1}=\lambda _{1}e_{1},\ \ K_{e_{1}}e_{2}=\mu
e_{2},\ \ K_{e_{2}}e_{2}=\mu e_{1}+a_{1}e_{2},\\
\varepsilon-\lambda_{1}\mu+\mu^{2}=0,\ \lambda_{1}>0,\
\lambda_{1}^{2}-4\varepsilon>0,\ \lambda_{1}> 2\mu.
\end{align*}
\end{lemma}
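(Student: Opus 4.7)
The plan is to specialize Lemma \ref{lm:4.1} to the surface case $n=2$ under Case $\mathfrak{C}_1$, and then to pin down the single remaining unknown component $K_{e_2}e_2$ by invoking the total symmetry of the cubic form.

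First, I would invoke Lemma \ref{lm:4.1}, applied at the point $p\in M^2$ with $UM_p$ the unit tangent circle and $f(u)=h(K_uu,u)$, to obtain an $h$-orthonormal basis $\{e_1,e_2\}$ of $T_pM^2$ with $K_{e_1}e_1=\lambda_1 e_1$ and $K_{e_1}e_2=\lambda_2 e_2$, where $\lambda_1=\max_{UM_p} f>0$ and $\lambda_2$ satisfies \eqref{eqn:4.1}. The definition of Case $\mathfrak{C}_1$ forces $\lambda_1^2-4\varepsilon>0$ and $\lambda_2=\mu$, and by Lemma \ref{lm:4.1}(ii) this is precisely the alternative $\lambda_1\neq 2\mu$ of \eqref{eqn:4.1}, which therefore reduces to $\varepsilon-\lambda_1\mu+\mu^2=0$. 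The strict inequality $\lambda_1>2\mu$ is immediate from the explicit formula $2\mu=\lambda_1-\sqrt{\lambda_1^2-4\varepsilon}<\lambda_1$.

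Next, to recover the coefficients of $K_{e_2}e_2$ in the basis $\{e_1,e_2\}$, I would write $K_{e_2}e_2 = b\,e_1 + a_1\,e_2$ for some real $b, a_1$. The total symmetry of the cubic form embodied in \eqref{eqn:1.2}, equivalently $h(K_XY,Z)=h(K_XZ,Y)=h(K_YX,Z)$, gives
\begin{equation*}
b = h(K_{e_2}e_2,e_1) = h(K_{e_2}e_1,e_2) = h(K_{e_1}e_2,e_2) = \mu,
\end{equation*}
which is the desired formula, with the coefficient $a_1$ left as a free parameter at this pointwise stage.

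The conclusion then assembles these pieces into the stated lemma. I do not expect a genuine obstacle here: all the nontrivial analytical content (the existence of the maximizing direction $e_1$, the positivity $\lambda_1>0$, and the quadratic constraint in $\mu$) is already packaged inside Lemma \ref{lm:4.1}, while the remaining off-diagonal bookkeeping is a one-line consequence of the symmetry of $h(K\cdot,\cdot,\cdot)$ as a totally symmetric trilinear form. The only potentially subtle point is the correct invocation of the dichotomy in \eqref{eqn:4.1}: one must confirm that Case $\mathfrak{C}_1$ indeed sits in the branch $\lambda_1\neq 2\mu$, but this is guaranteed by the very definition of the case (using the classification into $\mathfrak{C}_m$ and $\mathfrak{B}$ in Section \ref{sect:4.1}).
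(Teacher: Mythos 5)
Your proposal is correct and follows exactly the route the paper intends: the paper states Lemma 6.1 as a direct consequence of Lemma 4.1, and your two steps (reading off $\lambda_1$, $\mu$ and the constraints from Lemma 4.1(i)--(ii), then determining $h(K_{e_2}e_2,e_1)=h(K_{e_1}e_2,e_2)=\mu$ from the total symmetry of the cubic form) are precisely the omitted details. No gaps.
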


To prove Theorem \ref{th:6.1}, we also need the following lemma.
\begin{lemma}\label{lm:6.2}
If Case $\mathfrak{C}_1$ occurs, then there exists a local
orthonormal basis $\{E_{1},E_{2}\}$ around $p$, such that the
difference tensor takes the following form:
\begin{align}\label{eqn:6.1}
K_{E_{1}}&E_{1}=\lambda _{1}E_{1},\ K_{E_{1}}E_{2}=\mu
E_{2},\ K_{E_{2}}E_{2}=\mu E_{1}+a_{1}E_{2},\\
&\varepsilon-\lambda_{1}\mu+\mu^{2}=0,\ \lambda_{1}>0,\
\lambda_{1}^{2}-4\varepsilon>0,\ \lambda_{1}> 2\mu,\nonumber
\end{align}
where $\lambda_{1}, \mu, a_1$ are constant numbers and
$\hat{\nabla}_{E_i}E_j=0,\ i,j=1,2$. Moreover, $(M^2,h)$ is locally
isometric to the Euclidean space $\mathbb{R}^2$.
\end{lemma}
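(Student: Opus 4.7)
The plan is to extend the pointwise basis from Lemma \ref{lm:6.1} to a local frame via parallel translation, use $\hat\nabla K=0$ to carry the algebraic form \eqref{eqn:6.1} throughout a neighborhood, then exploit the eigenvalue gap $\lambda_1\neq 2\mu$ to kill every connection coefficient, and finally read off flatness. This follows the same template as the proof of Lemma \ref{lm:5.2}, adapted to the present eigenvalue configuration.

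First, starting from the orthonormal basis $\{e_1,e_2\}$ of $T_pM^2$ furnished by Lemma \ref{lm:6.1}, I would extend it to a local $h$-orthonormal frame $\{E_1,E_2\}$ on a normal neighborhood $U$ of $p$ by parallel translation along the radial $\hat{\nabla}$-geodesics emanating from $p$. Because $\hat{\nabla}K=0$, parallel translation commutes with $K$: if $X,Y$ are $\hat{\nabla}$-parallel along a curve, so is $K(X,Y)$. Hence at every $q\in U$ the components of $K$ in $\{E_1,E_2\}$ coincide with those of $K$ in $\{e_1,e_2\}$ at $p$, which is exactly \eqref{eqn:6.1}.

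Next, write $\hat{\nabla}_{E_j}E_i=\sum_k\Gamma_{ij}^k E_k$. Orthonormality of $\{E_1,E_2\}$ implies the antisymmetry $\Gamma_{ij}^k=-\Gamma_{kj}^i$, so in dimension two the connection is encoded by the two scalars $\omega_j:=\Gamma_{1j}^2=-\Gamma_{2j}^1$ for $j=1,2$, while $\Gamma_{11}^1=\Gamma_{22}^2=0$ automatically. I then expand the identity $(\hat{\nabla}_{E_j}K)(E_1,E_1)=0$ using \eqref{eqn:6.1}: the terms involving $\Gamma_{11}^1$ cancel, and a direct computation leaves
\begin{equation*}
(\lambda_1-2\mu)\,\omega_j\,E_2=0,\qquad j=1,2.
\end{equation*}
Since $\lambda_1>2\mu$ by hypothesis, this forces $\omega_1=\omega_2=0$, so every Christoffel symbol vanishes and $\hat{\nabla}_{E_i}E_j\equiv 0$ throughout $U$.

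Flatness is then immediate: $[E_i,E_j]=\hat{\nabla}_{E_i}E_j-\hat{\nabla}_{E_j}E_i=0$, and $\hat{R}(E_i,E_j)E_k=\hat{\nabla}_{E_i}\hat{\nabla}_{E_j}E_k-\hat{\nabla}_{E_j}\hat{\nabla}_{E_i}E_k-\hat{\nabla}_{[E_i,E_j]}E_k=0$, so $(M^2,h)$ is locally isometric to $\mathbb{R}^2$. As a consistency check, one can verify $\hat{R}=0$ directly from \eqref{eqn:2.3} and \eqref{eqn:6.1} using the relation $\varepsilon-\lambda_1\mu+\mu^2=0$. The one delicate point, and the main obstacle in principle, is that the radial-parallel frame of the first step is a priori only parallel along geodesics through $p$; full parallelism requires the algebraic input of $\hat{\nabla}K=0$ combined with the nondegeneracy $\lambda_1\neq 2\mu$ supplied by Case $\mathfrak{C}_1$. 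This is precisely where that hypothesis is used.
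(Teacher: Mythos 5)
Your proposal is correct and follows essentially the same route as the paper: extend the pointwise basis of Lemma \ref{lm:6.1} by radial parallel translation, use $\hat{\nabla}K=0$ to propagate \eqref{eqn:6.1}, then expand $(\hat{\nabla}_{E_j}K)(E_1,E_1)=0$ and invoke $\lambda_1\neq 2\mu$ to force all connection coefficients to vanish, whence flatness. Your use of the antisymmetry $\Gamma_{ij}^k=-\Gamma_{kj}^i$ to dispose of $\hat{\nabla}_{E_j}E_2$ in one stroke is a minor streamlining of the paper's extra computation of $h((\hat{\nabla}_{E_2}K)(E_1,E_2),E_1)$, but the argument is the same in substance.
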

\begin{proof}
Let $\{e_{1},e_{2}\}$ be the orthonormal basis of $T_pM^2$ given by
Lemma \ref{lm:6.1}. By parallel translation of $\{e_1,e_2\}$ along
geodesics (with respect to $\hat{\nabla}$) through $p$, we can
obtain an $h$-orthonormal basis, denoted by $\{E_{1},E_2\}$, in a
normal neighbourhood around $p$ such that, thanks to
$\hat{\nabla}K=0$, the difference tensor $K$ takes the form stated
in \eqref{eqn:6.1}.

First, from the calculation
$$
0=(\hat{\nabla}_{E_i}K)(E_1, E_1)=\lambda_1\hat{\nabla}_{E_i}
E_1-2K(\hat{\nabla}_{E_i}E_1, E_1),\ \ i=1, 2,
$$
and noting that $\hat{\nabla}_{E_i}E_{1}$ is $h$-orthogonal to
$E_{1}$, we have $\hat{\nabla}_{E_i}E_1=0,\ i=1,2$.

Next, by computation of $0=h((\hat{\nabla}_{E_2}K)(E_1, E_2),E_1)$
we obtain that
\begin{equation}\label{eqn:6.2}
h(\hat{\nabla}_{E_2}E_2,E_1)=0.
\end{equation}
This, together with $h(\hat{\nabla}_{E_i}E_2,E_2)=0$ and
$h(\hat{\nabla}_{E_1}E_2,E_1)=-h(\hat{\nabla}_{E_1}E_1,E_2)=0$,
we will obtain
\begin{equation}\label{eqn:6.3}
\hat{\nabla}_{E_i}E_j=0,\ i,j=1,2.
\end{equation}

It follows that $\hat{R}(E_i, E_j)E_k=0$ and $(M^2,h)$ is locally
isometric to the Euclidean space $\mathbb{R}^2$.
\end{proof}

\begin{proof}[Proof of Theorem \ref{th:6.1}]

According to Lemma \ref{lm:6.2}, we can choose local coordinates
$(u_1,u_2)$ for $M^2$ such that the centroaffine metric $h$ has the
following expression:
\begin{equation}\label{eqn:6.4}
h=du_1^2+du_2^2,
\end{equation}
and $E_i=\tfrac{\partial}{\partial u_i}$ for $i=1,2$. It follows
from \eqref{eqn:6.4} that
\begin{equation}\label{eqn:6.5}
\hat{\nabla}_{\partial u_i}\partial u_j=0,\ 1\leq i,j\leq 2.
\end{equation}

For $x=x(u_1,u_2)\in\mathbb{R}^3$, using \eqref{eqn:6.1},
\eqref{eqn:6.4}, \eqref{eqn:6.5} and \eqref{eqn:1.1} we can
obtain:
\begin{align}
x_{u_1u_1}&=\lambda_1x_{u_1}-\varepsilon x,\label{eqn:6.6}\\
x_{u_1u_2}&=\mu x_{u_2},\label{eqn:6.7}\\
x_{u_2u_2}&=\mu x_{u_1}+a_1x_{u_2}-\varepsilon x.\label{eqn:6.8}
\end{align}

We first solve the equation \eqref{eqn:6.6} to obtain that
\begin{equation}\label{eqn:6.9}
x=P_1(u_2)\exp\{(\lambda_1-\mu)u_1\}+P_2(u_2)\exp(\mu u_1),
\end{equation}
where $P_1(u_2)$ and $P_2 (u_2)$ are $\mathbb{R}^3$-valued
functions.

Inserting \eqref{eqn:6.9} into \eqref{eqn:6.7}, we obtain
$\tfrac{\partial P_1}{\partial u_2}=0$, showing that $P_1(u_2)$ is a
constant vector, denoted by $A_1$. Hence, we have
\begin{align}\label{eqn:6.10}
x=\exp\{(\lambda_1-\mu)u_1\}A_1 +P_2(u_2)\exp(\mu u_1).
\end{align}

Combining \eqref{eqn:6.10} and \eqref{eqn:6.8}, we get
\begin{align}\label{eqn:6.11}
\tfrac{{\rm d}^2P_2}{{\rm d}u_2{\rm d}u_2}=a_1 \tfrac{{\rm
d}P_2}{{\rm d}u_2}+(\mu^2-\varepsilon)P_2.
\end{align}

To solve \eqref{eqn:6.11}, we will consider the following three
cases, separately:
\begin{enumerate}
\item[(a)] $a_1^2+4(\mu^2-\varepsilon)>0$.\vskip 1mm
\item[(b)] $a_1^2+4(\mu^2-\varepsilon)<0$.\vskip 1mm
\item[(c)] $a_1^2+4(\mu^2-\varepsilon)=0$.
\end{enumerate}

(a) In this case, the solution of \eqref{eqn:6.11} is
$$
\begin{aligned}
P_2=\exp\Big\{\tfrac12&\Big(a_1+\sqrt{a_1^2+4(\mu^2-\varepsilon)}\,\Big)u_2
\Big\}A_2\\
&+\exp\Big\{\tfrac{1}{2}\Big(a_1-\sqrt{a_1^2+4(\mu^2-\varepsilon)}\Big)u_2
\Big\}A_3,
\end{aligned}
$$
where $A_2,\,A_3$ are constant vectors.

It follows that, up to a centroaffine transformation, $x$ can be
written as
\begin{align}\label{eqn:6.12}
x=\Big(\exp\big\{(\lambda_1-\mu)u_1&\big\},
\exp\big\{\tfrac{1}{2}\big(a_1+\sqrt{a_1^2+4(\mu^2-\varepsilon)}\,\big)u_2
+\mu u_1\big\},\nonumber\\
&\ \ \
\exp\big\{\tfrac{1}{2}\big(a_1-\sqrt{a_1^2+4(\mu^2-\varepsilon)}\,\big)u_2
+\mu u_1\big\}\Big),
\end{align}
which, due to its locally strongly convexity, is easily seen locally
on the hypersurface (ii) of Theorem \ref{th:6.1}.

\vskip 3mm

(b) In this case, we have $\varepsilon=1$. The solution of
\eqref{eqn:6.11} is given by
$$
\begin{aligned}
P_2=&\cos\Big(\tfrac{1}{2}\sqrt{-a_1^2
-4(\mu^2-1)}u_2\Big)\exp(\tfrac{1}{2}a_1u_2)A_2\\
&+\sin\Big(\tfrac{1}{2}\sqrt{-a_1^2
-4(\mu^2-1)}u_2\Big)\exp(\tfrac{1}{2}a_1u_2)A_3,
\end{aligned}
$$
where $A_2,A_3$ are constant vectors.

It follows that, up to a centroaffine transformation, $x$ can be
written as
\begin{align}\label{eqn:6.13}
x=\Big(\exp\big\{(\lambda_1-\mu)&u_1\big\},
\sin\big(\tfrac{1}{2}\sqrt{-a_1^2
-4(\mu^2-1)}u_2\big)\exp(\tfrac{1}{2}a_1u_2+\mu u_1),\nonumber\\
&\cos\big(\tfrac{1}{2}\sqrt{-a_1^2-4(\mu^2-1)}u_2\big)
\exp(\tfrac{1}{2}a_1u_2 +\mu u_1)\Big),
\end{align}
which, due to its locally strongly convexity, is locally on the
hypersurface (iii) of Theorem \ref{th:6.1}.

\vskip 3mm

(c) In this case, from the fact that $a_1^2+4(\mu^2-\varepsilon)=0$
and Lemma \ref{lm:6.2}, we have
\begin{align}\label{eqn:6.14}
a_1\neq 0,\ \ \varepsilon=1.
\end{align}
The solution of \eqref{eqn:6.11} is given by
$$
\begin{aligned}
P_2=&\exp(\tfrac{1}{2}a_1u_2)A_2+u_2 \exp(\tfrac{1}{2}a_1u_2)A_3,
\end{aligned}
$$
where $A_2,A_3$ are constant vectors.

It follows that, up to a centroaffine transformation, $x$ can be
written as
\begin{align}\label{eqn:6.15}
x=&\Big(\exp(\tfrac{1}{2}a_1u_2+\mu u_1),
\exp\big\{(\lambda_1-\mu)u_1\big\},\tfrac{1}{2}a_1u_2\exp(\tfrac{1}{2}a_1u_2
+\mu u_1)\Big),
\end{align}
which, according to \eqref{eqn:6.14}, \eqref{eqn:6.15} and due to
its locally strongly convexity, is locally on the hypersurface (iv)
of Theorem \ref{th:6.1}.

We have completed the proof of Theorem \ref{th:6.1}.
\end{proof}

%%%%%%%%%%%%%%%%%%%%%%%%%%%%%%%%%%%%%%%%%%%%%%%%%%%%%%%%%%%%%%%%
\numberwithin{equation}{section}

\section{Case
$\{\mathfrak{C}_m\}_{2\le m\le n-1}$ with $k_0=1$}\label{sect:7}

In this section, we consider Case $\mathfrak{C}_m$ ($2\le m\le n-1$)
with the condition that in the decomposition \eqref{eqn:4.28},
$k_0=1$. We will prove the following theorem.
\begin{theorem}\label{th:7.1}
Let $M^n$ be a locally strongly convex centroaffine hypersurface in
$\mathbb{R}^{n+1}$ which has parallel and non-vanishing cubic form.
If $\mathfrak{C}_m$ with $2\le m\le n-1$ occurs and the integer
$k_0$, as defined in subsection \ref{sect:4.5}, satisfies $k_0=1$,
then $M^n$ can be decomposed as the Calabi product of two locally
strongly convex centroaffine hypersurfaces with parallel cubic form,
or the Calabi product of a locally strongly convex centroaffine
hypersurface with parallel cubic form and a point.
\end{theorem}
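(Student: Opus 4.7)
Since $k_0=1$ forces $\dim(\mathrm{Im}\,L)=1$ by Proposition \ref{pr:4.2}, I would fix a unit generator $w_0$ of $\mathrm{Im}\,L$. The plan is to replace the vector $e_1$ in the decomposition $T_pM=\mathcal D_1\oplus\mathcal D_2\oplus\mathcal D_3$ by a carefully chosen unit vector $T$ in the two-plane $\mathrm{span}\{e_1,w_0\}$; this will produce a new $h$-orthogonal splitting $\tilde{\mathcal D}_1\oplus\tilde{\mathcal D}_2\oplus\tilde{\mathcal D}_3$ satisfying the hypotheses of Theorem \ref{th:3.2} when $\dim\mathcal D_3\geq 2$, and of Theorem \ref{th:3.4} when $\dim\mathcal D_3=1$. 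Parallel extension combined with $\hat\nabla K=0$ will then promote the pointwise construction at $p$ to one on a neighborhood.

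Write $\nu:=\sqrt{\tfrac12\lambda_1\eta}$ and pick an orthonormal basis $\{u_1=v_1,u_2,\ldots,u_{m-1}\}$ of $\mathcal D_2$ such that $u_2,\ldots,u_{m-1}\in V_{v_1}(0)$. By the equivalence (i)$\Leftrightarrow$(iii) in Lemma \ref{lm:4.9} and by Lemma \ref{lm:4.12}(ii), one has $L(u_i,u_i)=\nu w_0$ for every $i$ and $L(u_i,u_j)=0$ for $i\neq j$. Symmetry of $K$ then forces $K_{u_i}w_0=\nu u_i$, Lemma \ref{lm:4.5} yields $K_{w_0}w=(\mu\eta/\nu)w$ for every $w\in\mathcal D_3\cap\{w_0\}^\perp$, and Lemma \ref{lm:4.14}(1) gives $K_{w_0}w_0=\mu e_1+\tfrac{\eta(\mu+\lambda_1)}{\nu}w_0$. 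These four identities, together with the basic Case $\mathfrak{C}_m$ structure from Lemma \ref{lm:4.2}, pin down $K$ in all directions that will enter the construction.

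Now set $\alpha=\nu/\sqrt{\nu^2+\eta^2}$ and $\beta=\eta/\sqrt{\nu^2+\eta^2}$, so that $\alpha\eta=\beta\nu$, and define
$$T=\alpha e_1+\beta w_0,\qquad T'=-\beta e_1+\alpha w_0,$$
$\tilde{\mathcal D}_1=\{T\}$, $\tilde{\mathcal D}_2=\mathcal D_2\oplus\{T'\}$, $\tilde{\mathcal D}_3=\mathcal D_3\cap\{w_0\}^\perp$. Using the three elementary identities $\nu^2=\tfrac12\lambda_1\eta$, $\mu+\eta=\tfrac12\lambda_1$, and $\mu(\lambda_1-\mu)=\varepsilon$ (the last being a rewriting of \eqref{eqn:4.1}), a direct algebraic check produces
$$K(T,T)=(\lambda_2+\lambda_3)T,\ K(T,V)=\lambda_2 V,\ K(T,W)=\lambda_3 W,\ K(V,W)=0,$$
for $V\in\tilde{\mathcal D}_2$ and $W\in\tilde{\mathcal D}_3$, with
$$\lambda_2=\frac{\nu(\lambda_1-\mu)}{\sqrt{\nu^2+\eta^2}},\qquad \lambda_3=\frac{\mu\sqrt{\nu^2+\eta^2}}{\nu},$$
$\lambda_2\lambda_3=\mu(\lambda_1-\mu)=\varepsilon$, and $\lambda_2\neq\lambda_3$ (equality would force $\eta(\tfrac12\lambda_1+\eta)=0$, impossible).

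Finally, extending the typical frame $\{e_1,\ldots,e_n\}$ to a parallel orthonormal frame $\{E_1,\ldots,E_n\}$ on a normal neighborhood of $p$, and letting $W_0$ be the parallel extension of $w_0$, the vector field $T=\alpha E_1+\beta W_0$ and the distributions $\tilde{\mathcal D}_1,\tilde{\mathcal D}_2,\tilde{\mathcal D}_3$ are all parallel; since $\hat\nabla K=0$, the pointwise identities propagate to the whole neighborhood. Theorem \ref{th:3.2} then produces the Calabi product of two locally strongly convex centroaffine hypersurfaces with parallel cubic form in the subcase $\dim\mathcal D_3\geq 2$, while in the subcase $\dim\mathcal D_3=1$ we have $\tilde{\mathcal D}_3=\{0\}$ and Theorem \ref{th:3.4} applies instead, giving the Calabi product with a point (the hypothesis $\lambda_2+\lambda_3\neq 2\lambda_2$ reduces to $\lambda_2\neq\lambda_3$, which we have). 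The main obstacle is the algebraic identity $K(T,T')=\lambda_2 T'$ with precisely the same scalar $\lambda_2$ that $K(T,\cdot)$ acts by on $\mathcal D_2$: forcing this coincidence singles out the rotation angle $\alpha\eta=\beta\nu$, and under that same relation the mixed term $K(T',w)$ vanishes for free, so that all the hypotheses of Theorem \ref{th:3.2} or Theorem \ref{th:3.4} hold simultaneously.
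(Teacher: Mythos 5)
Your proposal is correct and follows essentially the same route as the paper: after pinning down $K$ on $\mathrm{span}\{e_1,w_0\}\oplus\mathcal{D}_2\oplus(\mathcal{D}_3\cap\{w_0\}^{\perp})$ via Lemmas \ref{lm:4.2}, \ref{lm:4.5}, \ref{lm:4.12} and \ref{lm:4.14}, your rotation $T=\alpha e_1+\beta w_0$ with $\alpha=\nu/\sqrt{\nu^2+\eta^2}=\sqrt{\lambda_1/(\lambda_1+2\eta)}$ and $\beta=\sqrt{2\eta/(\lambda_1+2\eta)}$ is exactly the paper's vector $t$, your $\lambda_2,\lambda_3$ coincide with the paper's $\sigma_2,\sigma_3$, and the concluding appeal to Theorems \ref{th:3.2}/\ref{th:3.4} after parallel extension is the paper's argument verbatim. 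All the algebraic identities you quote (in particular $\lambda_2-\lambda_3\neq 0$ via $2\eta(\tfrac12\lambda_1+\eta)\neq 0$, and $K(T',w)=0$ from $\alpha\eta=\beta\nu$) check out.
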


To prove Theorem \ref{th:7.1}, we first note that if $k_0=1$ then by
Proposition \ref{pr:4.2} we have $\dim\,({\rm Im}\,L)=1$. Moreover,
we can prove the following result.

\begin{lemma}\label{lm:7.1}
If $\dim\,({\rm Im}\,L)=1$, then there is a unit vector $w_1\in {\rm
Im}\,L\subset \mathcal{D}_3$ such that $L$ has the expression
\begin{equation}\label{eqn:7.1}
L(v_1,v_2)=\sqrt{\tfrac{1}{2}\lambda_1\eta }h(v_1,v_2)w_1,\ \
\forall\, v_1,v_2\in\mathcal{D}_2.
\end{equation}
\end{lemma}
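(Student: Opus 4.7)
The plan is to exploit the isotropy identity \eqref{eqn:4.4} together with the hypothesis that $\mathrm{Im}\,L$ is one-dimensional. Since $\mathrm{Im}\,L \subset \mathcal{D}_3$ has dimension one, we can pick a unit vector $w_1 \in \mathrm{Im}\,L$ with respect to $h$, and write
\[
L(v_1,v_2) = \phi(v_1,v_2)\, w_1, \qquad v_1, v_2 \in \mathcal{D}_2,
\]
where $\phi$ is a symmetric $\mathbb{R}$-valued bilinear form on $\mathcal{D}_2$ (symmetric because $L$ is). Everything then reduces to showing that $\phi = \sqrt{\tfrac{1}{2}\lambda_1\eta}\, h$.

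The isotropy condition \eqref{eqn:4.4} applied to this ansatz gives
\[
\phi(v,v)^2 = \tfrac{1}{2}\lambda_1\eta\, h(v,v)^2 \quad \text{for every } v \in \mathcal{D}_2.
\]
Writing $c := \sqrt{\tfrac{1}{2}\lambda_1\eta}$ (which is a well-defined positive real number because $\lambda_1 > 0$ and $\eta > 0$), and viewing $\phi(v,v)$ and $h(v,v)$ as elements of the polynomial ring $\mathbb{R}[x_2,\ldots,x_m]$ in the coordinates of $v$ relative to any basis of $\mathcal{D}_2$, the identity above factors as
\[
\bigl(\phi(v,v) - c\, h(v,v)\bigr)\bigl(\phi(v,v) + c\, h(v,v)\bigr) = 0
\]
in this polynomial ring. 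Since $\mathbb{R}[x_2,\ldots,x_m]$ is an integral domain, one of the two factors must vanish identically. Replacing $w_1$ by $-w_1$ if necessary (note $-w_1$ is again a unit vector spanning $\mathrm{Im}\,L$), we may assume $\phi(v,v) = c\, h(v,v)$ for all $v \in \mathcal{D}_2$.

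Finally, polarizing this quadratic identity (i.e., applying the standard formula $\phi(v_1,v_2) = \tfrac{1}{2}[\phi(v_1+v_2,v_1+v_2) - \phi(v_1,v_1) - \phi(v_2,v_2)]$, and similarly for $h$) yields $\phi(v_1,v_2) = c\, h(v_1,v_2)$ for all $v_1,v_2 \in \mathcal{D}_2$, which is precisely \eqref{eqn:7.1}. The only subtle point in the argument is the sign ambiguity coming from the square root, which is handled cleanly by the integral-domain factorization rather than by a continuity or connectedness argument on $\mathcal{D}_2 \setminus \{0\}$ (the latter would also work, but is less efficient when $\dim \mathcal{D}_2 = 1$).
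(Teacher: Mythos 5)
Your proof is correct, and it takes a genuinely different route from the paper's. The paper also begins with the ansatz $L(v_1,v_2)=\alpha(v_1,v_2)\bar w$, but then defines the operator $Q$ by $h(Qv_1,v_2)=\alpha(v_1,v_2)$ and invokes Lemma \ref{lm:4.12}(ii) (which depends on the structure of the decomposition \eqref{eqn:4.28} in the case $k_0=1$) to conclude $L(v_1,v_2)=0$ whenever $h(v_1,v_2)=0$; this forces $Qv$ to be proportional to $v$ for every $v$, the isotropy relation \eqref{eqn:4.4} pins the factor down to $\pm\sqrt{\tfrac12\lambda_1\eta}$, and a separate symmetry argument shows the sign $\varepsilon(v)$ is independent of $v$. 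You bypass Lemma \ref{lm:4.12} entirely: reading $\phi(v,v)^2=c^2h(v,v)^2$ as an identity of quadratic forms in $\mathbb{R}[x_2,\ldots,x_m]$ and factoring over this integral domain forces $\phi=\pm c\,h$ globally in one stroke, which simultaneously disposes of the sign-coherence issue; polarization then finishes. Your argument is more self-contained (it needs only the isotropy identity and $\dim\,({\rm Im}\,L)=1$, not the $k_0=1$ decomposition machinery), at the cost of not re-deriving the auxiliary fact $L(v_1,v_2)=0$ for orthogonal $v_1,v_2$ --- which here is harmless, since that fact is an immediate consequence of the formula \eqref{eqn:7.1} once it is established. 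The one hypothesis you do use implicitly, namely $c=\sqrt{\tfrac12\lambda_1\eta}>0$, is guaranteed in Cases $\{\mathfrak{C}_m\}$ by $\lambda_1>0$ and $\lambda_1^2-4\varepsilon>0$, as you note.
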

\begin{proof}
The fact $\dim\,({\rm Im}\,L)=1$ implies that we have a unit vector
$\bar{w}\in {\rm Im}\,(L)\subset \mathcal{D}_3$ and a symmetric
bilinear form $\alpha$ over $\mathcal{D}_2$ such that
\begin{equation}\label{eqn:7.2}
L(v_1,v_2)=\alpha(v_1,v_2)\bar{w},\ \ \forall\,
v_1,v_2\in\mathcal{D}_2.
\end{equation}
We define $Q:\mathcal{D}_2\rightarrow \mathcal{D}_2$ by
$h(Qv_1,v_2):=\alpha(v_1, v_2)$. From Lemma \ref{lm:4.12} we have
\begin{equation}\label{eqn:7.3}
L(v_1,v_2)=0,\ {\rm if} \ h(v_1,v_2)=0.
\end{equation}
Now we see that $h(Qv_1,v_2)=0$ if $h(v_1,v_2)=0$. Hence,
$Qv=\sqrt{\tfrac{1}{2}\lambda_1\eta}\,\varepsilon(v)v$ for all $v\in
\mathcal{D}_2$ and $\varepsilon(v)=\pm1$. It follows that
\begin{equation}\label{eqn:7.4}
L(v_1,v_2)=\alpha(v_1,v_2)\bar{w}
=\sqrt{\tfrac{1}{2}\lambda_1\eta }\varepsilon(v_1)h(v_1,v_2)\bar{w}.
\end{equation}
This, together with the fact that both $L$ and $h$ are symmetric,
implies that, for any $v_1,v_2\in \mathcal{D}_2$,
$\varepsilon(v_1)=\varepsilon(v_2)$ holds, i.e., $\varepsilon(v)$ is
independent of $v$.

We finally get the assertion by putting $w_1:=
\varepsilon(v_1)\bar{w}$.
\end{proof}

In sequel of this section, we will fix the unit vector $w_1 \in
\mathcal{D}_3$ as in Lemma \ref{th:7.1}. Then, besides
$K_{e_1}w_1=\mu w_1$, the next three lemmas give all informations
about the difference tensor $K$.
\begin{lemma}\label{lm:7.2}
There exists an orthonormal basis $\{v_1,\ldots,v_{m-1}\}$
of $\mathcal{D}_2$ such that
\begin{align}\label{eqn:7.5}
&K(e_1,v_i)=\tfrac{1}{2}\lambda_1v_i,\ K(w_1,v_i)
=\sqrt{\tfrac{1}{2}\lambda_1\eta }v_i,\ \ 1\leq i\leq m-1,\\
&K(v_i,v_j)=\big(\tfrac{1}{2}\lambda_1e_1
+\sqrt{\tfrac{1}{2}\lambda_1\eta }w_1\big)\delta_{ij},\ \ 1\leq
i,j\leq m-1.
\end{align}
\end{lemma}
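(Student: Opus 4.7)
The plan is to observe that, since we are in the $k_0=1$ situation of Case $\mathfrak{C}_m$, essentially all of the work has already been done: the identities in the statement will follow from unwinding the definition of $L$ together with Lemma \ref{lm:7.1} and the symmetry $h(K_XY,Z)=h(K_XZ,Y)$ coming from \eqref{eqn:1.2}. First I would pick any orthonormal basis $\{v_1,\dots,v_{m-1}\}$ of $\mathcal{D}_2$ (e.g.\ take the distinguished $v_1$ of the decomposition \eqref{eqn:4.28} as the first element and complete it arbitrarily). The existence clause is thus trivial; the content is the three formulas.

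The identity $K(e_1,v_i)=\tfrac12\lambda_1 v_i$ is immediate from the definition of Case $\mathfrak{C}_m$, since $v_i\in\mathrm{span}\{e_2,\dots,e_m\}=\mathcal{D}_2$. For $K(v_i,v_j)$ I would use the definition \eqref{eqn:4.3} of $L$ together with Lemma \ref{lm:7.1} to get
\[
K(v_i,v_j)=L(v_i,v_j)+\tfrac12\lambda_1 h(v_i,v_j)e_1=\sqrt{\tfrac12\lambda_1\eta}\,\delta_{ij}w_1+\tfrac12\lambda_1\delta_{ij}e_1,
\]
which is the second displayed identity. Finally, for $K(w_1,v_i)$, Lemma \ref{lm:4.2}(iii) gives $K(w_1,v_i)\in\mathcal{D}_2$, so I can write $K(w_1,v_i)=\sum_j a_{ij}v_j$; using the symmetry $h(K_{w_1}v_i,v_j)=h(K_{v_i}v_j,w_1)$ together with the formula just obtained for $K(v_i,v_j)$, the coefficients are forced to be $a_{ij}=\sqrt{\tfrac12\lambda_1\eta}\,\delta_{ij}$, giving $K(w_1,v_i)=\sqrt{\tfrac12\lambda_1\eta}\,v_i$.

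There is essentially no obstacle here; the lemma is a bookkeeping step that records, in the $k_0=1$ setting, exactly what information about $K$ is supplied by Lemmas \ref{lm:4.2} and \ref{lm:7.1}. The real work was done earlier in establishing the isotropy of $L$ and Lemma \ref{lm:7.1}, and the only mildly non-obvious point is noticing that to determine $K(w_1,v_i)$ one does not need to invoke the curvature identity \eqref{eqn:2.5}, but merely the symmetry of the cubic form.
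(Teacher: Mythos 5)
Your proof is correct and follows essentially the same route as the paper: both arguments reduce everything to Lemma \ref{lm:7.1}, the definition \eqref{eqn:4.3} of $L$, Lemma \ref{lm:4.2}(iii), and the total symmetry of $h(K_XY,Z)$. The only (harmless) difference is that the paper first diagonalizes the self-adjoint operator $K_{w_1}$ on $\mathcal{D}_2$ to produce the basis, whereas you correctly observe that Lemma \ref{lm:7.1} already forces $K_{w_1}$ to be the scalar $\sqrt{\tfrac12\lambda_1\eta}$ times the identity on $\mathcal{D}_2$, so any orthonormal basis works.
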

\begin{proof}
From Lemma \ref{lm:4.2}, we see that $K_{w_1}$ maps $\mathcal{D}_2$
to $\mathcal{D}_2$. Note that $K_{w_1}$ is self-adjoint, then
there exists an orthonormal basis $\{v_1,\ldots, v_{m-1}\}$ of
$\mathcal{D}_2$ such that $K_{w_1}v_i=\alpha_iv_i$ with
eigenvalues $\alpha _i$. As $v_i\in \mathcal{D}_2$,
we have $K_{e_1}v_i=\tfrac{1}{2}\lambda_1v_i$. By Lemma
\ref{lm:7.1} we get
\begin{equation*}
\alpha_i=h(K_{w_1}v_i,v_i)=h(L(v_i,v_i),w_1)=\sqrt{\tfrac{1}{2}\lambda_1\eta }.
\end{equation*}
Since
$$
L(v_i,v_j)=\sqrt{\tfrac12\lambda_1\eta}h(v_i,v_j)w_1
=\sqrt{\tfrac{1}{2}\lambda_1\eta}\,\delta_{ij}w_1,
$$
we get
$$
K(v_i,v_j)=\big(\tfrac{1}{2}\lambda_1
e_1+\sqrt{\tfrac{1}{2}\lambda_1\eta } w_1\big)\delta_{ij}.
$$
This completes the proof of Lemma \ref{lm:7.2}.
\end{proof}

Next, by \eqref{eqn:4.32} and Lemma \ref{lm:7.1} we get the
following result.

\begin{lemma}\label{lm:7.3}
$K(w_1,w_1)=\mu e_1+(\lambda_1+\mu)\sqrt{\tfrac{2\eta}{\lambda_1}}w_1.$
\end{lemma}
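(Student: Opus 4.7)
My plan is to deduce the formula for $K(w_1,w_1)$ directly from identity \eqref{eqn:4.32} specialized to the present situation, combined with the explicit form of $L$ given in Lemma \ref{lm:7.1}.

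Concretely, since we are in the case $k_0=1$, the decomposition \eqref{eqn:4.28} reads $\mathcal{D}_2=\{v_1\}\oplus V_{v_1}(0)=V_1$, so \emph{every} unit vector $a\in\mathcal{D}_2$ lies in $V_1$. Thus I may apply \eqref{eqn:4.32} to any such unit $a$ to get
\begin{equation*}
K\bigl(L(a,a),L(a,a)\bigr)=\tfrac{1}{2}\lambda_1\mu\eta\,e_1+\eta(\mu+\lambda_1)L(a,a).
\end{equation*}
By Lemma \ref{lm:7.1}, $L(a,a)=\sqrt{\tfrac{1}{2}\lambda_1\eta}\,w_1$, so the left-hand side equals $\tfrac{1}{2}\lambda_1\eta\,K(w_1,w_1)$, while the second term on the right equals $\eta(\mu+\lambda_1)\sqrt{\tfrac{1}{2}\lambda_1\eta}\,w_1$.

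Solving for $K(w_1,w_1)$ by dividing through by $\tfrac{1}{2}\lambda_1\eta$ (nonzero since $\lambda_1>0$ and $\eta>0$ in Case $\mathfrak{C}_m$) yields
\begin{equation*}
K(w_1,w_1)=\mu\,e_1+\frac{2(\mu+\lambda_1)}{\lambda_1}\sqrt{\tfrac{1}{2}\lambda_1\eta}\,w_1 =\mu\,e_1+(\lambda_1+\mu)\sqrt{\tfrac{2\eta}{\lambda_1}}\,w_1,
\end{equation*}
which is exactly the claimed identity. There is no real obstacle here: the content of the lemma is already encoded in \eqref{eqn:4.32}, and Lemma \ref{lm:7.1} lets one translate that identity from a statement about $L(a,a)$ into one about $w_1$ once one notes that $k_0=1$ forces $V_1$ to exhaust $\mathcal{D}_2$.
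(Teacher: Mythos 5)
Your proof is correct and is exactly the paper's argument: the paper's entire justification for Lemma \ref{lm:7.3} is the one-line remark that it follows from \eqref{eqn:4.32} and Lemma \ref{lm:7.1}, and your computation (substituting $L(a,a)=\sqrt{\tfrac12\lambda_1\eta}\,w_1$ into \eqref{eqn:4.32} and dividing by $\tfrac12\lambda_1\eta\neq0$) supplies precisely the omitted details, including the correct observation that $k_0=1$ makes $V_1=\mathcal{D}_2$ so that \eqref{eqn:4.32} applies to every unit $a\in\mathcal{D}_2$.
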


Finally, in case $\mathcal{D}_3\neq\mathbb{R}w_1$ and let
$\{w_2,\ldots,w_{n-m}\}$ be an orthonormal basis of
$\mathcal{D}_3\setminus\mathbb{R}w_1$, by Lemmas \ref{lm:4.5} and
\ref{lm:7.1}, we immediately have:
\begin{lemma}\label{lm:7.4}
$K(w_1,w_i)=\mu_i w_i,\ 2\leq i\leq n-m,$
where $\mu_i=\sqrt{\tfrac{2\eta}{\lambda_1}}\mu$.
\end{lemma}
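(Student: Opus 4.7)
The plan is to read off $K(w_1,w_i)$ for $i\ge 2$ directly from Lemma \ref{lm:4.5}, exploiting the fact that in the $k_0=1$ situation the image of $L$ is the single line $\mathbb{R}w_1$, so every $w_i$ with $i\ge 2$ is automatically orthogonal to $\mathrm{Im}\,L$.

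First I would fix an arbitrary unit vector $v\in\mathcal{D}_2$. By Lemma \ref{lm:7.1} the value of $L$ on the diagonal is
\[
L(v,v)=\sqrt{\tfrac{1}{2}\lambda_1\eta}\;w_1,
\]
so $K(L(v,v),w_i)=\sqrt{\tfrac{1}{2}\lambda_1\eta}\,K(w_1,w_i)$. Since $w_i\perp w_1$ and $\mathrm{Im}\,L=\mathbb{R}w_1$, the hypothesis $w_i\perp\mathrm{Im}\,L$ of Lemma \ref{lm:4.5} is satisfied, and \eqref{eqn:4.11} applied with $v_1=v_2=v$ yields
\[
K(L(v,v),w_i)=\eta\mu\,h(v,v)\,w_i=\eta\mu\, w_i.
\]
Combining the two displays and solving gives
\[
K(w_1,w_i)=\frac{\eta\mu}{\sqrt{\tfrac{1}{2}\lambda_1\eta}}\,w_i
=\mu\sqrt{\tfrac{2\eta}{\lambda_1}}\,w_i,
\]
which is exactly the claimed formula with $\mu_i=\mu\sqrt{2\eta/\lambda_1}$ (note that the same scalar is obtained for every $i\ge 2$, and in particular $K(w_1,w_i)$ has no component along $w_j$ for $j\ne i$ or along $w_1$).

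There is no real obstacle: the only thing one must check before invoking Lemma \ref{lm:4.5} is that the quantity $\sqrt{\lambda_1\eta/2}$ is nonzero, which is guaranteed because in Case $\mathfrak{C}_m$ we have $\lambda_1>0$ and $\eta=\tfrac{1}{2}\sqrt{\lambda_1^2-4\varepsilon}>0$. Thus the proof reduces to a one-line substitution using Lemma \ref{lm:7.1} into equation \eqref{eqn:4.11}.
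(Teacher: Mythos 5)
Your proposal is correct and follows exactly the route the paper intends: the paper's own (one-line) proof of Lemma \ref{lm:7.4} is precisely ``by Lemmas \ref{lm:4.5} and \ref{lm:7.1}'', and you have simply written out the substitution $L(v,v)=\sqrt{\tfrac12\lambda_1\eta}\,w_1$ into \eqref{eqn:4.11} and divided by the nonzero constant $\sqrt{\tfrac12\lambda_1\eta}$. Nothing further is needed.
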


Now, we are ready to complete the proof of Theorem \ref{th:7.1}.

\vskip 2mm

\begin{proof}[Proof of Theorem \ref{th:7.1}]

Based on Lemmas \ref{lm:7.1}, \ref{lm:7.2}, \ref{lm:7.3} and
\ref{lm:7.4}, by putting
$$
t=\sqrt{\tfrac{\lambda_1}{\lambda_1+2\eta}}e_1
+\sqrt{\tfrac{2\eta}{\lambda_1+2\eta}}w_1,\ \
v=-\sqrt{\tfrac{2\eta}{\lambda_1+2\eta}}e_1
+\sqrt{\tfrac{\lambda_1}{\lambda_1+2\eta}}w_1,
$$
we see that if $\mathcal{D}_3=\mathbb{R}w_1$, then
$\{t,v,v_1,\ldots,v_{m-1}\}$ (or, resp. if
$\mathcal{D}_3\neq\mathbb{R}w_1$, then $\{t,v,v_1,\ldots,v_{m-1},
w_2,\ldots,w_{n-m}\}$) forms an orthonormal basis of $T_pM^{n}$,
with respect to which, the difference tensor $K$ takes the following
form:
\begin{equation}\label{eqn:7.7}
\left\{
\begin{aligned}
&K(t,t)=\sigma_1t;\ \ K(t,v)=\sigma_2v; \ \ K(t,v_i)=\sigma_2v_i,\
\  1\leq i\leq m-1;\\[1mm]
&{\rm if}\ \mathcal{D}_3\neq\mathbb{R}w_1,\ \ K(t,w_i)=\sigma_3w_i,\
\ 2\leq i\leq n-m,
\end{aligned}\right.
\end{equation}
where
\begin{align}\label{eqn:7.8}
\sigma_1=\tfrac{\lambda_1^2+2\eta\mu}{\sqrt{\lambda_1(\lambda_1
+2\eta)}},\ \sigma_2=\tfrac{\tfrac{1}{2}\lambda_1^2
+\lambda_1\eta}{\sqrt{\lambda_1(\lambda_1+2\eta)}},\
\sigma_3=\tfrac{\lambda_1\mu+2\eta\mu}{\sqrt{\lambda_1(\lambda_1+2\eta)}}.
\end{align}

It is easy to show that the constants $\sigma_1,\sigma_2$ and
$\sigma_3$ satisfy the relations:
\begin{align}\label{eqn:7.9}
\sigma_1\neq 2\sigma_2,\ \ \sigma_1\neq 2\sigma_3,\ \
\sigma_2\neq\sigma_3.
\end{align}

By parallel translation along geodesics (with respect to
$\hat{\nabla}$) through $p$, we can extend
$\{t,v,v_1,\ldots,v_{m-1}\}$ (if $\mathcal{D}_3=\mathbb{R}w_1$), or,
resp. $\{t,v,v_1,\ldots,v_{m-1}, w_2,\ldots,w_{n-m}\}$ (if
$\mathcal{D}_3\neq\mathbb{R}w_1$) to obtain a local $h$-orthonormal
basis $\{T,V,V_1,\ldots,V_{m-1}\}$, or, resp.
$\{T,V,V_1,\ldots,V_{m-1},W_2,\ldots,W_{n-m}\}$ such that
$$
\left\{
\begin{aligned}
&K(T,T)=\sigma_1T;\ \ K(T,V)=\sigma_2V; \ \ K(T,V_i)=\sigma_2V_i,\ \
1\leq i\leq m-1;\\[1mm]
&{\rm if}\ \mathcal{D}_3\neq\mathbb{R}w_1,\ \
K(T,W_i)=\sigma_3W_i,\ \ 2\leq i\leq n-m.
\end{aligned}
\right.
$$

Now, the above fact implies that, if
$\mathcal{D}_3\neq\mathbb{R}w_1$ we can apply Theorem \ref{th:3.2}
to conclude that $M^n$ is decomposed as the Calabi product of two
locally strongly convex centroaffine hypersurfaces with parallel
cubic form. If $\mathcal{D}_3=\mathbb{R}w_1$, then we can apply
Theorem \ref{th:3.4} to conclude that $M$ can be decomposed as the
Calabi product of a locally strongly convex centroaffine
hypersurface with parallel cubic form and a point.
\end{proof}
%

%%%%%%%%%%%%%%%%%%%%%%%%%%%%%%%%%%%%%%%%%%%%%%%%%%%%%%%%%%%%%%%%%%%%%%%%%%%%%%%%%%%%%%%%%%
\numberwithin{equation}{section}

\section{Case $\{\mathfrak{C}_m\}_{2\le m\le n-1}$
with $k_0\geq 2$ and $\mathfrak{p}=0$}\label{sect:8}

In this section, we will prove the following theorem.
\begin{theorem}\label{th:8.1}
Let $M^n$ be a locally strongly convex centroaffine hypersurface in
$\mathbb{R}^{n+1}$ which has parallel and non-vanishing cubic form.
If $\mathfrak{C}_m$ with $2\le m\le n-1$ occurs and the integers
$k_0$ and $\mathfrak{p}$, as defined in subsection \ref{sect:4.5},
satisfy $k_0\geq 2$ and $\mathfrak{p}=0$, then $n\geq
\frac{1}{2}m(m+1)-1$. Moreover, we have either
\begin{enumerate}
\item[(i)]$n=\frac{1}{2}m(m+1)$, $M^n$ can be decomposed as the Calabi product of
a locally strongly convex centroaffine hypersurface with parallel
cubic form and a point, or

\item[(ii)] $n>\frac{1}{2}m(m+1)$, $M^n$ can
be decomposed the Calabi product of two locally strongly convex
centroaffine hypersurfaces with parallel cubic form, or

\item[(iii)] $n=\frac{1}{2}m(m+1)-1$, $M^n$ is centroaffinely equivalent to the standard
embedding of \
$\mathrm{SL}(m,\mathbb{R})/\mathrm{SO}(m;\mathbb{R})\hookrightarrow
\mathbb{R}^{n+1}$.
\end{enumerate}
\end{theorem}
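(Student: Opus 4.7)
The plan closely follows the template of \cite{HLV2} and of Section \ref{sect:7} above. With $\mathfrak{p}=0$, Proposition \ref{pr:4.1} specializes to an $h$-orthogonal splitting $\mathcal{D}_2=\mathbb{R}v_1\oplus\cdots\oplus\mathbb{R}v_{m-1}$ (so $k_0=m-1$), and the $\tfrac{1}{2}m(m-1)$ vectors $\{L(v_i,v_j):1\le i\le j\le m-1\}$ all lie in $\mathcal{D}_3$. My first step is to compute the Gram matrix $G$ of this family. Lemmas \ref{lm:4.4} and \ref{lm:4.11}, together with the identities in Lemma \ref{lm:4.14}, yield: (a) for $i<j$, $h(L(v_i,v_j),L(v_i,v_j))=\tau$ and $L(v_i,v_j)$ is $h$-orthogonal to every other $L(v_k,v_l)$ and to every $L(v_k,v_k)$; (b) $h(L(v_i,v_i),L(v_i,v_i))=\tfrac{1}{2}\lambda_1\eta$, while for $i\neq j$, $h(L(v_i,v_i),L(v_j,v_j))=\tfrac{1}{2}\lambda_1\eta-2\tau$. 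Hence $G$ is block-diagonal: its off-diagonal block is a multiple of the identity, and the $(m-1)\times(m-1)$ ``diagonal'' block is circulant, with rank either $m-1$ or $m-2$. Consequently $\dim(\mathrm{Im}\,L)\in\{\tfrac{m(m-1)}{2},\tfrac{m(m-1)}{2}-1\}$, and in either case $n=1+(m-1)+\dim\mathcal{D}_3\ge\tfrac{m(m+1)}{2}-1$, which is the claimed lower bound.

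Next, I would split into cases according to the dimension of $\mathcal{D}_3''$, the $h$-orthogonal complement of $\mathrm{Im}\,L$ inside $\mathcal{D}_3$. If $\mathcal{D}_3''\neq\{0\}$, Lemma \ref{lm:4.5} gives, for every $w\in\mathcal{D}_3''$ and $v,v'\in\mathcal{D}_2$, both $K(L(v,v'),w)=\eta\mu\,h(v,v')w$ and $K(v,w)=0$; so $K$ is block-decoupled relative to the splitting $T_pM=(\mathbb{R}e_1\oplus\mathcal{D}_2\oplus\mathrm{Im}\,L)\oplus\mathcal{D}_3''$. Rotating $e_1$ against a suitably normalized unit element $\omega\in\mathrm{Im}\,L$ exactly as in the proof of Theorem \ref{th:7.1} produces a new unit vector $T$ whose $K_T$-eigenspace decomposition fits the hypotheses of Theorem \ref{th:3.2} (when $\dim\mathcal{D}_3''\ge 2$) or Theorem \ref{th:3.4} (when $\dim\mathcal{D}_3''=1$), thereby yielding conclusions (ii) and the boundary subcase $n=\tfrac{m(m+1)}{2}$ of (i).

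If $\mathcal{D}_3''=\{0\}$, so that $\mathrm{Im}\,L=\mathcal{D}_3$, then $\dim\mathcal{D}_3\in\{\tfrac{m(m-1)}{2},\tfrac{m(m-1)}{2}-1\}$. When $G$ has full rank $\tfrac{m(m-1)}{2}$, a rotation argument analogous to the previous paragraph again produces case (i). When $G$ has rank $\tfrac{m(m-1)}{2}-1$, so $n=\tfrac{m(m+1)}{2}-1$, I would identify $M^n$ with the standard embedding $\mathrm{SL}(m,\mathbb{R})/\mathrm{SO}(m)\hookrightarrow\mathbb{R}^{n+1}$ as follows: the unique linear relation among the $\{L(v_i,v_i)\}$, combined with the trace identity $n\hat T(X)={\rm Tr}\,K_X$, forces the Tchebychev form of $M^n$ to vanish, so that $M^n$ is a proper affine hypersphere centred at the origin; the constants $\lambda_1,\mu$ are then rigidly determined by $\varepsilon-\lambda_1\mu+\mu^2=0$ and the traceless condition. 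Realizing the model explicitly as the set of unimodular positive-definite symmetric $m\times m$ matrices, with $h(X,Y)=\tfrac{1}{2}{\rm Tr}\,(A^{-1}XA^{-1}Y)$ and $K$ computed directly, one verifies the same numerical invariants; the fundamental uniqueness theorem for centroaffine hypersurfaces (recalled in Section \ref{sect:2}) then gives local centroaffine equivalence.

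The principal obstacle is the identification in case (iii): propagating the single algebraic relation among the $L(v_i,v_i)$ at the initial point, via Lemma \ref{lm:4.14} and the parallelism $\hat\nabla K=0$, to a rigidity statement strong enough to match $M^n$ with the symmetric-space model rather than with some other parallel-$K$ candidate. A secondary technical difficulty is arranging, in Step 2, that the rotation producing $T$ yields eigenvalues $\lambda_1,\lambda_2,\lambda_3$ satisfying the strict distinctness conditions in \eqref{eqn:3.6} or \eqref{eqn:3.10}, so that Theorems \ref{th:3.2} and \ref{th:3.4} apply directly without degenerate exceptions.
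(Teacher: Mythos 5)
Your overall strategy coincides with the paper's: the Gram-matrix computation of the family $\{L(v_i,v_j)\}$ reproduces \eqref{eqn:8.1}--\eqref{eqn:8.7} and yields the bound $n\ge\tfrac12 m(m+1)-1$, the rotation of $e_1$ against $\mathrm{Tr}\,L$ is exactly how the paper constructs $T$ in \eqref{eqn:8.13}, and the trace identity in case (iii) is the paper's \eqref{eqn:8.20}. There is, however, a concrete error in your second step. When $\mathcal{D}_3''\neq\{0\}$, the rotated vector $T$ satisfies $K(T,\tilde w)=\sigma_3\tilde w$ on $\mathcal{D}_3''$ while $K(T,\cdot)=\sigma_2\,\cdot$ on the remainder of $\{T\}^\perp$, with $\sigma_2\neq\sigma_3$ (cf.\ \eqref{eqn:8.17}--\eqref{eqn:8.18}); hence $K_T|_{\{T\}^\perp}$ is never a single scalar there, and Theorem \ref{th:3.4} is inapplicable \emph{regardless} of whether $\dim\mathcal{D}_3''$ equals $1$ or is $\ge 2$. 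Theorem \ref{th:3.2} is the correct tool in both subcases, and the conclusion is (ii), a Calabi product of two hypersurfaces (one factor possibly a curve). Your assignment of Theorem \ref{th:3.4} to $\dim\mathcal{D}_3''=1$ also mislabels the dimension: full-rank $G$ together with $\dim\mathcal{D}_3''=1$ gives $n=\tfrac12m(m+1)+1$, not $\tfrac12 m(m+1)$. Conclusion (i) occurs only when $\mathcal{D}_3''=\{0\}$ and $G$ has full rank, which you do treat (correctly, a second time) in your third paragraph.

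A smaller untreated point: the rotation vector must be a multiple of $\mathrm{Tr}\,L$ (this is what makes $K(T,v_j)$ proportional to $v_j$ with a $j$-independent factor), so you need $\mathrm{Tr}\,L\neq0$ whenever $\mathcal{D}_3''\neq\{0\}$, i.e.\ you must exclude the combination ``rank-deficient $G$ and $\mathcal{D}_3''\neq\{0\}$''. This follows from the very identity you quote from Lemma \ref{lm:4.5}: for a unit $w\perp\mathrm{Im}\,L$ one gets $K(\mathrm{Tr}\,L,w)=k_0\eta\mu\,w\neq0$ since $\mu\neq0$, whence $\mathrm{Tr}\,L\neq0$; this is precisely the paper's Lemma \ref{lm:8.2}. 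With these two corrections the case analysis closes, and your plan for case (iii) --- the unique linear relation forces the Tchebychev field to vanish, $M^n$ is a proper affine hypersphere, and the model $\mathrm{SL}(m,\mathbb{R})/\mathrm{SO}(m)$ is matched via the uniqueness theorem --- is exactly how the paper proceeds, by reduction to the proof of Theorem 5.1 of \cite{HLV2}.
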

In the present situation, the decomposition \eqref{eqn:4.28} reduces
to $\mathcal{D}_2=\{v_1\}\oplus\cdots \oplus \{v_{k_0}\}$. Then
$\dim\,\mathcal{D}_2=k_0=m-1,\ m\ge3$, and $\{v_1,\ldots,v_{k_0}\}$
forms an orthonormal basis of $\mathcal{D}_2$.

According to \eqref{eqn:4.5}, Lemma \ref{lm:4.11}
and the fact that for $j\neq l,\ v_j\in V_{v_l}(\tau)$, we have
\begin{equation}\label{eqn:8.1}
h(L(v_j,v_l),L(v_j,v_l))=\tau,\ \ j\neq l,
\end{equation}
\begin{equation}\label{eqn:8.2}
h(L(v_j,v_{l_1}),L(v_j,v_{l_2}))=0,\ \ j, l_1, l_2 \ {\rm distinct},
\end{equation}
\begin{equation}\label{eqn:8.3}
h(L(v_{j_1},v_{j_2}),L(v_{j_3},v_{j_4}))=0,\ \ j_1, j_2, j_3, j_4\
{\rm distinct},
\end{equation}
\begin{equation}\label{eqn:8.4}
h(L(v_j,v_j),L(v_j,v_j))=\tfrac{1}{2}\lambda_1\eta,
\end{equation}
\begin{equation}\label{eqn:8.5}
h(L(v_j,v_j),L(v_l,v_l))=\tfrac{1}{2}\mu\eta,\ \ j\not=l,
\end{equation}
\begin{equation}\label{eqn:8.6}
h(L(v_j,v_j),L(v_j,v_l))=0,\ \ j\not=l,
\end{equation}
\begin{equation}\label{eqn:8.7}
h(L(v_j,v_j),L(v_{l_1},v_{l_2}))=0,\ \ j,\ l_1,\ l_2\ {\rm
distinct}.
\end{equation}

Denote $L_j:=L(v_1,v_1)+\cdots+L(v_j,v_j)-jL(v_{j+1},v_{j+1}), \
1\leq j\leq k_0-1$. Then it is easy to check
$h(L_j,L_j)=2j(j+1)\tau\neq 0$, and that
\begin{equation}\label{eqn:8.8}
\left\{
\begin{aligned}
&w_j=\tfrac{1}{\sqrt{2j(j+1)\tau}}L_j,\ 1\leq j\leq k_0-1,\\
&w_{kl}=\tfrac{1}{\sqrt{\tau}}L(v_k,v_l),\ 1\leq k<l\leq k_0
\end{aligned}
\right.
\end{equation}
give $\tfrac{1}{2}(m+1)(m-2)$ orthonormal vectors in ${\rm
Im}\,L\subset \mathcal{D}_3$. Thus, we have the estimate of the
dimension
\begin{align}\label{eqn:8.9}
\begin{split}
n&=1+\dim\,(\mathcal{D}_2)+\dim\,(\mathcal{D}_3)\\
&\geq1+m-1+\tfrac{1}{2}(m+1)(m-2)=\tfrac{1}{2}m(m+1)-1.
\end{split}
\end{align}

Direct computations show that ${\rm Tr}\,L=L(v_1,v_1)+\cdots
+L(v_{k_0},v_{k_0})$ is orthogonal to all vectors in
\eqref{eqn:8.8}, and by using \eqref{eqn:4.4}, \eqref{eqn:4.8} and
the fact that $v_i\in V_{v_j}(\tau),\ i\neq j$, we get
\begin{align}\label{eqn:8.10}
h({\rm Tr}\,L,{\rm Tr}\,L)&=\tfrac{1}{2}k_0\eta(\lambda_1+(k_0-1)\mu)\nonumber\\
&=\tfrac18(m-1) \sqrt{\lambda_1^2-4\varepsilon}\big(m\lambda_1-(m-2)
\sqrt{\lambda_1^2-4\varepsilon}\,\big)\\
&=:\rho^2,\nonumber
\end{align}
where $\rho\geq0$. From \eqref{eqn:8.10} and that
$\lambda_1^2-4\varepsilon>0$, the following result is obvious.

\begin{lemma}\label{lm:8.1}
${\rm Tr}\,L=0$ if and only if $\lambda_1=\tfrac{m-2}{\sqrt{m-1}}$
and $\varepsilon=-1$.
\end{lemma}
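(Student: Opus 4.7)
The proof is essentially an algebraic exercise based on the explicit formula \eqref{eqn:8.10}. The plan is to show that the condition $\mathrm{Tr}\,L=0$ translates into a single polynomial equation in $\lambda_1$ and $\varepsilon$, which has exactly one admissible solution.

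First I would observe that, by \eqref{eqn:8.10}, the vanishing $\mathrm{Tr}\,L=0$ is equivalent to $\rho^2=0$, hence to
\[
\tfrac{1}{8}(m-1)\sqrt{\lambda_1^2-4\varepsilon}\,\bigl(m\lambda_1-(m-2)\sqrt{\lambda_1^2-4\varepsilon}\bigr)=0.
\]
Since we are in Case $\mathfrak{C}_m$ with $k_0=m-1\ge 2$ (so $m\ge 3$) and Lemma \ref{lm:4.1} guarantees $\lambda_1^2-4\varepsilon>0$, both the factor $(m-1)$ and the factor $\sqrt{\lambda_1^2-4\varepsilon}$ are strictly positive. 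Consequently the condition reduces to
\[
m\lambda_1=(m-2)\sqrt{\lambda_1^2-4\varepsilon}.
\]

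Next I would square this identity; this is harmless here because $\lambda_1>0$ (again by Lemma \ref{lm:4.1}) and $m-2>0$, so both sides are nonnegative. Squaring and collecting yields
\[
\bigl(m^2-(m-2)^2\bigr)\lambda_1^2=-4\varepsilon(m-2)^2,\qquad\text{i.e.}\qquad 4(m-1)\lambda_1^2=-4\varepsilon(m-2)^2,
\]
so that
\[
\lambda_1^2=-\tfrac{\varepsilon(m-2)^2}{m-1}.
\]
The left-hand side is strictly positive and $(m-2)^2/(m-1)>0$, which forces $\varepsilon=-1$; then taking the positive square root (since $\lambda_1>0$) gives $\lambda_1=\tfrac{m-2}{\sqrt{m-1}}$.

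Finally I would verify the converse by direct substitution. With $\varepsilon=-1$ and $\lambda_1=\tfrac{m-2}{\sqrt{m-1}}$ one computes
\[
\lambda_1^2-4\varepsilon=\tfrac{(m-2)^2}{m-1}+4=\tfrac{(m-2)^2+4(m-1)}{m-1}=\tfrac{m^2}{m-1},
\]
so $\sqrt{\lambda_1^2-4\varepsilon}=\tfrac{m}{\sqrt{m-1}}$, and hence $m\lambda_1=\tfrac{m(m-2)}{\sqrt{m-1}}=(m-2)\sqrt{\lambda_1^2-4\varepsilon}$, confirming $\rho^2=0$. There is no genuine obstacle: once \eqref{eqn:8.10} is in hand, the argument is a short computation, the only point requiring attention being the sign justification ($\lambda_1>0$, $m\ge 3$) that makes squaring reversible and fixes the sign of $\varepsilon$.
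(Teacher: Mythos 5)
Your proof is correct and follows exactly the route the paper intends: the paper simply declares the lemma ``obvious'' from \eqref{eqn:8.10} together with $\lambda_1^2-4\varepsilon>0$, and your computation (reducing $\rho^2=0$ to $m\lambda_1=(m-2)\sqrt{\lambda_1^2-4\varepsilon}$, squaring with the sign checks $\lambda_1>0$, $m\ge3$, and verifying the converse by substitution) is precisely the spelled-out version of that claim.
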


On the other hand, an implicit fact can be said about the statement
${\rm Tr}\,L=0$.
\begin{lemma}\label{lm:8.2}
${\rm Tr}\,L=0$ if and only if $n=\tfrac{1}{2}m(m+1)-1$.
\end{lemma}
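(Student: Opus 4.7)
The plan is to prove the two implications separately, with the nontrivial content concentrated in the reverse direction. In the forward direction, suppose $n=\tfrac12m(m+1)-1$. Then $\dim\mathcal{D}_3\le \tfrac12(m+1)(m-2)$ by \eqref{eqn:8.9}. But the $\tfrac12(m+1)(m-2)$ vectors in \eqref{eqn:8.8} are already orthonormal and lie in ${\rm Im}\,L\subset\mathcal{D}_3$, so they must span $\mathcal{D}_3$. The text has already observed that ${\rm Tr}\,L$ is orthogonal to every vector in \eqref{eqn:8.8}; hence ${\rm Tr}\,L$ is orthogonal to $\mathcal{D}_3$. Since ${\rm Tr}\,L\in\mathcal{D}_3$, we conclude ${\rm Tr}\,L=0$. (Contrapositively: if ${\rm Tr}\,L\neq 0$, then adjoining it to \eqref{eqn:8.8} gives $\tfrac12(m+1)(m-2)+1=\tfrac12 m(m-1)$ independent vectors in $\mathcal{D}_3$, whence $n\ge \tfrac12 m(m+1)$.)

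For the reverse direction, assume ${\rm Tr}\,L=0$; I want to show $\dim\mathcal{D}_3=\tfrac12(m+1)(m-2)$, equivalently ${\rm Im}\,L=\mathcal{D}_3$ and the vectors in \eqref{eqn:8.8} form a basis of ${\rm Im}\,L$. The counting of ${\rm Im}\,L$ is quick: the $\binom{k_0}{2}$ vectors $w_{kl}$ together with the $k_0-1$ vectors $w_j$ are orthonormal by \eqref{eqn:8.1}--\eqref{eqn:8.7}, and the relation ${\rm Tr}\,L=0$ shows that the $k_0$ vectors $L(v_i,v_i)$ span the same $(k_0-1)$-dimensional subspace as $\{L_1,\dots,L_{k_0-1}\}$. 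Hence $\dim\,{\rm Im}\,L=\tfrac12(m+1)(m-2)$.

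The real obstacle is to rule out ${\rm Im}\,L\subsetneq\mathcal{D}_3$. Suppose there exists a nonzero $w\in\mathcal{D}_3$ with $w\perp{\rm Im}\,L$. Applying Lemma \ref{lm:4.5} with $v_1=v_2=v_i$ yields
\begin{equation*}
K(L(v_i,v_i),w)=\eta\mu\, w,\qquad i=1,\dots,k_0.
\end{equation*}
Summing over $i$ and using the bilinearity of $K$ together with ${\rm Tr}\,L=0$, I obtain $0=K({\rm Tr}\,L,w)=k_0\eta\mu\, w$. Now Lemma \ref{lm:8.1} gives $\varepsilon=-1$ and $\lambda_1=\tfrac{m-2}{\sqrt{m-1}}>0$, so $\eta=\tfrac12\sqrt{\lambda_1^2+4}>0$ and $\mu=\tfrac12(\lambda_1-\sqrt{\lambda_1^2+4})<0$; combined with $k_0=m-1\ge 2$, this forces $w=0$, a contradiction. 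Thus ${\rm Im}\,L=\mathcal{D}_3$, and
\begin{equation*}
n=1+(m-1)+\tfrac12(m+1)(m-2)=\tfrac12 m(m+1)-1,
\end{equation*}
completing the proof. The decisive step is the application of Lemma \ref{lm:4.5} to convert the hypothesis ${\rm Tr}\,L=0$ into a linear identity that annihilates any putative extra vector $w$.
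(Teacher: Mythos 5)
Your proof is correct and follows essentially the same route as the paper's: the forward direction uses the dimension count \eqref{eqn:8.9} together with the orthogonality of ${\rm Tr}\,L$ to the basis \eqref{eqn:8.8}, and the reverse direction rules out a vector $w\perp{\rm Im}\,L$ via Lemma \ref{lm:4.5} and the identity $0=K({\rm Tr}\,L,w)=k_0\eta\mu w$. Your explicit verification that $\eta\mu\neq 0$ (via Lemma \ref{lm:8.1}) and that $\dim{\rm Im}\,L=\tfrac12(m+1)(m-2)$ when ${\rm Tr}\,L=0$ merely spells out details the paper leaves implicit.
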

\begin{proof}
If ${\rm Tr}\,L=0$, then we claim that $\mathcal{D}_3={\rm Im}\,L$.
In fact, if $\mathcal{D}_3\not={\rm Im}\,L$, we have a unit vector
$w\in \mathcal{D}_3$ which is orthogonal to ${\rm Im}\,L$. Then by
Lemma \ref{lm:4.5} we get the contradiction
\begin{equation}\label{eqn:8.11}
0=K({\rm Tr}\,L,w)=k_0\eta\mu w.
\end{equation}
Thus, according to this claim and \eqref{eqn:8.9}, we have
$n=\tfrac{1}{2}m(m+1)-1$, provided that ${\rm Tr}\,L=0$.

Conversely, if $n=\tfrac{1}{2}m(m+1)-1$, then by \eqref{eqn:8.9} we
have
$$
\dim\,(\mathcal{D}_3)=\tfrac{1}{2}(m+1)(m-2)
$$
which implies that $\mathcal{D}_3={\rm Im}\,L$. This further implies
that ${\rm Tr}\,L=0$ due to the fact that the vector ${\rm Tr}\,L$,
which belongs to $\mathcal{D}_3$, is orthogonal to all vectors in
\eqref{eqn:8.8}.
\end{proof}

Now, we are ready to complete the proof of Theorem \ref{th:8.1}.

\vskip 2mm

\begin{proof}[Proof of Theorem \ref{th:8.1}] We need to
consider three cases:

\vskip 1mm

{\bf Case\ \ (i)}\ \ \ $n=\tfrac{1}{2}m(m+1)$.

\vskip 1mm

{\bf Case\ \,(ii)}\ \ \ $n>\tfrac{1}{2}m(m+1)$.

\vskip 1mm

{\bf Case (iii)}\ \ \ $n=\tfrac{1}{2}m(m+1)-1$.

\vskip 2mm

For Cases (i) and (ii), as ${\rm Tr}\,L\not=0$, we can define a unit
vector $t:=\frac1{\rho}{\rm Tr}\,L$.

In Case (i), from the previous discussions we see that
$$
\{t, \ w_j\mid_{ 1\leq j\leq k_0-1},\ w_{kl}\mid_{1\leq k<l\leq
k_0}\}
$$
forms an orthonormal basis of ${\rm Im}\,L=\mathcal{D}_3$. By direct
calculations with the use of Lemmas \ref{lm:4.2}, \ref{lm:4.14} and
\eqref{eqn:8.1}--\eqref{eqn:8.7}, we have the following fact which
we state as

\begin{lemma}\label{lm:8.3}
In Case (i), the difference tensor $K$ satisfies
\begin{equation}\label{eqn:8.12}
\left\{
\begin{aligned}
&K(t,e_1)=\mu t,\ \ K(t,v_i)=\tfrac{\rho}{k_0}v_i,\ \ 1\le i\le
m-1,\\
&K(t,w_j)=\tfrac{2\rho}{k_0}w_j,\ \ 1\leq j\leq k_0-1,\\
&K(t,w_{kl})=\tfrac{2\rho}{k_0}w_{kl},\ \ 1\leq k<l\leq
k_0,\\
&K(t,t)=\mu
e_1+\left(\tfrac{2\rho}{k_0}+\tfrac{k_0\mu\eta}{\rho}\right)t.
\end{aligned}\right.
\end{equation}
\end{lemma}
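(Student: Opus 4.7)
The plan is to verify the five formulas in \eqref{eqn:8.12} one at a time, using that $t=\rho^{-1}{\rm Tr}\,L$ lies in $\mathcal{D}_3\cap{\rm Im}\,L$ so that every $K(t,X)$ reduces to a sum of terms $K(L(v_k,v_k),X)$, all of which are already controlled by Lemmas \ref{lm:4.2}, \ref{lm:4.14} and the scalar products \eqref{eqn:8.1}--\eqref{eqn:8.7}. The identity $\rho^2=\tfrac{1}{2}k_0\eta(\lambda_1+(k_0-1)\mu)$ from \eqref{eqn:8.10}, equivalently $\lambda_1+(k_0-1)\mu=2\rho^2/(k_0\eta)$, will be applied repeatedly to collapse the resulting scalars.

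First I would dispatch $K(t,e_1)=\mu t$ from Lemma \ref{lm:4.2}(i) since $t\in\mathcal{D}_3$, and $K(t,v_i)=(\rho/k_0)v_i$ by computing $h(K(t,v_i),v_j)=h(t,K(v_i,v_j))$ for each $j$: when $i\neq j$, $K(v_i,v_j)=L(v_i,v_j)$ and \eqref{eqn:8.6}--\eqref{eqn:8.7} kill every term in $h({\rm Tr}\,L,L(v_i,v_j))$; when $i=j$, only $h(L(v_i,v_i),L(v_i,v_i))$ and $h(L(v_k,v_k),L(v_i,v_i))$ for $k\neq i$ survive, giving $\tfrac{1}{2}\lambda_1\eta+(k_0-1)\tfrac{1}{2}\mu\eta=\rho^2/k_0$ after using \eqref{eqn:8.10}.

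The bulk of the work is $K(t,w_j)$ and $K(t,w_{kl})$. For a fixed $\ell$, I would expand
\[
K({\rm Tr}\,L,L(v_\ell,v_\ell))=K(L(v_\ell,v_\ell),L(v_\ell,v_\ell))+\sum_{k\neq\ell}K(L(v_k,v_k),L(v_\ell,v_\ell))
\]
using \eqref{eqn:4.32} and \eqref{eqn:4.34}. After regrouping, the $L(v_\ell,v_\ell)$-coefficient becomes $\eta[\lambda_1+(k_0-1)\mu]=2\rho^2/k_0$, and one obtains a clean formula of the shape $c_1 e_1+(2\rho^2/k_0)L(v_\ell,v_\ell)+c_2{\rm Tr}\,L$. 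Substituting this into $K(t,L_j)$, where $L_j=\sum_{\ell\le j}L(v_\ell,v_\ell)-jL(v_{j+1},v_{j+1})$, the $e_1$ and ${\rm Tr}\,L$ pieces cancel by construction of $L_j$, leaving exactly $(2\rho^2/k_0)L_j$; dividing by the normalization $\rho\sqrt{2j(j+1)\tau}$ yields $(2\rho/k_0)w_j$. An entirely parallel expansion for $K({\rm Tr}\,L,L(v_k,v_l))$ via \eqref{eqn:4.33} and \eqref{eqn:4.37} gives $\eta[\lambda_1+(k_0-1)\mu]L(v_k,v_l)=(2\rho^2/k_0)L(v_k,v_l)$, hence $K(t,w_{kl})=(2\rho/k_0)w_{kl}$.

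Finally, for $K(t,t)=\rho^{-2}K({\rm Tr}\,L,{\rm Tr}\,L)$ I would sum the previous expansion over $\ell$: the $e_1$ contribution accumulates to $k_0\cdot\tfrac{1}{2}\eta\mu[\lambda_1+(k_0-1)\mu]e_1=\rho^2\mu e_1$, while the $L(v_\ell,v_\ell)$ and ${\rm Tr}\,L$ terms combine into $(2\rho^2/k_0+k_0\eta\mu){\rm Tr}\,L$, producing the stated formula after dividing by $\rho^2$. The main obstacle is not conceptual but purely one of bookkeeping, namely ensuring the $e_1$-components and the $\mathcal{D}_3$-components across the sums recombine into scalar multiples of the intended basis vectors; the identity $\lambda_1+(k_0-1)\mu=2\rho^2/(k_0\eta)$ is what makes every numeric coefficient drop into the advertised form $2\rho/k_0$.
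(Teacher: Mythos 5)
Your computation is correct and is exactly the ``direct calculation with Lemmas \ref{lm:4.2}, \ref{lm:4.14} and \eqref{eqn:8.1}--\eqref{eqn:8.7}'' that the paper invokes without writing out: the key observation that $K(\mathrm{Tr}\,L,L(v_\ell,v_\ell))$ has $\ell$-independent $e_1$- and $\mathrm{Tr}\,L$-coefficients (so that these cancel in $L_j$) and that every remaining scalar collapses via $\eta[\lambda_1+(k_0-1)\mu]=2\rho^2/k_0$ is precisely what makes the stated coefficients come out. I verified each of the five identities against \eqref{eqn:4.32}--\eqref{eqn:4.37} and they all check.
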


Put
\begin{equation}\label{eqn:8.13}
T=\tfrac{\rho}{\sqrt{\rho^2+k_0^2\eta^2}}e_1
+\tfrac{k_0\eta}{\sqrt{\rho^2+k_0^2\eta^2}}t,\ \
T^*=-\tfrac{k_0\eta}{\sqrt{\rho^2+k_0^2\eta^2}}e_1
+\tfrac{\rho}{\sqrt{\rho^2+k_0^2\eta^2}}t.
\end{equation}

It is easily to see that $\{T,\ T^*,\ v_j|_{1\le j\le k_0}, \
w_j\mid_{ 1\leq j\leq k_0-1},\ w_{kl}\mid_{1\leq k<l\leq k_0}\}$ is
an orthonormal basis of $T_pM$. By Lemmas \ref{lm:4.2} and \ref{lm:8.3} we have the
following lemma.

\begin{lemma}\label{lm:8.4}
In Case (i), under the above notations, we have
\begin{equation}\label{eqn:8.14}
\left\{
\begin{aligned}
&K(T,T)=\sigma_1T,\ \ \ \ \ \
K(T,v_j)=\sigma_2v_j,\ \ \ 1\le j\le k_0;\\
&K(T,T^*)=\sigma_2T^*,\ \ \ K(T,w_j)=\sigma_2w_j,\ \ 1\le j\le k_0-1;\\
&K(T,w_{kl})=\sigma_2w_{kl},\ \ 1\leq k<l\leq k_0,
\end{aligned}\right.
\end{equation}
where $\sigma_1$ and $\sigma_2$ are defined by
\begin{equation}\label{eqn:8.15}
\sigma_1=\tfrac{\rho^2\lambda_1+k_0^2\eta^2\mu}
{\rho\sqrt{\rho^2+k_0^2\eta^2}},\ \
\sigma_2=\tfrac{(\tfrac{1}{2}\lambda_1+\eta)\rho}
{\sqrt{\rho^2+k_0^2\eta^2}},
\end{equation}
which satisfy $\sigma_1\neq 2\sigma_2$.
\end{lemma}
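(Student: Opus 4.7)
The plan is to verify all five assertions in \eqref{eqn:8.14} by direct bilinear expansion, working from the definitions \eqref{eqn:8.13}. Writing $T = \alpha e_1 + \beta t$ and $T^* = -\beta e_1 + \alpha t$ with $\alpha = \rho/\sqrt{\rho^2+k_0^2\eta^2}$ and $\beta = k_0\eta/\sqrt{\rho^2+k_0^2\eta^2}$, so that $\alpha^2+\beta^2 = 1$, the indispensable algebraic identity throughout will be
\[
\mu + \eta = \tfrac12\lambda_1,
\]
which follows immediately from $\mu = \tfrac12(\lambda_1 - 2\eta)$ in Lemma \ref{lm:4.1}. It will repeatedly collapse the combinations one meets into the prescribed closed forms \eqref{eqn:8.15}.

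I would first handle the three ``transverse'' relations. For $K(T,v_j)$, bilinearity together with $K(e_1,v_j) = \tfrac12\lambda_1 v_j$ from Lemma \ref{lm:4.2} and $K(t,v_j) = \tfrac{\rho}{k_0}v_j$ from Lemma \ref{lm:8.3} gives coefficient $\alpha\tfrac{\lambda_1}{2} + \beta\tfrac{\rho}{k_0} = \rho(\tfrac{\lambda_1}{2}+\eta)/\sqrt{\rho^2+k_0^2\eta^2} = \sigma_2$. For $K(T,w_j)$ and $K(T,w_{kl})$, one uses $K(e_1,w) = \mu w$ for $w\in\mathcal{D}_3$ (Lemma \ref{lm:4.2}) and $K(t,w) = \tfrac{2\rho}{k_0}w$ from \eqref{eqn:8.12}; the coefficient becomes $\alpha\mu + \beta\tfrac{2\rho}{k_0} = \rho(\mu+2\eta)/\sqrt{\rho^2+k_0^2\eta^2}$, which equals $\sigma_2$ upon rewriting $\mu + 2\eta = \tfrac{\lambda_1}{2}+\eta$.

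Next I would compute $K(T,T)$ and $K(T,T^*)$ by the same expansion, now also using $K(e_1,e_1) = \lambda_1 e_1$, $K(e_1,t) = \mu t$ (since $t \in \mathcal{D}_3$), and the formula for $K(t,t)$ from \eqref{eqn:8.12}. Collecting separately the $e_1$- and $t$-components of each side and applying $\mu + \eta = \tfrac12\lambda_1$ (equivalently $\lambda_1 - \mu = \tfrac{\lambda_1}{2}+\eta$), one verifies the two identities $K(T,T) = \sigma_1 T$ and $K(T,T^*) = \sigma_2 T^*$ by matching components.

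Finally, for the inequality $\sigma_1\neq 2\sigma_2$, I would substitute \eqref{eqn:8.10} in the form $2\rho^2 = k_0\eta(\lambda_1 + (k_0-1)\mu)$ into the numerator of $\sigma_1 - 2\sigma_2$; this collapses to
\[
\sigma_1 - 2\sigma_2 \;=\; \frac{k_0\eta^2(\mu-\lambda_1)}{\rho\sqrt{\rho^2+k_0^2\eta^2}},
\]
which is nonzero since $\mu < \tfrac12\lambda_1 < \lambda_1$ (recall $\eta > 0$ and $\lambda_1 > 0$). No real obstacle is anticipated here: the proof is a routine bilinear unfolding, and the only recurring device is the systematic use of the identity $\mu + \eta = \tfrac12\lambda_1$ to match the computed coefficients with the closed expressions in \eqref{eqn:8.15}.
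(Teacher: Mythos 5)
Your proposal is correct and is exactly the computation the paper intends: the paper's proof of Lemma \ref{lm:8.4} consists of the single remark that it follows from Lemmas \ref{lm:4.2} and \ref{lm:8.3}, i.e.\ precisely the bilinear expansion of $K(T,\cdot)$ in the $e_1$, $t$ decomposition that you carry out, with the identity $\mu+\eta=\tfrac12\lambda_1$ doing the bookkeeping. Your explicit verification of $\sigma_1-2\sigma_2=k_0\eta^2(\mu-\lambda_1)/\bigl(\rho\sqrt{\rho^2+k_0^2\eta^2}\bigr)\neq 0$ also checks out.
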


Given the parallelism of the difference tensor $K$, Lemma
\ref{lm:8.4} and Theorem \ref{th:3.4}, we conclude that in Case (i),
$M$ is locally the Calabi product of a lower-dimensional locally
strongly convex centroaffine hypersurface with parallel cubic form
with a point.

\vskip 2mm

In Case (ii), we proceed in the same way as in Case (i). We first
see that
$$
\{t,\ w_{kl}\mid_{1\leq k<l\leq k_0},
\ w_j\mid_{ 1\leq j\leq k_0-1} \}
$$
is still an orthonormal basis of ${\rm Im}\,L$, even though we have
${\rm Im}\,L\varsubsetneq\mathcal{D}_3$.

Denote $\tilde{n}=n-\frac{1}{2}m(m+1)$ and choose
$\tilde{w}_1,\ldots,\tilde{w}_{\tilde{n}}$ in the orthogonal
complement of ${\rm Im}\,L$ in $\mathcal{D}_3$ such that
$$
\{t,\ w_{kl}\mid_{1\leq k<l\leq k_0},
\ w_j\mid_{ 1\leq j\leq k_0-1},
\ \tilde{w}_r\mid_{ 1\leq r\leq \tilde{n}} \}
$$
is an orthonormal basis of $\mathcal{D}_3$. By
Lemma \ref{lm:4.5}, we obtain that
\begin{align}\label{eqn:8.16}
\begin{split}
 K(t,\tilde{w}_r)=k_0\eta\mu\rho^{-1}\tilde{w}_r.
\end{split}
\end{align}

We define $T$ and $T^*$ as in \eqref{eqn:8.13}. Then
$$
\{T,\ T^*,\ v_j|_{1\le j\le k_0},\ w_{kl}\mid_{1\leq k<l\leq k_0}, \
w_j\mid_{ 1\leq j\leq k_0-1},\ \tilde{w}_r|_{1\le r\le\tilde{n}}\}
$$
is an orthonormal basis of $T_pM$. Similar to Lemma \ref{lm:8.4}, we
can easily show the following
\begin{lemma}\label{lm:8.5}
In Case (ii), under the previous notations, we have
\begin{equation}\label{eqn:8.17}
\left\{
\begin{aligned}
&K(T,T)=\sigma_1T,\ \ \ \ \ \
K(T,v_j)=\sigma_2v_j,\ \ \ 1\le j\le k_0;\\
&K(T,T^*)=\sigma_2T^*,\ \ \ K(T,w_j)=\sigma_2w_j,\ \ 1\le j\le k_0-1;\\
&K(T,w_{kl})=\sigma_2w_{kl},\ \ 1\leq k<l\leq k_0;\\
&K(T,\tilde{w}_r)=\sigma_3\tilde{w}_r,\ \ \ \ 1\leq r\leq \tilde{n},
\end{aligned}\right.
\end{equation}
where $\sigma_1$ and $\sigma_2$ are defined by \eqref{eqn:8.15},
and
\begin{equation}\label{eqn:8.18}
\sigma_3={\rho}^{-1}\mu\sqrt{\rho^2+k_0^2\eta^2},
\end{equation}
which satisfy the relations $\sigma_1\neq 2\sigma_2,\ \sigma_1\neq
2\sigma_3$ and $\sigma_2\neq\sigma_3$.
\end{lemma}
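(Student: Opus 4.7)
The plan is to follow the template of Lemma \ref{lm:8.4} from Case (i), augmenting it only with the action of $K$ on the extra $\mathcal{D}_3$-directions $\tilde{w}_r$ which did not appear there. First, I will collect all the $K$-brackets needed on the $h$-orthonormal frame $\{e_1, t, v_j, w_j, w_{kl}, \tilde{w}_r\}$. On the first five families the brackets are exactly those in \eqref{eqn:8.12}: their derivation uses only Lemma \ref{lm:4.14} together with the orthogonality relations \eqref{eqn:8.1}--\eqref{eqn:8.7}, and does not require the equality $\mathrm{Im}\,L = \mathcal{D}_3$, so they carry over verbatim to the present case. For the extra vectors I have $K(e_1,\tilde{w}_r) = \mu\tilde{w}_r$ by Lemma \ref{lm:4.2}(i) and $K(t,\tilde{w}_r) = k_0\eta\mu\rho^{-1}\tilde{w}_r$ by the already-recorded equation \eqref{eqn:8.16}, which is itself a direct application of Lemma \ref{lm:4.5} to $\tilde{w}_r \perp \mathrm{Im}\,L$.

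Next, I will expand each required bracket $K(T,\cdot)$ bilinearly, writing $T = \alpha e_1 + \beta t$ and $T^* = -\beta e_1 + \alpha t$ with $\alpha = \rho/\sqrt{\rho^2+k_0^2\eta^2}$ and $\beta = k_0\eta/\sqrt{\rho^2+k_0^2\eta^2}$. The identities $K(T,T) = \sigma_1 T$, $K(T,T^*) = \sigma_2 T^*$, $K(T,v_j) = \sigma_2 v_j$, $K(T,w_j) = \sigma_2 w_j$ and $K(T,w_{kl}) = \sigma_2 w_{kl}$ involve no $\tilde{w}_r$ and therefore are obtained by the very same expansion used in Lemma \ref{lm:8.4}; the algebraic mechanism which collapses the various coefficients into the single scalar $\sigma_2$ is the identity $\mu + \eta = \tfrac{1}{2}\lambda_1$, equivalently $\mu = \tfrac{1}{2}\lambda_1 - \eta$. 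For the only genuinely new bracket, I compute $K(T,\tilde{w}_r) = \bigl(\alpha\mu + \beta k_0\eta\mu/\rho\bigr)\tilde{w}_r = \mu(\rho\alpha + k_0\eta\beta)\rho^{-1}\tilde{w}_r = \mu\rho^{-1}\sqrt{\rho^2+k_0^2\eta^2}\,\tilde{w}_r = \sigma_3\tilde{w}_r$, using the identity $\rho\alpha + k_0\eta\beta = \sqrt{\rho^2+k_0^2\eta^2}$.

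Finally, the three non-degeneracy relations are verified by direct manipulation. Clearing the common positive factor $\rho\sqrt{\rho^2+k_0^2\eta^2}$ and substituting the value $2\rho^2 = k_0\eta(\lambda_1+(k_0-1)\mu)$ from \eqref{eqn:8.10}, the equation $\sigma_1 = 2\sigma_2$ reduces to $\mu = \lambda_1$, the equation $\sigma_1 = 2\sigma_3$ reduces to $\lambda_1 = -2\eta$, and $\sigma_2 = \sigma_3$ again reduces to $\mu = \lambda_1$; each is excluded by the standing inequalities $\lambda_1 > 0$ and $\lambda_1 > 2\mu$ together with $\eta > 0$. The only subtle point is therefore the bookkeeping across the three $h$-orthogonal subspaces of $\mathcal{D}_3$, namely $\mathbb{R}t$, the span of $\{w_j, w_{kl}\}$, and $(\mathrm{Im}\,L)^\perp \cap \mathcal{D}_3$; no new geometric identity beyond Lemmas \ref{lm:4.2}, \ref{lm:4.5} and \ref{lm:4.14} is required.
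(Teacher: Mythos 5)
Your proposal is correct and follows exactly the route the paper intends: the paper proves Lemma 8.5 only by remarking that it is "similar to Lemma 8.4," relying on \eqref{eqn:8.12} carrying over and on \eqref{eqn:8.16} for the new directions $\tilde{w}_r$, which is precisely your argument. Your verification of the non-degeneracy relations is also sound (note that all three conditions $\sigma_1=2\sigma_2$, $\sigma_1=2\sigma_3$, $\sigma_2=\sigma_3$ in fact collapse to the single equation $2\rho^2=k_0^2\eta\mu$, i.e.\ $\mu=\lambda_1$, equivalently $\lambda_1=-2\eta$, which is excluded since $\lambda_1>0$ and $\eta>0$).
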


Given the parallelism of the difference tensor $K$, Lemma
\ref{lm:8.5} and Theorem \ref{th:3.2}, we conclude that in Case
(ii), $M$ is locally the Calabi product of two lower-dimensional
locally strongly convex centroaffine hypersurfaces with parallel
cubic form.

\vskip 2mm

In Case (iii), we take the following basis of $T_pM$:
\begin{equation}\label{eqn:8.19}
\{e_1,\ v_i|_{1\le i\le k_0},\ w_j\mid_{1\leq j\leq k_0-1},\
w_{jk}\mid_{1\leq j<k\leq k_0-1}\}.
\end{equation}

By Lemmas \ref{lm:8.1}, \ref{lm:8.2}, \ref{lm:4.14}
and a direct computation, we obtain that
\begin{equation}\label{eqn:8.20}
K(e_1,e_1)+\sum_{j=1}^{k_0}K(v_j,v_j)+\sum_{j=1}^{k_0-1}K(w_j,w_j)
+\sum_{1\leq i<j\leq k_0}K(w_{ij},w_{ij})=0.
\end{equation}
This implies that in Case (iii) it holds ${\rm Tr}\,K_X=0$ for any
vector $X$. Thus $M$ is a proper affine hypersphere. Then, according
to previous computations and the proof of Theorem 5.1 in
\cite{HLV2}, we can easily show that in Case (iii) $M^n$ is
centroaffinely equivalent to the standard embedding $\mathrm{SL}(m,
\mathbb{R})/\mathrm{SO}(m;\mathbb{R})\hookrightarrow
\mathbb{R}^{n+1}$.

\vskip 1mm

The combination of the preceding three cases' discussion then
completes the proof of Theorem \ref{th:8.1}.
\end{proof}

%%%%%%%%%%%%%%%%%%%%%%%%%%%%%%%%%%%%%%%%%%%%%%%%%%%%%%%%%%%%%%%%%%%%%%%%%%%%%%%%%%%%%%%%%
\numberwithin{equation}{section}

\section{Case $\{\mathfrak{C}_m\}_{2\le m\le n-1}$
with $k_0\geq 2$ and $\mathfrak{p}=1$}\label{sect:9}

In this section, we will prove the following theorem.
\begin{theorem}\label{th:9.1}
Let $M^n$ be a locally strongly convex centroaffine hypersurface in
$\mathbb{R}^{n+1}$ which has parallel and non-vanishing cubic form.
If $\mathfrak{C}_m$ with $2\le m\le n-1$ occurs and the integers
$k_0$ and $\mathfrak{p}$, as defined in subsection \ref{sect:4.5},
satisfy $k_0\geq 2$ and $\mathfrak{p}=1$, then $n\geq
\tfrac{1}{4}(m+1)^2-1$. Moreover, we have either
\begin{enumerate}
\item[(i)] $n=\tfrac{1}{4}(m+1)^2$, $M^n$ can be decomposed
as the Calabi product of a locally strongly convex centroaffine
hypersurface with parallel cubic form and a point, or
\item[(ii)] $n>\tfrac{1}{4}(m+1)^2$, $M^n$ can
be decomposed as the Calabi product of two locally strongly convex
centroaffine hypersurfaces with parallel cubic form, or
\item[(iii)] $n=\tfrac{1}{4}(m+1)^2-1$,
$M^n$ is centroaffinely equivalent to the standard embedding
$\mathrm{SL}(\tfrac{m+1}{2},\mathbb{C})/\mathrm{SU}(\tfrac{m+1}{2})
\hookrightarrow\mathbb{R}^{n+1}$.
\end{enumerate}
\end{theorem}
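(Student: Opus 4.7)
The plan is to follow closely the proof of Theorem \ref{th:8.1}, adapting each step to the regime $\mathfrak{p}=1$, where every summand of the decomposition \eqref{eqn:4.28} is two-dimensional, so $\dim\mathcal{D}_2=2k_0=m-1$, $m=2k_0+1$ is odd, and $\tfrac{1}{4}(m+1)^2=(k_0+1)^2$. For each $l$ I pick a unit vector $u_l\in V_{v_l}(0)$; Lemmas \ref{lm:4.12}(ii) and \ref{lm:4.9} give $L(v_l,u_l)=0$ and $L(u_l,u_l)=L(v_l,v_l)$, and for $j\neq l$ Lemma \ref{lm:4.13} yields a unique unit $\tilde u_j\in V_{v_j}(0)$ with $L(\tilde u_j,v_l)=L(v_j,u_l)$ and $L(v_j,v_l)=-L(\tilde u_j,u_l)$. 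Just as in \cite{HLV2}, one checks that the sign choices $\tilde u_j=u_j$ can be made globally consistent; this is the linear-algebraic expression of the complex structure on $\mathcal{D}_2$ underlying the case $\mathfrak{p}=1$.

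Next I construct an orthonormal basis of $\mathrm{Im}\,L$. Using Lemma \ref{lm:4.14} together with \eqref{eqn:4.7}--\eqref{eqn:4.10} and the identities just stated, one verifies that the $k_0-1$ trace combinations $L_j/\|L_j\|$ (with $L_j=\sum_{i\le j}L(v_i,v_i)-jL(v_{j+1},v_{j+1})$) and the $k_0(k_0-1)$ off-diagonal vectors $L(v_k,v_l)/\sqrt{\tau}$, $L(v_k,u_l)/\sqrt{\tau}$ for $1\le k<l\le k_0$ are pairwise orthonormal and each orthogonal to $\mathrm{Tr}\,L=2\sum_{l=1}^{k_0}L(v_l,v_l)$. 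A direct computation gives $h(\mathrm{Tr}\,L,\mathrm{Tr}\,L)=2k_0\eta(\lambda_1+(k_0-1)\mu)$, which produces both the dimension bound $n\ge(k_0+1)^2-1=\tfrac{1}{4}(m+1)^2-1$ and the exact analog of Lemma \ref{lm:8.1}: $\mathrm{Tr}\,L=0$ iff $\lambda_1=\tfrac{1}{2}(3-m)\mu$ (which forces $\varepsilon=-1$, $\mu^2=2/(m-1)$, consistent with $m\ge 5$). The analog of Lemma \ref{lm:8.2}, saying $\mathrm{Tr}\,L=0$ is equivalent to $n=\tfrac{1}{4}(m+1)^2-1$, is proved in the same way: if $\mathrm{Tr}\,L\neq 0$ and $w\in\mathcal{D}_3$ is orthogonal to $\mathrm{Im}\,L$, Lemma \ref{lm:4.5} applied to $\mathrm{Tr}\,L$ yields $k_0\eta\mu w=0$, a contradiction.

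The remaining case analysis is entirely parallel to Theorem \ref{th:8.1}. For Cases (i) and (ii), set $t:=\mathrm{Tr}\,L/\|\mathrm{Tr}\,L\|\in\mathcal{D}_3$, compute the action of $K$ on $t$ from Lemma \ref{lm:4.14}, and rotate $(e_1,t)$ to a new pair $(T,T^*)$ by the formula analogous to \eqref{eqn:8.13}; after parallel translation along $\hat\nabla$-geodesics this produces a local orthonormal frame in which $K_T$ has exactly two distinct eigenvalues (Case (i)) or three (Case (ii)), with the required non-coincidence conditions. Theorems \ref{th:3.4} and \ref{th:3.2} then deliver the Calabi product decompositions. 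In Case (iii), $\mathrm{Tr}\,L=0$ and $\mathcal{D}_3=\mathrm{Im}\,L$; summing $K_XX$ over the full orthonormal basis as in \eqref{eqn:8.20} gives $\mathrm{Tr}\,K_X\equiv 0$, making $M$ a proper affine hypersphere, and the explicit structure constants of $K$ match the standard embedding $\mathrm{SL}(\tfrac{m+1}{2},\mathbb{C})/\mathrm{SU}(\tfrac{m+1}{2})\hookrightarrow\mathbb{R}^{n+1}$ by the same Lie-algebraic argument as used in \cite{HLV2}.

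The main obstacle I expect is the bookkeeping in Step 1: the global consistency of the sign choices $\tilde u_j=u_j$ and the full table of inner products $h(L(a,b),L(c,d))$ for $a,b,c,d\in\{v_l,u_l\}$ include several new mixed terms (such as $h(L(v_k,u_l),L(v_k,v_l))=0$ and $h(L(v_k,u_l),L(v_k,u_l))=\tau$) which have no $\mathfrak{p}=0$ analog and must be derived from Lemma \ref{lm:4.11} together with the complex-structure identities encoded in the map $\mathfrak{T}_1$ of Proposition \ref{pr:4.2}. Once this algebra is in place, the rotation argument for Cases (i) and (ii) is routine, and the symmetric-space identification in Case (iii) is a direct translation of the corresponding equiaffine step in \cite{HLV2}.
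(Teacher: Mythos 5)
Your proposal follows essentially the same route as the paper's proof: the same choice of companion vectors $u_j\in V_{v_j}(0)$ with globally consistent signs (the paper's Lemma \ref{lm:9.1}, proved via the $\epsilon=\pm1$ comparison using \eqref{eqn:4.36}), the same orthonormal spanning set $\{w_j,\,w_{kl},\,w_{kl}'\}$ of ${\rm Im}\,L$ giving $n\ge\tfrac14(m+1)^2-1$, the same computation of $h({\rm Tr}\,L,{\rm Tr}\,L)$ and the equivalence ${\rm Tr}\,L=0\Leftrightarrow n=\tfrac14(m+1)^2-1$, and the same trichotomy resolved by Theorems \ref{th:3.4}, \ref{th:3.2} and the symmetric-space identification from \cite{HLV2}; all of your numerical checks ($k_0^2-1$ vectors, $\lambda_1=\tfrac{3-m}{2}\mu$, $\mu^2=2/(m-1)$) agree with the paper's. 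The only slip is in your sketch of the analog of Lemma \ref{lm:8.2}, where the hypothesis should read ``if ${\rm Tr}\,L=0$ and some $w\in\mathcal{D}_3$ were orthogonal to ${\rm Im}\,L$'' (the contradiction $0=K({\rm Tr}\,L,w)=2k_0\eta\mu w$ arises from the vanishing of the trace, not from its nonvanishing), but this is clearly a misstatement rather than a flaw in the argument.
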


Now we have $\dim\,\mathcal{D}_2=m-1=2k_0$ and $m\ge5$. Similar to
Lemma 6.1 of \cite{HLV2}, we will prove the following
\begin{lemma}\label{lm:9.1}
In the decomposition \eqref{eqn:4.28}, if we have $k_0\geq 2$ and
$\mathfrak{p}=1$, then there exist unit vectors $u_j\in V_{v_j}(0)\
(1\leq j\leq k_0)$ such that the orthonormal basis
$\{v_1,u_1,\ldots,v_{k_0}, u_{k_0}\}$ of $\mathcal{D}_2$ satisfies
the relations
\begin{align}\label{eqn:9.1}
L(u_l, v_j)=-L(v_l, u_j),\ L(v_l, v_j)=L(u_l, u_j),\
1\leq j,\ l\leq k_0.
\end{align}
\end{lemma}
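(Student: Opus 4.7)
The plan is to construct the $u_j$'s coherently by pivoting through $v_1$ via Lemma~\ref{lm:4.13}, and then to propagate the symmetric relations from pairs involving $v_1$ to arbitrary pairs by means of the structural identities \eqref{eqn:4.36} and \eqref{eqn:4.40} from Lemma~\ref{lm:4.14}. Concretely, since $\mathfrak{p}=1$ each $V_{v_l}(0)$ is one-dimensional and contains exactly one pair of opposite unit vectors. I fix an arbitrary unit vector $u_1\in V_{v_1}(0)$. For each $j\in\{2,\ldots,k_0\}$, Lemma~\ref{lm:4.13} applied to the pair $(v_1,v_j)$ assigns to any chosen unit vector in $V_{v_j}(0)$ a companion unit vector in $V_{v_1}(0)$, which must be $\pm u_1$; I select the sign of $u_j\in V_{v_j}(0)$ so that the associated companion is $-u_1$, which amounts to
\begin{equation*}
L(u_1,v_j)=-L(v_1,u_j),
\end{equation*}
and the ``moreover'' clause of Lemma~\ref{lm:4.13} then automatically produces $L(v_1,v_j)=L(u_1,u_j)$. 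The system $\{v_1,u_1,\ldots,v_{k_0},u_{k_0}\}$ is orthonormal by the mutual orthogonality of the summands in \eqref{eqn:4.28} together with $V_{v_l}(0)\perp v_l$.

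To verify the identities for all $(l,j)$, the diagonal case $l=j$ is immediate from $L(u_j,v_j)=0$ and Lemma~\ref{lm:4.9}, while $l=1$ or $j=1$ is the content of the construction. For the remaining case $l,j\in\{2,\ldots,k_0\}$ with $l\ne j$, I apply \eqref{eqn:4.40} with $a_1=u_1,\,a_2=v_1\in V_1$, $b=v_l\in V_l$, $c=v_j\in V_j$: since the construction gives $L(u_1,u_j)=L(v_1,v_j)$, the vector $u_j$ satisfies the defining equation for $c'$, so $c'=u_j$ by uniqueness; hence
\begin{equation*}
K(L(u_1,v_l),L(v_1,v_j))=\tau L(v_l,u_j).
\end{equation*}
On the other hand, substituting $L(u_1,v_l)=-L(v_1,u_l)$ on the left and invoking \eqref{eqn:4.36} turns the left-hand side into $-\tau L(u_l,v_j)$. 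Comparison yields $L(u_l,v_j)=-L(v_l,u_j)$, and a final application of Lemma~\ref{lm:4.13} to the triple $(v_l,v_j,u_j)$ upgrades this to $L(v_l,v_j)=L(u_l,u_j)$.

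The main obstacle is the global coherence of the sign choices: each $u_j$ is pinned down purely with reference to $v_1$, and one must show a posteriori that the required symmetric relation still holds with respect to every other $v_l$. The key mechanism that allows this transport from the base index $1$ to arbitrary pairs is the identity \eqref{eqn:4.40}, which through Lemma~\ref{lm:4.6} couples $K$-products of $L$-values across distinct summands of the decomposition \eqref{eqn:4.28}.
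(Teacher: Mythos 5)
Your proposal is correct and follows essentially the same route as the paper: fix a base index, use Lemma \ref{lm:4.13} to pin down each $u_j$ relative to that base (the paper pivots through $v_2$ rather than $v_1$), and then resolve the remaining sign ambiguity for pairs not involving the base index by evaluating a $K$-product of $L$-values in two ways via Lemma \ref{lm:4.14}. The only cosmetic difference is that the paper writes $u_j^{(l)}=\epsilon u_j$ and shows $\epsilon=1$ by computing $K(L(v_j,v_l),L(v_2,u_j))$ twice with \eqref{eqn:4.36}, whereas you compute $K(L(u_1,v_l),L(v_1,v_j))$ twice using \eqref{eqn:4.40} and \eqref{eqn:4.36}; both arguments are sound.
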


\begin{proof}
As for each $1\leq j\leq k_0$ it holds $\dim\,(V_{v_j}(0))=1$, we
assume $V_{v_2}(0)=\{u_2\}$ for a unit vector $u_2$. Then, for each
$j\neq 2$, by Lemma \ref{lm:4.13}, we have a unique unit vector
$u_j\in V_{v_j}(0)$ satisfying
\begin{align}\label{eqn:9.2}
L(u_2, v_j)=-L(v_2, u_j),\ L(v_2, v_j)=L(u_2, u_j),\
1\leq j\leq k_0,\ j\neq 2.
\end{align}

Moreover, Lemma \ref{lm:4.9} implies that \eqref{eqn:9.2} also
holds for $j=2$. Next, we state

\vskip 1mm

{\bf Claim 1}. $L(u_l, v_j)=-L(v_l, u_j),\ L(v_l, v_j)=L(u_l, u_j),\
1\leq j,l\leq k_0,\ j, l\neq2.$

\vskip 1mm

To verify the claim, as $u_j\in V_{v_j}(0)$, we first see by Lemma
\ref{lm:4.9} that $L(u_j, v_j)=0$ and $L(v_j, v_j)=L(u_j, u_j)$.
Hence the claim is true for $j=l$.

Now we fix $j\neq l$ such that $j, l\neq2$. By Lemma \ref{lm:4.13},
there exists a unique unit vector $u_j^{(l)}\in V_{v_j}(0)$, such
that
\begin{align}\label{eqn:9.3}
L(u_l, v_j)=-L(v_l, u_j^{(l)}),\ L(v_l, v_j)=L(u_l, u_j^{(l)}).
\end{align}

Noting that $\dim\,(V_{v_j}(0))=1$ and $u_j^{(l)},\,u_j\in
V_{v_j}(0)$ are unit vectors, we have $u_j^{(l)}=\epsilon u_j$ with
$\epsilon=\pm 1$. Hence from \eqref{eqn:9.3} we have
\begin{align}\label{eqn:9.4}
L(u_l, v_j)=-\epsilon L(v_l, u_j),\ L(v_l, v_j)=\epsilon L(u_l, u_j).
\end{align}

On the other hand, by using \eqref{eqn:4.36}, \eqref{eqn:9.2} and
\eqref{eqn:9.4}, we get
$$
\begin{aligned}
&K(L(v_j,v_l),L(v_2,u_j))=K(L(v_j,v_l),-L(v_j,u_2))=-\tau L(v_l,u_2),\\
&K(L(v_j,v_l),L(v_2,u_j))=K(\epsilon
L(u_j,u_l),L(v_2,u_j))=-\epsilon\tau L(v_l,u_2).
\end{aligned}
$$
From the comparison of the above two equations we get $\epsilon=1$.

From \eqref{eqn:9.4} we have verified Claim 1 and the proof of Lemma
\ref{lm:9.1} is fulfilled.
\end{proof}

To continue the proof of Theorem \ref{th:9.1}, we now assume that
$k_0\ge2$ and let $\{v_1,u_1,\ldots,v_{k_0}, u_{k_0}\}$ be the
orthonormal basis of $\mathcal{D}_2$ as constructed in Lemma
\ref{lm:9.1}.

Given \eqref{eqn:4.5}, Lemmas \ref{lm:4.9}, \ref{lm:4.11} and that for $j\neq l,
v_j, u_j \in V_{v_l}(\tau)=V_{u_l}(\tau)$, we have the following
calculations:
\begin{equation}\label{eqn:9.5}
h(L(v_j,u_l),L(v_j,u_l))=h(L(v_j,v_l),L(v_j,v_l))=\tau,\ j\neq l,
\end{equation}
\begin{align}\label{eqn:9.6}
h(L(u_j,&v_{l_1}),L(u_j,v_{l_2}))=h(L(v_j,u_{l_1}),L(v_j,u_{l_2}))\nonumber\\
&=h(L(v_j,v_{l_1}),L(v_j,v_{l_2}))=0,\ \ j, l_1, l_2 \ {\rm
distinct},
\end{align}
\begin{equation}\label{eqn:9.7}
h(L(v_{j_1},v_{j_2}),L(v_{j_3},v_{j_4}))=0,\ \ j_1, j_2, j_3, j_4\
{\rm distinct},
\end{equation}
\begin{equation}\label{eqn:9.8}
h(L(v_j,v_l),L(v_{j_1},u_{l_1}))=0,\ \ j\neq l\ {\rm and}\ j_1\neq
l_1,
\end{equation}
\begin{equation}\label{eqn:9.9}
h(L(v_j,v_j),L(v_j,v_j))=\tfrac{1}{2}\lambda_1\eta,\ \ 1\leq j\leq
k_0,
\end{equation}
\begin{equation}\label{eqn:9.10}
h(L(v_j,v_j),L(v_l,v_l))=\tfrac{1}{2}\mu\eta,\ \ 1\leq j\neq l\leq
k_0,
\end{equation}
\begin{align}\label{eqn:9.11}
h(L(v_j,v_j),L(v_j,v_l))&=h(L(v_j,v_j),L(v_j,u_l))\nonumber\\
&=h(L(v_j,v_j),L(v_l,u_j))=0,\ \ 1\leq
j\not=l\leq k_0;
\end{align}
\begin{align}\label{eqn:9.12}
h(L(v_j,v_j),L(v_{l_1},v_{l_2}))=h(&L(v_j,v_j),L(v_{l_1},u_{l_2}))=0,\\
&1\leq j, l_1, l_2\ {\rm distinct}\leq k_0.\nonumber
\end{align}

Similar as in the proof of Theorem \ref{th:8.1}, we denote
$$
L_j:=L(v_1,v_1)+\cdots+L(v_j,v_j)-jL(v_{j+1},v_{j+1}), \ 1\leq j\leq
k_0-1.
$$
Then direct calculations show that $h(L_j,L_j)=2j(j+1)\tau\neq 0$
for each $j$, and
\begin{equation}\label{eqn:9.13}
\left\{
\begin{aligned}
&w_j=\tfrac{1}{\sqrt{2j(j+1)\tau}}L_j,\ \ 1\leq j\leq k_0-1,\\
&w_{kl}=\tfrac{1}{\sqrt{\tau}}L(v_k,v_l),\ \ 1\leq k<l\leq k_0,\\
&w_{kl}'=\tfrac{1}{\sqrt{\tau}}L(v_k,u_l),\ \ 1\leq k<l\leq k_0
\end{aligned}
\right.
\end{equation}
give $\tfrac{1}{4}(m+1)(m-3)$ mutually orthogonal unit vectors in
${\rm Im}\,L\subset\mathcal{D}_3$. Thus, we have the estimate of the
dimension
\begin{align}\label{eqn:9.14}
\begin{split}
n&=1+\dim\,(\mathcal{D}_2)+\dim\,(\mathcal{D}_3)\\
&\geq1+m-1+\tfrac{1}{4}(m+1)(m-3)=\tfrac{1}{4}(m+1)^2-1.
\end{split}
\end{align}

Moreover, direct computations show that ${\rm
Tr}\,L=2[L(v_1,v_1)+\cdots +L(v_{k_0},v_{k_0})]$ is orthogonal to
all vectors in \eqref{eqn:9.13}, and by using \eqref{eqn:4.4},
\eqref{eqn:4.8} and the fact that $v_i\in V_{v_j}(\tau)$ for $i\neq
j$, we get
\begin{align}\label{eqn:9.15}
\tfrac14h({\rm Tr}\,L,{\rm Tr}\,L)&=\tfrac12k_0\eta(\lambda_1+(k_0-1)\mu)\nonumber\\
&=\tfrac1{32}(m-1)
\sqrt{\lambda_1^2-4\varepsilon}\Big[(m+1)\lambda_1-(m-3)
\sqrt{\lambda_1^2-4\varepsilon}\,\Big]\\
&=:\rho^2\nonumber
\end{align}
for $\rho\geq0$. From \eqref{eqn:9.15} and that
$\lambda_1^2-4\varepsilon>0$, the following result is obvious.

\begin{lemma}\label{lm:9.2}
${\rm Tr}L=0$ if and only if $\lambda_1=\tfrac{m-3}{\sqrt{2(m-1)}}$
and $\varepsilon=-1$.
\end{lemma}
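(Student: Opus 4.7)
The plan is to read Lemma \ref{lm:9.2} off directly from the explicit factorization in equation \eqref{eqn:9.15}, which already expresses $h({\rm Tr}\,L,{\rm Tr}\,L)=4\rho^{2}$ as a product of three scalar factors in $m$, $\lambda_{1}$ and $\varepsilon$. Since the centroaffine metric $h$ is positive definite on $\mathcal{D}_{3}\ni {\rm Tr}\,L$, the vanishing of ${\rm Tr}\,L$ is equivalent to the vanishing of $\rho^{2}$, so the lemma reduces to a purely algebraic question about which of the three factors in \eqref{eqn:9.15} can vanish.

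First I would rule out two of the three factors. In the present regime $k_{0}\ge 2$ and $\mathfrak{p}=1$ we have $\dim\mathcal{D}_{2}=(\mathfrak{p}+1)k_{0}=2k_{0}$, i.e.\ $m-1=2k_{0}\ge 4$, so $m\ge 5$ and the prefactor $m-1$ is strictly positive. The factor $\sqrt{\lambda_{1}^{2}-4\varepsilon}$ is strictly positive by the standing assumption of Case $\mathfrak{C}_{m}$. Hence $\rho^{2}=0$ is equivalent to the single scalar relation
$$(m+1)\lambda_{1}=(m-3)\sqrt{\lambda_{1}^{2}-4\varepsilon}.$$

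Now $\lambda_{1}>0$ by Lemma \ref{lm:4.1} and $m-3>0$, so both sides are strictly positive and squaring is reversible. Using the identity $(m+1)^{2}-(m-3)^{2}=8(m-1)$, squaring yields
$$8(m-1)\lambda_{1}^{2}=-4(m-3)^{2}\varepsilon.$$
Positivity of the left-hand side forces $\varepsilon=-1$, and then $\lambda_{1}=\dfrac{m-3}{\sqrt{2(m-1)}}$. The converse direction is immediate by substitution back into \eqref{eqn:9.15}.

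I do not anticipate any real obstacle here: the computational content of the lemma is already packaged in the factorization \eqref{eqn:9.15}, and the only points requiring a remark are (a) the observation $m\ge 5$, which follows from the dimension count $m-1=2k_{0}\ge 4$, and (b) the sign check needed to justify squaring, which follows from $\lambda_{1}>0$ and $m-3>0$.
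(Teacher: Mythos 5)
Your argument is correct and is exactly the computation the paper leaves implicit: the paper's proof of Lemma \ref{lm:9.2} consists of the single remark that the result is ``obvious'' from the factorization \eqref{eqn:9.15} together with $\lambda_1^2-4\varepsilon>0$, and your write-up simply makes that explicit (positivity of the factors $m-1$ and $\sqrt{\lambda_1^2-4\varepsilon}$, the sign check $\lambda_1>0$, $m-3>0$ justifying the squaring, and the identity $(m+1)^2-(m-3)^2=8(m-1)$). No gap; same approach, fully detailed.
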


On the other hand, the statement ${\rm Tr}\,L=0$ has an implicit
characterization with a proof totally similar to that of Lemma
\ref{lm:8.2}.

\begin{lemma}\label{lm:9.3}
${\rm Tr}\,L=0$ if and only if $n=\tfrac{1}{4}(m+1)^2-1$.
\end{lemma}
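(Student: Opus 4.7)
The plan is to follow, step by step, the blueprint already established for the analogous Lemma \ref{lm:8.2}, adapting the bookkeeping to account for the fact that here $\dim \mathcal{D}_2 = 2k_0$ and the basis of $\mathcal{D}_2$ comes in pairs $\{v_j, u_j\}$ with $u_j \in V_{v_j}(0)$. The two directions of the iff will be handled separately, and both use only the dimension inequality \eqref{eqn:9.14} together with Lemmas \ref{lm:4.5} and \ref{lm:4.9}.

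For the forward direction, I would suppose ${\rm Tr}\,L = 0$ and aim to conclude that $\mathcal{D}_3 = {\rm Im}\,L$. Since Lemma \ref{lm:4.9} gives $L(u_j,u_j) = L(v_j,v_j)$ for each $j$, the trace collapses to ${\rm Tr}\,L = 2\sum_{j=1}^{k_0} L(v_j,v_j)$. If there were a unit vector $w \in \mathcal{D}_3$ orthogonal to ${\rm Im}\,L$, then applying Lemma \ref{lm:4.5} term by term (using linearity of $K$ in the first slot) would yield
$$
0 = K({\rm Tr}\,L, w) = 2\sum_{j=1}^{k_0} \eta\mu\, h(v_j,v_j)\, w = 2 k_0 \eta \mu\, w.
$$
Since $\eta > 0$ and $\mu \neq 0$ (otherwise $\mu^2 - \lambda_1 \mu + \varepsilon = 0$ would force $\varepsilon = 0$, contradicting $\varepsilon = \pm 1$), this is a contradiction. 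Hence $\mathcal{D}_3 = {\rm Im}\,L$, and combined with the estimate \eqref{eqn:9.14}, we get $n = \tfrac{1}{4}(m+1)^2 - 1$.

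For the converse direction, if $n = \tfrac{1}{4}(m+1)^2 - 1$, then equality holds in \eqref{eqn:9.14}, so $\dim \mathcal{D}_3 = \tfrac{1}{4}(m+1)(m-3)$, which is exactly the number of orthonormal vectors listed in \eqref{eqn:9.13}. These vectors therefore constitute a full orthonormal basis of $\mathcal{D}_3$, forcing $\mathcal{D}_3 = {\rm Im}\,L$. Because ${\rm Tr}\,L \in \mathcal{D}_3$ is orthogonal to every vector in \eqref{eqn:9.13} — a fact already recorded in the paper just before \eqref{eqn:9.15} and checked via relations \eqref{eqn:9.5}--\eqref{eqn:9.12} — it must vanish.

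The argument is almost mechanical once Lemmas \ref{lm:4.5} and \ref{lm:4.9} are in hand, so the only subtlety is verifying that ${\rm Tr}\,L$ is genuinely orthogonal to each of the vectors $w_j$, $w_{kl}$, and $w_{kl}'$ listed in \eqref{eqn:9.13}. This is the step that could tempt one to make a sign or indexing error, because one must invoke \eqref{eqn:9.11}--\eqref{eqn:9.12} for the $w_j$ components and then use Lemma \ref{lm:9.1} (i.e.\ $L(v_k,u_l) = -L(u_k,v_l)$ together with $L(v_k,v_l) = L(u_k,u_l)$) to get the cancellation against $w_{kl}$ and $w_{kl}'$. Everything else is a direct transcription of the proof of Lemma \ref{lm:8.2}, with the factor $2$ from pairing $v_j$ with $u_j$ tracked consistently.
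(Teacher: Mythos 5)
Your proof is correct and is essentially the paper's own argument: the paper disposes of Lemma \ref{lm:9.3} by declaring its proof ``totally similar to that of Lemma \ref{lm:8.2}'', and your adaptation (the contradiction $0=K({\rm Tr}\,L,w)=2k_0\eta\mu\,w$ via Lemma \ref{lm:4.5} for the forward direction, and equality in \eqref{eqn:9.14} plus the orthogonality of ${\rm Tr}\,L$ to the basis \eqref{eqn:9.13} for the converse) is exactly that adaptation, with the factor $2$ from $L(u_j,u_j)=L(v_j,v_j)$ tracked correctly. The only half-step worth making explicit is that once ${\rm Tr}\,L=0$, the $\tfrac14(m+1)(m-3)$ vectors in \eqref{eqn:9.13} span all of ${\rm Im}\,L$ (since ${\rm Im}\,L$ is spanned by them together with ${\rm Tr}\,L$), which is what upgrades the inequality \eqref{eqn:9.14} to the equality $n=\tfrac14(m+1)^2-1$; the paper's proof of Lemma \ref{lm:8.2} is equally terse on this point.
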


\vskip 1mm

Now, we are ready to complete the proof of Theorem \ref{th:9.1}.

\begin{proof}[Proof of Theorem \ref{th:9.1}]

First, if $n\neq\tfrac{1}{4}(m+1)^2-1$, we define a unit vector
$t=\frac{1}{2\rho}{\rm Tr}\,L$. We separate the discussions into
three cases:

(i) If $n=\tfrac{1}{4}(m+1)^2$, the previous results show that
$$\{t,\ w_j\mid_{ 1\leq j\leq k_0-1},\ w_{kl}\mid_{1\leq k<l\leq k_0},
\ w_{kl}'\mid_{1\leq k<l\leq k_0} \}$$ is an orthonormal basis of
${\rm Im}\,L=\mathcal{D}_3$.

(ii) If $n>\tfrac{1}{4}(m+1)^2$, we still have that $\{t,\
w_{kl}\mid_{1\leq k<l\leq k_0}, \ w_j\mid_{ 1\leq j\leq k_0-1} \}$
is an orthonormal basis of ${\rm Im}\,L$, but now ${\rm Im}\,L
\varsubsetneq \mathcal{D}_3$. Denote
$\tilde{n}=n-\tfrac{1}{4}(m+1)^2$ and let
$\{\tilde{w}_1,\ldots,\tilde{w}_{\tilde{n}}\}$ be an orthonormal
basis of $\mathcal{D}_3\setminus{\rm Im}\,L$ such that
$$
\{t,\ w_j\mid_{ 1\leq j\leq k_0-1},\ w_{kl}\mid_{1\leq k<l\leq
k_0},\ w_{kl}'\mid_{1\leq k<l\leq k_0}, \ \tilde{w}_r\mid_{ 1\leq
r\leq \tilde{n}}\}
$$
is an orthonormal basis of $\mathcal{D}_3$.

(iii) If $n=\tfrac{1}{4}(m+1)^2-1$, then an orthonormal basis of
${\rm Im}\,L=\mathcal{D}_3$ is given by
$$
\{w_j\mid_{ 1\leq j\leq k_0-1} ,\ w_{kl}\mid_{1\leq k<l\leq k_0}, \
w_{kl}'\mid_{1\leq k<l\leq k_0}\}.
$$

Now, following the proof of Theorem 6.1 in \cite{HLV2}, we can
proceed in the same way as in the proof of Theorem \ref{th:8.1} to
obtain the following conclusions:

If $n=\tfrac{1}{4}(m+1)^2$, we can apply Theorem \ref{th:3.4} to
conclude that $M^n$ can be decomposed as the Calabi product of a
locally strongly convex centroaffine hypersurface with parallel
cubic form and a point.

If $n>\tfrac{1}{4}(m+1)^2$, we can apply Theorem \ref{th:3.2} to
conclude that $M^n$ can be decomposed as the Calabi product of two
locally strongly convex centroaffine hypersurfaces with parallel
cubic form.

If $n=\tfrac{1}{4}(m+1)^2-1$, then $M^n$ is centroaffinely
equivalent to the standard embedding
$\mathrm{SL}(\tfrac{m+1}{2},\mathbb{C})/\mathrm{SU}(\tfrac{m+1}{2})
\hookrightarrow\mathbb{R}^{n+1}$.
\end{proof}

%%%%%%%%%%%%%%%%%%%%%%%%%%%%%%%%%%%%%%%%%%%%%%%%%%%%%%%%%%%%%%%%%%
\numberwithin{equation}{section}

\section{Case $\{\mathfrak{C}_m\}_{2\le m\le n-1}$
with $k_0\geq 2$ and $\mathfrak{p}=3$}\label{sect:10}

In this section, we will prove the following theorem.
\begin{theorem}\label{th:10.1}
Let $M^n$ be a locally strongly convex centroaffine hypersurface in
$\mathbb{R}^{n+1}$ which has parallel and non-vanishing cubic form.
If $\mathfrak{C}_m$ with $2\le m\le n-1$ occurs and the integers
$k_0$ and $\mathfrak{p}$, as defined in subsection \ref{sect:4.5},
satisfy $k_0\geq 2$ and $\mathfrak{p}=3$, then $n\geq
\tfrac{1}{8}(m+1)(m+3)-1$. Moreover, we have either
\begin{enumerate}
\item[(i)] $n=\tfrac{1}{8}(m+1)(m+3)$, $M^n$ can be decomposed as
the Calabi product of a locally strongly convex centroaffine
hypersurface with parallel cubic form and a point, or
\item[(ii)] $n>\tfrac{1}{8}(m+1)(m+3)$, $M^n$ can
be decomposed the Calabi product of two locally strongly convex
centroaffine hypersurfaces with parallel cubic form, or
\item[(iii)] $n=\tfrac{1}{8}(m+1)(m+3)-1$,
$M^n$ is centroaffinely equivalent to the standard embedding
$\mathrm{SU}^*(\tfrac{m+3}{2})/\mathrm{Sp}(\tfrac{m+3}{4})\hookrightarrow
\mathbb{R}^{n+1}$.
\end{enumerate}
\end{theorem}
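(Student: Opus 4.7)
The approach mirrors the proofs of Theorems \ref{th:8.1} and \ref{th:9.1}, the new ingredient being a quaternionic structure on each $V_{v_j}(0)$. First I would establish an analog of Lemma \ref{lm:9.1} for $\mathfrak{p}=3$: fixing an orthonormal basis $\{u_2^1,u_2^2,u_2^3\}$ of $V_{v_2}(0)$, Lemma \ref{lm:4.13} produces, for each $s\in\{1,2,3\}$ and each $j\neq 2$, a unique unit vector $u_j^s\in V_{v_j}(0)$ satisfying $L(u_2^s,v_j)=-L(v_2,u_j^s)$ and $L(v_2,v_j)=L(u_2^s,u_j^s)$. A threefold application of the sign-fixing argument from Claim~1 in the proof of Lemma \ref{lm:9.1}, using \eqref{eqn:4.40}, should then show that the resulting orthonormal basis $\{v_j,u_j^1,u_j^2,u_j^3\}_{1\leq j\leq k_0}$ of $\mathcal{D}_2$ enjoys the corresponding Clifford-type compatibilities for all $j,l$, consistent with the algebra structure provided by Proposition \ref{pr:4.2}.

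With this basis, I would enumerate independent vectors in $\mathrm{Im}\,L\subset\mathcal{D}_3$: the $(k_0-1)$ diagonal vectors $w_j:=(2j(j+1)\tau)^{-1/2}L_j$ built from $L_j:=L(v_1,v_1)+\cdots+L(v_j,v_j)-jL(v_{j+1},v_{j+1})$, together with the off-diagonal vectors $w_{kl}:=\tau^{-1/2}L(v_k,v_l)$ and $w_{kl}^s:=\tau^{-1/2}L(v_k,u_l^s)$ for $1\leq k<l\leq k_0$ and $s=1,2,3$. The quaternionic relations force all remaining $L(u_k^s,v_l)$ and $L(u_k^s,u_l^t)$ to be dependent combinations, yielding exactly $(k_0-1)+2k_0(k_0-1)=(k_0-1)(2k_0+1)$ orthonormal vectors. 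Including the extra direction $\mathrm{Tr}\,L=2\sum_{j=1}^{k_0}L(v_j,v_j)$ whenever it does not vanish, this gives the sharp bound $n\geq\tfrac{1}{8}(m+1)(m+3)-1$, and a direct computation of $h(\mathrm{Tr}\,L,\mathrm{Tr}\,L)$ along the lines of \eqref{eqn:8.10} and \eqref{eqn:9.15} will yield the equivalence $\mathrm{Tr}\,L=0 \iff n=\tfrac{1}{8}(m+1)(m+3)-1$, exactly parallel to Lemmas \ref{lm:8.1}\,--\,\ref{lm:8.2} and \ref{lm:9.2}\,--\,\ref{lm:9.3}.

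The three cases are then handled in exact parallel with Sections \ref{sect:8} and \ref{sect:9}. In Cases (i) and (ii), where $\mathrm{Tr}\,L\neq 0$, I set $t$ proportional to $\mathrm{Tr}\,L$ and rotate in the $\{e_1,t\}$-plane to a pair $(T,T^*)$ defined exactly as in \eqref{eqn:8.13}. Lemmas \ref{lm:4.2} and \ref{lm:4.14} then yield, in Case (i) with $\mathrm{Im}\,L=\mathcal{D}_3$, the eigenvalue pattern $K(T,T)=\sigma_1 T$ and $K(T,Y)=\sigma_2 Y$ for all $Y\perp T$ with $\sigma_1\neq 2\sigma_2$, so Theorem \ref{th:3.4} produces the Calabi product with a point; in Case (ii), vectors $\tilde{w}_r\perp\mathrm{Im}\,L$ contribute a third eigenvalue $\sigma_3\neq \sigma_1/2,\sigma_2$ via Lemma \ref{lm:4.5}, and Theorem \ref{th:3.2} produces the Calabi product of two factors.

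The main obstacle is Case (iii), where $\mathrm{Tr}\,L=0$. Lemma \ref{lm:4.14} immediately gives $\mathrm{Tr}\,K_X=0$ for every $X\in T_pM$, so $M^n$ is a proper centroaffine hypersphere. Following the argument of Theorem 7.1 in \cite{HLV2}, I would show that $(h,K)$ at $p$ matches precisely the algebraic data of the standard centroaffine embedding of $\mathrm{SU}^*(\tfrac{m+3}{2})/\mathrm{Sp}(\tfrac{m+3}{4})$, which is the quaternionic analogue of the real and complex $R$-spaces appearing in Sections \ref{sect:8} and \ref{sect:9}, and then invoke the fundamental theorem of centroaffine hypersurfaces (cf. \cite{LLS,LSZH,SSV}) to conclude centroaffine equivalence. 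The most delicate points are the consistent propagation of the quaternionic frame across all $V_l$'s in the first step (requiring careful sign bookkeeping absent in the $\mathfrak{p}=1$ case), and, in Case (iii), verifying that our centroaffine normalization matches that of the symmetric $R$-space embedding without appealing to the apolarity condition available in the equiaffine setting.
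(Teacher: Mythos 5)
Your proposal follows essentially the same route as the paper: the quaternionic frame lemma (the paper's Lemma \ref{lm:10.1}, built from Lemmas \ref{lm:4.12}, \ref{lm:4.13} and the sign-fixing via \eqref{eqn:4.36}/\eqref{eqn:4.40}), the count of $(k_0-1)(2k_0+1)=\tfrac18(m+1)(m-5)$ orthonormal vectors in $\mathrm{Im}\,L$, the dichotomy $\mathrm{Tr}\,L=0\iff n=\tfrac18(m+1)(m+3)-1$, and the three-case conclusion via Theorems \ref{th:3.4}, \ref{th:3.2} and the reduction of case (iii) to Theorem 7.1 of \cite{HLV2}. The only slips are cosmetic: $\mathrm{Tr}\,L=4\sum_j L(v_j,v_j)$ here (not $2\sum_j$, which is the $\mathfrak{p}=1$ normalization), and the lower bound $n\geq\tfrac18(m+1)(m+3)-1$ already follows without counting the $\mathrm{Tr}\,L$ direction.
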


Now we have $\dim\,\mathcal{D}_2=m-1=4k_0$ and $m\ge9$. Similar to
Lemma 7.1 of \cite{HLV2}, we will prove the following lemma.
\begin{lemma}\label{lm:10.1}
In the decomposition \eqref{eqn:4.28}, if we have $k_0\geq 2$ and
$\mathfrak{p}=3$, then there exist unit vectors $x_j,y_j,z_j\in
V_{v_j}(0)\ (1\leq j\leq k_0)$ such that the orthonormal basis
$\{v_1,x_1,y_1, z_1;\ldots;v_{k_0},x_{k_0},y_{k_0}, z_{k_0}\}$ of
$\mathcal{D}_2$ satisfies the relations
\begin{equation}\label{eqn:10.1}
\left\{
\begin{aligned}
&L(v_j,v_l)=L(x_j,x_l)=L(y_j,y_l)=L(z_j,z_l),\\
&L(v_j,x_l)=-L(x_j,v_l)=-L(y_j,z_l)=L(z_j,y_l),\\
&L(v_j,y_l)=-L(y_j,v_l)=-L(z_j,x_l)=L(x_j,z_l),\\
&L(v_j,z_l)=-L(z_j,v_l)=-L(x_j,y_l)=L(y_j,x_l),
\end{aligned}\right.\ \ \ 1\leq j,\ l\leq k_0.
\end{equation}
\end{lemma}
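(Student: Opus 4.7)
The plan is to adapt the strategy of Lemma \ref{lm:9.1} to the quaternionic situation $\mathfrak{p}=3$, where instead of a single ``imaginary unit'' we must construct three of them satisfying the multiplication table of $\mathbb{H}$. First, I would fix an orthonormal basis $\{x_2, y_2, z_2\}$ of the $3$-dimensional space $V_{v_2}(0)$. Then, applying Lemma \ref{lm:4.13} three times (once to each of $x_2, y_2, z_2$ in place of $u_2$), I obtain for every $j\ne 2$ unique unit vectors $x_j, y_j, z_j\in V_{v_j}(0)$ verifying
\begin{equation*}
L(x_2,v_j)=-L(v_2,x_j),\quad L(y_2,v_j)=-L(v_2,y_j),\quad L(z_2,v_j)=-L(v_2,z_j),
\end{equation*}
together with the corresponding equalities $L(v_2,v_j)=L(x_2,x_j)=L(y_2,y_j)=L(z_2,z_j)$. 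By Lemma \ref{lm:4.9}, these relations hold also for $j=2$ (trivially, with $x_2,y_2,z_2$).

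Next I would verify that $\{x_j, y_j, z_j\}$ is orthonormal in $V_{v_j}(0)$: the transfer map $V_{v_2}(0)\to V_{v_j}(0)$ defined by Lemma \ref{lm:4.13} preserves norms (as in the proof of Lemma \ref{lm:9.1} via $V_{v_l}(0)\subset V_{v_j}(\tau)$), and it is easily checked using \eqref{eqn:4.5} that it preserves the inner product as well; since $\dim V_{v_j}(0)=3=\dim V_{v_2}(0)$, this map is an isometry.

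The core work is to establish the full system \eqref{eqn:10.1} when both indices $j,l$ are different from $2$. Following the blueprint of Claim 1 in Lemma \ref{lm:9.1}, I would fix such $j, l$ and invoke Lemma \ref{lm:4.13} for the pair $(v_l, x_j)$, producing a unique $x_j^{(l)}\in V_{v_j}(0)$ with $L(x_l, v_j) = -L(v_l, x_j^{(l)})$ and $L(v_l, v_j) = L(x_l, x_j^{(l)})$; analogously define $y_j^{(l)}, z_j^{(l)}$. I would then use the product formula \eqref{eqn:4.36} to compute $K(L(v_j, v_l), L(v_2, x_j))$ in two ways: directly, via the previously established relation $L(v_2, x_j) = -L(x_2, v_j)$, and alternatively via $L(v_j, v_l)=L(x_j, x_l)$ and the definition of $x_j^{(l)}$. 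Comparing the outcomes will force $x_j^{(l)} = x_j$ (and similarly $y_j^{(l)}=y_j$, $z_j^{(l)}=z_j$), giving the first two rows of \eqref{eqn:10.1} in full generality.

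The genuinely new content beyond Lemma \ref{lm:9.1} consists of the ``mixed'' identities such as $L(v_j, x_l) = -L(y_j, z_l) = L(z_j, y_l)$, which encode the non-commutativity $ij=k$ in $\mathbb{H}$. These I would extract by pairing \eqref{eqn:4.36} with the relations already proved: computing $K(L(v_j, x_l), L(v_2, y_j))$ and $K(L(v_j, x_l), L(v_2, z_j))$, and using the identities of the previous paragraph to rewrite one factor in each case, reduces the verification to a sign comparison. I expect this sign bookkeeping — ensuring that the anti-involutions induced by the three transfer maps close up into a consistent quaternion algebra, rather than merely a triple of independent complex structures — to be the main technical obstacle; the conceptual input is precisely property (P4) of Proposition \ref{pr:4.2}, which guarantees mutual orthogonality of $\mathfrak{T}_1,\mathfrak{T}_2,\mathfrak{T}_3$ and hence prevents any degeneracy in the sign analysis.
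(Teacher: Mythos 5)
Your overall strategy (transfer bases between the $V_{v_j}(0)$ via Lemma \ref{lm:4.13} and then propagate identities with the product formula \eqref{eqn:4.36} and Lemma \ref{lm:4.14}) is the same as the paper's, but there is a genuine gap at the decisive point: you begin by fixing an \emph{arbitrary} orthonormal basis $\{x_2,y_2,z_2\}$ of the three--dimensional space $V_{v_2}(0)$. Once $x_2$ and $y_2$ are fixed, $z_2$ is determined up to sign, and for one of the two sign choices the mixed identities such as $L(v_j,x_l)=-L(y_j,z_l)=L(z_j,y_l)$ are simply false (they hold with the opposite sign). Hence no computation with \eqref{eqn:4.36} can ``verify'' them from your starting data; the third vector must be \emph{constructed}, not chosen. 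The paper does exactly this: it fixes only two orthogonal unit vectors $x_1,y_1\in V_{v_1}(0)$, and then obtains the third vector $z_1^j$ by applying Lemma \ref{lm:4.13} to the pair $(y_j,x_1)$, i.e.\ $z_1^j$ is defined by $L(v_j,z_1^j)=L(y_j,x_1)$ (see \eqref{eqn:10.3}); this builds the quaternionic relation $z=xy$ into the definition. It must then be checked (the paper's Claim~1) that $\{x_1,y_1,z_1^j\}$ is orthonormal, and (Claim~2, via Lemma \ref{lm:4.14}) that $z_1^j$ is independent of $j$ --- a step your scheme has no counterpart for, but which is where the real work lies.

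Your proposed remedy for the sign problem --- property (P4) of Proposition \ref{pr:4.2} --- is insufficient. (P4) only gives mutual orthogonality of $\mathfrak{T}_1(v),\mathfrak{T}_2(v),\mathfrak{T}_3(v)$; combined with (P1)--(P3) it shows that $\mathfrak{T}_1\mathfrak{T}_2(v)$ is a unit vector orthogonal to $v,\mathfrak{T}_1(v),\mathfrak{T}_2(v)$ and hence equals $\pm\mathfrak{T}_3(v)$ in the four--dimensional space $V_1$, but it determines neither the sign nor its constancy in $v$, and the sign relative to your arbitrary $z_2$ is precisely what the mixed identities assert. The gap is reparable (replace $z_2$ by the vector produced by Lemma \ref{lm:4.13} from $L(y_2,x_2)$-type data, or equivalently flip its sign when needed and then prove consistency across all $j,l$), but as written the proposal does not close the quaternion algebra.
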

\begin{proof}
As doing before, we denote $V_j=\{v_j\}\oplus V_{v_j}(0),\ 1\leq
l\leq k_0$. Let us fix two orthogonal unit vectors $x_1,y_1\in
V_{v_1}(0)$. By using Lemmas \ref{lm:4.12} and \ref{lm:4.13}, for
each $j\not=1$, we have two unit vectors $x_j,\,y_j \in V_{v_j}(0)$
such that
\begin{equation}\label{eqn:10.2}
\left\{
\begin{aligned}
&L(v_j,v_1)=L(x_j,x_1)=L(y_j,y_1),\\
&L(v_j,x_1)=-L(x_j,v_1),\ \ L(v_j,y_1)=-L(y_j,v_1).
\end{aligned}\right.
\end{equation}

Then, according to Lemma \ref{lm:4.13}, we further have unit vectors
$z_1^j\in V_{x_1}(0)$ and $z_j\in V_{x_j}(0)$ such that
\begin{equation}\label{eqn:10.3}
\left\{
\begin{aligned}
&L(v_j,z_1^j)=L(y_j,x_1),\ L(v_j,x_1)=-L(y_j,z_1^j), \\
&L(z_j,v_1)=L(x_j,y_1),\ L(z_j,y_1)=-L(x_j,v_1).
\end{aligned}\right.
\end{equation}

The important is that we have the following

\vskip 1mm

{\bf Claim 1}. {\it For each $j\neq2$, $\{x_1, y_1, z_1^j\}$ is an
orthonormal basis of $V_{v_1}(0)$ and $\{x_j, y_j,z_j\}$ is an
orthonormal basis of $V_{v_j}(0)$.}

\vskip 1mm

To verify this claim, it suffices to show that
$$
z_1^j \perp v_1,\ \ z_1^j \perp y_1,\ \ x_j\perp y_j,\ \ z_j\perp
y_j,\ \ z_j\perp v_j.
$$

In fact, by using \eqref{eqn:10.2} and \eqref{eqn:10.3}, we obtain
that
$$
\tau
h(z_1^j,v_1)=h(L(z_1^j,v_j),L(v_j,v_1))=h(L(y_j,x_1),L(y_j,y_1))=0,
$$
$$
\tau h(z_1^j,y_1)=h(L(z_1^j,v_j),L(v_j,y_1))=h(L(y_j,x_1),-L(y_j,v_1))=0,
$$
$$
\tau h(x_j,y_j)=h(L(x_j,v_1),L(y_j,v_1))=h(L(v_j,x_1),L(v_j,y_1))=0,
$$
$$
\tau h(z_j,y_j)=h(L(z_j,v_1),L(y_j,v_1))=h(L(x_j,y_1),-L(v_j,y_1))=0,
$$
$$
\tau h(z_j,v_j)=h(L(z_j,v_1),L(v_j,v_1))=h(L(x_j,y_1),L(x_j,x_1))=0.
$$

From these relations, we immediately get the claim.

\vskip 1mm

Next, by using Lemmas \ref{lm:4.12} and \ref{lm:4.13},
\eqref{eqn:10.2} and \eqref{eqn:10.3}, we have
\begin{equation}\label{eqn:10.4}
\left\{
\begin{aligned}
&L(v_j,v_1)=L(x_j,x_1)=L(y_j,y_1)=L(z_j,z_1^j),\\
&L(v_j,x_1)=-L(x_j,v_1)=-L(y_j,z_1^j)=L(z_j,y_1),\\
&L(v_j,y_1)=-L(y_j,v_1)=-L(z_j,x_1)=L(x_j,z_1^j),\\
&L(v_j,z_1^j)=-L(z_j,v_1)=-L(x_j,y_1)=L(y_j,x_1),
\end{aligned}\right.\ \ \ 2\leq j\leq k_0.
\end{equation}

From these relations we can prove the following assertion:

\vskip 1mm

{\bf Claim 2}. {\it $z_1^2=\cdots=z_1^{k_0}=:z_1$.}~

\vskip 1mm

In fact, by Claim 1, we know that for $j\neq l\ (j, l\geq 2)$ we
have $z_1^j=\varepsilon_{jl}z_1^l$ with $\varepsilon_{jl}=\pm1$.
From Lemma \ref{lm:4.14} and \eqref{eqn:10.4} we get
\begin{equation}\label{eqn:10.5}
\begin{aligned}
\varepsilon_{jl}\tau L(v_j,v_l)&=K(L(z_1^j,v_j),L(z_1^l,v_l))\\
&=K(L(y_j,x_1),L(y_l,x_1))=\tau L(y_j,y_l).
\end{aligned}
\end{equation}

Similarly, we get
\begin{align}\label{eqn:10.6}
\varepsilon_{jl}L(x_j,x_l)=L(y_j,y_l)=L(z_j,z_l)=L(v_j,v_l).
\end{align}

From \eqref{eqn:10.5} and \eqref{eqn:10.6} we have
$\varepsilon_{jl}=1$. Thus Claim 2 is verified.

Moreover, the following relations hold
\begin{align}\label{eqn:10.7}
\begin{split}
L(v_j,v_l)=L(x_j,x_l)=L(y_j,y_l)=L(z_j,z_l),\ j\neq l,\ j,\ l\geq 2.
\end{split}
\end{align}

From \eqref{eqn:10.4} and apply Lemma \ref{lm:4.14}, we get
\begin{align}\label{eqn:10.8}
\tau L(x_j,y_l)&=K(L(y_1,x_j),L(y_1,y_l))\nonumber \\
&=K(L(z_j,v_1),L(v_1,v_l))=\tau L(z_j,v_l).
\end{align}

Similarly, we have the following relations:
\begin{align}\label{eqn:10.9}
L(z_j,x_l)=L(y_j,v_l),\ \ L(y_j,z_l)=L(x_j,v_l).
\end{align}

Combination of \eqref{eqn:10.4}, Claim 2 and
\eqref{eqn:10.7}\,--\,\eqref{eqn:10.9}, we get \eqref{eqn:10.1}
immediately.
\end{proof}

\vskip 1mm

To continue the proof of Theorem \ref{th:10.1}, we now assume that
$k_0\ge2$ and let $\{v_1, x_1, y_1, z_1;\,\ldots;\,v_{k_0}, x_{k_0},
y_{k_0}, z_{k_0}\}$ be the orthonormal basis of $\mathcal{D}_2$ as
constructed in Lemmas \ref{lm:4.9} and \ref{lm:10.1}. According to \eqref{eqn:4.5},
Lemma \ref{lm:4.11} and the fact that for $j\neq l$, $v_j,\, x_j,\,
y_j,\, z_j\in
V_{v_l}(\tau)=V_{x_l}(\tau)=V_{y_l}(\tau)=V_{z_l}(\tau)$, we have
\begin{equation}\label{eqn:10.10}
\begin{aligned}
h(L(v_j,x_l),L(v_j,x_l))&=h(L(v_j,y_l),L(v_j,y_l))=h(L(v_j,z_l),L(v_j,z_l))\\
&=h(L(v_j,v_l),L(v_j,v_l))=\tau,\ \ \ j\neq l,
\end{aligned}
\end{equation}
\begin{equation}\label{eqn:10.11}
\begin{aligned}
h(L(v_j,v_{l_1}),&L(v_j,v_{l_2}))=h(L(v_j,x_{l_1}),L(v_j,x_{l_2}))\\
&=h(L(x_j,v_{l_1}),L(x_j,v_{l_2}))=h(L(y_j,v_{l_1}),L(y_j,v_{l_2}))\\
&=h(L(v_j,y_{l_1}),L(v_j,y_{l_2}))=h(L(z_j,v_{l_1}),L(z_j,v_{l_2}))\\
&=h(L(v_j,z_{l_1}),L(v_j,z_{l_2}))=0,\ \ \ j, l_1, l_2\ {\rm
distinct},
\end{aligned}
\end{equation}
\begin{equation}\label{eqn:10.12}
\begin{aligned}
h(L(v_{j_1},v_{j_2}),&L(v_{j_3},v_{j_4}))
=h(L(v_{j_1},x_{j_2}),L(v_{j_3},x_{j_4}))\\
&=h(L(v_{j_1},y_{j_2}),L(v_{j_3},y_{j_4}))=h(L(v_{j_1},z_{j_2}),L(v_{j_3},z_{j_4}))\\
&=0,\ \ \ j_1, j_2, j_3, j_4\ {\rm distinct},
\end{aligned}
\end{equation}
\begin{equation}\label{eqn:10.13}
\begin{aligned}
h(L(v_j,v_l),&L(v_{j_1},x_{l_1}))=h(L(v_j,v_l),L(v_{j_1},y_{l_1}))\\
&=h(L(v_j,v_l),L(v_{j_1},z_{l_1}))=0,\ \ \ j\neq l\ {\rm and}\
j_1\neq l_1,
\end{aligned}
\end{equation}
\begin{equation}\label{eqn:10.14}
h(L(v_j,v_j),L(v_j,v_j))=\tfrac{1}{2}\lambda_1\eta,\ \ \ 1\leq j\leq
k_0,
\end{equation}
\begin{equation}\label{eqn:10.15}
h(L(v_j,v_j),L(v_l,v_l))=\tfrac{1}{2}\mu\eta,\ \ \ j\neq l,
\end{equation}
\begin{equation}\label{eqn:10.16}
\begin{aligned}
h(L(v_j,v_j),L(v_j,v_l))&=h(L(v_j,v_j),L(v_j,x_l))=h(L(v_j,v_j),L(v_j,y_l))\\
&=h(L(v_j,v_j),L(v_j,z_l))=h(L(v_j,v_j),L(v_l,x_j))\\
&=h(L(v_j,v_j),L(v_l,y_j))=h(L(v_j,v_j),L(v_l,z_j))\\
&=0,\ \ \ j\neq l,
\end{aligned}
\end{equation}
\begin{equation}\label{eqn:10.17}
\begin{aligned}
h(L(v_j,v_j),&L(v_{l_1},v_{l_2}))=h(L(v_j,v_j),L(v_{l_1},x_{l_2}))\\
&=h(L(v_j,v_j),L(v_{l_1},y_{l_2}))=h(L(v_j,v_j),L(v_{l_1},z_{l_2}))\\
&=0,\ \ \ j, l_1, l_2\ {\rm distinct}.
\end{aligned}
\end{equation}

As in preceding sections we denote
$$
L_j:=L(v_1,v_1)+\cdots+L(v_j,v_j)-jL(v_{j+1},v_{j+1}),\ \ 1\leq
j\leq k_0-1.
$$
Then we have $h(L_j,L_j)=2j(j+1)\tau\neq 0$ for each $j$. Moreover,
\begin{equation}\label{eqn:10.18}
\left\{
\begin{aligned}
&w_j=\tfrac{1}{\sqrt{2j(j+1)\tau}}L_j,\ \ 1\leq j\leq k_0-1,\\
&w_{kl}=\tfrac{1}{\sqrt{\tau}}L(v_k,v_l),\ \ 1\leq k<l\leq k_0,\\
&w_{kl}'=\tfrac{1}{\sqrt{\tau}}L(v_k,x_l),\ \ 1\leq k<l\leq k_0,\\
&w_{kl}''=\tfrac{1}{\sqrt{\tau}}L(v_k,y_l),\ \ 1\leq k<l\leq k_0,\\
&w_{kl}'''=\tfrac{1}{\sqrt{\tau}}L(v_k,z_l),\ \ 1\leq k<l\leq k_0
\end{aligned}
\right.
\end{equation}
give $\tfrac{1}{8}(m+1)(m-5)$ mutually orthogonal unit vectors in
${\rm Im}\,L\subset \mathcal{D}_3$. Thus we have the estimate of the
dimension
\begin{equation}\label{eqn:10.19}
\begin{aligned}
n&=1+\dim\,(\mathcal{D}_2)+\dim\,(\mathcal{D}_3)\\
&\geq1+m-1+\tfrac{1}{8}(m+1)(m-5)=\tfrac{1}{8}(m+1)(m+3)-1.
\end{aligned}
\end{equation}

Further direct computations show that ${\rm
Tr}\,L=4[L(v_1,v_1)+\cdots +L(v_{k_0},v_{k_0})]$ is orthogonal to
all vectors in \eqref{eqn:10.18}, and by using the fact that $v_i\in
V_{v_j}(\tau)\ (i\neq j)$, \eqref{eqn:4.4} and \eqref{eqn:4.8} we
have the calculation
\begin{equation}\label{eqn:10.20}
\begin{aligned}
\tfrac{1}{16}h({\rm Tr}\,L,{\rm Tr}\,L)&=\tfrac{1}{2}k_0\eta(\lambda_1+(k_0-1)\mu)\\
&=\tfrac{1}{128}(m-1)\sqrt{\lambda_1^2-4\varepsilon}\Big((m+3)\lambda_1-(m-5)
\sqrt{\lambda_1^2-4\varepsilon}\,\Big)\\
&=:\rho^2
\end{aligned}
\end{equation}
for $\rho\geq0$. From \eqref{eqn:10.20} and that
$\lambda_1^2-4\varepsilon>0$, the following result is obvious.

\begin{lemma}\label{lm:10.2}
${\rm Tr}\,L=0$ if and only if $\lambda_1=\tfrac{m-5}{2\sqrt{m-1}}$
and $\varepsilon=-1$.
\end{lemma}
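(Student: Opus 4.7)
The plan is to read off the lemma essentially directly from equation \eqref{eqn:10.20}, which already packages ${\rm Tr}\,L=0$ into an explicit algebraic identity in $\lambda_1$ and $\varepsilon$. Since $h$ is positive definite, ${\rm Tr}\,L=0$ is equivalent to $\rho^2=0$. In the right-hand side of \eqref{eqn:10.20}, the factors $m-1$ (we are in the regime $m\ge 9$) and $\sqrt{\lambda_1^2-4\varepsilon}$ (positive by the case $\mathfrak{C}_m$ hypothesis $\lambda_1^2-4\varepsilon>0$) are nonzero, so ${\rm Tr}\,L=0$ reduces to
\[
(m+3)\lambda_1=(m-5)\sqrt{\lambda_1^2-4\varepsilon}.
\]

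Since $m\ge 9$ and $\lambda_1>0$ (from Lemma \ref{lm:4.1}(i)), the left-hand side is strictly positive and the factor $m-5$ on the right is also positive, so both sides are positive and squaring is reversible. Squaring yields $(m+3)^2\lambda_1^2=(m-5)^2(\lambda_1^2-4\varepsilon)$, i.e.
\[
[(m+3)^2-(m-5)^2]\,\lambda_1^2=-4\varepsilon(m-5)^2,
\]
and the elementary identity $(m+3)^2-(m-5)^2=16(m-1)$ gives $16(m-1)\lambda_1^2=-4\varepsilon(m-5)^2$. Because $\lambda_1^2>0$ and $(m-5)^2>0$, the sign forces $\varepsilon=-1$, and extracting the positive root then gives $\lambda_1=\tfrac{m-5}{2\sqrt{m-1}}$.

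For the converse, I would simply substitute $\varepsilon=-1$ and $\lambda_1=\tfrac{m-5}{2\sqrt{m-1}}$ back in and verify $\lambda_1^2-4\varepsilon=\tfrac{(m+3)^2}{4(m-1)}$, so that $\sqrt{\lambda_1^2-4\varepsilon}=\tfrac{m+3}{2\sqrt{m-1}}$; then $(m-5)\sqrt{\lambda_1^2-4\varepsilon}=\tfrac{(m-5)(m+3)}{2\sqrt{m-1}}=(m+3)\lambda_1$, so $\rho^2=0$ and ${\rm Tr}\,L=0$. There is no real obstacle here beyond a careful bookkeeping of signs and the positivity conventions built into the setup; the content of the lemma is purely the algebra already done implicitly in the derivation of \eqref{eqn:10.20}.
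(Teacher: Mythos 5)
Your proof is correct and is exactly the computation the paper leaves implicit when it states that the lemma is ``obvious'' from \eqref{eqn:10.20} together with $\lambda_1^2-4\varepsilon>0$: the reduction to $(m+3)\lambda_1=(m-5)\sqrt{\lambda_1^2-4\varepsilon}$, the sign analysis forcing $\varepsilon=-1$, and the identity $(m+3)^2-(m-5)^2=16(m-1)$ are all just the algebra the authors intend. No further comment is needed.
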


On the other hand, by similar proof of Lemma \ref{lm:8.2} and Lemma
\ref{lm:9.3}, we also obtain the following implicit characterization
of the statement ${\rm Tr}\,L=0$.

\begin{lemma}\label{lm:10.3}
${\rm Tr}\,L=0$ if and only if $n=\tfrac{1}{8}(m+1)(m+3)-1$.
\end{lemma}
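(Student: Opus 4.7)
The plan is to follow essentially the same two-step strategy that proved Lemmas \ref{lm:8.2} and \ref{lm:9.3}, adapted to the list of orthonormal vectors \eqref{eqn:10.18} for the case $\mathfrak{p}=3$. Both directions are counting arguments that compare $\dim\,\mathcal{D}_3$ with $\dim\,({\rm Im}\,L)$ and use the fact that ${\rm Tr}\,L$ is orthogonal to every vector listed in \eqref{eqn:10.18}.

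For the \textbf{only if} direction, I assume ${\rm Tr}\,L=0$ and aim to conclude $\mathcal{D}_3={\rm Im}\,L$. Suppose not: then there is a unit vector $w\in \mathcal{D}_3$ with $w\perp {\rm Im}\,L$. Applying Lemma \ref{lm:4.5} to each pair $(v_j,v_j)$ and summing over $j=1,\ldots,k_0$, together with ${\rm Tr}\,L=4\sum_{j=1}^{k_0}L(v_j,v_j)$ from \eqref{eqn:10.20}, yields
$$K({\rm Tr}\,L,w)=4k_0\eta\mu\,w.$$
Since $\lambda_1^2-4\varepsilon>0$ forces $\mu=\tfrac12(\lambda_1-\sqrt{\lambda_1^2-4\varepsilon})\neq 0$ and $\eta>0$, the right-hand side is nonzero, contradicting ${\rm Tr}\,L=0$. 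Hence $\mathcal{D}_3={\rm Im}\,L$, so the dimension estimate \eqref{eqn:10.19} becomes an equality and $n=\tfrac{1}{8}(m+1)(m+3)-1$.

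For the \textbf{if} direction, assume $n=\tfrac{1}{8}(m+1)(m+3)-1$. A direct computation gives $\dim\,\mathcal{D}_3=n-m=\tfrac{1}{8}(m-5)(m+1)$, which matches exactly the cardinality of the orthonormal family listed in \eqref{eqn:10.18}. Since that family lies in ${\rm Im}\,L\subseteq \mathcal{D}_3$ and the two spaces therefore coincide, those vectors form an orthonormal basis of $\mathcal{D}_3$. Because ${\rm Tr}\,L\in \mathcal{D}_3$ and is orthogonal to every element of this basis (by the inner product relations \eqref{eqn:10.10}--\eqref{eqn:10.17}), we conclude ${\rm Tr}\,L=0$.

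There is no essential obstacle: all the needed orthogonality relations and the formula for $K$ on $L(v,v)$-type vectors have already been established in Section \ref{sect:4} and in the preliminary computations of Section \ref{sect:10}. The only place to be careful is checking the coefficient $4k_0\eta\mu$ in the contradiction step (this is where the factor of $4$ in the formula ${\rm Tr}\,L=4\sum_j L(v_j,v_j)$, specific to the $\mathfrak{p}=3$ case, enters) and the arithmetic verifying that $1+(m-1)+\tfrac18(m-5)(m+1)=\tfrac18(m+1)(m+3)-1$, both of which are routine.
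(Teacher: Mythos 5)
Your proof is correct and follows essentially the same route as the paper, which establishes Lemma \ref{lm:10.3} "by similar proof of Lemma \ref{lm:8.2}": the contradiction $0=K({\rm Tr}\,L,w)=4k_0\eta\mu w$ via Lemma \ref{lm:4.5} for the forward direction, and the dimension count matching $\dim\mathcal{D}_3$ against the orthonormal family \eqref{eqn:10.18} for the converse. The details you flag (the factor $4$ in ${\rm Tr}\,L$, $\mu\neq0$ since $\varepsilon\neq0$, and the arithmetic $1+(m-1)+\tfrac18(m+1)(m-5)=\tfrac18(m+1)(m+3)-1$) all check out.
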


Now, we are ready to complete the proof of Theorem \ref{th:10.1}.

\begin{proof}[Proof of Theorem \ref{th:10.1}]
We consider three cases:
\begin{enumerate}
\item[(i)] $n=\tfrac{1}{8}(m+1)(m+3)$.
\item[(ii)] $n>\tfrac{1}{8}(m+1)(m+3)$.
\item[(iii)] $n=\tfrac{1}{8}(m+1)(m+3)-1$.
\end{enumerate}

For Cases (i) and (ii), as ${\rm Tr}\,L\not=0$, we can define a unit
vector $t:=\frac1{4\rho}{\rm Tr}\,L$.

For Case (i), from previous discussions we see that
$$
\{t,\, w_j|_{1\leq j\leq k_0-1},\, w_{kl}|_{1\leq k<l\leq k_0}, \,
w_{kl}'|_{1\leq k<l\leq k_0},\, w_{kl}''|_{1\leq k<l\leq k_0}, \,
w_{kl}'''\mid_{1\leq k<l\leq k_0}\}
$$
forms an orthonormal basis of ${\rm Im}\,L=\mathcal{D}_3$.

\vskip 1mm

For Case (ii), as ${\rm Im}\,L\varsubsetneq\mathcal{D}_3$, we choose
$\{\tilde{w}_1,\ldots,\tilde{w}_{\tilde{n}}\}$ in
$\mathcal{D}_3\setminus{\rm Im}\,L$ such that
$$
\begin{aligned}
\{t,\, w_j\mid_{ 1\leq j\leq k_0-1},\, w_{kl}\mid_{1\leq k<l\leq
k_0},&\, w_{kl}'\mid_{1\leq k<l\leq
k_0},\, w_{kl}''\mid_{1\leq k<l\leq k_0},\\
&\, w_{kl}'''\mid_{1\leq k<l\leq k_0},\, \tilde{w}_r\mid_{ 1\leq
r\leq \tilde{n}}\}
\end{aligned}
$$
is an orthonormal basis of $\mathcal{D}_3$.

\vskip 1mm

For Case (iii), we see that
$$
\{w_j|_{ 1\leq j\leq k_0-1},\, w_{kl}|_{1\leq k<l\leq k_0}, \,
w_{kl}'|_{1\leq k<l\leq k_0},\, w_{kl}''|_{1\leq k<l\leq k_0}, \,
w_{kl}'''|_{1\leq k<l\leq k_0}\}
$$
is an orthonormal basis of ${\rm Im}\,L=\mathcal{D}_3$.

Now, following the proof of Theorem 7.1 in \cite{HLV2}, we can
proceed in the same way as in the proof of Theorem \ref{th:8.1} to
obtain the following conclusions:

If $n=\tfrac{1}{8}(m+1)(m+3)$, we can apply Theorem \ref{th:3.4} to
conclude that $M^n$ can be decomposed as the Calabi product of a
locally strongly convex centroaffine hypersurface with parallel
cubic form and a point.

If $n>\tfrac{1}{8}(m+1)(m+3)$, we can apply Theorem \ref{th:3.2} to
conclude that $M^n$ can be decomposed as the Calabi product of two
locally strongly convex centroaffine hypersurfaces with parallel
cubic form.

If $n=\tfrac{1}{8}(m+1)(m+3)-1$, then $M^n$ is centroaffinely
equivalent to the standard embedding
$\mathrm{SU}^*(\tfrac{m+3}{2})/\mathrm{Sp}(\tfrac{m+3}{4})
\hookrightarrow\mathbb{R}^{n+1}$.
\end{proof}

%%%%%%%%%%%%%%%%%%%%%%%%%%%%%%%%%%%%%%%%%%%%%%%%%%%%%%%%%%%%%%%
\numberwithin{equation}{section}

\section{Case $\{\mathfrak{C}_m\}_{2\le m\le n-1}$
with $k_0\geq 2$ and $\mathfrak{p}=7$}\label{sect:11}
In this section, we will prove the following theorem.

\begin{theorem}\label{th:11.1}
Let $M^n$ be a locally strongly convex centroaffine hypersurface in
$\mathbb{R}^{n+1}$ which has parallel and non-vanishing cubic form.
If $\mathfrak{C}_m$ with $2\le m\le n-1$ occurs and the integers
$k_0$ and $\mathfrak{p}$, as defined in subsection \ref{sect:4.5},
satisfy $k_0\geq 2$ and $\mathfrak{p}=7$, then $k_0=2$, $m=17$ and
$n\geq 26$. Moreover, we have either
\begin{enumerate}
\item[(i)] $n=27$, $M^n$ can be decomposed as the Calabi product of
a locally strongly convex centroaffine hypersurface with parallel
cubic form and a point, or
\item[(ii)] $n>27$, $M^n$ can
be decomposed the Calabi product of two locally strongly convex
centroaffine hypersurfaces with parallel cubic form, or
\item[(iii)] $n=26$,
$M^n$ is centroaffinely equivalent to the standard embedding\\
$\mathrm{E_{6(-26)}/F_4\hookrightarrow\mathbb{R}^{27}}$.
\end{enumerate}
\end{theorem}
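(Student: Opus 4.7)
The plan is to follow the same overall strategy used in Sections \ref{sect:8}, \ref{sect:9} and \ref{sect:10}, with the octonionic parallelization of $S^7$ replacing the real/complex/quaternionic ones, while adding a preliminary step whose purpose is to cut $k_0$ down to $2$. Since each $V_l=\{v_l\}\oplus V_{v_l}(0)$ has dimension $1+\mathfrak{p}=8$, we have $\dim\mathcal{D}_2=m-1=8k_0$. First I would fix $j=1$, choose an orthonormal basis $\{v_1,x_1^{(1)},\ldots,x_1^{(7)}\}$ of $V_1$, and apply Lemmas \ref{lm:4.12} and \ref{lm:4.13} inductively, as in the proof of Lemma \ref{lm:10.1}, to produce, for every $l\ge2$, unit vectors $x_l^{(1)},\ldots,x_l^{(7)}\in V_{v_l}(0)$ and a full multiplication table of the form $L(a,b)=L(c,d)$ (with signs) matching the octonion multiplication on $\mathbb{R}^8$. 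The routine verifications of orthogonality inside each $V_l$ are done exactly as in Lemma \ref{lm:10.1} using Lemma \ref{lm:4.14}.

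The main obstacle is the following rigidity step, which is the essential new point and the reason the theorem asserts $k_0=2$. Suppose $k_0\ge3$. Then the multiplication table produced above must be internally consistent for the three indices $1,2,3$ simultaneously, in the sense that, for any two elements $a\in V_1$ and $b\in V_2$, the unique $c\in V_3$ given by Lemma \ref{lm:4.13} via $L(v_1,c)=L(a,b)$ (after fixing reference vectors in each $V_l$) must coincide with the element produced by any other route through Lemma \ref{lm:4.14}. Expressing this compatibility via \eqref{eqn:4.36}--\eqref{eqn:4.40} translates, under the identification $V_l\cong\mathbb{O}$, into an associativity identity for three octonions; since the octonions are not associative, this identity fails for a generic triple and yields a contradiction by the Cauchy--Schwarz argument used in Lemma \ref{lm:9.1}. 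This parallels Theorem 8.1 of \cite{HLV2} in the equiaffine setting and is the reason for the Bott--Milnor--Kervaire dichotomy appearing at the level of the index $k_0$ rather than only at the level of $\mathfrak{p}$. It forces $k_0=2$, hence $m-1=16$ and $m=17$.

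With $k_0=2$ and $m=17$, the construction of mutually orthogonal unit vectors in ${\rm Im}\,L$ proceeds exactly as in \eqref{eqn:10.18}: one vector $w_1=\tfrac{1}{\sqrt{4\tau}}(L(v_1,v_1)-L(v_2,v_2))$ from the diagonal part, and $\mathfrak{p}+1=8$ vectors of the form $\tfrac{1}{\sqrt{\tau}}L(v_1,x_2^{(\alpha)})$ (with the convention $x_l^{(0)}=v_l$). This gives $\dim({\rm Im}\,L)\ge9$, whence
\begin{equation*}
n\ge 1+16+9=26.
\end{equation*}
Computing $h({\rm Tr}\,L,{\rm Tr}\,L)$ by the same formula as in \eqref{eqn:10.20} (with $k_0=2$) shows that ${\rm Tr}\,L=0$ is equivalent to $\varepsilon=-1$ together with a specific value of $\lambda_1$, and the argument of Lemma \ref{lm:8.2} shows that ${\rm Tr}\,L=0$ if and only if $n=26$.

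Finally I would split into the three cases $n=26$, $n=27$, $n>27$. For $n\ge27$, ${\rm Tr}\,L\ne0$, so $t:=\tfrac{1}{\|{\rm Tr}\,L\|}{\rm Tr}\,L$ is a unit vector and a rotation in the $(e_1,t)$ plane produces an orthonormal basis satisfying the hypotheses of Theorem \ref{th:3.4} (when $n=27$, i.e.\ ${\rm Im}\,L=\mathcal{D}_3$) or Theorem \ref{th:3.2} (when $n>27$, i.e.\ there are extra vectors in $\mathcal{D}_3\setminus{\rm Im}\,L$, on which $K_t$ acts as $\sqrt{2\eta/\lambda_1}\mu$ by Lemma \ref{lm:4.5}), giving the Calabi product decompositions (i) and (ii). For $n=26$, ${\rm Tr}\,L=0$ implies ${\rm Tr}\,K_X=0$ for all $X\in T_pM$, so $M^n$ is a proper affine hypersphere; the resulting difference tensor has exactly the multiplication table of the Jordan algebra of $3\times3$ Hermitian matrices over the octonions, and then, as in the proof of Theorem 8.1 of \cite{HLV2}, one identifies $M^{26}$ with the standard embedding of the Cayley projective plane $\mathrm{E}_{6(-26)}/\mathrm{F}_4\hookrightarrow\mathbb{R}^{27}$.
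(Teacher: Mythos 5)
Your proposal follows essentially the same route as the paper's proof: the octonionic multiplication table relating $V_{v_1}(0)$ and $V_{v_2}(0)$ (the paper's Lemma \ref{lm:11.1}), the rigidity step forcing $k_0=2$ through the failure of octonion associativity once three blocks $V_1,V_2,V_3$ coexist (the paper's Lemma \ref{lm:11.2} carries out your sketch explicitly, deriving $L(z_2,y_6)=0$ against $h(L(z_2,y_6),L(z_2,y_6))=\tau$), the nine orthonormal vectors in ${\rm Im}\,L$ giving $n\ge 1+16+9=26$, the equivalence ${\rm Tr}\,L=0\Leftrightarrow n=26$, and the trichotomy settled by Theorems \ref{th:3.4}, \ref{th:3.2} and the $\mathrm{E}_{6(-26)}/\mathrm{F}_4$ identification via the proper-affine-hypersphere reduction. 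The one slip is the eigenvalue of $K_t$ on $\mathcal{D}_3\setminus{\rm Im}\,L$: by Lemma \ref{lm:4.5} and \eqref{eqn:11.28} it equals $2\eta\mu/\rho$ with $\rho^2=\eta(\lambda_1+\mu)$, not $\sqrt{2\eta/\lambda_1}\,\mu$ (that constant belongs to the $k_0=1$ situation of Section \ref{sect:7}), though this does not affect the verification of the hypotheses of Theorem \ref{th:3.2}.
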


\vskip 1mm

To prove Theorem \ref{th:11.1}, a key ingredient is the following
lemma whose proof is similar to that of Lemma 8.1 in \cite{HLV2}.
\begin{lemma}\label{lm:11.1}
If in the decomposition \eqref{eqn:4.28}, $k_0\ge2$ and
$\mathfrak{p}=7$, then we can choose an orthonormal basis
$\{x_j\}_{1\leq j\leq 7}$ for $V_{v_1}(0)$ and an orthonormal basis
$\{y_j\}_{1\leq j\leq 7}$ for $V_{v_2}(0)$ so that by identifying
$e_j(v_1)=x_j$ and $e_j(v_2)=y_j$, we have the relations
\begin{equation}\label{eqn:11.1}
L(e_j(v_1),e_l(v_2))=-L(v_1,e_je_l(v_2))=-L(e_le_j(v_1),v_2),
\end{equation}
for $1\leq j,l\leq 7$, where $e_je_l$ denotes a product defined by
the following multiplication table.
\begin{center}
 \begin{tabular}{cccccccc}
  $\cdot$ &$\ e_1$     &$\ e_2$     &$\ e_3$     &$\ e_4$     &$\ e_5$     &$\ e_6$     &$\ e_7$ \\
  $e_1$   &$-{\rm id}$ &$\ e_3$     &$-e_2$      &$\ e_5$     &$-e_4$      &$-e_7$      &$\ e_6$ \\
  $e_2$   &$-e_3$      &$-{\rm id}$ &$\ e_1$     &$\ e_6$     &$\ e_7$     &$-e_4$      &$-e_5$ \\
  $e_3$   &$\ e_2$     &$-e_1$      &$-{\rm id}$ &$\ e_7$     &$-e_6$      &$\ e_5$     &$-e_4$ \\
  $e_4$   &$-e_5$      &$-e_6$      &$-e_7$      &$-{\rm id}$ &$\ e_1$     &$\ e_2$     &$\ e_3$ \\
  $e_5$   &$\ e_4$     &$-e_7$      &$\ e_6$     &$-e_1$      &$-{\rm id}$ &$-e_3$      &$\ e_2$ \\
  $e_6$   &$\ e_7$     &$\ e_4$     &$-e_5$      &$-e_2$      &$\ e_3$     &$-{\rm id}$ &$-e_1$ \\
  $e_7$   &$-e_6$      &$\ e_5$     &$\ e_4$     &$-e_3$      &$-e_2$      &$\ e_1$     &$-{\rm id}$
\end{tabular}
\end{center}

\end{lemma}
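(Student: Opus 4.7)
The plan is to extend the constructive strategy of Lemma \ref{lm:10.1} (the quaternionic case $\mathfrak{p} = 3$) to the octonionic case $\mathfrak{p} = 7$. The target table is the octonion multiplication, so the underlying geometric claim is that the bilinear map $L$ induces, on the eight-dimensional spaces $V_1 := \{v_1\} \oplus V_{v_1}(0)$ and $V_2 := \{v_2\} \oplus V_{v_2}(0)$, a pair of compatible copies of the octonion algebra $\mathbb{O}$.

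First I would fix an orthonormal basis $\{y_1,\ldots,y_7\}$ of $V_{v_2}(0)$ and, using Lemma \ref{lm:4.13} with an appropriate sign normalisation, produce a unique unit vector $x_j =: e_j(v_1) \in V_{v_1}(0)$ characterised by $L(x_j,v_2) = -L(v_1,y_j)$, equivalently $L(x_j,y_j) = L(v_1,v_2)$. That $\{x_1,\ldots,x_7\}$ is orthonormal follows from the computation
$$\tau\, h(x_j,x_l) = h(L(x_j,v_2),L(x_l,v_2)) = h(L(v_1,y_j),L(v_1,y_l)) = \tau\, \delta_{jl},$$
which uses $y_j \perp y_l$, $v_2 \in V_{v_1}(\tau)$ and the definition of $\tau$. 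Setting $e_l(v_2) := y_l$ matches the notation of the lemma.

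Next, for each ordered pair $(j,l)$ with $j \neq l$, the key step is to show that $L(x_j,y_l)$ can be written both as $-L(v_1,z)$ for some unit $z \in V_{v_2}(0)$ and as $-L(z',v_2)$ for some unit $z' \in V_{v_1}(0)$. For this I would apply identities \eqref{eqn:4.38}--\eqref{eqn:4.40} from Lemma \ref{lm:4.14} to compute $K$-products such as $K(L(v_1,v_2),L(x_j,y_l))$ and $K(L(x_j,y_l),L(x_l,y_j))$, and then combine the outcome with the isotropy identity \eqref{eqn:4.5} and Cauchy--Schwarz to pin down these representatives. This produces uniquely determined unit vectors which we define to be $e_je_l(v_2)$ and $e_le_j(v_1)$. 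The relations $e_je_j = -\mathrm{id}$ and $e_je_l = -e_le_j$ for $j\neq l$ then follow immediately from the construction and the symmetry of $L$.

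Finally, the specific multiplication table must be exhibited. I would fix three generators (say $e_1,e_2,e_4$) and verify that the four triples $(e_1,e_2,e_3)$, $(e_1,e_4,e_5)$, $(e_2,e_4,e_6)$, $(e_3,e_4,e_7)$ each satisfy the quaternionic relations established in the proof of Lemma \ref{lm:10.1}; this pins down $e_3,e_5,e_6,e_7$ unambiguously. The remaining off-diagonal entries (such as $e_1e_6 = -e_7$, $e_2e_7 = -e_5$, and so on) are then read off one at a time by iterated application of \eqref{eqn:4.40} and Lemma \ref{lm:4.13}, using the already established triples to propagate signs consistently. The main obstacle is precisely the non-associativity of $\mathbb{O}$: unlike in Lemma \ref{lm:10.1}, where the quaternionic relations close up after a single extra generator, here the twenty-one off-diagonal entries cannot be deduced from one another by associativity, and each requires an independent verification with careful sign-tracking. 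From a structural viewpoint, what is being proved is that $(L,K)$ endows $V_1 \cong V_2$ with a real normed composition algebra of dimension $8$, which by the Hurwitz theorem is forced to be $\mathbb{O}$; the displayed table merely records a preferred basis realising this isomorphism.
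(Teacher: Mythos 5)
Your overall strategy—use Lemma \ref{lm:4.13} to pair off $V_{v_1}(0)$ and $V_{v_2}(0)$, normalize so that $L(x_j,y_j)=L(v_1,v_2)$, and then compute the off-diagonal products $L(x_j,y_l)$ via the $K$-identities \eqref{eqn:4.36}--\eqref{eqn:4.40} together with isotropy and Cauchy--Schwarz—is the same one the paper uses (the paper in turn defers the final sign-checking to Lemma 8.1 of \cite{HLV2}). However, there is a genuine gap in your setup. You begin by fixing an \emph{arbitrary} orthonormal basis $\{y_1,\ldots,y_7\}$ of $V_{v_2}(0)$ and transporting it to $V_{v_1}(0)$. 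With respect to an arbitrary basis, the multiplication induced by $L$ will only be \emph{orthogonally conjugate} to the displayed octonion table; its structure constants need not be the specific Fano-plane constants the lemma asserts. The content of the lemma is precisely that an \emph{adapted} basis exists, so most of the basis vectors cannot be chosen freely: only $x_1$, $x_2$ and (after $x_3$ is produced) $x_4$ are free choices, while $x_3$, $x_5$, $x_6$, $x_7$ must be \emph{defined} as the images under $L$-products of the generators (this is exactly what the paper does in \eqref{eqn:11.2}--\eqref{eqn:11.6} and \eqref{eqn:11.11}--\eqref{eqn:11.13}, together with the nontrivial verifications, e.g.\ \eqref{eqn:11.5} and Claims 1 and 2, that these defined vectors land back in $V_{v_1}(0)$ and are orthogonal to the earlier ones). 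Your last paragraph shows you are aware of this, but it directly contradicts your first step: you cannot both fix all seven $y_j$ in advance and then ``pin down $e_3,e_5,e_6,e_7$'' from the generators. As written, the construction would have to be reorganized before the table can be claimed.

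A second, smaller point: the appeal to Hurwitz's theorem identifies the algebra up to isomorphism but does not produce the table relative to any particular basis, so it cannot replace the explicit entry-by-entry verification; it is at best a heuristic for why the construction should succeed. Also note that the antisymmetry $e_je_l=-e_le_j$ is not quite automatic from ``the symmetry of $L$''; it requires the relation $L(x_j,y_l)=-L(x_l,y_j)$ for $j\neq l$, which follows from Lemma \ref{lm:4.13} only after the normalization $L(x_i,y_i)=L(v_1,v_2)$ has been established for all $i$ simultaneously (this is \eqref{eqn:11.15}--\eqref{eqn:11.16} in the paper and does require the incremental construction).
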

\begin{proof}
As before we denote $V_j=\{v_j\}\oplus V_{v_j}(0),\ 1\le j\le k_0$.
First we fix any two orthogonal unit vectors $x_1,x_2\in
V_{v_1}(0)$. Then, by Lemmas \ref{lm:4.12} and \ref{lm:4.13}, we can
consecutively find unit vectors $y_1,y_2\in V_{v_2}(0)$ and $x_3\in
V_{x_2}(0)$, such that
\begin{equation}\label{eqn:11.2}
L(y_1,v_1)=-L(x_1,v_2),\ L(y_1,x_1)=L(v_1,v_2),
\end{equation}
\begin{equation}\label{eqn:11.3}
L(y_2,v_1)=-L(x_2,v_2),\ L(y_2,x_2)=L(v_1,v_2),
\end{equation}
\begin{equation}\label{eqn:11.4}
L(y_1,x_2)=-L(x_3,v_2),\ L(y_1,x_3)=L(x_2,v_2).
\end{equation}
From the computation
\begin{equation}\label{eqn:11.5}
\tau
h(x_3,v_1)=h(L(x_3,v_2),L(v_1,v_2))=h(-L(y_1,x_2),L(y_1,x_1))=0,
\end{equation}
we get $x_3\in V_{v_1}(0)$. Thus, we can further take unit vector
$y_3\in V_{v_2}(0)$ such that
\begin{equation}\label{eqn:11.6}
L(y_3,v_1)=-L(x_3,v_2),\ L(y_3,x_3)=L(v_1,v_2).
\end{equation}

\vskip 1mm

{\bf Claim 1}. $\{x_1,x_2,x_3,v_1\}$ are orthonormal vectors.
Similarly, $\{y_1,y_2,y_3,v_2\}$ are orthonormal vectors.

\vskip 1mm

In fact, by using \eqref{eqn:11.2} and \eqref{eqn:11.4}, we have
\begin{equation*}
\tau h(x_3,x_1)=h(L(x_3,v_2),L(x_1,v_2))=h(L(y_1,x_2),L(y_1,v_1))=0,
\end{equation*}
so we have $x_3\perp x_1$, and the mutual orthogonality of
$\{x_1,x_2,x_3,v_1\}$ immediately follows. The assertion that
$\{y_1,y_2,y_3\}$ are mutually orthogonal vectors can be proved
using Lemmas \ref{lm:4.12} and \ref{lm:4.13}. Hence we have the Claim 1.

\vskip 2mm

By \eqref{eqn:11.2}, \eqref{eqn:11.3} and \eqref{eqn:11.6}, we get the
relation
\begin{equation}\label{eqn:11.7}
L(y_1,x_1)=L(y_2,x_2)=L(y_3,x_3)=L(v_1,v_2),
\end{equation}
which together with Lemmas \ref{lm:4.12}, \ref{lm:4.13},
Claim 1 and \eqref{eqn:11.4}, imply that
\begin{equation}\label{eqn:11.8}
L(y_1,x_3)=-L(x_1,y_3)=L(x_2,v_2),
\end{equation}
\begin{equation}\label{eqn:11.9}
L(x_1,y_2)=-L(y_1,x_2)=L(x_3,v_2),
\end{equation}
\begin{equation}\label{eqn:11.10}
L(y_3,x_2)=-L(x_3,y_2)=-L(y_1,v_1).
\end{equation}

Now we pick an arbitrary unit vector $x_4\in V_{v_1}(0)$ such that
it is orthogonal to all $x_1,\ x_2$ and $x_3$. Then, inductively and
following the preceding argument, we can find unit vectors $y_4 \in
V_{v_2}(0)$, $x_5, x_6, x_7\in V_{v_1}(0)$ and $y_5, y_6, y_7 \in
V_{v_2}(0)$ such that the following relations hold:
\begin{align}\label{eqn:11.11}
\begin{split}
&L(x_4,y_1)=-L(x_1,y_4)=-L(x_5,v_2)=L(y_5,v_1),\\
& L(x_4,y_4)=L(x_1,y_1)=L(x_5,y_5)=L(v_1,v_2), \
L(x_4,v_2)=L(x_5,y_1),
\end{split}
\end{align}
\begin{align}\label{eqn:11.12}
\begin{split}
&L(x_4,y_2)=-L(x_6,v_2)=L(y_6,v_1),\\
&L(x_4,v_2)=L(x_6,y_2),\ L(x_6,y_6)=L(v_1,v_2),
\end{split}
\end{align}
\begin{align}\label{eqn:11.13}
\begin{split}
&L(x_4,y_3)=-L(x_7,v_2)=L(y_7,v_1),\\
&L(x_4,v_2)=L(x_7,y_3),\ L(x_7,y_7)=L(v_1,v_2).
\end{split}
\end{align}

Similar to Claim 1, applying Lemmas \ref{lm:4.12},
\ref{lm:4.13}, \eqref{eqn:11.2}\,--\,\eqref{eqn:11.4}
and \eqref{eqn:11.6}\,--\,\eqref{eqn:11.13}, we obtain:

\vskip 1mm

{\bf Claim 2}. $\{x_1,\ldots, x_7,v_1\}$ are orthonormal vectors.
Similarly, $\{y_1,\ldots, y_7,v_2\}$ are orthonormal vectors.

\vskip 1mm

From \eqref{eqn:11.7}, \eqref{eqn:11.11}\,--\,\eqref{eqn:11.13}, it
follows immediately that
\begin{equation}\label{eqn:11.15}
L(x_i,y_i)=L(v_1,v_2),\ \ i=1,\ldots,7,
\end{equation}
and therefore, by Lemmas \ref{lm:4.12} and \ref{lm:4.13}, we obtain
\begin{equation}\label{eqn:11.16}
L(x_i,y_j)=-L(y_i,x_j),\ L(x_i,v_2)=-L(y_i,v_1), \ 1\leq i\neq j\leq
7.
\end{equation}

Finally, based on the relations \eqref{eqn:11.2}-\eqref{eqn:11.4} and
\eqref{eqn:11.6}-\eqref{eqn:11.16}, the following relations can be
established (cf. proof of Lemma 8.1 in \cite{HLV2}):
\begin{align}\label{eqn:11.17}
\begin{split}
&L(x_4,y_5)=-L(v_1,y_1),\ L(x_4,y_6)=-L(v_1,y_2),\\
&L(x_4,y_7)=-L(v_1,y_3),
\end{split}
\end{align}
\begin{align}\label{eqn:11.18}
\begin{split}
&L(x_5,y_1)=-L(v_1,y_4),\ L(x_5,y_2)=L(v_1,y_7),\\
&L(x_5,y_3)=-L(v_1,y_6),\ L(x_5,y_6)=L(v_1,y_3),\\
&L(x_5,y_7)=-L(v_1,y_2),
\end{split}
\end{align}
\begin{align}\label{eqn:11.19}
\begin{split}
&L(x_6,y_1)=-L(v_1,y_7),\ L(x_6,y_2)=-L(v_1,y_4),\\
&L(x_6,y_3)=L(v_1,y_5),\ L(x_6,y_7)=L(v_1,y_1),
\end{split}
\end{align}
\begin{align}\label{eqn:11.20}
\begin{split}
&L(x_7,y_1)=L(v_1,y_6),\ L(x_7,y_2)=-L(v_1,y_5),\\
&L(x_7,y_3)=-L(v_1,y_4).
\end{split}
\end{align}

In a similar way as above, all relations in \eqref{eqn:11.1} can be
verified, and thus we complete the proof of Lemma \ref{lm:11.1}.
\end{proof}

Now, we can present the following crucial and remarkable lemma with
a simplified proof (comparing to that of Lemma 8.2 in \cite{HLV2})
included.

\begin{lemma}\label{lm:11.2}
Suppose that in the decomposition \eqref{eqn:4.28} we have
$k_0\geq2$ and $\mathfrak{p}=7$. Then it must be the case that
$k_0=2$.
\end{lemma}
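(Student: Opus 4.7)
The plan is to argue by contradiction: suppose $k_0 \geq 3$, so three blocks $V_{v_1}(0)$, $V_{v_2}(0)$, $V_{v_3}(0)$, each of dimension $7$, are present. The core idea is that applying Lemma \ref{lm:11.1} to each of the three pairs $(v_1,v_2)$, $(v_1,v_3)$, $(v_2,v_3)$ produces three octonion-like multiplication structures, and that forcing these three structures to be mutually compatible would collapse the octonion product to an associative one, which is impossible.

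Concretely, I would first fix orthonormal bases $\{x_i\}_{i=1}^7 \subset V_{v_1}(0)$ and $\{y_i\}_{i=1}^7 \subset V_{v_2}(0)$ furnished by Lemma \ref{lm:11.1} applied to the pair $(v_1,v_2)$, so that
\[
L(x_i, y_j) = -L(v_1, e_i e_j(v_2)) = -L(e_j e_i(v_1), v_2),
\]
where $e_i e_j$ denotes the multiplication in that lemma. Applying Lemma \ref{lm:11.1} to $(v_1,v_3)$ with the basis $\{x_i\}$ already fixed, Lemma \ref{lm:4.13} then determines a unique orthonormal basis $\{z_i\}_{i=1}^7$ of $V_{v_3}(0)$ for which the analogous relations hold with $y$ replaced by $z$ and $v_2$ by $v_3$.

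Next, Lemma \ref{lm:11.1} applied a third time to $(v_2,v_3)$ furnishes a product $\star$ on the seven-dimensional index set so that $L(y_i, z_j) = -L(v_2, e_i \star e_j(v_3))$. On the other hand, the value $L(y_i, z_j)$ can be computed directly by means of \eqref{eqn:4.40} combined with \eqref{eqn:4.36}\,--\,\eqref{eqn:4.37}: starting from $K(L(x_k, y_i), L(x_k, z_j))$ and $K(L(v_1, y_i), L(x_k, z_j))$ for appropriately chosen $k$, the right-hand sides reduce, by the structures fixed for $(v_1,v_2)$ and $(v_1,v_3)$, to expressions in which the product $\star$ is written as an iterated composition of multiplications on $V_{v_1}(0)$. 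Performing this equality for enough triples, one forces the identity
\[
(e_i e_j) e_k = e_i (e_j e_k)
\]
to hold on all triples $1\le i,j,k\le 7$, i.e., the octonion multiplication of Lemma \ref{lm:11.1} would be associative. However, from the multiplication table in Lemma \ref{lm:11.1} one checks for instance
\[
(e_1 e_4) e_6 = e_5 e_6 = -e_3, \qquad e_1(e_4 e_6) = e_1 e_2 = e_3,
\]
so that the associator $[e_1,e_4,e_6] = -2e_3 \neq 0$. This contradiction rules out $k_0\ge3$, forcing $k_0=2$.

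The main obstacle is the bookkeeping of indices, signs, and the repeated application of \eqref{eqn:4.40} together with Lemmas \ref{lm:4.12}\,--\,\ref{lm:4.13} in order to isolate the precise triple on which the forced associativity fails; the structure of the argument is parallel to the equiaffine analog (Lemma 8.2 of \cite{HLV2}), so the work is in adapting that computation to the centroaffine identity \eqref{eqn:4.40} rather than in finding a new mechanism.
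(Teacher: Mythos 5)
Your proposal follows essentially the same route as the paper: assume $k_0\ge 3$, impose the octonion structure of Lemma \ref{lm:11.1} on the three pairs of blocks $V_{v_1}(0)$, $V_{v_2}(0)$, $V_{v_3}(0)$, and derive a contradiction from the non-associativity of the multiplication table. The one step you gloss over that the paper must handle explicitly is that the basis of $V_{v_1}(0)$ produced by applying Lemma \ref{lm:11.1} to the pair $(v_1,v_3)$ coincides with the one already fixed for $(v_1,v_2)$ (the paper's $\tilde{x}_i=x_i$ argument via \eqref{eqn:11.22}--\eqref{eqn:11.23}); after that, the paper extracts a single concrete inconsistency ($L(z_2,y_6)=0$ versus $h(L(z_2,y_6),L(z_2,y_6))=\tau$) rather than forcing full associativity on all triples, but the underlying mechanism is identical to the one you describe.
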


\begin{proof}
Suppose on the contrary that $k_0\geq 3$. Following the same
argument as in the proof of Lemma \ref{lm:11.1} for $V_{v_1}(0)$ and
$V_{v_2}(0)$, we choose a basis
$\{x_1,x_2,\tilde{x}_3,x_4,\tilde{x}_5,\tilde{x}_6, \tilde{x}_7\}$
of $V_{v_1}(0)$ and a basis $\{z_1,z_2,z_3,z_4,z_5,z_6, z_7\}$ of
$V_{v_3}(0)$ such that all the following relations hold:
\begin{equation}\label{eqn:11.21}
L(e_j(v_1),e_l(v_3))=-L(v_1,e_je_l(v_3))=-L(e_le_j(v_1),v_3),\ 1\leq
j,l\leq 7.
\end{equation}

Now, we have two orthonormal bases of $V_{v_1}(0)$, i.e.
$\{x_1,x_2,\tilde{x}_3,x_4,\tilde{x}_5,\tilde{x}_6, \tilde{x}_7\}$
and $\{x_1,x_2,x_3,x_4,x_5,x_6, x_7\}$. We first show that
$\tilde{x}_i=x_i$ for $i=3,5,6,7$:

By \eqref{eqn:4.36} and \eqref{eqn:11.21}, we get
\begin{equation*}
\tau L(y_1,z_1)=K(L(y_1,x_2),L(x_2,z_1))
=K(-L(x_3,v_2),-L(x_3,v_3))=\tau L(v_2,v_3).
\end{equation*}
Thus, similarly, we can prove that
\begin{equation}\label{eqn:11.22}
L(y_1,z_1)=\cdots=L(y_7,z_7)=L(v_2,v_3).
\end{equation}

Since $\{x_1,x_2,\tilde{x}_3,x_4,\tilde{x}_5,\tilde{x}_6,
\tilde{x}_7\}$ and $\{x_1,x_2,x_3,x_4,x_5,x_6, x_7\}$ are two
orthonormal bases for $V_{v_1}(0)$, we may assume that
$x_3=b_3\tilde{x}_3+b_5\tilde{x}_5+b_6\tilde{x}_6+b_7 \tilde{x}_7$.
Then we have the following calculation
\begin{equation}\label{eqn:11.23}
\begin{aligned}
\tau L(y_2,z_2)&=K(L(v_1,y_2),L(v_1,z_2))=- K(L(x_3,y_1),L(v_1,z_2))\\
&=b_3K(L(\tilde{x}_3,y_1),L(\tilde{x}_3,z_1))+
b_5K(L(\tilde{x}_5,y_1),L(\tilde{x}_5,z_7))\\
&\ \ \ -b_6K(L(\tilde{x}_6,y_1),L(\tilde{x}_6,z_4))
-b_7K(L(\tilde{x}_7,y_1),L(\tilde{x}_7,z_5))\\
&=b_3\tau L(y_1,z_1)+b_5\tau L(y_1,z_7)
-b_6\tau L(y_1,z_4)-b_7\tau L(y_1,z_5).
\end{aligned}
\end{equation}

On the other hand, by \eqref{eqn:11.22} and that $L(y_1,z_1),
L(y_1,z_4), L(y_1,z_5)$ and $L(y_1,z_7)$ are mutually orthogonal,
\eqref{eqn:11.23} implies that $b_3=1, b_5=b_6=b_7=0$ and hence
$x_3=\tilde{x}_3$. Similarly, we can verify that $x_i=\tilde{x}_i$
for $i=5,6,7$.

\vskip 2mm

In order to complete the proof of Lemma \ref{lm:11.2}, we will first
use \eqref{eqn:11.1} and \eqref{eqn:11.21} to show that we have also
similar relations between $V_{v_2}(0)$ and $V_{v_3}(0)$, i.e.,
\begin{equation}\label{eqn:11.24}
L(e_j(v_2),e_l(v_3))=-L(v_2,e_je_l(v_3))=-L(e_le_j(v_2),v_3),\ \
1\leq j,l\leq 7.
\end{equation}

In fact, for $j=l$, by Lemma \ref{lm:4.14}, \eqref{eqn:11.1} and
\eqref{eqn:11.21}, we have
$$
\begin{aligned}
\tau L(e_j(v_2),e_j(v_3))&=K(L(e_j(v_2),e_k(v_1)),L(e_k(v_1),e_j(v_3))) \\
&=K(L(v_2,e_je_k(v_1)),L(e_je_k(v_1),v_3))=\tau L(v_2,v_3).
\end{aligned}
$$

For $j\neq l$, according to the multiplication table in Lemma
\ref{lm:11.1}, there exists a unique $k$ and $\epsilon=\pm1$ such
that $e_le_j=\epsilon e_k, e_je_k=\epsilon e_l, e_ke_l=\epsilon e_j
$. It follows, by applying \eqref{eqn:4.36}, \eqref{eqn:11.1} and
\eqref{eqn:11.21}, that
$$
\begin{aligned}
\tau L(e_j(v_2),e_l(v_3))&=K(L(e_j(v_2),v_1),L(v_1,e_l(v_3))) \\
&=K(L(-\epsilon e_le_k(v_2),v_1),L(v_1,e_l(v_3))) \\
&=\epsilon K(L(e_k(v_2),e_l(v_1)),-L(v_3,e_l(v_1))) \\
&=-\epsilon\tau L(e_k(v_2),v_3)=-\tau L(e_le_j(v_2),v_3)
\end{aligned}
$$
and that
$$
\begin{aligned}
\tau L(v_2,e_je_l(v_3))&=K(L(e_k(v_1),v_2),L(e_je_l(v_3),e_k(v_1))) \\
&=K(L(v_2,\epsilon e_le_j(v_1)),L(-\epsilon e_k(v_3),e_k(v_1))) \\
&= K(L(v_1,-\epsilon e_le_j(v_2)),L(-\epsilon v_3,v_1))=\tau
L(e_le_j(v_2),v_3).
\end{aligned}
$$
Thus, \eqref{eqn:11.24} holds indeed.

From \eqref{eqn:11.1}, \eqref{eqn:11.21}, \eqref{eqn:11.24} and Lemma
\ref{lm:4.14}, we have
\begin{equation}\label{eqn:11.25}
K(L(v_1,y_6)+L(x_1,y_7),L(x_2,v_3))=0.
\end{equation}

On the other hand, we have
$$
\begin{aligned}
&K(L(v_1,y_6),L(x_2,v_3))=K(L(v_1,y_6),-L(v_1,z_2))=-\tau L(z_2,y_6),\\
&K(L(x_1,y_7),L(x_2,v_3))=K(L(x_1,y_7),-L(x_1,z_3))=-\tau L(z_3,y_7).
\end{aligned}
$$
These, together with \eqref{eqn:11.25}, give that
\begin{equation}\label{eqn:11.26}
L(z_2,y_6)+L(z_3,y_7)=0.
\end{equation}
\eqref{eqn:11.24} implies that $L(z_2,y_6)=L(z_3,y_7)$, and by
\eqref{eqn:11.26} we get $L(z_2,y_6)=0$.

However, we also have the relation $h(L(z_2,y_6),L(z_2,y_6))=\tau$,
which gives the contradiction.

This completes the proof of Lemma \ref{lm:11.2}.
\end{proof}

\vskip 2mm

Now, we are ready to complete the proof of Theorem \ref{th:11.1}.

\begin{proof}[Proof of Theorem \ref{th:11.1}]~

First, Lemma \ref{lm:11.2} implies that $k_0=2$ and
$\dim\,(\mathcal{D}_2)=16$.

Let $\{v_1,v_2,x_j,y_j,\ 1\leq j\leq 7\}$ be the orthonormal basis
of $\mathcal{D}_2$ as constructed in Lemma \ref{lm:11.1} such that
all relations in \eqref{eqn:11.1} hold. Then we easily see that the
image of $L$ is spanned by
$$
\{L(v_1,v_1),\ L(v_1,v_2),\ L(v_2,v_2);\ L(v_1,y_j)\mid_{1\leq j\leq 7}\}.
$$

Define $L_1=L(v_1,v_1)-L(v_2,v_2)$, then we have
\begin{equation}\label{eqn:11.27}
h(L_1,L_1)=4\tau\neq0.
\end{equation}
We now easily see that there exist nine orthonormal vectors in ${\rm
Im}\,L\subset \mathcal{D}_3$:
$$
\begin{aligned}
w_0=\tfrac{1}{\sqrt{4\tau}}L_1,\
w_1=\tfrac{1}{\sqrt{\tau}}L(v_1,v_2),\
w_{j+1}:=\tfrac{1}{\sqrt{\tau}}L(v_1,y_j),\ 1\leq j\leq 7.
\end{aligned}
$$

Note that ${\rm Tr}L=8(L(v_1,v_1)+L(v_2,v_2))$ is orthogonal to
$\{w_0,w_1,w_{j+1}\mid_{1\leq j\leq 7}\}$, by using \eqref{eqn:4.4},
\eqref{eqn:4.8} and the fact $v_1\in V_{v_2}(\tau)$, we obtain
\begin{equation}\label{eqn:11.28}
\tfrac{1}{64}h({\rm Tr}\,L,{\rm Tr}\,L)=\eta(\lambda_1+\mu)=\tfrac{1}{4}
\sqrt{\lambda_1^2-4\varepsilon}\Big(3\lambda_1
-\sqrt{\lambda_1^2-4\varepsilon}\Big)=:\rho^2
\end{equation}
for $\rho\geq0$. Then we have the estimate of the dimension
\begin{equation}\label{eqn:11.29}
n=1+\dim\,(\mathcal{D}_2)+\dim\,(\mathcal{D}_3)\ge26.
\end{equation}
From \eqref{eqn:11.28} and the fact $\lambda_1^2-4\varepsilon>0$, we
have the following result.

\begin{lemma}\label{lm:11.3}
${\rm Tr}\,L=0$ if and only if $2\lambda_1^2=1$ and
$\varepsilon=-1$.
\end{lemma}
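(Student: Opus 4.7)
The statement is essentially a direct algebraic consequence of the scalar identity \eqref{eqn:11.28}, so the proof plan is short. My plan is to read off the claim by analyzing when the right-hand side of \eqref{eqn:11.28} vanishes, using that $\lambda_1>0$ and $\lambda_1^2-4\varepsilon>0$ from Lemma \ref{lm:4.1}.

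First I would observe that, since $h$ is positive definite, ${\rm Tr}\,L=0$ is equivalent to $\rho=0$, i.e.\ to
\[
\sqrt{\lambda_1^2-4\varepsilon}\,\bigl(3\lambda_1-\sqrt{\lambda_1^2-4\varepsilon}\bigr)=0.
\]
The factor $\sqrt{\lambda_1^2-4\varepsilon}$ is strictly positive because we are in Case $\mathfrak{C}_m$, hence the identity reduces to $3\lambda_1=\sqrt{\lambda_1^2-4\varepsilon}$. Squaring (which is legitimate since the left-hand side $3\lambda_1>0$) gives $9\lambda_1^2=\lambda_1^2-4\varepsilon$, that is, $\varepsilon=-2\lambda_1^2$.

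Since $\varepsilon\in\{-1,+1\}$ and $\lambda_1^2>0$, the only possibility is $\varepsilon=-1$ together with $2\lambda_1^2=1$, which proves the forward direction. For the converse, substituting $\varepsilon=-1$ and $2\lambda_1^2=1$ back into the middle expression of \eqref{eqn:11.28} one checks immediately that $\sqrt{\lambda_1^2-4\varepsilon}=\sqrt{\tfrac12+4}=\tfrac{3}{\sqrt2}=3\lambda_1$, so the bracket vanishes and hence $\rho=0$, which yields ${\rm Tr}\,L=0$. No step here is really an obstacle; the whole verification is a one-line squaring argument, and the work has already been done in deriving \eqref{eqn:11.28}.
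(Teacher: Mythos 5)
Your proof is correct and follows exactly the route the paper intends: the paper simply states that the lemma follows from \eqref{eqn:11.28} and $\lambda_1^2-4\varepsilon>0$, and your argument is precisely the algebra (positive definiteness of $h$, cancelling the nonzero factor $\sqrt{\lambda_1^2-4\varepsilon}$, squaring $3\lambda_1=\sqrt{\lambda_1^2-4\varepsilon}$ using $\lambda_1>0$, and reading off $\varepsilon=-1$, $2\lambda_1^2=1$) that makes this explicit.
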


On the other hand, by similar proof of Lemma \ref{lm:8.2}, we also
obtain the following implicit characterization of the statement
${\rm Tr}\,L=0$.

\begin{lemma}\label{lm:11.4}
${\rm Tr}\,L=0$ if and only if $n=26$.
\end{lemma}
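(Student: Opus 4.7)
The plan is to imitate the proof of Lemma \ref{lm:8.2} (and its analogues Lemmas \ref{lm:9.3}, \ref{lm:10.3}) in the present setting, where $k_0=2$, $\mathfrak{p}=7$, $\dim\mathcal{D}_2=16$, and
$$
{\rm Tr}\,L=8\bigl(L(v_1,v_1)+L(v_2,v_2)\bigr).
$$

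For the forward implication, I would assume ${\rm Tr}\,L=0$ and proceed in two steps. First, establish $\mathcal{D}_3={\rm Im}\,L$ by contradiction: picking a unit vector $w\in\mathcal{D}_3$ with $w\perp{\rm Im}\,L$ and applying Lemma \ref{lm:4.5} summand by summand should produce $0=K({\rm Tr}\,L,w)=16\eta\mu\, w$, which is impossible since $\eta>0$ (from $\lambda_1^2-4\varepsilon>0$) and $\mu\neq 0$ (since $\mu=0$ would force $\varepsilon=0$ through $\mu^2-\lambda_1\mu+\varepsilon=0$). Second, count dimensions: the hypothesis ${\rm Tr}\,L=0$ gives $L(v_2,v_2)=-L(v_1,v_1)$, so the image of $L$ is spanned by the nine explicit orthonormal vectors $w_0,w_1,\ldots,w_8$ constructed after Lemma \ref{lm:11.2}, whence $\dim\mathcal{D}_3=9$ and $n=1+16+9=26$.

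For the backward implication, assume $n=26$. Combined with \eqref{eqn:11.29} this forces $\dim\mathcal{D}_3=9$, so the nine $h$-orthonormal vectors $w_0,w_1,\ldots,w_8\in{\rm Im}\,L\subset\mathcal{D}_3$ must form an orthonormal basis of $\mathcal{D}_3$. Since ${\rm Tr}\,L\in\mathcal{D}_3$ is already noted (just before \eqref{eqn:11.28}) to be $h$-orthogonal to every $w_j$, it is forced to vanish.

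I do not anticipate any serious obstacle; the whole argument is structurally identical to Lemma \ref{lm:8.2}. The only point deserving genuine care is the orthogonality of ${\rm Tr}\,L$ to $w_0$, since both vectors live inside the two-dimensional subspace $\mathrm{span}\{L(v_1,v_1),L(v_2,v_2)\}$; this reduces to the identity $h(L(v_1,v_1),L(v_1,v_1))=h(L(v_2,v_2),L(v_2,v_2))=\tfrac{1}{2}\lambda_1\eta$, a direct consequence of the isotropy relation \eqref{eqn:4.4}, while orthogonality of ${\rm Tr}\,L$ to the remaining $w_j$'s is an immediate consequence of \eqref{eqn:4.7}.
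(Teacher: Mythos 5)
Your proposal is correct and is essentially the paper's own argument: the paper proves Lemma \ref{lm:11.4} ``by similar proof of Lemma \ref{lm:8.2}'', i.e., exactly the two-step scheme you describe (Lemma \ref{lm:4.5} forces $\mathcal{D}_3={\rm Im}\,L$ when ${\rm Tr}\,L=0$, and the dimension count with the nine orthonormal vectors $w_0,\dots,w_8$ handles both directions). The only cosmetic point is that the orthogonality of ${\rm Tr}\,L$ to $w_{j+1}=\tau^{-1/2}L(v_1,y_j)$ involves the cross term $h(L(v_2,v_2),L(v_1,y_j))$ with three distinct vectors, which requires \eqref{eqn:4.9} or Lemma \ref{lm:4.11} rather than \eqref{eqn:4.7} alone; this orthogonality is in any case already recorded in the paper just before \eqref{eqn:11.28}.
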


Then, if $n=27$ or $n\geq 28$, we can define a unit vector
$t=\tfrac1{8\rho}{\rm Tr}\,L$ so that we can construct an orthonormal
basis for $\mathcal{D}_3$ and $T_pM^n$, respectively, and we get the
similar expressions as in Lemmas \ref{lm:8.3}, \ref{lm:8.4} and
\ref{lm:8.5} which allows us to conclude that $M^n$ can be
decomposed as the Calabi product of a locally strongly convex
centroaffine hypersurface with parallel cubic form and a point, or
the Calabi product of two locally strongly convex centroaffine
hypersurfaces with parallel cubic form.

If $n=26$, by calculating the difference tensor $K$ with respect to
the preceding typical basis of $T_pM^n$ totally similar to previous
sections as in Sections \ref{sect:8}-\ref{sect:10}, we can also show
that ${\rm Tr}\,(K_X)=0$ for any $X\in T_pM^n$. Then, according to
Theorem 8.1 of \cite{HLV2}, we can finally conclude that $M^n$ is
locally centroaffnely equivalent to the standard embedding
$\mathrm{E_{6(-26)}/F_4\hookrightarrow\mathbb{R}^{27}}$ that was
introduced in \cite{BD} and also \cite{HLV2}.

In conclusion, we have completed the proof of Theorem \ref{th:11.1}.
\end{proof}

%%%%%%%%%%%%%%%%%%%%%%%%%%%%%%%%%%%%%%%%%%%%%%%%%%%%%%%%%%%%%%%%%%%%%%
\numberwithin{equation}{section}

\section{Completion of the proof of Theorem \ref{th:1.1}}\label{sect:12}

If $C=0$, according to subsection 7.1.1 of \cite{SSV}, and also
Lemma 2.1 of \cite{LLSSW}, we have (i).

For hypersurfaces with $C\not=0$, according to Lemma \ref{lm:4.1},
it is necessary and sufficient to consider the cases
$\{\mathfrak{C}_m\}_{1\le m\le n}$ as well as the exceptional case
$\mathfrak{B}$.

Firstly, by Theorems \ref{th:4.1} and \ref{th:4.2}, we have
settled the two cases, $\mathfrak{C}_1$ and $\mathfrak{C}_n$, from
which we have (ii).

Next, case $\mathfrak{B}$ is settled by Theorem \ref{th:5.1}, from
which we have (viii).

Then, being of independent meaning we have Theorem \ref{th:6.1}, by
which a complete classification is given for the lowest dimension
$n=2$. Theorem \ref{th:6.1} verifies the assertion of Theorem
\ref{th:1.1} explicitly for $n=2$.

The remaining cases, i.e. $\mathfrak{C}_m$ with $2\leq m\leq n-1$,
are completely settled by Proposition \ref{pr:4.2} and subsequent
five theorems, i.e. Theorems \ref{th:7.1}, \ref{th:8.1},
\ref{th:9.1}, \ref{th:10.1} and \ref{th:11.1}. In these cases, we have
(ii)-(vii).

From all of above discussions, we have completed the proof of
Theorem \ref{th:1.1}.

%%%%%%%%%%%%%%%%%%%%%%%%%%%%%%%%%%%%%%%%%%%%%%%%%%%%%%%%%%%%%%%%%%%

\vskip 5mm

\begin{flushleft}

Xiuxiu Cheng and Zejun Hu:\\
{\sc School of Mathematics and Statistics, Zhengzhou University,\\
Zhengzhou 450001, People's Republic of China.}\\
E-mails: chengxiuxiu1988@163.com; huzj@zzu.edu.cn.

\vskip 2mm

Marilena Moruz:\\
{\sc Universit\'e de Valenciennes et du Hainaut-Cambr\'esis, LAMAV,
ISTV 2 59300 Valenciennes, France.}\\
E-mail: marilena.moruz@gmail.com.

\end{flushleft}
\end{document}